\theoremstyle{plain}
\newtheorem{theorem}{Theorem}[section]
\newtheorem{lemme}[theorem]{Lemma}
\newtheorem{lemma}[theorem]{Lemma}
\newtheorem{proposition}[theorem]{Proposition}
\newtheorem{corollaire}[theorem]{Corollary}
\newtheorem{corollary}[theorem]{Corollary}
\newtheorem*{theorem*}{Theorem}
\theoremstyle{definition}
\newtheorem{definition}[theorem]{Definition}
\newtheorem{example}[theorem]{Example}
\newtheorem*{lemA}{Lemma A}
\newtheorem*{thmB}{Theorem B}
\newtheorem*{thmC}{Theorem C}
\theoremstyle{remark}
\newtheorem{remarque}[theorem]{Remark}
\newtheorem{remark}[theorem]{Remark}
\def\ZZ{{\mathbb Z}}
\def\NN{{\mathbb N}}
\def\QQ{{\mathbb Q}}
\def\D{{\mathscr{D}}}
\def\EE{{\mathscr{E}}}
\def\TT{{\mathbb{T}}}
\def\G{{\mathbb{G}}}
\def\dim{{\rm dim}}
\def\CP{{\rm CPDiv}}
\def\tail{{\Sigma_{\star}}}
\def\face{{\rm face}}
\def\supp{{\rm supp}}
\def\hs{{\mathscr{Q}}}
\def\P{{\mathbb{P}}}
\def\AA{{\mathbb{A}}}
\def\spec{{\rm Spec}}
\def\Hom{{\rm Hom}}
\def\aa{{\mathbf{A}}}
\def\ss{{\mathscr{S}}}
\def\F{{\mathscr{F}}}
\def\sc{{\mathrm{Mod}}}
\def\gsc{{\mathrm{Mod}_{G}}}
\def\Vc{{\mathcal{V}}}
\def\Vh{{\mathscr{Q}_{\Sigma}}}
\def\loc{{\mathrm{Loc}}}
\def\lc{{\star}}
\def\SL{{\mathrm{SL}}}
\def\Ray{{\mathrm{Ray}}}
\def\Vert{{\mathrm{Vert}}}
\def\Div{{\mathrm{Div}}}
\def\Cl{{\mathrm{Cl}}}
\def\div{{\mathrm{div}}}
\def\TT{{\mathbb{T}}}
\def\XX{{\mathscr{X}}}
\def\UU{{\mathscr{U}}}
\def\YY{S}
\def\cont{{c}}
\def\va{{s}}
\def\geo{\varsigma }
\def\tail{{\mathrm{Tail}}}
\def\Aut{{\mathrm{Aut}}}
\def\face{{\rm face}}
\def\fsc{{\mathrm{Mod}_{F}}}
\title[Classification of $G$-varieties]
{On the classification of normal $G$-varieties\\  with spherical orbits}
\author{Kevin Langlois}
\date{}
\address{Mathematisches Institut, Heinrich Heine Universit\"at, 40225 D\"usseldorf, Germany.}
\email{langlois.kevin18@gmail.com}
\begin{document}

\begin{abstract}
In this article, we investigate the geometry of reductive group actions on
algebraic varieties.  
Given a connected reductive group $G$, we elaborate on a geometric and combinatorial approach based on Luna-Vust theory to describe every normal $G$-variety with spherical orbits. This description encompasses the classical case of spherical varieties and the theory of $\TT$-varieties recently introduced by Altmann, Hausen, and S\"uss.  
\\

\textsc{R\'esum\'e}. Dans cet article, nous \'etudions la g\'eom\'etrie des op\'erations de groupes r\'eductifs dans les vari\'et\'es alg\'ebriques. \'Etant donn\'e un groupe alg\'ebrique r\'eductif connexe $G$, nous \'elaborons une approche g\'eom\'etrique et combinatoire bas\'ee sur la th\'eorie de Luna-Vust pour d\'ecrire toute $G$-vari\'et\'e normale avec orbites sph\'eriques. Cette description comprend le cas classique des vari\'et\'es sph\'eriques et la th\'eorie des $\mathbb{T}$-vari\'et\'es introduite r\'ecemment par Altmann, Hausen et S\"uss.  

\end{abstract}  
\thanks{\em 2010 Mathematics Subject Classification \rm 14L30, 14M27, 14M25,  	13A18. 
\\ \em Key Words and Phrases: \rm action of algebraic groups, Luna-Vust theory, homogeneous spaces, valuation theory.}

\maketitle

\tableofcontents
\section*{Introduction}
Throughout this article, we consider algebraic varieties and algebraic groups over an algebraically closed field $k$ of characteristic $0$.
\\

{\bf Goal.} Toric varieties are known to provide applications to test conjectures
and general theories. They naturally come with a combinatorial description encoding their geometric properties in terms of simple objects of convex geometry (e.g.
fans, monoids, polytopes etc.). The approach that is developed here aims to establish a similar dictionary for a larger class of algebraic varieties. More precisely, considering a connected reductive group $G$, the goal of the paper is to study the classification of normal $G$-varieties with spherical orbits. We
propose a geometric and combinatorial construction of these $G$-varieties which also generalizes the classical examples of spherical varieties (case of an open $G$-orbit) \cite{Kno91}
and of normal varieties with a torus action (case where the acting group $G$ is a torus) \cite{AH06, AHS08}, see Theorems B, C. 
Moreover, for such reductive group actions new results are obtained, see  Theorems \ref{t-divisor1}, \ref{t-canonical}.
\\

{\bf Context.}
Before stating our results, 
we start by recalling the definitions and some basic facts on reductive group actions. Let us fix a Borel subgroup $B\subseteq G$ and a $G$-variety $X$. The \emph{complexity}
 (cf \cite{Vin86}) of the $G$-action on $X$, denoted by $c(X)$, is defined as the transcendence degree over $k$ of the field of $B$-invariant functions $k(X)^{B}$. This number does not depend on the choice of the Borel subgroup and corresponds by a result of Rosenlicht (see \cite{Ros63}, \cite[Satz 2.2]{Spr89}) to the codimension of a $B$-orbit of $X$ in general position.
 The complexity has a remarkable property, namely, if $Z\subseteq X$ is an irreducible $G$-stable closed subvariety, then $c(Z)\leq c(X)$ \cite[Theorem 5.7]{Tim11}. 
A \emph{spherical $(G$-$)$variety} is a normal $G$-variety of complexity $0$.

We will say that the $G$-variety $X$ has a \emph{stabilizer in general position} (or has a (unique) general orbit) if there exist a $G$-stable dense open subset $X_{1}\subseteq X$ and a closed subgroup $H\subseteq G$ such that for any $x \in X_{1}$ the isotropy group $G_{x}$ is conjugate to $H$. In other words, this means that
each $G$-orbit of $X_{1}$ is $G$-isomorphic to the homogeneous space $G/H$.

 Moreover, the $G$-variety $X$ is said to have \emph{trivial equivariant birational type}  if there exist a variety $S$, a homogeneous $G$-space $G/H$
and a $G$-equivariant birational map $X\dashrightarrow S\times G/H$, where $G$ acts on the product $S\times G/H$ by the trivial action on the first factor, and with the usual action on the second one. By a result of Richardson (see \cite{Ric72})
any smooth affine $G$-variety has a stabilizer in general position. In particular, this applies to the case of finite dimensional rational representations of $G$. 
In addition, this is also true for \emph{$G$-varieties with spherical orbits} which are, by definition, the $G$-varieties whose all $G$-orbits are spherical. More precisely, the authors in \cite{AB05} have shown that for any $G$-scheme $\mathcal{X}$ of finite type over $k$, there exist a finite number of conjugacy classes of isotropy groups of $\mathcal{X}$ giving rise to spherical orbits (see \cite[Theorem 3.1]{AB05}). 

The classification of algebraic varieties in algebraic geometry has an equivariant analogue, namely one can distinguish two types of classification: 
\begin{itemize}
\item[(1)] One determines a natural representative for each $G$-equivariant birational class.  
\item[(2)] Given a $G$-variety $S$, one studies (or classifies) the $G$-isomorphism classes of $G$-varieties $X$ which are $G$-equivariantly birational to $S$.
\end{itemize}
Note that in general, we restrict ourselves to the case where the $G$-varieties are normal; this is the viewpoint that we will adopt. In this case, following the notation of (2), we will say that $X$ is a \emph{$G$-model} of $S$.

Several general approaches were given to study this classification problem. For the type (1), it can be reformulated in terms of the relative Galois cohomology using the space of quasi-sections of a $G$-variety (see \cite[Paragraph 2.5]{PV89}).  A description for the type (2) was obtained by Luna-Vust in the setting of embeddings of homogeneous spaces (cf. \cite{LV83}). It turns out to be effective in the case where the acting connected linear algebraic group is reductive and the complexity is $\leq 1$. A generalization for reductive group actions can be found in \cite{Kno93b}, \cite[Section 1]{Tim97}. The techniques for their classifications are based on commutative algebra, especially on the theory of valuations.   

Reductive group actions in small complexity still attract great interest. The class of spherical varieties comprises several important examples appearing in algebraic geometry and representation theory. It includes the classes of toric varieties (cf \cite{Ful93}), flag varieties, horospherical varieties \cite{PV72, Pau81, Pas08}, embeddings of symmetric spaces (cf \cite{Sat60, Vus90}), determinantal varieties,
wonderful compactifications (cf \cite{CP83, Lun96}), and many others. We refer to \cite{Kno91} for a description of spherical varieties in terms of colored fans and \cite{Tim97} for a generalization to the case of complexity one. Together they gave  a complete classification for the type (2) in the case where the complexity is $\leq 1$. 

The description of a spherical variety $X$ in \cite{Kno91} uses the geometric structure of its open orbit and especially the (equivariant) birational invariants attached to it, namely their lattice, colors and $G$-valuations (see Section \ref{s:spherical-sub} for a reminder). These invariants form the \emph{colored equipment}
of the spherical variety $X$.

In \cite{Lun01} Luna proposed a description of the spherical homogeneous spaces and their colored equipment in terms of the root system of $G$ and gave a complete classification when $G$ is of type A. While Losev showed that the correspondence
between a $G$-isomorphism class of a spherical homogeneous $G$-space and its colored equipment is injective (see \cite{Los09}), the conjectural general case following Luna is reformulated from a conjecture based on the classification of wonderful varieties \cite[Section 2]{Lun01}. This problem was recently solved by the efforts of several authors (see for instance \cite{Akh83, Was96,Cup,BP14,BP15,BP16}) and completes the classification of type (1) for spherical varieties. Note that there exist alternative solutions for this problem; see for instance \cite{Avd11, Avd16} for a classification in the special case of spherical subgroups contained in a Borel subgroup.

In 2006, Altmann and Hausen have developed a new theory for describing torus actions on normal affine varieties in the setting of arbitrary complexity \cite{AH06}. Their idea involves the geometry of line bundles on a normal variety $\Gamma$ (this later playing the role of a certain quotient for the torus action) and the combinatorics coming from toric geometry. This description specializes to the known cases when the acting torus $\TT$ is of dimension one (see \cite{Dol75, Pin77, Dem88, FZ03}) and intersects with the description for complexity-one reductive group actions
(see \cite{KKMS73, Tim97, Tim08, Lan15}). 

It is well known (see for instance \cite[Section 2]{FZ03}) that one can construct a normal surface $X$ with a $\G_{m}$-action by considering the affine cone of a smooth projective algebraic curve $\Gamma$ (modulo an appropriate action of a finite group). In this case, the algebra of regular functions $k[X]$ is described by a $\QQ$-divisor $D$ having positive degree on $\Gamma$ via the equality 
$$k[X] = \bigoplus_{m\geq 0}H^{0}(\Gamma, \mathcal{O}_{\Gamma}(\lfloor mD\rfloor)).$$
For instance (see \cite[Example 3.6]{Dem88}), if $D$ is the divisor $\frac{1}{2}[0] - \frac{1}{3}[1] - \frac{1}{7}[\infty]$ over $\P^{1}$, then we recover the hypersurface $x^{2} + y^{3} + z^{7} = 0$ in $\AA^{3}$.  

The formalism of \emph{polyhedral divisors} introduced in \cite{AH06} is a generalization of this phenomenon for the multigraded case where we consider instead of a $\QQ$-divisor a piecewise linear map
$$\sigma^{\vee}\rightarrow {\rm CaDiv}_{\QQ}(\Gamma),\, m\mapsto \D(m)=\sum_{Y\subseteq \Gamma}\min_{v\in\D_{Y}}\langle m ,v\rangle\cdot Y$$
for describing the algebra of functions of an affine normal $\TT$-variety. The set     
$\sigma^{\vee}$ is a polyhedral cone living in the vector space $M_{\QQ}$ generated by the character lattice of the torus $\TT$ and ${\rm CaDiv}_{\QQ}(\Gamma)$ is the vector space of Cartier $\QQ$-divisors on the normal variety $\Gamma$. The polyhedron $\D_{Y}\subseteq N_{\QQ} = {\rm Hom}(M_{\QQ}, \QQ)$
is referred to as the coefficient at the prime divisor $Y\subseteq \Gamma$ of the polyhedral divisor $\D$, see the reminder in Section \ref{s-chi}.

The generalization to the setting of normal $\TT$-varieties presented in \cite{AHS08} involves considering a certain finite family $\EE$ of polyhedral divisors $\{\D^{i},\,i\in I\}$, called the \emph{divisorial fan}, and defined on a common normal variety $\Gamma$ (see \cite[Definition 5.2]{AHS08} for a precise definition). Here the coefficient family $\{\D^{i}_{Y},\,i\in I\}$ forms a polyhedral subdivision. In particular, this notion collapses to the notion of the defining fan of a toric variety when the complexity of the torus action is $0$.
\\

{\bf Main results.} In this article, we generalize the combinatorial description in \cite{AH06, AHS08} for torus actions and the one of spherical varieties coming from the Luna-Vust theory \cite{Kno91} to the setting of normal $G$-varieties with spherical orbits. We first collect some classical results about the equivariant birational type of such varieties. One knows for instance by \cite[Theorem 2.13]{CKPR11} that a $G$-variety having a stabilizer in general position has trivial equivariant birational type after making an \'etale base change on a $G$-stable dense open subset (see also the reminder in \ref{t-CKPR}). 

A morphism $\pi:Z_{1}\rightarrow Z_{2}$ between two varieties is called a \emph{Galois covering}
if it is dominant finite and the field extension $k(Z_{1})/\pi^{\star}k(Z_{2})$ is Galois. By using tools from the Luna-Vust theory, one can obtain the following intermediate result as a consequence of \cite[Theorem 2.13]{CKPR11} (see Corollary \ref{t-Galois} for a proof). 
\begin{lemA}\label{t-zero}
Let $X$ be a normal $G$-variety with stabilizer $H$ in general position. Then there exist a normal $G$-variety $\tilde{X}$ with general stabilizer $H$ having trivial equivariant birational type and a $G$-equivariant Galois covering $\tilde{X}\rightarrow X$.
\end{lemA}
In other words, this means that $\tilde{X}$ admits a generically free $G$-equivariant action of a finite group $F$, the quotient $\tilde{X}/F$ exists
and is identified with $X$. Note that a version of Lemma A was originally shown by Arzhantsev (see \cite[Section 3, Proposition 3]{Arz97a}) for certain affine $G$-varieties of complexity one with spherical orbits. Lemma A also gives a concrete picture for the classification of type (1) of normal $G$-varieties with spherical orbits. Indeed, it reduces the classification to the determination of birational models of the rational quotient of the total space $\tilde{X}$, the description of the general spherical orbit in terms of the Luna theory (cf. \cite{Lun01}), and the description of a certain $G$-equivariant finite group action which is completely determined by a Galois cohomology class (see Corollary \ref{c:cohoclass} for more details). This will allow us to deal first with the trivial equivariant birational case and then to go back to the general case via the finite group action. 

We now explain how to combinatorially describe a normal $G$-variety with spherical orbits. Our main motivation is provided by the case of a toric variety $V$ defined by a fan $\EE_{V}$. In this setting $G= B = \TT$ is an algebraic torus
and elements of $\EE_{V}$ are strictly convex polyhedral cones $\sigma\subseteq N_{\QQ}$ living in the vector space generated by the lattice $N$ of one-parameter subgroups of $\TT$. Each element of $\sigma\in\EE_{V}$ determines a $\TT$-stable dense open affine subset $V_{\sigma}\subseteq V$, and vice-versa. Moreover, one can recover $\sigma$ as the set of the discrete (geometric) valuations $v:k(V)^{\star}\rightarrow \QQ$ centered in the generic point of a $\TT$-stable irreducible closed subvariety of $V_{\sigma}$. From this viewpoint, we have the equality 
$k[V_{\sigma}] = k[\TT]\cap \bigcap_{v\in\sigma}\mathcal{O}_{v},$
where $\mathcal{O}_{v}$ is the valuation ring associated with $v$. The Luna-Vust theory aims to exploit this observation in the general setting of reductive group actions. 
Note that the analogous notion for the $V_{\sigma}$'s in the context of an arbitrary $G$-variety is the notion of simplicity. Let us recall that a \emph{$B$-chart} of a $G$-variety is a $B$-stable dense affine open subset. The $G$-variety is said to be \emph{simple\footnote{In case of spherical varieties, the notion of simple $G$-varieties coincides with the usual one, namely to require to have a unique closed $G$-orbit (see Theorem 3.1 and results in Section 2 of \cite{Kno91}).}}
if it has a $B$-chart intersecting any $G$-orbit. According to a result of Sumihiro (see \cite[Theorem 1]{Sum74}, \cite[Theorem 1.3]{Kno91}), any normal $G$-variety is covered by $G$-translates of $B$-charts, or equivalently, admits a finite open covering of simple $G$-varieties. 

Let $X$ be a normal $G$-variety with spherical orbits having trivial equivariant birational type and consider a $B$-chart $X_{0}$.
Our first main result (see Theorem B below) is a description of $X_{0}$ in terms of a pair $(\D,\F)$ called a \emph{colored polyhedral divisor} (see Definition \ref{d-poly}). The symbol $\D$ denotes a polyhedral divisor 
defined on a certain birational model $\Gamma$ of the rational quotient of $X$ by $G$. More precisely, the polyhedral divisor $\D$ describes the algebra of $U$-invariants $k[X_{0}]^{U}$ graded by the $B$-eigencharacters of $k(X)$,
where $U$ is the unipotent radical of the Borel subgroup $B$. The finite set $\F$ consists of \emph{colors} (i.e., prime $B$-divisors of $X$ that are not $G$-stable) that intersect the open subset $X_{0}$. 

Similarly to the toric case, one can define the $k$-algebra $k[X_{0}]$ by considering valuations of $k(X)$ that are centered in the generic point of a $G$-stable closed irreducible subvariety
 or in a color of $X$, namely that $k[X_{0}]$ is equal to a ring intersection 
$$(k(\Gamma)\otimes_k k[\Omega_{0}])\cap \bigcap_{D\in\F}\mathcal{O}_{v_D}\cap \bigcap_{v\in C(\D)\cap \Vh}\mathcal{O}_{v}\subseteq k(X)$$
depending on the combinatorial datum $(\D,\F)$. Here $\Omega_{0}$ stands for the open $B$-orbit of the general orbit of $X$ and $\Vh$ denotes the set of $G$-valuations of $k(X)$. We refer to Section \ref{s-chi} for further details. To sum up, we have the following statement.     
\begin{thmB}\label{t-un}
Any $B$-chart of a normal $G$-variety $X$ with spherical orbits having trivial equivariant birational type arises from a colored polyhedral divisor and vice-versa.  
\end{thmB}

In the general situation (i.e. $X$ has non-trivial equivariant birational type), we have a $G$-equivariant Galois covering $\gamma: \tilde{X}\rightarrow X$ (see Lemma A). Since the general orbit of $\tilde{X}$ is spherical and therefore its $G$-equivariant automorphism group is a diagonalizable group (see \cite[Section 5.2]{BP87}, \cite[Theorem 6.1]{Kno91}), the Galois 
group $F$ of $\gamma$ can be chosen to be abelian finite. Also, we may assume that $\gamma$ is trivial if and only if its Galois cohomology class is trivial. In this case, $\gamma$ is referred as a \emph{splitting} of $X$ (see Definition \ref{def-split}). 

The geometric and combinatorial approach that we present in this article is to consider a finite set of colored polyhedral divisors $\EE$ that we will call a \emph{colored divisorial fan}. This set encodes the geometry of a $G$-stable open covering by simple $G$-varieties of $\tilde{X}$ and the $F$-action on $\tilde{X}$ (given by the splitting $\gamma$) in order to determine $X$ as the quotient $\tilde{X}/F$. 
In the case where $G$ is a torus $\TT$, the splitting $\gamma$ is the identity map and $\EE$ corresponds exactly to the divisorial fan introduced in \cite{AHS08} for describing normal $\TT$-varieties. Note that we have exactly the same face relations (compare \cite[Definition 5.1]{AHS08} and Theorem \ref{p:glue-pp}). 

In analogy with the toric case, 
the colored divisorial fan $\EE$ will consist of a finite set of colored polyhedral divisors on a common smooth projective variety $\Gamma$ stable under intersection, and such that for all $(\D,\F), (\D',\F')\in \EE$ the corresponding natural maps between $B$-charts
$$X_{0}(\D,\F)\leftarrow X_{0}(\D\cap \D',\F\cap \F')\rightarrow X_{0}(\D',\F')$$
are $B$-equivariant open immersions (see Definition \ref{d:divfan} for more information).  

Moreover, we ask that $\EE$ satisfies the additional condition (see Condition (iii) of loc. cit.) corresponding to the separateness and that any element is $F$-stable for the natural $F$-action on the valuation set (see Definition \ref{d: F-divfan}). To mention this latter property, we will say that $\EE$ is a colored divisorial fan 
defined on the triple $(\Gamma, \ss, \gamma),$ where here $\ss$ denotes the Luna invariant attached to the general $G$-orbit $\Omega$. Considering $\gamma$ as a rational map, it can be defined as the quotient map of a generically free $G$-equivariant $F$-action
on the space $\Gamma\times \Omega$. Thus the datum $\gamma$ encodes a priori the $G$-equivariant birational information. Our classification result yields a 'toric picture' of the classification of type (2) for normal $G$-varieties with spherical orbits that can be stated as follows (see Theorem \ref{t-claf}). 
\begin{thmC}\label{t-deux} 
Colored divisorial fans are the geometric and combinatorial realizations of normal $G$-varieties with spherical orbits. 
For any normal $G$-variety $X$ with spherical orbits there exist a splitting $\gamma:\tilde{X}\rightarrow X$ and a 
 colored divisorial fan $\EE= \EE_{X}$ on a triple $(\Gamma, \ss, \gamma)$ attached to it. 
The $(G\times F)$-variety $\tilde{X}$ is $(G\times F)$-birational to $\Gamma\times \Omega$ and is covered by the $G$-translates of $B$-charts $X_{0}(\D,\F)$ for any colored polyhedral divisor $(\D,\F)$ running through $\EE$. 
Moreover, if we let $$X(\D,\F):= G\cdot X_{0}(\D,\F), \text{ then we have }
X(\D,\F)\cap X(\D',\F') = X(\D\cap \D', \F\cap \F') \text{ in }\tilde{X}$$
for all $(\D,\F), (\D',\F')\in\EE$. 
Each $G$-stable open subvariety $X(\D,\F)$ is $F$-stable. 

Conversely, every colored divisorial fan defines a normal $G$-variety with spherical orbits.
\end{thmC}
By extending the construction of Knop for simple spherical varieties (see the proof of \cite[Theorem 3.1]{Kno91}) to our setting (see Theorem \ref{t-explicit}), one can explicitly define  each simple $G$-variety $X(\D,\F)$ as a locally closed 
$G$-stable subvariety in the projectivization $\P(V)$ of a finite dimensional rational $G$-module $V$ by choosing generators of the multigraded algebra associated with the polyhedral divisor $\D$. Hence the abstract $G$-variety $\tilde{X}$ associated with $\EE$ can be thought as the gluing of those $G$-varieties $X(\D,\F)$ so that 
their intersections agree with the intersections of colored polyhedral divisors. Moreover, in Theorem \ref{t-claf} we characterize the completeness property for normal $G$-varieties with spherical orbits in terms of the colored divisorial fan in the same way as in \cite[Section 7]{AHS08}. 

Following the philosophy in \cite{AHS08}, applications of our construction are expected where the combinatorics of toric varieties have proved their usefulness. We refer the reader to \cite{AIPSV12, Per14} for surveys on the geometry of spherical varieties and $\TT$-varieties. Especially in Theorem \ref{t-divisor1}, we explicitly describe the divisor class group of a normal $G$-variety $X$ with spherical orbits in terms of its colored divisorial fan (see \cite[Theorem 4.22]{FZ03}, \cite[Corollary 3.15]{PS11}, \cite[Corollary 2.12]{LT16} for former cases). Using 
the Riemann-Hurwitz formula for finite coverings of algebraic varieties, we give an explicit description of the Weil divisor $\sharp F\cdot K_{X}$ (see Theorem \ref{t-canonical}), where $K_{X}$ is a canonical divisor of $X$. The calculation is expressed in terms of the ramification indices of the quotient map by $F$. In particular, this formula is a first step toward the classification of Fano varieties in this setting as it was studied in some particular cases in \cite{Bat94, Pas08, Pas10, Sus14, GH17}. We believe that the combinatorial description developed in this article can be useful for describing the deformation theory of spherical varieties as in \cite{AB05, AB06}. We also refer to \cite{Arz97b, Arz02a, Arz02b} for other results on the geometry of normal $G$-varieties with spherical orbits.
\\

{\bf Content of the article.} Let us give a brief summary of the contents of each section. In Section \ref{sec-prel}, we introduce the notation for reductive group actions by following the viewpoint of the Luna-Vust theory. Section \ref{s-combi} establishes Theorem B and Theorem C in the case where the $G$-equivariant birational type is trivial. We also introduce combinatorial tools such as the concept of colored polyhedral divisor. In Section \ref{sec-3class}, we investigate the equivariant birational classification of a normal $G$-variety with spherical orbits and in particular we prove Theorem C. Finally, in Sections \ref{sec-Weil} and \ref{s-canclass} we provide some applications. We determine the divisor class group of a normal $G$-variety with spherical orbits in \ref{t-divisor1} and obtain information on the canonical class in the last section.        
\\

{\em Acknowledgments.}
We are extremely grateful to the referee for her (or his) careful remarks that improved a lot the presentation in a previous version. We warmly thank Ivan Arzhantsev and Roman Avdeev for useful discussions, and David Bradley-Williams for many corrections. We also warmly thank Hendrik S\"uss for e-mail exchanges and for allowing us to use his pictures of divisorial fans.
This research was supported by the Max Planck Institute for Mathematics of Bonn and
by the Heinrich Heine University of D\"usseldorf. This research was also conducted in the framework of the research training group
\emph{GRK 2240: Algebro-geometric Methods in Algebra, Arithmetic and Topology},
which is funded by the DFG.
\\

{\bf Notation.} By a \emph{variety} (resp. a \emph{linear algebraic group}) we mean an integral separated scheme (resp. an affine group scheme) of finite type over $k$. All subgroups of a linear algebraic group are assumed to be closed. Let $X$ be an integral scheme of finite type over $k$. We denote by $k[X] = \Gamma(X, \mathcal{O}_{X})$ the algebra of regular global functions. Moreover, $k(X)$ is the
field of rational functions on $X$ which is by definition the residue field of the generic point of $X$. 

The letter $G$ stands for a connected reductive linear algebraic group. Changing $G$ by a finite covering, we may restrict to the case where $G$ is \emph{simply-connected}. This latter condition means that $G$ is a direct product $C\times G^{ss}$, where $C$ is an algebraic torus
and $G^{ss}$ is a simply-connected semi-simple linear algebraic group. We will consider a maximal torus $T$ in a Borel subgroup $B$ of $G$. 
We denote by $U$ the unipotent radical of $B$ so that $B = TU$ and by $\Delta$ the set of simple roots of $G$ with respect to the pair $(B,T)$.

We will use \cite{Tim11} as our reference for the geometry
of homogeneous spaces. 

\begin{remark}
The restriction to the case where the base field is characteristic $0$  is mainly due to the assumptions in the result of Alexeev and Brion (see \cite[Theorem 3.1]{AB05}) used in our paper. It would be interesting to develop the theory of $G$-varieties with spherical orbits over a field of positive characteristic.
\end{remark}
\section{Preliminaries}\label{sec-prel}
In this section, we briefly recall some basic notions on reductive group actions that we will use in this paper.  In Section \ref{sec-valsec}, we define the notions of $G$-valuations, colors and scheme of geometric localities of a given $G$-variety $\XX$. We also mention the definitions of a $B$-chart and of an embedding of a homogeneous space. Section \ref{s:spherical-sub} is devoted to the classification of \emph{spherical subgroups} of $G$, that is closed subgroups $H\subseteq G$ such that $G/H$ is a spherical homogeneous $G$-space.

\subsection{Valuations and colors}\label{sec-valsec} 
Many constructions that we will encounter deal with valuations and
colors. From the viewpoint of the Luna-Vust theory \cite[Chapter 12]{Tim11}, they constitute the basic material 
for describing normal algebraic varieties with a reductive group action.
\\

Let $X$ be an integral scheme of finite type over $k$. 
A \emph{discrete valuation} of $k(X)$ is a group homomorphism 
$$v : (k(X)^{\star}, \times)\rightarrow (\QQ, +)$$
with kernel containing the subgroup $k^{\star}$ and image $a\ZZ$
for some $a\in\QQ$, such that $v(f_{1} + f_{2})$  is greater or equal to  $$\min\{v(f_{1}), v(f_{2})\}\text{ for all }f_{1}, f_{2}\in k(X)^{\star}\text{ satisfying }f_{1} + f_{2}\neq 0.$$ For a possibly non-closed point $\zeta\in X$ (resp. a valuation $v$ on $k(X)$), the associated local ring
is denoted by $\mathcal{O}_{\zeta, X}$ (resp. $\mathcal{O}_{v}$). 
The valuation $v$
of $k(X)$ has a \emph{center} in $X$ if there exists a schematic point $\xi\in X$ such that the valuation ring $\mathcal{O}_{v}$ dominates $\mathcal{O}_{\xi, X}$, i.e., 
$\mathcal{O}_{\xi, X}\subseteq \mathcal{O}_{v}$ and $\mathfrak{m}_{\xi}\subseteq \mathfrak{m}_{v}$ for the corresponding maximal ideals.
\\

Assume that $X$ is a normal variety.
Then every prime divisor $D$ on the variety $X$ determines
a discrete valuation $v_{D}$ on $k(X)$ (called the \emph{vanishing
order} along $D$) so that $\mathcal{O}_{v_{D}} = \mathcal{O}_{\xi, X}$, where $\xi$ is the generic point of $D$. The \emph{geometric valuations}
are those of the forms $\alpha \cdot v_{D}$ for any possible choice 
of normal varieties $X'$ such that $k(X) = k(X')$, prime divisors $D\subseteq X'$
and scalars $\alpha\in\QQ_{>0}$. Let $L\subseteq G$ be a subgroup and suppose that $G$ acts on $X$.
The valuation $v$ on the field $k(X)$ is said to be \emph{$L$-invariant} (or simply called an \emph{$L$-valuation}) if it is geometric and  if further the equalities $v(g\cdot f) = v(f)$ hold for all $f\in k(X)^{\star}$  and $g\in L$.
It is moreover called \emph{central} if its restriction to the subfield of $B$-invariants $k(X)^{B}$ is trivial.

Let us introduce some special subvarieties of the $G$-variety $X$ which will play an important role later on.
An \emph{$L$-cycle} (or \emph{$L$-germ} if we rather look at the corresponding generic point) of $X$ is an $L$-stable irreducible closed subvariety of $X$, an \emph{$L$-divisor} is an $L$-cycle of codimension one, and a \emph{color} is a $B$-divisor which is not
$G$-stable. In particular, every $L$-divisor on $X$ defines an $L$-valuation on $k(X)$.
\\

We now introduce the scheme of geometric localities \cite[Section 1]{LV83}. Let $\XX$ be a variety. A (geometric) \emph{locality}
of $k(\XX)$ is a local ring associated with a prime ideal of a finitely generated normal subalgebra $A\subseteq k(\XX)$ having field of fractions $k(\XX)$.
The set of localities $\sc(\XX)$ of $k(\XX)$ is naturally endowed with a structure of scheme over $k$ where the possible affine schemes $\spec\, A$ are considered as open subsets. Note that the scheme $\sc(\XX)$ is in general not separated over $k$.
A ($G$-)\emph{model of $\XX$} is a normal ($G$-)variety equipped with a ($G$-equivariant) birational map $$X \dasharrow \XX,$$ yielding an identification of ($G$-)algebras over $k$ between $k(X)$
and $k(\XX)$. 
We denote by $\gsc(\XX)$ the \emph{$G$-scheme of
geometric localities of $\XX$} (also called the universal $G$-model).
As a set, $\gsc(\XX)$ consists of localities
$\mathcal{O}_{\xi, X}\subseteq k(\XX)$, where $\xi$ is a schematic point of a $G$-model $X$ of $\XX$. Note that the natural birational $G$-action on $\sc(\XX)$ is not regular in general. Actually, $\gsc(\XX)$ corresponds to the maximal open $G$-stable subset of $\sc(\XX)$ in which the $G$-action on $\sc(\XX)$ is regular. 
Summing up, any ($G$-)model of $\XX$
can be thought as a ($G$-stable) separated open subscheme of finite type over $k$ of $\sc_{(G)}(\XX)$, and vice-versa. Notice that the generic point of a color of a $G$-model of $\XX$ meets any other $G$-model (compare with \cite[Remark 13.3]{Tim11}). Hence it is reasonable to speak about \emph{colors} of $\gsc(\XX)$.
\\

Local information on the normal $G$-variety $X$ can be read off from their \emph{$B$-charts}, that is,
their affine $B$-stable dense open subsets. We will consider $B$-charts of the $G$-scheme $\sc_{G}(\XX)$ which will be by definition the $B$-charts of any $G$-model of $\XX$.
The $G$-variety $X$ will be called \emph{simple} if it possesses 
a $B$-chart intersecting every $G$-orbit. This terminology makes sense, since according to the Sumihiro theorem (see \cite[Theorem 1]{Sum74}, \cite[Theorem 1.3]{Kno91}), every normal $G$-variety is a finite open union of simple $G$-varieties. 
\\

The next proposition is a well-known fact that we will constantly use in the present paper.
\begin{proposition}\cite[Section 2, Proposition 1]{Arz97c}
Let $\mathcal{X}$ be a $G$-variety. Then the following are equivalent.
\begin{itemize}
\item[(i)]  For a general point $x\in \mathcal{X}$, the $G$-orbit $G\cdot x$ is spherical.
\item[(ii)] Any $G$-orbit of $\mathcal{X}$ is spherical.
\end{itemize}
\end{proposition}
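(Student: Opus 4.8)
The plan is to prove the two implications separately. Here $(ii)\Rightarrow(i)$ is trivial: a point in general position lies on some $G$-orbit, which by hypothesis is spherical. So the content is $(i)\Rightarrow(ii)$.

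Assume $(i)$ and fix an arbitrary $x_{0}\in\mathcal X$; I must show $c(G\cdot x_{0})=0$. I would first reduce to a convenient model. Replacing $\mathcal X$ by its normalization is harmless, the map being $G$-equivariant and finite on orbits, hence changing the complexity of no orbit; by Sumihiro's theorem \cite{Sum74} one may then shrink to a quasi-projective normal $G$-stable open neighbourhood of $G\cdot x_{0}$ and finally pass to a normal $G$-equivariant completion. As each of these operations leaves a dense open $G$-stable subset untouched, the general orbit stays spherical, so I may assume $\mathcal X$ is a complete normal $G$-variety with spherical general orbit $G/H$.

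Now pass to the orbit closure $Z:=\overline{G\cdot x_{0}}$. Since an orbit is open in its closure, $G\cdot x_{0}$ is the open $G$-orbit of $Z$, hence $k(Z)^{B}=k(G\cdot x_{0})^{B}$ and $c(Z)=c(G\cdot x_{0})$; it thus suffices to prove $c(Z)=0$. The natural route is to realise $Z$ inside an embedding of a spherical orbit and invoke the monotonicity $c(Z')\le c(X')$ for $Z'\subseteq X'$ irreducible $G$-stable closed (\cite[Theorem 5.7]{Tim11}). Concretely, let $\mathcal X_{1}\subseteq\mathcal X$ be the dense open $G$-stable locus where orbits have maximal dimension and are spherical; joining $x_{0}$ to $\mathcal X_{1}$ by an irreducible curve $C$, passing to its normalization $\nu\colon\widetilde C\to C$, and forming $\mathcal W\subseteq\mathcal X\times\widetilde C$ as the closure of $\{(g\cdot\nu(t),t)\}$, one obtains a $G$-variety $\mathcal W\to\widetilde C$ with spherical general fibre and containing $Z$ in the fibre over $x_{0}$; since $c(\mathcal W)$ equals the complexity of the general orbit plus $\dim\widetilde C=1$, \cite[Theorem 5.7]{Tim11} already yields $c(Z)\le c(\mathcal W)=1$.

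The genuine difficulty, and the step I expect to be the main obstacle, is to sharpen this to $c(Z)=0$ — equivalently, to show that the complexity of an arbitrary orbit never exceeds that of the general orbit, a semicontinuity statement asserting that complexity does not grow under a degeneration of orbits. Using completeness one would extract from the family $\mathcal W\to\widetilde C$ a maximal (hence spherical) orbit $G\cdot x_{1}$ with $G\cdot x_{0}\subseteq\overline{G\cdot x_{1}}$, so that \cite[Theorem 5.7]{Tim11} applied to $Z\subseteq\overline{G\cdot x_{1}}$ gives $c(Z)\le c(\overline{G\cdot x_{1}})=0$. A clean way to package the needed semicontinuity is to reformulate $(i)$ as the equality $k(\mathcal X)^{B}=k(\mathcal X)^{G}$ — which holds because over the function field $K=k(\mathcal X/G)$ of the rational quotient the generic fibre of $\mathcal X\to\mathcal X/G$ is a geometrically spherical homogeneous space, so that $k(\mathcal X)^{B}=K$ by flat base change — and then to verify that this identity is inherited by the orbit closure $Z$. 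Everything else (the reductions via normalization, Sumihiro and completion, the passage to $Z$, and the applications of \cite[Theorem 5.7]{Tim11}) is routine bookkeeping.
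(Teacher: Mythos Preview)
The paper does not give its own proof of this proposition; it is simply quoted from \cite[Section~2, Proposition~1]{Arz97c} as a known fact. So there is no proof in the paper to compare against, and the question is only whether your sketch constitutes a proof.

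It does not. You correctly isolate the content as $(i)\Rightarrow(ii)$ and correctly observe that the whole difficulty is to pass from $c(Z)\le 1$ (which your curve family and \cite[Theorem~5.7]{Tim11} do give) to $c(Z)=0$. But neither of your two proposed ways to close this gap is carried out. First, the sentence ``using completeness one would extract \ldots\ a maximal (hence spherical) orbit $G\cdot x_{1}$ with $G\cdot x_{0}\subseteq\overline{G\cdot x_{1}}$'' is precisely the assertion you are trying to prove: nothing in the family $\mathcal{W}\to\widetilde{C}$ forces $x_{0}$ to lie in the closure of a single generic (hence spherical) orbit, since the closures $\overline{G\cdot\nu(t)}$ for generic $t$ sit in the fibres over $t\neq t_{0}$ and their projection to $\mathcal{X}$ need not contain $x_{0}$. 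Second, the reformulation $k(\mathcal{X})^{B}=k(\mathcal{X})^{G}$ is correct, but the claim that ``this identity is inherited by the orbit closure $Z$'' has no mechanism behind it: rational $B$-invariants on $Z$ need not extend to $\mathcal{X}$, and the monotonicity \cite[Theorem~5.7]{Tim11} only yields $\mathrm{tr.deg}\,k(Z)^{B}\le \mathrm{tr.deg}\,k(\mathcal{X})^{B}$, which equals $\mathrm{tr.deg}\,k(\mathcal{X})^{G}$ and is in general positive. What is actually needed is a semicontinuity statement for the function $x\mapsto \dim G\cdot x-\dim B\cdot x$ (equivalently, for $\dim G_{x}\cdot eB$ on $G/B$), and this is the substance of Arzhantsev's argument; your sketch stops exactly where that argument begins.
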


We finish this section by recalling the notion of embeddings of homogeneous spaces.
\begin{definition}\label{def-embbbb}
Let $H\subseteq G$ be a closed subgroup. If the mention of $H$ is clear from the context, then an \emph{embedding} of the homogeneous $G$-space $\Omega = G/H$ is a pair $(X,x)$ with the following properties. The letter $X$ denotes a normal $G$-variety and $x$ is a point of $X$ such that the stabilizer $G_{x}$ at $x$ is $H$ and the $G$-orbit $G\cdot x$ is open. To simplify the notation, in the sequel, we will
not indicate the base point $x$ of $(X,x)$ and rather write $X$ for the embedding $(X,x)$.
\end{definition}
\subsection{Spherical subgroups}
\label{s:spherical-sub}
In \cite{Lun01} Luna attached to any spherical subgroup of $G$ a discrete invariant depending on the root system of $G$: the prominent combinatorial
object occurring is a homogeneous spherical datum. 
It was shown that these objects classify 
the conjugacy classes of spherical subgroups of $G$ \cite{Los09, BP16}. The aim
of this subsection is to give a brief overview of this description. 
We recall that a subset $C$ of a finite dimensional $\QQ$-vector space $E$ is a \emph{polyhedral cone}
if there exist vectors $v_{1}, \ldots , v_{d}\in E$ such that $C = \QQ_{\geq 0}v_{1}+\ldots +\QQ_{\geq 0}v_{d}.$  
\\

Let $\Omega =G/H$ be a spherical homogeneous $G$-space. We define the lattice $M$
as the set of $B$-eigencharacters of the $B$-algebra $k(\Omega)$. Denote by
$N = \Hom(M,\ZZ)$ the dual lattice and by $M_{\QQ} = \QQ\otimes_\ZZ M$,
$N_{\QQ} = \QQ\otimes_\ZZ N$ the associated dual vector spaces. The set of $B$-divisors of $\Omega$ consists of the irreducible components of the complement of the open $B$-orbit in $\Omega$. It forms the set of colors $\F_{\Omega}$ of $\Omega$.  

Any valuation $v$ on $k(\Omega)$ and any $B$-eigenfunction $f\in k(\Omega)$ gives a pairing  
$\langle \varrho(v), \chi_f\rangle = v(f),$
where $\chi_f$ is the $B$-weight associated with $f$. This expression is well-defined since $f$ is uniquely determined by $\chi_f$ up to the multiplication
of a nonzero constant. 

It is known by \cite[Proposition 7.4]{LV83} that the map $\varrho$ is injective on the set of $G$-valuations $\Vc$  of $k(\Omega)$. We will again denote by $\Vc$
the image $\varrho(\Vc)$. The subset $\Vc$ is a full dimensional cosimplicial polyhedral cone in $N_\QQ$ and admits therefore
a presentation
$$ \Vc = \bigcap_{\gamma\in \Sigma}\{v\in N_{\QQ}\, |\,\langle \gamma, v\rangle\leq 0\},$$
where $\Sigma$ is a finite set of linear independent primitive lattice vectors of $M$ (compare with \cite{BP87}). 
The set $\Sigma$ is called the \emph{set of spherical roots} of $\Omega$.
By \cite[Theorem 1]{Los09}, the triple $(M, \Sigma, \F_{\Omega})$ determines uniquely the spherical homogeneous space $\Omega$ up to a $G$-isomorphism.  
\\

Let $\alpha\in \Delta$ be a simple root of $(B,T)$ and let $\F_{\Omega}(\alpha)$ be the set of colors $D\subseteq G/H$ such that the corresponding minimal parabolic subgroup $P_{\alpha}$
moves $D$, i.e., $P_{\alpha}\cdot D\neq D$. The parabolic subgroup associated with the subset
$$\Delta^{p} = \{ \alpha\in \Delta\, |\, \F_{\Omega}(\alpha) = \emptyset\}$$
is the subgroup of $G$ preserving the open $B$-orbit of $\Omega$. 
Moreover, denoting by $D_1,\ldots, D_s$ the distinct colors such that
$$\{\alpha\in \Delta\, |\, P_{\alpha}\cdot D_{i}\neq D_{i}\}\cap \Sigma \neq \emptyset$$
for any $i$, we define the family $\aa$ as $(D_i)_{1\leq i\leq s}$.  
\\

Summarizing, to every spherical homogeneous $G$-space $\Omega= G/H$ we may attach 
$$\ss_\Omega = (\Delta^p_{\Omega}, \Sigma_{\Omega}, \aa_{\Omega}, M_{\Omega}) = (\Delta^p, \Sigma, \aa, M).$$ 
It was shown that any datum $\ss_\Omega$ satisfies the combinatorial conditions  of a \emph{homogeneous spherical datum}; we refer to \cite[\S 2]{Lun01} for the list of axioms. Note that from $\ss_{\Omega}$ one can recover the set of colors of $\Omega$ (see \cite[\S 2.3]{Lun01}).
In addition, we have the following important result of classification,
see \cite{Lun01, Los09, BP16}. 
\begin{theorem}
The correspondence 
$$\Omega\mapsto \ss_\Omega  = (\Delta^p_{\Omega}, \Sigma_{\Omega}, \aa_{\Omega}, M_{\Omega})$$
is a well-defined map
from the class of spherical homogeneous $G$-spaces to the set of homogeneous 
spherical data. It induces an injective map on the
set of $G$-isomorphism classes of spherical homogeneous $G$-spaces. Every homogeneous 
spherical datum of $G$ is geometrically realizable by a spherical homogeneous $G$-space via the map $\Omega\mapsto \ss_\Omega$. 
\end{theorem}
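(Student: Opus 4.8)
The statement combines three assertions of very different depth, and I do not expect any of them to admit a short self-contained proof; the plan is rather to organize the reduction to the existing literature and to point out the main ingredient of each step.

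First I would check that the map is well defined, i.e. that for an arbitrary spherical homogeneous space $\Omega = G/H$ the quadruple $\ss_\Omega = (\Delta^{p}, \Sigma, \aa, M)$ satisfies the axioms of a homogeneous spherical datum. Here I would follow Luna's original analysis in \cite[\S 2]{Lun01}: interpret each component geometrically --- $M$ as the weight lattice of the $B$-semi-invariants of $k(\Omega)$; the cone $\Vc$ dual to $\Sigma$ as Knop's valuation cone, which is full-dimensional and cosimplicial so that the primitive generators in $\Sigma$ are linearly independent (cf. \cite{BP87}); $\Delta^{p}$ as the set of simple roots whose minimal parabolic fixes the open $B$-orbit; and $\aa$ as the colors moved by a simple root that is itself a spherical root --- and then verify the normalization of the spherical roots together with the compatibility and parity relations between $\Sigma$, $\Delta^{p}$ and $\aa$ by reducing to the rank-one situations via the local structure theorem. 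This is precisely the content of \cite{Lun01}.

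For injectivity, I would invoke Losev's theorem \cite{Los09}: two spherical homogeneous spaces with the same homogeneous spherical datum are $G$-isomorphic. The strategy there is to use Knop's description of the equivariant automorphism group together with the embedding theory of \cite{Kno91} to reduce to comparing the finitely many spherical subgroups sharing a given datum, to treat the cases of rank $\leq 1$ directly, and to run an induction on the rank by localizing at subsets of spherical roots. For surjectivity --- the statement that every abstract homogeneous spherical datum of $G$ is realized by some $\Omega$ --- I would follow Luna's reduction \cite{Lun01} of this conjecture, via a combinatorial gluing modeled on the way a wonderful variety is assembled from its rank-one boundary divisors, to the realizability of the finitely many \emph{primitive} homogeneous spherical data, and then appeal to the case-by-case verification of these: in type $A$ after \cite{Lun01} and \cite{Cup}, and in general by Bravi and Pezzini \cite{BP14, BP15, BP16}, building on \cite{Akh83} and \cite{Was96}.

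The genuine obstacle is the last step. There is no conceptual reason why combinatorial data obeying Luna's axioms must come from an actual homogeneous space, and the known proof rests on the full classification of wonderful varieties, i.e. on a long case analysis that cannot be compressed; reproving it from scratch is out of scope. Consequently, the proof of the theorem as stated reduces to assembling \cite{Lun01}, \cite{Los09} and \cite{BP16} together with the papers on which they rely.
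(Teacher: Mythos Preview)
Your proposal is consistent with the paper's treatment: the paper does not prove this theorem at all but merely states it as a known classification result, citing \cite{Lun01, Los09, BP16}. Your sketch correctly attributes each of the three assertions to the appropriate source and in fact gives more detail than the paper does.
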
 
The next two examples deal with reductive spherical subgroups of $\SL_{2}$.
\begin{example}\label{e-sl2}
We consider the natural diagonal action of $\SL_{2}$ on $\P^{1}\times \P^{1}$.
Let $T$ be the subgroup of diagonal matrices and let $B$ be the subgroup of upper triangular matrices. Then the spherical homogeneous space $\SL_{2}/T$ is identified with 
$\P^{1}\times \P^{1}\setminus {\rm diag}(\P^{1})$. It contains the $B$-chart $$X_{0}:= \{([x_{0}:1], [y_{0}:1])\,|x_{0}\neq y_{0}\}.$$
We have $M = \ZZ\alpha$, where $\alpha$ is the character $ \left( \begin{array}{ccc}
a & 0  \\
0 & a^{-1}  \end{array} \right)\mapsto a$ and $k[\chi^{-\alpha}, \chi^{\alpha}]$ is the subalgebra of $k(\SL_{2}/T)$ generated by $k(\SL_{2}/T)^{(B)},$ where $\chi^{\alpha} = (x_{0} - y_{0})^{-1}$. 
Moreover, $\SL_{2}/T$ has two colors obtained via the intersection with the two subsets
$D_{+} = \P^{1}\times \{[0:1]\}$ and $D_{-} = \{[0:1]\}\times \P^{1}$. Note that $\langle \varrho(D_{\pm}), \alpha\rangle = 1$. Finally, the homogeneous 
spherical datum $\ss = (\Delta^p, \Sigma, \aa, M)$ of $\SL_{2}/T$ is given by
$\Delta^{p}= \emptyset$, $\Sigma = \{\alpha\}$, $\aa =\{D_{-}, D_{+}\}$, and $M= \ZZ\alpha$.
\end{example}
\begin{example}\label{expl-sl2n}
We regard the projective plane $\P^{2}$ as the projectivization of $S^{2}V$, where $V$ is a two-dimensional vector space over $k$ and $S^{2}V$ is the space of binary forms 
$S^{2}V = k\, v^{2}\oplus k\,vw\oplus k\,w^{2}.$ Let
$$\tau:\P^{1}\times \P^{1}\setminus {\rm diag}(\P^{1})\rightarrow \P^{2}\setminus E$$
be the map 
whose restriction to $X_{0}\subseteq \P^{1}\times \P^{1}\setminus {\rm diag}(\P^{1})$ (see the notation of Example \ref{e-sl2}) is $$([x_{0}:1], [y_{0}:1])\mapsto [(x_{0}v + w)(y_{0}v + w)].$$ Then this map
is identified with the natural projection $\SL_{2}/T\rightarrow \SL_{2}/H$, where
$H$ is the normalizer of $T$ in $\SL_{2}$. The subset $E$ equal to $\{[(x_{0}v + y_{0}w)^{2}] \,|\, [x_{0}: y_{0}]\in\P^{1}\}$ is the space of degenerate binary forms. The morphism $\tau$ is a covering involution which sends the union $D_{-}\cup D_{+}$ of the colors of $\SL_{2}/T$ onto the unique color $D$ of $\SL_{2}/H$. Finally, the homogeneous 
spherical datum $\ss = (\Delta^p, \Sigma, \aa, M)$ of $\SL_{2}/H$ is given by
$\Delta^{p}= \emptyset$, $\Sigma = \{2\alpha\}$, $\aa =\emptyset$, and $M= 2\ZZ\alpha$.
\end{example}

\begin{example}{\em Horospherical homogeneous spaces.}
A closed subgroup $H\subseteq G$ is said to be \emph{horospherical}
if $H$ contains a maximal unipotent subgroup of $G$. Then in this case,
the normalizer $P= N_{G}(H)$ is a parabolic subgroup corresponding to a set of
simple roots $\Delta^{p}$ and $P/H$ is an algebraic torus $\TT$ with character lattice $M$. Note that $G/H$ is spherical and is equal to the parabolic induction $G\times^{P}\TT$. Hence the homogeneous spherical datum $\ss$ of $H$ is $(\Delta^{p},\emptyset, \emptyset, M)$ (see \cite[Proposition 2.4]{Pas08}).  
\end{example}

\section{Combinatorics}\label{s-combi}
Let $\YY$ be an arbitrary algebraic variety and denote by $\Omega = G/H$  a
spherical homogeneous $G$-space.
We consider the $G$-variety $\XX:=\YY\times\Omega$, where the action is defined by letting $G$ act trivially on the variety $\YY$ and by left translations on the homogeneous space $\Omega$. We will denote by
$\ss = (\Delta^p, \Sigma, \aa, M)$
the homogeneous spherical datum
corresponding to $\Omega$. We recall that $N$ is the dual lattice of $M$.
In the current section, we classify normal $G$-varieties for the case of $G$-models of $\XX$.

Our approach splits into several sections that we now summarize. In Section \ref{s-chi},
we introduce the notion of colored polyhedral divisors inspired by the works of Altmann and Hausen for torus actions. This will latter be used to classify $B$-charts of any $G$-model 
of $\XX$. To have local information on the $G$-action on $\gsc(\XX)$,  we deal with the local structure theorem in Section \ref{s-sing}. While we make preparations in Section \ref{s-teclem}, in Section \ref{sec-localiz} we describe equivariant 
open immersions of $B$-charts using the language of colored polyhedral divisors. This allows us in Section \ref{sec-divfan} to construct via a gluing process
any $G$-model of $\XX$ in terms of a divisorial fan which is roughly speaking a fan of colored polyhedral divisors. Finally, the last section is devoted to providing
an explicit description of a simple $G$-model of $\XX$ by means of an embedding into a projective space.
\begin{remark}\label{remark-color}
There exists a natural bijection $\psi$ between the set of colors of $\Omega$
and $\gsc(\XX)$. The map $\psi$ sends a color
$D\subseteq \Omega$ to the closure of $S\times D$ in $\gsc(\XX)$. The injectivity of 
$\psi$ is clear and the surjectivity comes from \cite[Lemma 3]{FMSS95}.
In particular, the number of colors of $\gsc(\XX)$ is finite.
\end{remark}
\subsection{Colored polyhedral divisors}\label{s-p-divisor}
\label{s-chi}
Let us introduce the geometric environment where the $G$-valuations of $k(\XX)$ are represented \cite[Section 20.1]{Tim11}. More precisely, we want to construct an injective map from the set of $G$-valuations of $k(\XX)$ to a set $\hs$
(called later on hyperspace) depending only on $S$ and $N$.
\\

We call \emph{hyperspace} associated to the pair $(S, N)$ the quotient $\hs$ of $\geo(\YY)\times N_{\QQ}\times \QQ_{\geq 0}$ by the equivalence
relation $\equiv$ given by
\begin{equation*} \label{mod}
(\va,p,\ell)\equiv (\va', p', \ell') \text{ if and only if } \va = \va',\ p=p',\ \ell = \ell' \text{ or } p = p',\ \ell = \ell' = 0,
\end{equation*}
where $\geo(\YY)$ is the set of geometric valuations of $k(\YY)$ considered up to proportionality.
The equivalence class of $(\va,p,\ell)$ will be denoted by $[\va,p,\ell]$.
The reader may think that $\hs$ is the geometric object obtained as the union of the copies of upper spaces $\{s\}\times N_{\QQ}\times\QQ_{\geq 0}$ (where $s$ runs over $\geo(\YY)$)
in which the boundaries $\{s\} \times N_{\QQ}\times\{0\}$ are glued together into a common part. 
The element $[\va,p,0]$ does not depend on $\va$ and we will write it by the symbol $[\cdot,p,0]$. In particular, we have a natural inclusion $N_{\QQ}\subseteq \hs$
where $N_{\QQ}$ is identified with the image of the map
$$N_{\QQ}\rightarrow \hs,\,\,p \mapsto [\cdot,p,0].$$
Let us consider the short exact sequence of abelian groups   
$$0 \rightarrow k(\YY)^{\star}\rightarrow k(\XX)^{(B)}\rightarrow M\rightarrow 0,$$
where $k(\XX)^{(B)}$ is the multiplicative group of the rational $B$-eigenfunctions on 
$\XX$. The arrow $k(\XX)^{(B)}\rightarrow M$ is the map sending a $B$-eigenfunction to its
$B$-weight. Let $A_{M}\subseteq k(\XX)$ be the subalgebra generated by $k(\XX)^{(B)}$. Then
choosing a lifting $M\rightarrow k(\XX)^{(B)}\cap k(\Omega)$, $m\mapsto \chi^{m}$ we obtain the decomposition
$$A_{M} = \bigoplus_{m\in M}k(\YY)\otimes \chi^{m}.$$
Every $G$-valuation $v$ of $k(\XX)$ is uniquely determined by its values on $A_{M}$ (see \cite[Corollary 19.13]{Tim11})
and therefore \cite[Proposition 20.7]{Tim11} defines an element $$[\va,a,b]\in \hs \text { via }v(f\otimes\chi^{m}) = b\cdot \va(f) + \langle m, a \rangle,$$
where $f\in k(\YY)^{\star}$ and $m\in M$. We denote by $\Vh$ the set of $G$-valuations identified with a subset
of $\hs$. Note that if $S$ is $0$-dimensional, then $\hs = N_{\QQ}$ and  $\Vh$ is nothing but the valuation cone $\Vc$ of $\Omega$.
\\

The next proposition determines $\Vh$ in terms of the homogeneous spherical datum $\ss$.
This result seems to be well known to the experts. For the convenience of the reader we include here a short proof.
\begin{proposition}\label{prop-vc}
Let $\Vc$ be  the valuation cone of the spherical homogeneous space $\Omega$ (which is completely determined by the set of spherical roots $\Sigma$). Then we have the equality
$$\Vh = \{[\va,a,b]\in\hs\,|\, \va\in \geo(\YY), (a,b)\in\Vc\times\QQ_{\geq 0}\}.$$  
\end{proposition}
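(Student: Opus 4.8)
The plan is to reduce the statement to the case where $S$ is a point—i.e.\ to the known description of the valuation cone $\Vc$ of a spherical homogeneous space—by exploiting the fact that every $G$-valuation of $k(\XX)$ is determined by its restriction to the subalgebra $A_M = \bigoplus_{m\in M} k(\YY)\otimes \chi^m$, as recorded via \cite[Corollary 19.13]{Tim11} and \cite[Proposition 20.7]{Tim11}. Write $v \leftrightarrow [\va, a, b]$ under this identification, so that $v(f\otimes\chi^m) = b\cdot\va(f) + \langle m, a\rangle$ for $f\in k(\YY)^\star$ and $m\in M$. We must show that the $G$-invariance of $v$ forces exactly the condition $(a,b)\in\Vc\times\QQ_{\geq 0}$ (with $\va\in\geo(\YY)$ arbitrary), and conversely.

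For the forward inclusion, first note that $b\geq 0$ is automatic: $b$ is (up to the normalization of $\va$) the value of $v$ on a suitable $k(\YY)$-element, but more intrinsically $\Vh$ lives in $\hs$ which only involves $\QQ_{\geq 0}$ in its last coordinate—this is part of the definition of how the invariant valuation is encoded, so there is nothing to prove here beyond pointing to \cite[Proposition 20.7]{Tim11}. The real content is that $(a,b)\in\Vc\times\QQ_{\geq 0}$. Here I would argue as follows. The restriction of $v$ to $k(\XX)^B$—equivalently its behaviour on the $B$-weights, i.e.\ on the $M$-graded part—factors through the projection $\hs \to N_\QQ$, $[\va,a,b]\mapsto[\cdot, a, 0]$ composed with the natural map, so that the ``image'' of $v$ in $N_\QQ$ is exactly $a$. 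Now invoke the characterization of $G$-valuations of a spherical homogeneous space: by \cite[Proposition 7.4]{LV83} (cited in Section \ref{s:spherical-sub}) the map $\varrho$ is injective on $\Vc$, and $\Vc$ is the cone cut out by $\langle\gamma,\,\cdot\,\rangle\leq 0$ for $\gamma$ ranging over the spherical roots $\Sigma$. One checks that a $G$-valuation of $k(\XX)$ restricts to a $G$-valuation of $k(\Omega)$ on the appropriate subfield (or more precisely, induces via the grading a point of $N_\QQ$ that must satisfy the same inequalities), hence $a\in\Vc$. The key technical input is that the defining inequalities of $\Vc$ depend only on $\Sigma$ and not at all on $S$, because the $B$-semiinvariants of $k(\XX)$ supported on the $M$-grading come from $k(\Omega)$ via the chosen lifting $m\mapsto\chi^m$, so the relevant functions $f$ with $v(g\cdot f) = v(f)$ for all $g\in G$ are the same as in the homogeneous case.

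For the converse inclusion, given $\va\in\geo(\YY)$ and $(a,b)\in\Vc\times\QQ_{\geq 0}$, define $v$ on $A_M$ by the formula $v(f\otimes\chi^m) = b\cdot\va(f) + \langle m, a\rangle$ and extend (using that $v$ is determined on $A_M$ by \cite[Corollary 19.13]{Tim11}, together with the converse realizability in \cite[Proposition 20.7]{Tim11} which guarantees that any element of $\hs$ satisfying the right conditions actually comes from a $G$-valuation). One must verify this $v$ is $G$-invariant; but $G$-invariance on a $G$-variety of the form $\YY\times\Omega$ with $G$ acting only on $\Omega$ reduces, by the same $A_M$-determinacy, to $G$-invariance of the associated point of $N_\QQ$, which is precisely the condition $a\in\Vc$ by the definition of the valuation cone. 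The fact that $b$ may be taken $\geq 0$ without further restriction again reflects that $\geo(\YY)$ imposes no constraint coupling $\va$ to $(a,b)$.

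The main obstacle, I expect, is the bookkeeping around the passage between $G$-valuations of $k(\XX)$ and $G$-valuations of $k(\Omega)$: one needs to be careful that the $A_M$-determinacy results of \cite[Section 19--20]{Tim11} apply verbatim in the relative setting (where the ``quotient'' field $k(\YY)$ is nontrivial), and that the decomposition $\hs = \bigcup_{\va}\{\va\}\times N_\QQ\times\QQ_{\geq 0}$ is compatible with the projection to $N_\QQ$ sending $\Vh$ into $\Vc$. Once one grants that the $B$-semiinvariants carrying the $M$-grading are pulled back from $\Omega$ and that a $G$-valuation is rigidly determined by its values on these together with its ``vertical'' behaviour on $k(\YY)$, the inequality description transfers mechanically. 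I would write the proof by first stating this determinacy clearly, then observing the projection $\hs\to N_\QQ$ carries $\Vh$ onto (a subset of) $\Vc$ and conversely, and finally checking the two inclusions coordinate-wise.
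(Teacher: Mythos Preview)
Your forward inclusion is essentially the paper's argument: restrict $v$ to the subfield $k(\Omega)\subseteq k(\XX)$ and observe that the restriction is still a $G$-valuation (the paper cites \cite[Corollary 16.9]{Tim11} for this, which handles the subtlety that the restriction remains geometric), whence $a\in\Vc$.

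The reverse inclusion, however, has a genuine gap. You want to \emph{define} $v$ on $A_M$ by the formula $v(f\otimes\chi^m)=b\cdot\va(f)+\langle m,a\rangle$ and then ``extend'' via \cite[Corollary 19.13]{Tim11} and \cite[Proposition 20.7]{Tim11}. But those results are \emph{uniqueness/determinacy} statements: they say that a $G$-valuation is determined by its values on $A_M$, not that any formula on $A_M$ arises from a $G$-valuation. Your appeal to \cite[Proposition 20.7]{Tim11} as ``converse realizability'' is a misreading. Consequently your $G$-invariance check becomes circular: you reduce to ``$a\in\Vc$'', but $a\in\Vc$ only tells you \emph{some} $G$-valuation of $k(\Omega)$ has image $a$, not that your particular $v$ on $k(\XX)$ is $G$-invariant (or even a geometric valuation at all).

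The paper avoids this by constructing the required $G$-valuations geometrically. Given $a\in\Vc$ and $\va\in\geo(\YY)$, it picks an embedding $X_a$ of $\Omega$ whose boundary is a single $G$-divisor $D_a$ with vanishing order $a$ (such embeddings exist by \cite[3.3, 7.5, 8.10]{LV83}), and a model $\Gamma$ of $\YY$ with a prime divisor $Y$ realizing $\va$. Then $\Gamma\times D_a$ and $Y\times X_a$ are honest $G$-divisors in the $G$-model $\Gamma\times X_a$ of $\XX$, giving $[\va,a,0],[\va,0,1]\in\Vh$ directly. Finally \cite[Theorem 20.3]{Tim11} (a structural/convexity result for $\Vh$) yields all of $[\va,a,b]$ for $(a,b)\in\Vc\times\QQ_{\geq 0}$. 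This is the missing existence input your argument needs.
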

\begin{proof}
Let $v = [\va,a,b]\in\Vh$. The restriction of $v$ to the subfield $k(\Omega)\subseteq k(\XX)$
is a $G$-valuation (see \cite[Corollary 16.9]{Tim11}). This implies that $a\in \Vc$. Conversely, let us show that
$$\{[\va,a,b]\in\hs\,|\, \va\in \geo(\YY), (a,b)\in\Vc\times\QQ_{\geq 0}\}\subseteq \Vh.$$ 
Let us fix elements $a\in \Vc$ and $\va\in \geo(\YY)$.
We first consider an embedding $X_{a}$ of the homogeneous space $\Omega$. It has
the property that the complement of the open orbit is an orbit $D_{a}$ of codimension one with vanishing order equal to $a$. Such
embedding always exists (see \cite[3.3, 7.5, 8.10]{LV83}). Since $\va$ is a geometric valuation, there exist a model $\Gamma$ of $\YY$ and a prime divisor $Y\subseteq \Gamma$  such that $\va$ is the vanishing order of $Y$. By considering the vanishing orders of $\Gamma\times D_{a}$
and $Y\times X_{a}$ in the $G$-model $\Gamma\times X_{a}$ of $\XX$, we obtain that $[\va,a,0], [\va,0,1]\in\Vh$.
We conclude by using \cite[Theorem 20.3]{Tim11}.
\end{proof}
Let $\sigma\subseteq N_{\QQ}$ be a strictly convex polyhedral cone (that
is a polyhedral cone containing no line) and let $\Gamma$ be a model of $\YY$. A \emph{polytope} of $N_{\QQ}$ is the convex hull of a non-empty finite subset of $N_{\QQ}$. The concept of polyhedral divisor was invented by Altmann and Hausen in \cite{AH06}. We recollect this notion in the next paragraph.

\begin{definition}
A \emph{$\sigma$-polyhedral divisor} on $\Gamma$ is a formal sum
$$\D = \sum_{Y\subseteq \Gamma}\D_{Y}\cdot Y,$$ 
where $\D_{Y}$ is empty or a $\sigma$-polyhedron (i.e., $\D_{Y}\subseteq N_{\QQ}$ is a Minkowski sum of $\sigma$ and a polytope), $Y\subseteq \Gamma$ runs over the set of prime divisors of $\Gamma$, and $\D_{Y} = \sigma$ for all but
finitely many prime divisors $Y\subseteq \Gamma$. 
The complement in $\Gamma$ of the union of prime divisors $Y\subseteq \Gamma$  such that $\D_{Y} = \emptyset$ and the cone $\sigma$ are called respectively the \emph{locus} and the \emph{tail} of $\D$; we will denote them by $\loc(\D)$ and $\tail(\D)$ if the notation is not explicitly specified.
\end{definition}
Let $\D$ be a $\sigma$-polyhedral divisor on $\Gamma$.
Let us denote by $\sigma^\vee$ the dual cone defined
as the subset 
$$\sigma^\vee = \{m\in M_\QQ\,|\, \forall v\in \sigma,\,\langle m, v\rangle \geq 0\}
$$
which consists of the linear forms of $M_{\QQ}$ that are non-negative on $\sigma$.
In practice, it is convenient to see the polyhedral divisor $\D$ as a piecewise linear function on $\sigma^{\vee}$; we call \emph{evaluation} at the vector $m\in \sigma^\vee$ the $\QQ$-divisor
$$\D(m) = \sum_{Y\subseteq \loc(\D)}\min_{v\in\D_{Y}} \langle m, v\rangle\cdot Y.$$
Consider the sheaf $\mathcal{O}_{\loc(\D)}(\D(m))$ where on each open subset $V\subseteq \loc(\D)$ we have 
$$\mathcal{O}_{\loc(\D)}(\D(m))(V) = \{f\in k(S)^{\star}\,|\, (\div(f)+\D(m))_{|V}\geq 0\}\cup\{0\}.$$
Here $\div(f) = \sum_{Y\subseteq\loc(\D)}v_{Y}(f)\cdot Y$ is the principal divisor associated to $f$. The expression
$$ \mathcal{A} = \bigoplus_{m\in \sigma^\vee\cap M} \mathcal{O}_{\loc(\D)}(\D(m)))\otimes\chi^m$$
naturally defines a sheaf of $M$-graded $\mathcal{O}_{\loc(\D)}$-algebras where the multiplication on the homogeneous elements
is induced by the multiplication on $k(\YY)$. The algebra of global sections $A(\Gamma, \D):= \Gamma(\loc(\D), \mathcal{A})$ is the \emph{associated algebra of $\D$}. Note that $A(\Gamma, \D)$ is an $M$-graded subalgebra of $A_{M}$.
\\

The properness is a technical condition on the polyhedral divisor $\D$ which ensures that the algebra $A(\Gamma, \D)$ is of finite type over $k$ and that its field
of fractions is equal to that of $A_{M}$ \cite[Theorem 3.1]{AH06}. Note that this condition on $\D$ is needed for the finite generation condition even in the case where $\Gamma$
is an algebraic curve, see the counterexample of Knop in \cite[Remark 16.22]{Tim11}, \cite{Kno93a}. It is defined as follows.

\begin{definition}\label{def-p-divisors}
For a $\QQ$-divisor $D$ on an algebraic variety $Z$, the symbol ${\rm supp}(D)$ will denote the \emph{support} of $D$ that is 
the union of the prime divisors of $Z$ corresponding to nonzero coefficients of $D$.

The $\sigma$-polyhedral divisor $\D$ is said to be \emph{proper} (cf. \cite[Definition 2.7]{AH06}) if it satisfies the following additional properties.
\begin{itemize}
\item The locus $\loc(\D)$ is a \emph{semi-projective variety}, i.e., it is projective over an affine variety. 
\item For all $m\in\sigma^{\vee}$, the $\QQ$-divisor $\D(m)$ is a \emph{semi-ample} Cartier $\QQ$-divisor, i.e., a multiple of $\D(m)$ corresponds to  a basepoint-free line bundle.
\item For all $m\in M_{\QQ}$ in the relative interior of $\sigma^{\vee}$, the 
$\QQ$-divisor $\D(m)$ is \emph{big}, i.e., there exist a positive integer $d$
and a global section $f\in H^{0}(\loc(\D), \mathcal{O}_{\loc(\D)}(\D(dm)))$ such
that 
$$\loc(\D)\setminus {\rm supp}(\div(f) + \D(dm))$$
is affine.
\end{itemize}
\end{definition}
\begin{remark}
The bigness condition enunciated in Definition \ref{def-p-divisors} generalizes
the classical one in the projective case (see \cite[Lemma 2.60]{KM98}).  Recall that  a line bundle $\mathscr{L}$
on a normal projective algebraic variety $Z$ is big if for a sufficiently large positive integer $d$,
the rational mapping of the total system of divisors $Z\dashrightarrow \P(\Gamma(Z, \mathscr{L}^{\otimes d}))$ is birational on its image. 
\end{remark}

Note that any polyhedral divisor with Cartier evaluations and affine locus is proper. The following result determines which multigraded algebras are described by  proper polyhedral divisors. They geometrically correspond  to algebras of regular functions on normal affine varieties with an effective torus action.

\begin{theorem}\cite[Theorem 3.4]{AH06}\label{pp-divisor}
Let $\sigma\subseteq N_{\QQ}$ be a strictly convex polyhedral cone.
Let $A$ be a normal $M$-graded subalgebra of $A_{M}$ of finite type over $k$ with  field of fractions equal to that of $A_{M}$. For $m\in M$ denote by $A_{m}$ the $m$-th graded piece of $A$ and assume that the cone generated by
$\{m\in M\,|\,A_{m}\neq \{0\}\}$ is $\sigma^{\vee}$.
Then there exist an open semi-projective subvariety 
$\Gamma\subseteq \sc(\YY)$ and a proper $\sigma$-polyhedral divisor $\D$ on $\Gamma$ such that $A = A(\Gamma, \D)$.   
\end{theorem}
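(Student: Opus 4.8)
\emph{Step 1: the combinatorial skeleton.} The plan is to invert the Altmann--Hausen construction $\D\mapsto A(\Gamma,\D)$, reading the base and the coefficient polyhedra off the graded pieces of $A$. Since $A$ is finitely generated, I would choose homogeneous $k$-algebra generators $a_1,\dots,a_r$ and put $m_i=\deg a_i\in\sigma^\vee\cap M$; by hypothesis these generate $\sigma^\vee$. As $A$ is a domain, a degree count shows that $\{m\in M:A_m\neq 0\}$ is exactly the submonoid $\omega_0=\ZZ_{\ge0}m_1+\dots+\ZZ_{\ge0}m_r$, that $A$ is the $k$-subalgebra of $A_M$ generated by its degree-zero part $A_0$ together with the finite-dimensional pieces $A_{m_1},\dots,A_{m_r}$, and that for $m\in\omega_0$ the piece $A_m$ is the $A_0$-module spanned by the products $\prod_i g_i^{\lambda_i}$ with $g_i\in A_{m_i}$ and $\sum_i\lambda_im_i=m$. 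After fixing a lift $m\mapsto\chi^m$ of $M$ into $k(\XX)^{(B)}\cap k(\Omega)$, each $A_m$ becomes a finite-dimensional $k$-subspace (and a finitely generated $A_0$-submodule) of $k(\YY)$; the ring $A_0$, being the ring of invariants of the torus $\spec k[M]$ acting on $\spec A$, is normal and finitely generated over $k$.

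\emph{Step 2: the base $\Gamma$.} The finite linear systems $A_{m_i}\subseteq k(\YY)$ define rational maps to projective spaces, and I would take $\Gamma$ to be the normalization of the closure of the graph of their product over a model of $\YY$, enlarging the set of generating degrees if necessary so that $k(\Gamma)=k(\YY)$. The point --- and this is the step I expect to be the real obstacle --- is to choose $\Gamma$ so that each $A_{m_i}$ is the \emph{complete} linear system of a \emph{semiample Cartier} divisor $D_i$ on $\Gamma$, and so that completeness propagates to all of $\omega_0$, namely $A_m=H^0(\Gamma,\mathcal O(D_m))\chi^m$ for $m\in\omega_0$, where $D_m$ is the $\QQ$-divisor with $\mathrm{coeff}_Y(D_m)=\max\{\sum_i\lambda_i\,\mathrm{coeff}_Y(D_i)\,:\,\lambda\in\ZZ_{\ge0}^r,\ \sum_i\lambda_im_i=m\}$. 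Granting this, $\Gamma$ is projective over $\spec A_0$ and hence semi-projective, $H^0(\Gamma,\mathcal O_\Gamma)=A_0$ (Stein factorization and normality of $A_0$), and $m\mapsto D_m$ is superadditive, positively homogeneous, and supported inside the finite set $\bigcup_i\supp(D_i)$. Making completeness and semiampleness hold simultaneously on a single semi-projective model is exactly the phenomenon that breaks down without properness, as Knop's example \cite{Kno93a} shows.

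\emph{Step 3: the polyhedral divisor $\D$.} For each prime divisor $Y\subseteq\Gamma$ the function $m\mapsto\mathrm{coeff}_Y(D_m)$ on $\sigma^\vee$ is piecewise linear, concave and positively homogeneous (piecewise linearity from the $\max$-formula, concavity from superadditivity together with homogeneity), so by the Legendre-type duality for polyhedra with recession cone $\sigma$ it is the support function $m\mapsto\min_{v\in\D_Y}\langle m,v\rangle$ of a unique rational $\sigma$-polyhedron $\D_Y$; moreover $\D_Y=\sigma$ for all $Y$ outside the finite set above. Then $\D:=\sum_Y\D_Y\cdot Y$ is a $\sigma$-polyhedral divisor with $\loc(\D)=\Gamma$, $\tail(\D)=\sigma$, and $\D(m)=D_m$ for every $m\in\omega_0$; in particular $\D(m_i)=D_i$.

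\emph{Step 4: $A=A(\Gamma,\D)$, and properness.} For $m\in\omega_0$, Steps 2--3 give $A(\Gamma,\D)_m=H^0(\Gamma,\mathcal O(\D(m)))\chi^m=H^0(\Gamma,\mathcal O(D_m))\chi^m=A_m$. For $m\in(\sigma^\vee\cap M)\setminus\omega_0$ one has $A_m=0$, and I would deduce $A(\Gamma,\D)_m=0$ from the normality of $A$: if $0\neq f\in H^0(\Gamma,\mathcal O(\D(m)))$, pick $N>0$ with $Nm\in\omega_0$; superadditivity of $\D$ gives $N\,\D(m)\le\D(Nm)$, hence $f^N\chi^{Nm}\in H^0(\Gamma,\mathcal O(\D(Nm)))\chi^{Nm}=A_{Nm}\subseteq A$, so $f\chi^m\in\mathrm{Quot}(A)$ is integral over the normal ring $A$ and must lie in $A_m=0$, a contradiction. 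Therefore $A(\Gamma,\D)=\bigoplus_m A(\Gamma,\D)_m=\bigoplus_m A_m=A$. Finally $\D$ is proper: $\loc(\D)=\Gamma$ is semi-projective; the $\D(m)$, $m\in\sigma^\vee$, are semiample Cartier $\QQ$-divisors (built into the choice of $\Gamma$ in Step 2, using that the $D_i$ are semiample); and bigness of $\D(m)$ for $m$ in the relative interior of $\sigma^\vee$ follows by localizing $A$ at a section of degree in that interior and using that $\spec A$ is affine. Up to the construction carried out in Step 2, this is the argument of \cite[Theorem 3.4]{AH06}.
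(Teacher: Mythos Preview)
The paper does not prove this statement; it is quoted from \cite[Theorem 3.4]{AH06} without argument, so there is no in-paper proof to compare against beyond that citation. Your outline is recognisably the Altmann--Hausen strategy, and you are right that Step~2 --- producing a single semi-projective $\Gamma$ on which every graded piece is simultaneously the complete linear system of a semiample Cartier divisor --- is where essentially all the content lies; you openly defer that step, and the paper does likewise by citing \cite{AH06}.

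There is, however, a concrete error in what you do write down. Your formula
\[
\mathrm{coeff}_Y(D_m)=\max\Bigl\{\textstyle\sum_i\lambda_i\,\mathrm{coeff}_Y(D_i)\ :\ \lambda\in\ZZ_{\ge0}^r,\ \sum_i\lambda_im_i=m\Bigr\}
\]
is \emph{not} positively homogeneous in general, so Step~3 cannot produce a polyhedron $\D_Y$ from it. For instance, with $M=\ZZ$, $r=2$, $(m_1,m_2)=(2,3)$ and $(\mathrm{coeff}_Y D_1,\mathrm{coeff}_Y D_2)=(0,1)$, the only decomposition of $7$ is $(\lambda_1,\lambda_2)=(2,1)$, giving $D_7=1$, whereas $14$ admits $(1,4)$, giving $D_{14}=4\neq 2D_7$. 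The Altmann--Hausen construction does not define $D_m$ by a max over generator decompositions; rather, one reads $\mathrm{coeff}_Y(D_m)$ directly as $-\min\{v_Y(f):0\neq f\chi^m\in A_m\}$ on a suitably chosen $\Gamma$, and it is the \emph{normality} of $A$ (the very mechanism you invoke in Step~4) that forces this function to coincide with the restriction of an honest support function on $\sigma^\vee$. In other words, your Step~4 integrality argument is correct and is doing real work, but it has to be fed back into the definition of $D_m$ rather than bolted on afterwards; once that is done, positive homogeneity and the passage to $\D_Y$ go through. Your closing remark on bigness is also too elliptic to stand alone: in \cite{AH06} it is established from the specific graph-closure construction of $\Gamma$, not from an abstract localisation argument.
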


Let us give some examples of torus actions obtained from polyhedral divisors.
\begin{example} Assume that $N_{\QQ} = \QQ$. Consider the proper polyhedral divisor $\D = \sum_  {y\in \P^{1}}\D_{y}\cdot [y]$ over the projective line $\P^{1}$ with the property that 
$$\D_{y} = \QQ_{\geq 0}\,\,\text{ if }y \in \P^{1}\setminus\{ 0,\infty\} \,\,\text{ and }\,\,
\D_{0} = \D_{\infty} = \left\{\frac{1}{2}\right\} + \QQ_{\geq0}.$$
Here $0$ and $\infty$ are respectively the origin and the point at infinity with
respect to a local parameter $t$ generating the function field $k(\P^{1})$. For $m\in \ZZ_{\geq 0}$, let $A_{m}: = H^{0}(\P^{1}, \mathcal{O}_{\P^{1}}(\D(m)))$. 
Then a direct computation shows that $A_{0} = k$, $A_{1}\otimes \chi^{1} = k\otimes \chi^{1}$ and for $m\geq 2,$
$$A_{m}\otimes \chi^{m} = \left(k\oplus \bigoplus_{i=1}^{\lfloor m/2\rfloor}kt^{i}\oplus \bigoplus_{i=1}^{\lfloor m/2\rfloor}k\frac{1}{t^{i}}\right)\otimes \chi^{m}.$$
Therefore we get that 
$$A:= A(\P^{1}, \D) = \bigoplus_{m\geq 0}A_{m}\otimes \chi^{m}  = k\left[\chi^{1}, t\otimes \chi^{2}, \frac{1}{t}\otimes\chi^{2}\right].$$
The spectrum $\spec\, A$ is identified with the $\G_{m}$-surface $\mathbb{V}(x^{4}-yz)\subseteq \AA^{3}$ and the action is given by the formula 
$\lambda\cdot (x,y,z) = (\lambda x, \lambda^{2} y, \lambda^{2} z)$ for any  $\lambda\in \G_{m}.$
\end{example}
\begin{example}
We again assume that $N_{\QQ} = \QQ$. Let $n$ be a positive integer. Consider the polyhedral divisor $\D = \sum_  {y\in \AA^{1}}\D_{y}\cdot [y]$ over the affine line $\AA^{1}$
with the conditions 
$$\D_{y} = \{0\}\,\,\text{ if }y \in \AA^{1}\setminus\{ 0,1\},\,\,
\D_{0} = \left\{\frac{1}{n}\right\}\,\, \text{ and }\,\, \D_{1} = \left[0, \frac{1}{n}\right].$$
Then a system of homogeneous generators of the $\ZZ$-graded algebra $A:= A(\AA^{1},\D)$
is given by
$$x := t^{-1}\otimes \chi^{n},\,y:= t(t+1)\otimes \chi^{-n},\,\, z:= \chi^{1}.$$
Using these coordinates, the spectrum $\spec\, A$ is $\G_{m}$-isomorphic to the smooth $\G_{m}$-surface $$W_{n} = \{x^{2}y = x + z^{n}\}\subseteq \AA^{3}.$$ 
The action is given by $\lambda\cdot (x,y,z) = (\lambda^{n}x,\lambda^{-n}y, \lambda z)$ for any  $\lambda\in \G_{m}.$
\end{example}

Let us introduce various objects attached to the $\sigma$-polyhedral divisor $\D$.
The \emph{Cayley cones} of $\D$ are the cones $C_{Y}(\D)\subseteq N_\QQ\oplus \QQ$ generated by the union of $(\sigma,0)$ and $(\D_Y, 1)$, where $Y \subseteq \Gamma$ is a prime divisor. The \emph{hypercone} associated with $\D$ is the subset 
$$C(\D) = \{[v_{Y}, a, b]\in \hs\,|\, Y\subseteq\Gamma \text{ prime divisor},\, (a,b)\in C_Y(\D)\}.$$
We define in an obvious way the relative interior of $C(\D)$.
\\

Here is the definition of a colored polyhedral divisor.
\begin{definition}\label{d-poly}
Considering the set of colors $\F_{\Omega}$ of $\Omega$, a pair $(\sigma, \F)$, where $\F$ is a subset of $\F_{\Omega}$, is a \emph{colored cone}\footnote{To have more flexibility, we take a different viewpoint by modifying slightly the definition of colored cone in \cite[Section 3]{Kno91}. In our definition, we do not impose that the relative
interior of $\sigma$ intersects $\Vc$. The reason is that in \cite[Section 3]{Kno91} the author deals only with \emph{minimal} $B$-charts of spherical varieties in order to have a perfect dictionary between colored cones and simple spherical embeddings (see \cite[Theorem 3.1]{Kno91}). We refer to \cite[Section 15.1 and Remark 14.3]{Tim11} for more information.}  (cf. \cite[Section 3]{Kno91}) if $\varrho(\F)$ does not contain $0$ and $\sigma$ is a strictly convex polyhedral cone generated by the union of $\varrho(\F)$ and a finite subset of $\Vc$. Colored cones are combinatorial objects related to the classification of simple spherical varieties, see for instance
\cite[Section 15.1]{Tim11}. 

The pair $(\D,\F)$ is a \emph{colored $\sigma$-polyhedral divisor} on $\Gamma$ if the following hold.
\begin{itemize}
\item $(\sigma, \F)$ is a colored cone.
\item $\D$ is a proper $\sigma$-polyhedral divisor.
\item The hypercone $C(\D)$ is generated by the union of $C(\D)\cap \Vh$
and $\varrho(\F)$.  
\end{itemize}  
\end{definition}
We denote by $\CP(\Gamma, \ss)$ the set of colored polyhedral divisors with
respect to the normal semi-projective variety $\Gamma\subseteq \sc(\YY)$ and the homogeneous spherical datum $\ss$. Note that colored polyhedral divisors have been used in \cite{LT16, LT17, Lan17, LPR19} for studying the geometry of complexity-one horospherical varieties.
\begin{remark}
Recall that a \emph{vertex} of a polyhedron $Q$ in $N_{\QQ}$ is a $0$-dimensional face
of $Q$. In Definition \ref{d-poly}, one observes that the vertices of $\D_{Y}$
for any prime divisor $Y\subseteq \Gamma$ belong to $\Vc$. This is due to Proposition
\ref{prop-vc} and the fact that $\varrho(\F)$ is contained in $\sigma$ seen as a subset of the hyperspace $\hs$.
\end{remark}

Let us take subsets $\UU\subseteq\Vh$ and $\F\subseteq \F_{\Omega}$, where $\F_{\Omega}$ is seen as the set of colors of $\gsc(\XX)$ (see Remark \ref{remark-color}). With these two data, one can construct a $B$-stable subalgebra of $k(\XX)$. Recall that $\mathcal{O}_v$ denotes the local ring associated with the valuation $v$. We then let 
$$R(\UU, \F) = (k(\Gamma)\otimes_k k[\Omega_{0}])\cap \bigcap_{D\in\F}\mathcal{O}_{v_D}\cap \bigcap_{v\in \UU}\mathcal{O}_{v}\subseteq k(\XX),$$
where $\Omega_{0}$ is the open $B$-orbit of $\Omega$.
The next lemma follows from an adaptation of the results in \cite[Section 8]{LV83}. It gives conditions for the affine scheme $X_{0}= \spec\, R(\UU, \F)$ to be a $B$-chart of $\gsc(\XX)$. 
\begin{lemme}\label{l-chartt}\cite[Theorem 13.8]{Tim11} Denote by $E$ the set $\UU\sqcup \F$ where $\F$ is considered as a set of valuations of $k(\XX)$. The affine scheme $X_{0} = \spec\, R(\UU, \F)$ is a $B$-chart of $\gsc(\XX)$ if and only if the following conditions 
are satisfied.
\begin{itemize}
\item[(i)]  For any finite subset $E_{0}\subseteq E$, there exists  a homogeneous element $\xi\in A_{M}$ such that for all $v\in E$ and $w\in E_{0}$
we have $v(\xi) \geq 0$ and $w(\xi) > 0$. 
\item[(ii)] The subalgebra $R(\UU, \F)^{U} = k[R(\UU, \F)^{(B)}]$ is of finite type over $k$, where $U\subseteq G$ is the unipotent radical of $B$. 
\end{itemize}
Moreover, any $B$-chart of $\gsc(\XX)$ arises in this way.
\end{lemme}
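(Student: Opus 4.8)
The statement is attributed to \cite[Theorem 13.8]{Tim11}, so the plan is to reduce our situation to the classical Luna--Vust framework and then invoke that theorem. First I would recall the abstract Luna--Vust setup: a $B$-chart of $\gsc(\XX)$ is determined by the pair consisting of its ring of $U$-invariants together with the set of $B$-stable prime divisors in its complement, and the characterization of which such data arise is governed by finiteness of the $U$-invariant subalgebra and by a ``convexity/compatibility'' condition on the associated valuations. The key translation is that a $B$-stable affine open subset $X_0\subseteq\gsc(\XX)$ is recovered from its ring $R=k[X_0]$, and that $R$ in turn is recovered as the intersection of $k(\XX)$ with all the local rings $\mathcal{O}_{v}$ where $v$ ranges over the valuations of prime divisors meeting $X_0$ (the $G$-stable ones giving $G$-valuations, hence points of $\Vh$, and the non-$G$-stable ones giving colors in $\F_\Omega$), \emph{together} with the requirement that $X_0$ lies in the generic orbit locus $k(\Gamma)\otimes_k k[\Omega_0]$. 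This is exactly the shape of $R(\UU,\F)$, so the correspondence $X_0\leftrightarrow(\UU,\F)$ is set up; the content is to show $\spec R(\UU,\F)$ is actually a $B$-chart precisely under (i) and (ii).

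Next I would address the two conditions. Condition (ii) is visibly necessary: if $X_0$ is an affine $B$-variety then $k[X_0]^U$ is finitely generated (by Grosshans-type finiteness, since $B/U=T$ is a torus acting on the affine variety $X_0$), and the equality $R(\UU,\F)^U=k[R(\UU,\F)^{(B)}]$ just records that the $U$-invariants are spanned by $B$-eigenfunctions. For the forward direction one uses (ii) to see that $R(\UU,\F)$ itself is finitely generated (recover $R$ from $R^U$ by the standard argument that $R$ is integral-closure-finite over $k[R^U]$ inside $k(\XX)$, using that $U$ acts with a dense orbit behavior on each orbit and that $R$ is normal by construction as an intersection of valuation rings with $k(\Gamma)\otimes k[\Omega_0]$ which is normal). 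Condition (i) is the ``non-degeneracy'' that guarantees $X_0=\spec R(\UU,\F)$ is dense in $\gsc(\XX)$ and that the valuations in $E$ are exactly those having a center on $X_0$: the homogeneous element $\xi\in A_M$ with $v(\xi)\geq 0$ for all $v\in E$ and $v(\xi)>0$ for $v$ in a prescribed finite $E_0$ plays the role of a separating function, exhibiting enough regular functions to conclude $X_0\neq\emptyset$, that each $v\in E$ centers on $X_0$, and — by letting $E_0$ exhaust finite subsets — that no valuation outside $E$ that ``should'' appear is missed; conversely if some required separation failed, one of the candidate functions would fail to be regular and $X_0$ would not be affine of the right dimension. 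Here one must be a little careful to keep track of the grading by $M$ and to use that $A_M=\bigoplus_m k(\YY)\otimes\chi^m$, so that ``homogeneous element of $A_M$'' means $f\otimes\chi^m$, and the inequalities $v(f\otimes\chi^m)=b\cdot\va(f)+\langle m,a\rangle$ are the ones being controlled; this is where the hyperspace description $[\va,a,b]$ from Proposition \ref{prop-vc} gets used concretely.

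The main obstacle, and the step I expect to require the most care, is the passage between \cite[Theorem 13.8]{Tim11} as stated in Timashev's general Luna--Vust formalism and our specific presentation: Timashev phrases the criterion in terms of ``$B$-charts of a $G$-model'' and abstract valuation data, whereas here everything is rigidified through the fixed lift $M\to k(\XX)^{(B)}$, the algebra $A_M$, and the split form $\XX=\YY\times\Omega$. So the real work is verifying that (a) the data $(\UU,\F)$ with $\UU\subseteq\Vh$, $\F\subseteq\F_\Omega$ is the same thing as Timashev's combinatorial data (using Remark \ref{remark-color} to identify colors of $\gsc(\XX)$ with colors of $\Omega$, and the injection of $G$-valuations into $\hs$), and (b) conditions (i)--(ii) match Timashev's hypotheses line by line. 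I would also double-check that the ring-intersection formula for $R(\UU,\F)$ genuinely reproduces $k[X_0]$ and not something larger: the subtlety is that an arbitrary intersection of valuation rings need not be finitely generated (this is precisely why (ii) is imposed, and why Knop's counterexample is cited earlier for the polyhedral-divisor analogue), so one cannot drop (ii) and must invoke it exactly at the point where finite generation of $k[X_0]$ is claimed. Granting the dictionary, the ``if and only if'' and the final ``any $B$-chart arises in this way'' then follow directly from \cite[Theorem 13.8]{Tim11}.
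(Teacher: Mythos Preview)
Your plan is essentially correct and matches the paper's approach: the paper does not prove this lemma either, but simply cites \cite[Theorem 13.8]{Tim11} and adds a short explanatory paragraph. Your dictionary-checking strategy (identifying colors via Remark \ref{remark-color}, identifying $G$-valuations via the injection into $\hs$, and matching conditions (i)--(ii) to Timashev's hypotheses) is exactly what is needed.

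One point the paper singles out that you underemphasize: condition (i) guarantees that $R(\UU,\F)$ has fraction field $k(\XX)$ and condition (ii) that it is finitely generated, so $X_0$ is an open subset of $\sc(\XX)$; but to land in $\gsc(\XX)$ rather than merely $\sc(\XX)$ one must know that the birational $G$-action is regular on $G\cdot X_0$. The paper points to the argument of \cite[Section 8]{LV83} (reproduced in \cite[Theorem 13.8]{Tim11}) showing that $R(\UU,\F)$ is stable under the natural action of the Lie algebra of $G$ (cf.\ \cite[Proposition 12.3]{Tim11}). You allude to this when you say ``$X_0$ is dense in $\gsc(\XX)$'', but it is worth naming the mechanism explicitly, since it is the step that distinguishes a mere $B$-stable affine open of some model from a genuine $B$-chart of the universal $G$-model.
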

Actually, Conditions $(i), (ii)$ in the preceding lemma imply that the normal algebra $R(\UU, \F)$ has field of fractions $k(\XX)$ and is of finite type over $k$, respectively, so that the affine scheme $X_{0}$ is an open subset of
$\sc(\XX)$. Following the argument of \cite[Section 8]{LV83}, it was shown \cite[Theorem 13.8]{Tim11} that $R(\UU, \F)$ is stable under the natural action  of the Lie algebra of $G$ (see \cite[Proposition 12.3]{Tim11}).
\\

We will use the next two technical lemmata from the Luna-Vust theory.
\begin{lemme}\label{l:tecval1}\cite[Lemma 19.12]{Tim11}
For any nonzero element $f\in k(\Gamma)\otimes_{k}k[\Omega_{0}]$ and  any $G$-valuation $v\in\Vh$, there exists a $B$-eigenfunction $\tilde{f}\in A_{M}$ such that
the following hold.
\begin{itemize}
\item[(i)] $v(\tilde{f}) = v(f)$ and $w(\tilde{f})\geq w(f)$ for all
$w\in \Vh$.
\item[(ii)] $v_{D}(\tilde{f}) \geq v_{D}(f)$ for all $D\in\F_{\Omega}$.
\end{itemize}
\end{lemme}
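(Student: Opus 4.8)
\textbf{Proof strategy for Lemma \ref{l:tecval1}.}

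The plan is to reduce the statement to a computation on the graded algebra $A_M = \bigoplus_{m\in M} k(\YY)\otimes \chi^m$, exploiting the fact that every $G$-valuation $v\in\Vh$ is completely determined by its restriction to $A_M$ (as recalled via \cite[Corollary 19.13]{Tim11}), together with the explicit formula $v(f\otimes\chi^m) = b\cdot\va(f) + \langle m, a\rangle$ for $v = [\va,a,b]$. First I would fix $f\in k(\Gamma)\otimes_k k[\Omega_0]$; since $\Omega_0$ is the open $B$-orbit, the algebra $k[\Omega_0]$ is a $B$-module whose $B$-eigenfunctions span it, so $k(\Gamma)\otimes_k k[\Omega_0]$ decomposes under the $B$-action with eigenfunctions lying in $A_M$ (up to tensoring by $k(\YY)$). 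Writing $f = \sum_{j} f_j$ as a finite sum of $B$-eigenvectors $f_j$ of weight $m_j$, the value $v(f)$ equals $\min_j v(f_j)$ for valuations in a suitable "generic" position, but in general one only has $v(f)\ge \min_j v(f_j)$; the point of the lemma is to produce a single eigenfunction $\tilde f$ realizing $v(f)$ exactly while dominating $f$ everywhere on $\Vh$ and on all colors.

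The key step is the following: among the weights $m_j$ appearing in $f$, select the index $j_0$ for which $v(f_{j_0})$ is minimal, i.e. $v(f_{j_0}) = v(f)$ (using that $v$, being a $G$-valuation hence in particular $B$-invariant, satisfies $v(f)\ge \min_j v(f_j)$, and the minimum is attained since $v$ cannot separate the $B$-isotypic components — this is exactly the content of \cite[Corollary 19.13]{Tim11} applied appropriately, or can be argued directly from $B$-invariance: if the minimum were attained at two or more components they would have to cancel, but $v$ of a $B$-eigenfunction depends only on its weight and the underlying $k(\YY)$-factor). I would then try to take $\tilde f = f_{j_0}$ directly, or a small modification thereof. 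To verify $(i)$: for any $w = [\va',a',b']\in\Vh$, one has $w(f) = \min_j w(f_j)\le w(f_{j_0}) = w(\tilde f)$, giving $w(\tilde f)\ge w(f)$; and by construction $v(\tilde f) = v(f_{j_0}) = v(f)$. For $(ii)$: the colors $D\in\F_\Omega$ are $B$-divisors, so $v_D$ is again $B$-semiinvariant in the relevant sense, and the same argument $v_D(f) = \min_j v_D(f_j)\le v_D(f_{j_0})$ applies. One subtlety is that $v_D(f) = \min_j v_D(f_j)$ need not hold for non-$G$-valuations if cancellation occurs, but we only need the inequality $v_D(\tilde f)\ge v_D(f)$, so it suffices that $v_D(f)\ge\min_j v_D(f_j)$ — which is just the ultrametric (non-Archimedean) inequality for the valuation $v_D$ together with $v_D(f_{j_0})\ge\min_j v_D(f_j)$; hence picking $\tilde f = f_{j_0}$ works provided $j_0$ is chosen so that $v(f_{j_0})$ is the value realizing $v(f)$.

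The main obstacle I anticipate is justifying that the minimum $\min_j v(f_j)$ is \emph{attained with equality} by $v(f)$ for $G$-valuations $v$ — in other words, that a $G$-valuation does not "see cancellations" among distinct $B$-weight components. This is where one genuinely uses that $v$ is a $G$-valuation and not merely an arbitrary valuation: distinct $B$-weights $m_j\ne m_{j'}$ force the corresponding eigenfunctions to be "independent" with respect to $v$ in the sense that $v(f_j + f_{j'}) = \min\{v(f_j), v(f_{j'})\}$, which follows from the structure of $v$ on $A_M$ recorded in \cite[Proposition 20.7]{Tim11}. I would spell this out by reducing to the case of two components and observing that if $v(f_j) = v(f_{j'})$ for $j\ne j'$ then, after rescaling, $v((f_j+f_{j'})/f_j) \geq 0$ while $v(f_{j'}/f_j) = 0$ with distinct $B$-weights, and the residue computation in the valued field forces non-cancellation; alternatively one invokes directly that $G$-valuations correspond to points of $\hs$ where the value on $A_M$ is the $\min$ over the support, which is precisely how $\Vh\hookrightarrow\hs$ was constructed. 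Once this is in place, the rest is the routine bookkeeping sketched above, and the reference \cite[Lemma 19.12]{Tim11} confirms that this is indeed the intended short argument.
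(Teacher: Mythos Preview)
The paper does not prove this lemma; it is quoted from \cite[Lemma 19.12]{Tim11} without argument. Your sketch, however, has a genuine gap at the very first step. You assert that $k[\Omega_{0}]$ is spanned by its $B$-eigenfunctions, so that $f$ decomposes as $\sum_{j}f_{j}$ with each $f_{j}\in A_{M}$. This is false: $B$ is solvable, not reductive, and its rational representations are not semisimple. Take $\Omega=\SL_{2}/T$ as in Example~\ref{e-sl2}: there $k[\Omega_{0}]=k[x_{0},y_{0},(x_{0}-y_{0})^{-1}]$, while the $B$-eigenfunctions in $k(\Omega)$ are exactly the scalar multiples of the powers $(x_{0}-y_{0})^{-n}$, whose $k$-span $k[\chi^{\alpha},\chi^{-\alpha}]$ does not contain $x_{0}$. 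So an arbitrary $f$ simply has no decomposition of the kind you posit. What you \emph{can} always form is the $T$-weight decomposition $f=\sum_{m}f_{m}$, and your non-cancellation argument (a $T$-invariant valuation on a sum of vectors with pairwise distinct $T$-weights equals the minimum of the values) is valid for it; but then the selected component $f_{m_{0}}$ is merely a $T$-eigenvector, not a $B$-eigenvector, hence not in $A_{M}$, and the conclusion of the lemma is not reached.

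The argument in \cite{Tim11} (going back to Knop and Luna--Vust) works instead through the finite-dimensional $G$-module $V=\langle G\cdot f\rangle\subseteq k(\XX)$ and locates $\tilde{f}$ among its highest weight vectors. Two structural facts drive it: for a $G$-invariant valuation the filtration $V_{\ge c}$ is by $G$-submodules, so $v$ is constant on each irreducible summand and one can select a summand on which that constant equals $v(f)$; and for a $B$-invariant valuation $v_{D}$ the filtration of an irreducible $G$-module is by $B$-submodules, each nonzero one of which contains the highest weight line (its $U$-invariants are nonzero and coincide with that line). The ingredient your sketch is missing is precisely this appeal to the reductivity of $G$ to manufacture the $B$-eigenvector at the correct filtration level --- the $B$-action alone cannot do it.
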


\begin{lemme}\label{l:tecval2}\cite[Theorem 14.2]{Tim11}
Let $X_{0}$ be a $B$-chart of $\gsc(\XX)$ described by a pair $(\UU, \F)$
as in Lemma \ref{l-chartt}. Let $D\in\F$ be a color, let $O\subseteq \gsc(\XX)$ be  a $G$-cycle 
intersecting $X_{0}$, and let $v$ be a $G$-valuation centered in the generic point of $O$. Then $O$ is contained in $D$ if and only if for any $B$-eigenfunction $f$ in $R(\UU, \F)$ such that $v(f) = 0$,
we have $v_{D}(f) = 0$.
\end{lemme}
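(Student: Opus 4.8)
The plan is to prove Lemma \ref{l:tecval2} by a careful bookkeeping argument with $B$-eigenfunctions, exploiting the defining presentation $R(\UU,\F) = (k(\Gamma)\otimes_k k[\Omega_0])\cap\bigcap_{D'\in\F}\mathcal{O}_{v_{D'}}\cap\bigcap_{v'\in\UU}\mathcal{O}_{v'}$ and the transitivity properties of $B$-charts. First I would translate the geometric statement ``$O\subseteq D$'' into a purely valuative one: since $v$ is a $G$-valuation centered at the generic point $\eta_O$ of the $G$-cycle $O$, and $X_0 = \spec R(\UU,\F)$ meets $O$, the center of $v$ on $X_0$ is the ideal $\mathfrak{p}_O$ of the (nonempty) intersection $\overline{O}\cap X_0$; likewise $D$ meets $X_0$ (as $D\in\F$) in a prime $B$-divisor cut out by some height-one prime $\mathfrak{p}_D$. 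The inclusion $O\subseteq D$ on $X_0$ is then equivalent to $\mathfrak{p}_D\subseteq\mathfrak{p}_O$, i.e.\ $v_D$ dominates $\mathcal{O}_{\mathfrak{p}_O}$, which I would rephrase as: $v(f) > 0 \Rightarrow v_D(f) > 0$ for regular $f$, together with the converse. Since $R(\UU,\F)$ is $B$-stable and generated over $R(\UU,\F)^U$ by its $B$-eigenfunctions (via \cite[Proposition 12.3]{Tim11} and the local structure), it suffices to test the center $\mathfrak{p}_O$ on $B$-eigenfunctions.

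The key technical input is Lemma \ref{l:tecval1}: given any $f\in k(\Gamma)\otimes_k k[\Omega_0]$ and the $G$-valuation $v$, one produces a $B$-eigenfunction $\tilde f\in A_M$ with $v(\tilde f) = v(f)$, $w(\tilde f)\geq w(f)$ for all $w\in\Vh$, and $v_{D'}(\tilde f)\geq v_{D'}(f)$ for all $D'\in\F_\Omega$. I would use this to replace an arbitrary regular function by a $B$-eigenfunction that is still regular on $X_0$ (the inequalities guarantee it lies in all the relevant valuation rings, hence in $R(\UU,\F)$) and still detects the center of $v$. The heart of the forward direction is then: if $O\subseteq D$, take a $B$-eigenfunction $f\in R(\UU,\F)$ with $v(f)=0$; I must show $v_D(f)=0$. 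Since $v$ is centered at $\eta_O\in D$ and $f$ is a unit at $\eta_O$ (being nonvanishing under $v$, using that $v$ and $v_{\eta_O}$ have the same center hence $v(f)=0$ forces $f\notin\mathfrak{p}_O$), and $\mathfrak{p}_D\subseteq\mathfrak{p}_O$, the function $f$ is also a unit along $D$ in a neighborhood, giving $v_D(f) = 0$.

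For the converse I would argue contrapositively: suppose $O\not\subseteq D$, so $\mathfrak{p}_D\not\subseteq\mathfrak{p}_O$ (here I use that both are primes of the affine coordinate ring of $X_0$ and $\mathfrak{p}_D$ has height one; if $O$ is not contained in $D$ then the generic point of $O$ is not in $D$, so $\mathfrak{p}_O\not\supseteq\mathfrak{p}_D$). Pick a $B$-eigenfunction witnessing this separation — using Condition (i) of Lemma \ref{l-chartt} applied to suitable finite subsets of $E = \UU\sqcup\F$, together with Lemma \ref{l:tecval1} to upgrade it to an eigenfunction regular on $X_0$ — so that $f\in R(\UU,\F)^{(B)}$, $v(f)=0$ (i.e.\ $f$ is a unit at $\eta_O$), but $v_D(f)>0$ (i.e.\ $f$ vanishes on $D$). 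This is exactly the negation of the stated criterion. The main obstacle I anticipate is the delicate passage between the abstract valuative data $(\UU,\F)$ and the scheme-theoretic centers on the concrete $B$-chart $X_0$ — in particular verifying that the eigenfunctions supplied by Lemma \ref{l:tecval1} genuinely land in $R(\UU,\F)$ and correctly localize the $G$-germ $O$, which is where one must invoke the finiteness of colors (Remark \ref{remark-color}), the structure of $A_M = \bigoplus_{m\in M} k(\YY)\otimes\chi^m$, and the fact that $G$-valuations are determined by their restriction to $A_M$ (\cite[Corollary 19.13]{Tim11}). Once the dictionary ``center of $v$ on $X_0$ $\leftrightarrow$ behavior on $B$-eigenfunctions in $R(\UU,\F)$'' is set up cleanly, the equivalence follows by the two one-line implications above; so I would spend the bulk of the write-up on that dictionary and cite \cite[Theorem 14.2]{Tim11} for the parallel statement in the spherical case as a sanity check on the formulation.
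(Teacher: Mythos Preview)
The paper does not actually prove Lemma~\ref{l:tecval2}: it is stated with a direct citation to \cite[Theorem 14.2]{Tim11} and no proof environment follows. So there is no ``paper's own proof'' to compare against; the lemma is imported wholesale from Timash\"ev's book.

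That said, your sketch is essentially correct and is the natural argument. The forward direction is clean: if $O\subseteq D$ then $\mathfrak{p}_D\subseteq\mathfrak{p}_O$ in $R(\UU,\F)$, and since $v$ has center $\mathfrak{p}_O$, any regular $f$ with $v(f)=0$ lies outside $\mathfrak{p}_O$, hence outside $\mathfrak{p}_D$, giving $v_D(f)=0$. For the converse, picking $g\in\mathfrak{p}_D\setminus\mathfrak{p}_O$ and applying Lemma~\ref{l:tecval1} to produce a $B$-eigenfunction $\tilde g$ with $v(\tilde g)=v(g)=0$, $v_D(\tilde g)\geq v_D(g)>0$, and $\tilde g\in R(\UU,\F)$ (from the remaining inequalities) is exactly right. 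One small point: you do not need Condition~(i) of Lemma~\ref{l-chartt} for the converse; Lemma~\ref{l:tecval1} alone suffices once you have chosen $g$. The ``dictionary'' you worry about (center of $v$ on $X_0$ versus behavior on $B$-eigenfunctions) is standard and does not require the structure of $A_M$ or \cite[Corollary 19.13]{Tim11} beyond what Lemma~\ref{l:tecval1} already packages; you can trim that discussion considerably.
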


The next result gives a combinatorial picture for the correspondence between
$G$-valuations and colors, and the $B$-charts of $\XX$ via the language of polyhedral divisors.
\begin{theorem}\label{t-ppcol}
Let $(\D,\F)\in \CP(\Gamma, \ss)$ be a colored polyhedral divisor on an open semi-projective subvariety $\Gamma\subseteq\sc(\YY)$. Then the affine scheme $X_{0}(\D,\F) = \spec\, R(C(\D)\cap \Vh ,\F)$ is a $B$-chart of $\gsc(\XX)$. Any $B$-chart of $\gsc(\XX)$ arises from a colored polyhedral divisor as above. Therefore
any simple $G$-model of $\XX$ is of the form 
$$X(\D,\F):= G\cdot X_0(\D,\F)\subseteq \gsc(\XX).$$
The algebra of $U$-invariants $k[X_{0}(\D,\F)]^{U}$ is identified with $A(\Gamma, \D)$. 
\end{theorem}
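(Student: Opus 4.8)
The plan is to assemble Theorem \ref{t-ppcol} from three separate identifications: that $R(C(\D)\cap \Vh,\F)$ is indeed a $B$-chart via Lemma \ref{l-chartt}, that \emph{every} $B$-chart arises this way, and that the $U$-invariants recover $A(\Gamma,\D)$. First I would treat the $U$-invariant statement, since it is the most algebraic and does not depend on the full apparatus. By definition $k[X_0(\D,\F)]^U = k[R(C(\D)\cap\Vh,\F)^{(B)}]$, which is an $M$-graded subalgebra of $A_M = \bigoplus_{m\in M} k(\YY)\otimes\chi^m$. The point is to compute, for fixed $m\in\sigma^\vee\cap M$, the $m$-th graded piece: an element $f\otimes\chi^m$ lies in $R(C(\D)\cap\Vh,\F)$ iff $v(f\otimes\chi^m)\geq 0$ for all $v\in C(\D)\cap\Vh$ and $v_D(f\otimes\chi^m)\geq 0$ for all $D\in\F$. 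Using the parametrization $v = [\va,a,b]$ with $v(f\otimes\chi^m) = b\,\va(f) + \langle m,a\rangle$ and the description of $C(\D)$ via the Cayley cones $C_Y(\D)$, the conditions over the vertices $(v_Y,\cdot)$ of the $C_Y(\D)$ translate precisely into $\va_Y(f) + \min_{v\in\D_Y}\langle m,v\rangle \geq 0$ for every prime divisor $Y\subseteq\Gamma$, i.e.\ $\div(f) + \D(m) \geq 0$ on $\loc(\D)$; the third bullet of Definition \ref{d-poly} (that $C(\D)$ is generated by $C(\D)\cap\Vh$ together with $\varrho(\F)$) is what guarantees the color conditions impose nothing extra and that it suffices to check the $\Vh$-part. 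Hence the $m$-th piece equals $H^0(\loc(\D),\mathcal{O}_{\loc(\D)}(\D(m)))\otimes\chi^m$, and summing over $m$ gives $A(\Gamma,\D)$.

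Next I would verify that $X_0(\D,\F) = \spec\,R(C(\D)\cap\Vh,\F)$ is a $B$-chart, i.e.\ check Conditions (i) and (ii) of Lemma \ref{l-chartt} with $\UU = C(\D)\cap\Vh$. Condition (ii) is immediate from the previous paragraph: $R(\UU,\F)^U = A(\Gamma,\D)$ is of finite type over $k$ because $\D$ is proper (Theorem \ref{pp-divisor}/\cite[Theorem 3.1]{AH06}). For Condition (i), given a finite subset $E_0$ of $E = \UU\sqcup\F$, I want a homogeneous $\xi\in A_M$ nonnegative on all of $E$ and strictly positive on $E_0$. Here I would exploit that $\UU = C(\D)\cap\Vh$ is (the intersection with $\Vh$ of) a rational polyhedral hypercone: a homogeneous element $\xi = f\otimes\chi^m$ is nonnegative on $\UU$ precisely when $f\otimes\chi^m\in A(\Gamma,\D)$ by the graded-piece computation, and strict positivity at finitely many points of $C(\D)$ together with the colors can be arranged by choosing $m$ in the relative interior of $\sigma^\vee$ (using the bigness clause of properness to find $f$ with the support condition) and, if necessary, passing to a suitable power/combination. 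The bookkeeping with the colors uses that $\varrho(\F)\subseteq\sigma$ and that on a color $D$ the value $v_D(f\otimes\chi^m) = \langle\varrho(v_D),\chi_m\rangle$ depends only on $m$; choosing $m$ in the interior of $\sigma^\vee$ makes these strictly positive. I expect this to be essentially the argument of \cite[Section 8]{LV83} / \cite[Theorem 13.8]{Tim11} transported to the polyhedral setting, so citing those for the hard separation-of-valuations core is legitimate.

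For the converse — every $B$-chart of $\gsc(\XX)$ comes from a colored polyhedral divisor — I would start from the description of an arbitrary $B$-chart $X_0$ as $\spec\,R(\UU,\F)$ furnished by Lemma \ref{l-chartt}. The $U$-invariant algebra $A := R(\UU,\F)^U$ is then a normal finitely generated $M$-graded subalgebra of $A_M$ with the same field of fractions, so Theorem \ref{pp-divisor} (\cite[Theorem 3.4]{AH06}) produces an open semi-projective $\Gamma\subseteq\sc(\YY)$ and a proper $\sigma$-polyhedral divisor $\D$ with $A = A(\Gamma,\D)$, where $\sigma^\vee$ is the cone generated by the weights occurring in $A$. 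It remains to recover $\F$ combinatorially and to identify $\UU$ with $C(\D)\cap\Vh$; for the latter I would use Lemma \ref{l:tecval1} to show that a $G$-valuation $v\in\Vh$ is nonnegative on $R(\UU,\F)$ iff it is nonnegative on $A_M$-eigenfunctions lying in $R(\UU,\F)$, which by the graded-piece description means exactly $v\in C(\D)$; and Lemma \ref{l:tecval2} to pin down that the colors in $\F$ are exactly the ones forced by the eigenfunction conditions, which then also yields the third bullet of Definition \ref{d-poly}. Finally, once $X_0(\D,\F)$ is known to be a $B$-chart and every $B$-chart is of this form, the statement about simple $G$-models is just the definition $X(\D,\F) = G\cdot X_0(\D,\F)$ together with the Sumihiro-type fact (recalled in Section \ref{sec-valsec}) that simple $G$-varieties are exactly $G$-saturations of $B$-charts.

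\textbf{Main obstacle.} The delicate point is the converse direction, specifically checking that the pair $(\D,\F)$ extracted from an abstract $B$-chart genuinely satisfies the three axioms of a colored $\sigma$-polyhedral divisor — above all the compatibility condition that $C(\D)$ is generated by $C(\D)\cap\Vh$ and $\varrho(\F)$. This is where one must carefully interleave the Altmann–Hausen reconstruction (which only sees the $U$-invariant algebra and hence $\sigma$, $\Gamma$, $\D$) with the Luna–Vust data (the colors $\F$ and the $G$-valuation cone $\Vh$, controlled by Proposition \ref{prop-vc}); making the two combinatorial pictures agree, and in particular ruling out "extra" generators of $C(\D)$ not accounted for by $\Vh$-valuations or colors, is the technical heart of the proof and is where I would spend most of the effort (invoking Lemmata \ref{l:tecval1} and \ref{l:tecval2} as the main tools).
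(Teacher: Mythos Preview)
Your proposal is correct and follows the same architecture as the paper: compute the $U$-invariants first, verify Lemma~\ref{l-chartt}, then for the converse apply Theorem~\ref{pp-divisor} and Lemma~\ref{l:tecval1}. Two points of execution differ and are worth knowing. For Condition~(i) the paper does not invoke bigness directly: it splits $E_0$ into central valuations $E_1$ and non-central ones $E_2$, chooses $m\in M$ making $v(1\otimes\chi^m)>0$ for $v\in E_1$ and then $f\in k(\YY)^\star$ making $v(f\otimes\chi^m)>0$ for $v\in E_2$; since $\D$ is proper the fraction field of $A(\Gamma,\D)$ equals that of $A_M$, so one writes $f\otimes\chi^m=\xi/\xi'$ with $\xi,\xi'\in A(\Gamma,\D)$, and $\xi$ is the desired element. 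For the converse you have misplaced the main effort: the paper does \emph{not} labor over the third bullet of Definition~\ref{d-poly} (it declares the only nontrivial axiom check to be $0\notin\varrho(\F)$, which follows from Condition~(i)), and instead spends its work on the equality $k[X_0]=R(C(\D)\cap\Vh,\F)$, proved in two stages through an intermediate ``linear span'' $C\subseteq\hs$ of $(\UU,\F)$, with both inclusions driven by Lemma~\ref{l:tecval1}. The set $\F$ is read off directly from the Krull-ring presentation of $k[X_0]$ and does not need to be reconstructed; Lemma~\ref{l:tecval2} is not used anywhere in this proof.
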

\begin{proof}[Proof of Theorem \ref{t-ppcol}]
Let us fix a colored polyhedral divisor $(\D,\F)\in \CP(\Gamma, \ss).$ Associated to it,
we let $E := C(\D)\cap \Vh \sqcup \F$, where both sets $C(\D)\cap \Vh$
and $\F$ are seen as sets of valuations of the function field $k(\XX)$.
We start by proving that $X_{0}(\D,\F)$ satisfies
Conditions $(i)$ and $(ii)$ of Lemma \ref{l-chartt}. We naturally have the following equivalences 
$$\xi = f\otimes \chi^{m} \in R(C(\D)\cap \Vh ,\F)^{U}
\Leftrightarrow \forall v\in C(\D)\cap \Vh, \forall D\in \F,\, v(\xi)\geq 0 \text{ and } v_{D}(\xi)\geq 0$$
$$\Leftrightarrow \forall Y\subseteq \Gamma, \forall a\in\D_{Y}, \forall p\in \sigma,\, v_{Y}(f) + \langle m , a \rangle \geq 0 \text{ and } \langle m , p \rangle \geq 0,$$
for any homogeneous element $\xi$ of $A_{M}$. Hence $R(C(\D)\cap \Vh ,\F)^{U} = A(\Gamma, \D)$ and by \cite[Theorem 3.1]{AH06}, Condition $(ii)$ is verified.
Let $E_{0}\subseteq E$ be a finite set. Denote by $E_{1}$ and $E_{2}$ the subsets of $E_{0}$ of central and non-central valuations, respectively.
The elements of $E_{1}$ can be represented as elements in $\sigma:= \tail(\D)$,
in the sense that
$$E_{1} \subseteq \{[\cdot, p, 0]\in \hs\,|\, p\in\sigma\}.$$
Thus, we may find $m\in M$ such that for any $v\in E_{1}$ we have
$v(1\otimes \chi^m)>0$. Moreover, let $f\in k(\YY)^{\star}$ such that $v(f\otimes \chi^{m})>0$ for all $v\in E_{2}$. As $\D$ is a proper polyhedral divisor, the field of fractions of $A(\Gamma, \D)$ is equal to that of $A_{M}$, and so there exist homogeneous elements $\xi, \xi'$ of $A(\Gamma, \D)$ such that $f\otimes \chi^{m} =  \xi/\xi'$. Hence $v(\xi)\geq 0$  for all $v\in E$ and 
$$w(\xi) \geq w(\xi)  - w(\xi')  =  w(f\otimes \chi^{m})>0$$
for all $w\in E_{0}$, yielding Condition $(i)$ of Lemma \ref{l-chartt}. This shows that $X_{0}(\D,\F)$ is a $B$-chart of $\gsc(\XX)$.

Conversely, let $X_{0}$ be a $B$-chart of $\gsc(\XX)$. Since $k[X_{0}]$
is a Krull ring, we may write
$$k[X_{0}] = \bigcap_{D\in D(\XX)\sqcup\F}\mathcal{O}_{v_{D}}\cap \bigcap_{v\in \UU}\mathcal{O}_{v},$$
where $D(\XX)$ stands for the set of prime divisors of $\XX$ that are not $B$-stable (see \cite[Section 1.4]{Tim97}, \cite[Section 13]{Tim11}). Note that this set does not depend on the choice of $\XX$ and
$$ k(\Gamma)\otimes_k k[\Omega_{0}] = \bigcap_{D\in D(\XX)}\mathcal{O}_{v_{D}}
\text{ so that } k[X_{0}] = R(\UU, \F).$$
Given such a $B$-chart $X_{0}$, the dependence of the sets $\UU$ and $\F$ with respect to $X_{0}$ can be explained as follows. The set $\UU$ corresponds to $G$-valuations
that are centered in the generic point of an irreducible closed subvariety of $X_{0}$.
The set $\F$ is the set of colors of $\gsc(\XX)$ that intersect $X_{0}$. 
Moreover, we assume that the pair $(\UU, \F)$ satisfies Conditions $(i)$ and $(ii)$
of Lemma \ref{l-chartt}. Let $C$ be the linear span of $(\UU, \F)$ in $\hs$.
It is defined as the subset 
$$C =\{[\va, a, b]\in \hs\,|\, b\cdot \va(f) + \langle m, a\rangle\geq 0 \text{ for all } f\otimes \chi^{m}\in R(\UU, \F)^{(B)}\}.$$  
Let us show that  $k[X_{0}] = R(C\cap \Vh, \F)$. Since $\UU$ is a set of $G$-valuations of $k(\XX)$, we have
$\UU\subseteq C\cap \Vh$ and therefore $$R(C\cap \Vh, \F)\subseteq k[X_{0}].$$ Let $\zeta\in k[X_{0}]$ and take $v\in C\cap \Vh$. By Lemma \ref{l:tecval1},
there exists $\xi\in A_{M}^{(B)}$ such that 
$$w(\xi)\geq w(\zeta)\geq 0  \text{ and }v_{D}(\xi)\geq v_{D}(\zeta)\geq 0  \text{ for all }w\in \UU, D\in\F$$
and $v(\zeta) = v(\xi)$. Remarking that these latter conditions imply that $$\xi \in R(C\cap \Vh, \F)^{U} = k[X_{0}]^{U}$$ we get $v(\zeta)\geq 0$. Hence $\zeta \in R(C\cap \Vh, \F)$,
yielding the desired equality  $k[X_{0}] = R(C\cap \Vh, \F)$.

In the sequel, we may assume that $\UU = C\cap \Vh$. Now using Theorem \ref{pp-divisor}, there exist an open semi-projective subvariety $\Gamma\subseteq \sc(\YY)$ and a proper polyhedral divisor $\D$ on $\Gamma$ such that $k[X_{0}]^{U} = A(\Gamma, \D)$. 

We claim that $k[X_{0}] = R(C(\D)\cap \Vh, \F)$.
Indeed, by definition of the set $C$, we have $$C(\D)\cap \Vh\subseteq C\cap \Vh\text{ and thus }k[X_{0}]\subseteq R(C(\D)\cap \Vh, \F).$$ Let us assume that there exists
$\zeta\in R(C(\D)\cap \Vh, \F)$ such that $\zeta\not\in k[X_{0}]$. A contradiction is expected. From this assumption there is a $G$-valuation $v\in \UU$ 
such that $v(\zeta)<0$. By Lemma \ref{l:tecval1}, one can find a $B$-eigenfunction $\xi\in A_{M}^{(B)}$ such that $$(1)\,\,v(\xi) = v(\zeta)<0,\text{ and }(2)\,\, w(\xi)\geq w(\zeta)\geq 0$$  for all $w\in C(\D)\cap \Vh\sqcup \F$.
So Condition $(2)$ implies that $\xi\in A(\Gamma, \D)$. Since $k[X_{0}]^{U} = A(\Gamma, \D)$ the function $\xi$ verifies 
$w(\xi)\geq 0$ for all $w\in C\cap \Vh$ which contradicts $(1)$. This shows the equality $k[X_{0}] = R(C(\D)\cap \Vh, \F)$. 

We need
to prove that the resulting pair $(\D, \F)$ is a colored polyhedral divisor.
First, we may assume that $\Gamma$ is smooth. Indeed, let $\psi: \Gamma'\rightarrow\Gamma$ be a projective desingularization and consider the pull back polyhedral divisor $\psi^{\star}(\D)$, defined as $\psi^{\star}(\D)(m) = \psi^{\star}(\D(m))$ for any $m\in \sigma^{\vee}\cap M$. Since $\psi$ is a projective fibration, we have the equality
$$H^{0}(\Gamma, \mathcal{O}_{\Gamma}(\D(m))) = H^{0}(\Gamma', \psi^{\star}\mathcal{O}_{\Gamma}(\D(m))) = H^{0}(\Gamma',\mathcal{O}_{\Gamma'}(\psi^{\star}\D(m)))$$ 
for any $m\in\sigma^{\vee}\cap M$ such that $\D(m)$ is an integral Cartier divisor.
Hence by normality, we have $A(\Gamma,\D) = A(\Gamma',\psi^{\star}\D).$
Note that  $\psi^{\star}(\D)$ is proper (see \cite[Example 8.4(i)]{AH06}). In the sequel, we assume $\Gamma = \Gamma'$.

In the next step, we deal with the polyhedral divisor $\D'$ over $\Gamma$ corresponding to the hypercone generated by $C(\D)\cap \Vh$ and $\varrho(\F)$. 
We wish to have that $\D = \D'$ (and therefore
having $C(\D)$ generated by $C(\D)\cap \Vh$ and $\varrho(\F)$). 
Let $Y_{1}\subseteq \loc(\D)$ be a (smooth) dense affine open subset. Then by construction
$$A(Y_{1}, \D_{|Y_{1}}) = R(C(\D_{|Y_{1}})\cap \Vh, \F)^{U} = A(Y_{1}, \D_{|Y_{1}}').$$
Therefore we may assume that $\Gamma$ is smooth and affine. 
Doing this reduction, the equality $A(\Gamma, \D) = A(\Gamma, \D')$ implies that  $\D = \D'$ (see \cite[Lemmata 6.4 (iii), 9.1]{AH06}).

It remains to check that the pair $(\sigma,\F)$ is a colored cone, i.e. that $\varrho(\F)$ does not contain $0$. But this latter is a consequence of Condition (i) of Lemma \ref{l-chartt}. This finishes the proof of the theorem.
\end{proof}
As consequence of the proof of Theorem \ref{t-ppcol}, we get the following known result which says that colors and $U$-invariants determine uniquely a $B$-chart.
\begin{corollary}\label{c-equal}
Let $(\D,\F), (\D',\F')\in\CP(\Gamma, \ss)$ be two colored polyhedral divisors. Then the equalities
$$k[X_{0}(\D,\F)]^{U} = k[X_{0}(\D',\F')]^{U}\text{ and  } \F = \F'$$ hold if and
only if $X_{0}(\D,\F) = X_{0}(\D', \F')$.
\end{corollary}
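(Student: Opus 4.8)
The plan is to extract everything from the proof of Theorem~\ref{t-ppcol}; no new ideas are needed. The implication ``$\Leftarrow$'' is immediate: if $X_{0}(\D,\F) = X_{0}(\D',\F')$ as open subschemes of $\gsc(\XX)$, then their coordinate rings coincide inside $k(\XX)$, hence so do their $U$-invariant subalgebras; and, as recorded in the proof of Theorem~\ref{t-ppcol} (or directly via Lemma~\ref{l:tecval2}), $\F$ is intrinsically the set of colors of $\gsc(\XX)$ meeting $X_{0}(\D,\F)$, so $\F = \F'$.

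For ``$\Rightarrow$'' I would isolate two facts from the proof of Theorem~\ref{t-ppcol}, valid for any $B$-chart $X_{0}$ of $\gsc(\XX)$ written as $X_{0} = \spec R(\UU,\F)$ with $\UU$ the set of $G$-valuations centered at a point of $X_{0}$ and $\F$ its colors. First, letting $C\subseteq\hs$ be the linear span of $(\UU,\F)$, namely
$$C = \{[\va, a, b]\in\hs\,|\, b\cdot\va(f) + \langle m, a\rangle \geq 0 \text{ for all } f\otimes\chi^{m}\in R(\UU,\F)^{(B)}\},$$
the cone $C$ depends only on $k[X_{0}]^{U}$: indeed $R(\UU,\F)^{(B)}$ is precisely the set of nonzero homogeneous elements of the $M$-graded algebra $R(\UU,\F)^{U} = k[X_{0}]^{U}$. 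Second, the computation carried out in the proof of Theorem~\ref{t-ppcol} gives $k[X_{0}] = R(C\cap\Vh, \F)$. Combining the two, $k[X_{0}]$ is determined, as a subalgebra of $k(\XX)$, by the pair $(k[X_{0}]^{U},\F)$ alone.

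Applying this to the two colored polyhedral divisors: assuming $k[X_{0}(\D,\F)]^{U} = k[X_{0}(\D',\F')]^{U}$ and $\F = \F'$, the associated linear spans agree, $C = C'$, hence $C\cap\Vh = C'\cap\Vh$, and therefore
$$k[X_{0}(\D,\F)] = R(C\cap\Vh,\F) = R(C'\cap\Vh,\F') = k[X_{0}(\D',\F')].$$
Since a $B$-chart of $\gsc(\XX)$ is determined by its coordinate ring viewed inside $k(\XX)$, this yields $X_{0}(\D,\F) = X_{0}(\D',\F')$, completing the equivalence.

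The only step demanding a little care — the ``main obstacle'', such as it is — is the bookkeeping that $R(\UU,\F)^{(B)}$ is genuinely recovered from $k[X_{0}]^{U}$ (it is the homogeneous part of the $M$-graded algebra $k[X_{0}]^{U}$), so that $C$, and hence $R(C\cap\Vh,\F)$, really depend on nothing beyond $k[X_{0}]^{U}$ and $\F$. Everything else is a direct invocation of identities already established in the proof of Theorem~\ref{t-ppcol}.
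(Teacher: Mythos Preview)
Your proof is correct and follows essentially the same approach as the paper. The only minor difference is that for the ``only if'' direction the paper simply cites \cite[Proposition 13.7(1)]{Tim11}, whereas you make the argument self-contained by extracting directly from the proof of Theorem~\ref{t-ppcol} the identity $k[X_{0}] = R(C\cap\Vh,\F)$ together with the observation that $C$ is determined by $k[X_{0}]^{U}$ alone; this is exactly the content of the cited external result, so the two arguments are the same in substance.
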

\begin{proof}
The 'if' part is a consequence of the proof of Theorem \ref{t-ppcol} and the 'only if'
part of \cite[Proposition 13.7 (1)]{Tim11}.
\end{proof}
The next proposition states that a $B$-chart of $\gsc(\XX)$ associated with a
colored polyhedral divisor $(\D,\F)$ does not change if we modify the locus 
of $\D$ by a birational projective morphism. 
\begin{proposition}
\label{p:pullbackpp}
Let $\Gamma, \Gamma'$ be two semi-projective models of $S$ with a projective birational morphism
$\psi: \Gamma'\rightarrow \Gamma.$ Consider a colored polyhedral divisor 
$(\D,\F)\in\CP(\Gamma,\ss)$ and denote by $\psi^{\star}(\D)$ the polyhedral divisor defined by the equality $\psi^{\star}(\D)(m) = \psi^{\star}(\D(m))$ for 
any $m\in\tail(\D)^{\vee}$. Then $(\psi^{\star}(\D),\F)\in\CP(\Gamma',\ss)$ and we have 
$$X_{0}(\D,\F) =  X_{0}(\psi^{\star}(\D),\F)\text{ in } \gsc(\XX).$$
\end{proposition}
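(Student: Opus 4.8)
The plan is to reduce the statement to an equality of $B$-stable subalgebras of $k(\XX)$ and then use the defining formula $X_0(\D,\F)=\spec R(C(\D)\cap\Vh,\F)$ together with the projection-formula behaviour of $\psi^\star$. First I would check that $(\psi^\star(\D),\F)$ is again a colored polyhedral divisor: $\Gamma'$ is semi-projective and the morphism $\psi$ is projective, so $\loc(\psi^\star(\D))=\psi^{-1}(\loc(\D))$ is semi-projective; semi-ampleness and bigness of $\psi^\star(\D)(m)=\psi^\star(\D(m))$ are preserved under pullback along a birational projective morphism (a basepoint-free multiple pulls back to a basepoint-free multiple, and bigness of $\D(m)$ for $m$ in the relative interior follows since $\psi^\star$ does not change the function field and one may pull back the section $f$ witnessing bigness, using that $\psi$ is birational to see the relevant open locus stays affine); hence $\psi^\star(\D)$ is proper. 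The tail cone is unchanged, so $(\tail(\psi^\star(\D)),\F)=(\tail(\D),\F)$ is still a colored cone; and the hypercone is unchanged as well because the coefficient polyhedra are only reindexed by the prime divisors of $\Gamma'$ lying over those of $\Gamma$ (with $\D_{Y'}=\D_{\psi(Y')}$ on divisors contracted by $\psi$, this does not enlarge $C(\D)$), so $C(\psi^\star(\D))=C(\D)$ as subsets of the hyperspace $\hs$. Thus the third condition of Definition \ref{d-poly} is inherited, and $(\psi^\star(\D),\F)\in\CP(\Gamma',\ss)$.

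Next I would prove the equality of $B$-charts. By Theorem \ref{t-ppcol}, $X_0(\D,\F)$ is determined by the pair consisting of its colors $\F$ and its algebra of $U$-invariants $k[X_0(\D,\F)]^U=A(\Gamma,\D)$, and likewise $k[X_0(\psi^\star(\D),\F)]^U=A(\Gamma',\psi^\star(\D))$; so by Corollary \ref{c-equal} it suffices to show $A(\Gamma,\D)=A(\Gamma',\psi^\star(\D))$ as $M$-graded subalgebras of $A_M$ (the color set being literally the same $\F$ in both cases). For each $m\in\tail(\D)^\vee\cap M$ we must check
\[
H^0\bigl(\loc(\D),\mathcal{O}(\D(m))\bigr)=H^0\bigl(\loc(\psi^\star(\D)),\mathcal{O}(\psi^\star(\D(m)))\bigr).
\]
This is exactly the statement that pushing forward along the birational projective morphism $\psi$ does not change global sections of the reflexive sheaf $\mathcal{O}(D)$ associated to a $\QQ$-divisor $D$ on a normal variety: a rational function $f$ with $(\div f+\D(m))|_{\loc(\D)}\geq 0$ is the same, via the function-field identification $k(\Gamma)=k(\Gamma')$, as a function with $(\div f+\psi^\star(\D(m)))|_{\loc(\psi^\star(\D))}\geq 0$, because on the locus where $\psi$ is an isomorphism the two conditions agree, and on the $\psi$-exceptional prime divisors $E$ we have $\mathrm{ord}_E(\psi^\star(\D(m)))=0$ by definition of $\psi^\star$ while $\mathrm{ord}_E(\div f)\geq 0$ is automatic once $f$ is regular in codimension one away from $\loc(\D)$'s boundary — more carefully, one uses that $f$ lies in $k(\Gamma)^\star$ and that the divisor $\div f+\D(m)$ being effective on $\Gamma$ forces $\div f+\psi^\star(\D(m))$ to be effective on $\Gamma'$ because $\psi^\star$ of an effective divisor is effective and $\div_{\Gamma'}(f)=\psi^\star(\div_\Gamma(f))$ on the non-exceptional part while the exceptional coefficients of $\div_{\Gamma'}(f)$ are $\geq$ those forced by $-\psi^\star(\D(m))=0$. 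Summing over $m$ gives $A(\Gamma,\D)=A(\Gamma',\psi^\star(\D))$, hence $X_0(\D,\F)=X_0(\psi^\star(\D),\F)$ in $\gsc(\XX)$.

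The main obstacle I anticipate is the bookkeeping on the $\psi$-exceptional divisors in the last displayed identity: one must be a little careful that the comparison of the two Krull-ring presentations $R(C(\D)\cap\Vh,\F)$ and $R(C(\psi^\star\D)\cap\Vh,\F)$ does not implicitly add or remove valuations coming from exceptional divisors of $\Gamma'$, i.e. that every prime divisor of $\XX$ of the form $E\times\Omega_0$ with $E$ $\psi$-exceptional already imposes no extra condition. This is handled precisely because $E$ is $\psi$-exceptional of positive codimension-one in $\Gamma'$ but its coefficient polyhedron in $\psi^\star(\D)$ is $\D_{\psi(E)}$, and the corresponding Cayley cone lies inside $C(\D)$; equivalently $C(\psi^\star(\D))=C(\D)$, which I would verify directly from the definition of the hypercone as a subset of $\hs$. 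Once that identification is in hand, everything reduces to the formal manipulations above, and the projective birationality of $\psi$ (so that $\psi_\ast\mathcal{O}_{\Gamma'}=\mathcal{O}_\Gamma$ and $H^0$ is preserved) does the rest. I would also note that this proposition is the combinatorial shadow of the fact that the $B$-chart only sees the valuations $C(\D)\cap\Vh$ and the colors $\F$, not the particular model $\Gamma$ chosen to realize the polyhedral divisor, which is why a projective birational modification of the locus is invisible.
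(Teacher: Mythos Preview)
Your overall strategy is correct and coincides with the paper's: reduce via Corollary~\ref{c-equal} to the identity $A(\Gamma,\D)=A(\Gamma',\psi^\star\D)$ of $U$-invariant algebras, which follows from preservation of $H^0$ under pullback along a projective birational morphism of normal varieties. The paper argues the $H^0$ identity in one line using $\psi_\star\mathcal{O}_{\Gamma'}=\mathcal{O}_\Gamma$, and cites \cite[Example~8.4(i)]{AH06} for the properness of $\psi^\star\D$.

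However, your handling of the $\psi$-exceptional divisors contains a genuine error that would make two of your verifications fail as written. Both claims ``$\D_{Y'}=\D_{\psi(Y')}$ for contracted $Y'$'' and ``$\mathrm{ord}_E(\psi^\star(\D(m)))=0$ by definition of $\psi^\star$'' are false. Since $\D(m)$ is Cartier, its pullback is computed from local equations: if $E\subseteq\Gamma'$ is exceptional with image the point (or higher-codimension center) $z$, and $f_Y$ is a local equation of $Y$ near $z$, then the coefficient of $E$ in $\psi^\star(\D(m))$ is $\sum_Y v_E(f_Y)\min_{v\in\D_Y}\langle m,v\rangle$. Hence $(\psi^\star\D)_E=\sum_Y v_E(f_Y)\cdot\D_Y$, the Minkowski sum $v_E(\D)$ in the notation of Section~\ref{s-teclem}, which is typically neither $\sigma$ nor any single $\D_Y$. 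In particular $C(\psi^\star\D)$ acquires new slices over the exceptional valuations and is strictly larger than $C(\D)$, so the equality you announce cannot be verified.

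The fixes are straightforward but different from what you wrote. For the third condition of Definition~\ref{d-poly}: the vertices of $(\psi^\star\D)_E$ are nonnegative combinations of vertices of the $\D_Y$, hence lie in the cone $\Vc$, so each Cayley cone $C_E(\psi^\star\D)$ is again generated by its intersection with $\Vc\times\QQ_{\geq 0}$ together with $\varrho(\F)\subseteq\sigma$. For the $H^0$ identity: drop the false ``$=0$'' and argue instead that $\div_{\Gamma'}f+\psi^\star\D(m)=\psi^\star(\div_\Gamma f+\D(m))$, so effectivity is preserved by pullback of effective Cartier divisors (one inclusion) and detected already on strict transforms (the other); or simply use $\psi_\star\mathcal{O}_{\Gamma'}=\mathcal{O}_\Gamma$ as the paper does.
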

\begin{proof} 
This follows from the argument of the proof of Theorem \ref{t-ppcol}.
\end{proof}

\subsection{The local structure theorem} 
\label{s-sing}
The local structure theorem is an important result which asserts that a normal variety 
with an action of a connected reductive group can be locally expressed 
 as the product of an affine space and an affine variety with an action of a Levi subgroup (see \cite{BLV86}, \cite[Section 4]{Tim11} for more information). In this section, we investigate this result for simple normal $G$-varieties with spherical orbits having trivial birational type. Using Theorem \ref{t-ppcol}, we will translate it into the language of colored polyhedral divisors, see Theorem \ref{p-loc}. 
As usual, the symbol $\ss = \ss_{\Omega}= (\Delta^p, \Sigma, \aa, M)$ denotes the homogeneous
spherical datum of the spherical homogeneous $G$-space $\Omega$.
\\

\label{s-zeta}
We start by recalling the notion of localization of homogeneous spherical data
(cf. \cite{Lun97}, \cite[Section 3.2]{Lun01}). We first consider a subset
 $\Delta_{a}\subseteq \Delta$ of the set of simple roots of $(B,T)$. Then the \emph{localization} of $\ss$ with respect to $\Delta_{a}$ is the
datum 
$$\ss_{a}  = (\Delta^p_{a}, \Sigma_{a}, \aa_{a}, M),$$
\begin{align*}
		\text{where } \begin{cases}
						\Sigma_{a}  = \Sigma\cap {\rm vect}_{\QQ}(\Delta_{a}), \\
						\Delta^p_{a} =  \Delta^p\cap \Delta_{a},\\
						\aa_{a} = \{ D\in \aa\,|\,\zeta(D)\cap \Delta_{a}\neq \emptyset\}. 
					\end{cases}
\end{align*}
The symbol $\zeta(D)$ denotes the set of roots $\alpha\in \Delta$ such that the associated 
minimal parabolic subgroup $P_{\alpha}$ moves $D$. It is a homogeneous spherical datum for 
the Levi subgroup of $G$ corresponding to $\Delta_{a}$. 
Let us fix a set of colors $\F$ of $\Omega$. 
The case which we will encounter is when
$$\Delta_{a} = \Delta_{\lc}= \Delta_{\lc, \F}:=\Delta\setminus \bigcup_{D \not\in \F}\zeta(D).$$
We denote by $\ss_{\lc} = (\Delta^p_{\lc}, \Sigma_{\lc}, \aa_{\lc}, M)$
the corresponding homogeneous spherical datum.
\\

Let us explain the meaning of this operation in terms of spherical homogeneous spaces. For the parabolic subgroup $P$ associated with $\Delta_{\lc}$ and a Levi decomposition $P = G_{\star} \ltimes P_u$, the datum $\ss_{\lc}$ corresponds to the spherical homogeneous $G_{\star}$-space $\Omega_{\lc}$ satisfying the following property (see for instance \cite[Proposition 3.2]{Gag15}). Considering a colored cone $(\sigma, \F)$ and the simple spherical embedding $W$ of $\Omega$ attached to it, we may define the $B$-chart
$$W_{0} = W\setminus\bigcup_{D\in \F_{\Omega}\setminus \F} \bar{D}.$$
By the local structure theorem (see \cite[\S 4.2]{Tim11}), we have a decomposition $P_u\times W_{\lc}\simeq W_0$, where
$W_{\lc}$ is a $G_{\lc}$-stable closed subvariety of $W_0$ which is spherical for the acting group $G_{\star}$. 
The open $G_{\star}$-orbit in $W_{\lc}$ is $G_{\star}$-isomorphic to $\Omega_{\lc}$. In the sequel, we will denote by the letter $\F_{\lc}$ the set of colors of $\Omega_{\lc}$. Note that the spherical $G_{\lc}$-variety $W_{\lc}$ is described by the colored cone $(\sigma,\F_{\lc})$.     
\\

In order to study the local structure for the simple $G$-variety $X = X(\D,\F)$
associated with the colored polyhedral divisor $(\D,\F)\in \CP(\Gamma, \ss),$ we need to introduce the intermediate affine $G_{\star}$-variety $X_{\lc}$. Its definition is stated in the next paragraph. We keep the same notation as before for the spherical homogeneous $G_{\star}$-space $\Omega_{\lc}$.
\\

\label{r-module}
We now consider the multiplicity-free rational $G_{\star}$-module
$k[\Omega_{\lc}] = \bigoplus_{\lambda\in\Lambda}V_\lambda.$
Here $\Lambda\subseteq 	M$ is a satured semigroup of dominant weights, and $V_\lambda$ is the simple $G_{\star}$-submodule associated with $\lambda$. 
For all $\lambda, \mu\in \Lambda$ the subset $V_{\lambda}\cdot V_{\mu}$ is a direct sum of simple submodules $V_{\nu}$. The possible differences $\lambda + \mu - \nu$
belong to the semigroup generated by $\Sigma_{\lc}$. 
In particular, the dual of the cone generated by the possible differences $\nu-\lambda - \mu$ (belonging to $-\Sigma_{\lc}$) is exactly the valuation cone $\Vc_{\lc}$ of $\Omega_{\lc}$ (see \cite[\S 1.2, Proposition]{Bri91}). More precisely,
let us define the partial order $\leq_{\Lambda}$ on $\Lambda$ by letting $\mu\leq_{\Lambda}\lambda$
if $\lambda - \mu$ is a non-negative integral linear combination of elements of $\Sigma_{\lc}$. Thus,
we have the $G_{\lc}$-module inclusion
$$V_{\lambda}\cdot V_{\mu}\subseteq \bigoplus_{\nu\leq_{\Lambda} \lambda +\mu}V_{\nu}.$$
\begin{lemme}
\label{l-loc}
Consider a colored polyhedral divisor $(\D, \F)\in \CP(\Gamma, \ss).$ Then the subset 
$$A_{\lc}(\Gamma, \D):=  \bigoplus_{\lambda\in \tail(\D)^\vee\cap M} H^0(\loc(\D), \mathcal{O}_{\loc(\D)}(\D(\lambda)))\otimes_{k} V_{\lambda}\subseteq k(\Gamma)\otimes_{k}k(\Omega_{\lc})$$
is a $G_{\star}$-stable subalgebra. Moreover, the $G_{\star}$-scheme $X_{\lc} = \spec\, A_{\lc}(\Gamma, \D)$ identifies with the $G_{\star}$-model of $\XX_{\lc} = \Gamma\times \Omega_{\lc}$ corresponding to the colored polyhedral divisor
$(\D, \F_{\lc})$. 
\end{lemme}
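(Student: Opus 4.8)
The plan is to deduce both assertions simultaneously by realising $A_{\lc}(\Gamma,\D)$ as the coordinate ring of the simple $G_{\star}$-model attached by Theorem \ref{t-ppcol} -- applied this time to the connected reductive group $G_{\star}$, its Borel subgroup $B_{\star}=B\cap G_{\star}$ with unipotent radical $U_{\star}$, and the homogeneous spherical datum $\ss_{\lc}$ -- to the colored polyhedral divisor $(\D,\F_{\lc})$. First I would check that $(\D,\F_{\lc})\in\CP(\Gamma,\ss_{\lc})$: this is exactly the compatibility of colored cones under localisation recalled above the statement (the tail $\sigma=\tail(\D)$ serves simultaneously as a colored cone for $(\ss,\F)$ and for $(\ss_{\lc},\F_{\lc})$, cf.\ \cite[\S4.2]{Tim11}, \cite[Proposition 3.2]{Gag15}), the remaining clauses -- properness of $\D$ and the hypercone condition -- involving only $\sigma$. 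Writing $X_{\lc}(\D,\F_{\lc}):=G_{\star}\cdot X_{0}(\D,\F_{\lc})$ for the resulting simple $G_{\star}$-model of $\XX_{\lc}$, Theorem \ref{t-ppcol} gives $k[X_{0}(\D,\F_{\lc})]^{U_{\star}}=A(\Gamma,\D)$.

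The decisive point I would exploit is that $\F_{\lc}$ is the \emph{full} set of colors of $\Omega_{\lc}$. Hence the $B_{\star}$-chart $X_{0}(\D,\F_{\lc})$ meets every color and every $G_{\star}$-orbit of $X_{\lc}(\D,\F_{\lc})$, so its complement there contains no $G_{\star}$-stable prime divisor (it meets the open orbit of each such divisor) and no color (those are precisely $\F_{\lc}$), hence has codimension $\geq 2$. As $X_{0}(\D,\F_{\lc})$ is affine and $X_{\lc}(\D,\F_{\lc})$ is normal and separated, Hartogs gives $k[X_{\lc}(\D,\F_{\lc})]=k[X_{0}(\D,\F_{\lc})]$, and the canonical map $X_{\lc}(\D,\F_{\lc})\to\spec\, k[X_{\lc}(\D,\F_{\lc})]=X_{0}(\D,\F_{\lc})$ restricts to the identity on the dense open $X_{0}(\D,\F_{\lc})$, forcing $X_{\lc}(\D,\F_{\lc})=X_{0}(\D,\F_{\lc})$; in particular this model is affine, with $U_{\star}$-invariants $A(\Gamma,\D)$. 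Now, since $\Omega_{\lc}$ is spherical, $k(\Omega_{\lc})$ is a multiplicity-free rational $G_{\star}$-module, so $\Hom_{G_{\star}}(V_{\lambda},k(\Gamma)\otimes_{k}k(\Omega_{\lc}))\cong k(\Gamma)$ for every dominant weight $\lambda$, and every $G_{\star}$-submodule decomposes as $\bigoplus_{\lambda}R_{\lambda}\otimes V_{\lambda}$ with $R_{\lambda}\subseteq k(\Gamma)$ recovered from the $\lambda$-weight space of its $U_{\star}$-invariants. Applying this to $k[X_{\lc}(\D,\F_{\lc})]$ and using that its $U_{\star}$-invariant subalgebra $A(\Gamma,\D)$ has $\lambda$-graded piece $H^{0}(\loc(\D),\mathcal{O}_{\loc(\D)}(\D(\lambda)))\otimes\chi^{\lambda}$ for $\lambda\in\tail(\D)^{\vee}\cap M$, one identifies $k[X_{\lc}(\D,\F_{\lc})]$ with $A_{\lc}(\Gamma,\D)$; along the way this also shows $H^{0}(\loc(\D),\mathcal{O}_{\loc(\D)}(\D(\lambda)))=0$ for those $\lambda\in\tail(\D)^{\vee}\cap M$ that are not weights of $\Omega_{\lc}$, so the sum defining $A_{\lc}(\Gamma,\D)$ does land inside $k(\Gamma)\otimes_{k}k(\Omega_{\lc})$. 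This delivers at once that $A_{\lc}(\Gamma,\D)$ is a $G_{\star}$-stable subalgebra and that $X_{\lc}=\spec\, A_{\lc}(\Gamma,\D)$ is the asserted $G_{\star}$-model.

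For completeness I would also present a direct verification of the subalgebra property. Given homogeneous elements $f\otimes v\in H^{0}(\mathcal{O}(\D(\lambda)))\otimes V_{\lambda}$ and $g\otimes w\in H^{0}(\mathcal{O}(\D(\mu)))\otimes V_{\mu}$, write $vw=\sum_{\nu}(vw)_{\nu}$ using $V_{\lambda}\cdot V_{\mu}\subseteq\bigoplus_{\nu\leq_{\Lambda}\lambda+\mu}V_{\nu}$, so $\lambda+\mu-\nu$ lies in the monoid generated by $\Sigma_{\lc}$. One then checks $\D(\nu)\geq\D(\lambda+\mu)$: by the Remark after Definition \ref{d-poly} every vertex $p$ of every coefficient polyhedron $\D_{Y}$ lies in $\Vc$, hence $\langle\gamma,p\rangle\leq0$ for each spherical root $\gamma\in\Sigma$, so $\langle\nu,p\rangle\geq\langle\lambda+\mu,p\rangle$ and the minimum defining the coefficient of $Y$ in $\D(\nu)$ dominates that in $\D(\lambda+\mu)$. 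Together with the superadditivity $\D(\lambda)+\D(\mu)\leq\D(\lambda+\mu)$ of the evaluation map this gives $\div(fg)+\D(\nu)\geq\div(fg)+\D(\lambda+\mu)\geq(\div(f)+\D(\lambda))+(\div(g)+\D(\mu))\geq0$, i.e.\ $fg\otimes(vw)_{\nu}\in H^{0}(\mathcal{O}(\D(\nu)))\otimes V_{\nu}$.

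The step I expect to be the main obstacle is the reduction $X_{\lc}(\D,\F_{\lc})=X_{0}(\D,\F_{\lc})$, i.e.\ the affineness of the local model -- one must carefully use that $\F_{\lc}$ is the complete set of colors of $\Omega_{\lc}$ to make the complement of the $B_{\star}$-chart of codimension $\geq2$ -- together with the bookkeeping needed to match the index set $\tail(\D)^{\vee}\cap M$ in the definition of $A_{\lc}(\Gamma,\D)$ with the weights actually occurring in $k[\Omega_{\lc}]$. For the hands-on route the crux is instead the inequality $\D(\nu)\geq\D(\lambda+\mu)$ for $\nu\leq_{\Lambda}\lambda+\mu$, which is precisely where the hypotheses on $\D$ (vertices of the $\D_{Y}$ in $\Vc$, superadditivity of $m\mapsto\D(m)$) are used.
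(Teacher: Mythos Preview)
Your direct verification (the ``for completeness'' paragraph) is exactly the paper's proof of the first claim: reduce to $\D(\nu)\geq\D(\lambda+\mu)$ for $\nu\leq_{\Lambda}\lambda+\mu$ via the inclusion of vertices $V(\D_{Y})\subseteq\Vc\subseteq\Vc_{\lc}$. For the second claim the paper argues in the opposite direction from your first route: having established that $A_{\lc}(\Gamma,\D)$ is a subalgebra, it observes that by properness of $\D$ its spectrum is a normal affine $G_{\star}$-model of $\XX_{\lc}$ (\cite[Theorem D5]{Tim11}), hence automatically a $B_{\star}$-chart meeting every color (\cite[Corollary 13.10]{Tim11}), and then identifies it with $X_{0}(\D,\F_{\lc})$ via Theorem \ref{t-ppcol} and Corollary \ref{c-equal}. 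Your first route is a legitimate alternative that recovers the subalgebra property as a byproduct rather than an input; what it buys is that one never has to verify closure under multiplication by hand. Two minor remarks: the affineness step $X_{\lc}(\D,\F_{\lc})=X_{0}(\D,\F_{\lc})$ follows more directly from the fact that the complement of an affine open in a separated normal variety is purely divisorial (cf.\ the proof of Lemma \ref{l:reduce}), so an empty set of $B_{\star}$-divisorial components forces it to be empty---your Hartogs-plus-retraction argument is correct but roundabout; and your worry about $\lambda\in\sigma^{\vee}\cap M\setminus\Lambda$ is vacuous, since $(\sigma,\F_{\lc})$ being a colored cone for $\Omega_{\lc}$ (equivalently, $k[W_{\lc}]\subseteq k[\Omega_{\lc}]$ in the notation of \S\ref{s-zeta}) already gives $\sigma^{\vee}\cap M\subseteq\Lambda$.
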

\begin{proof}
For the first claim, we only need to show that if $\lambda,\mu,\nu\in\Lambda$ satisfy
$\nu\leq_{\Lambda} \lambda +\mu$, then $\D(\nu)\geq \D(\lambda+\mu)$, i.e., $\D(\nu) - \D(\lambda+ \mu)$
is an effective $\QQ$-divisor. Let $V(\D_{Y})$ be the set of vertices of $\D_{Y}$. We recall that for such $\lambda,\mu,\nu$ we have $\nu -\lambda - \mu \in\Vc_{\lc}^{\vee},$
where $\Vc_{\lc}$ is the valuation cone of $\Omega_{\lc}$. Since $\Sigma_{\lc}\subseteq \Sigma$, we have $-\Sigma_{\lc}\subseteq -\Sigma$ and therefore by duality the inclusions $V(\D_{Y})\subseteq \Vc\subseteq \Vc_{\lc}$
for any prime divisor $Y\subseteq \Gamma$. 
Hence we obtain that
$$ \min_{v\in\D_{Y}}\langle \nu, v\rangle - \min_{v\in\D_{Y}}\langle \lambda + \mu,v\rangle \geq     \min_{v\in\D_{Y}}\langle \nu - \lambda - \mu , v\rangle \geq 0,$$
yielding the first claim. By properness of $\D$, the scheme $X_{\lc}$ is a $G_{\star}$-model of $\XX_{\lc}$ (see \cite[Theorem D5]{Tim11}). Finally, the colored polyhedral divisor $(\D,\F_{\star})$ describes
$X_{\lc}$ since it is a $B$-chart (see \cite[Corollary 13.10]{Tim11} and Theorem \ref{t-ppcol}).
\end{proof}
The next result determines the local structure of a simple $G$-model of $\XX$ in terms of its colored polyhedral divisor.
\begin{theorem}
\label{p-loc}
Let $P\subseteq G$ be the parabolic subgroup associated with the set of simple roots $\Delta_{\lc}$. 
The local structure for the simple $G$-variety $X = X(\D,\F)$ can be expressed as follows. Consider
 the $B$-chart $X_{0} = X_{0}(\D,\F)\subseteq X$ attached to the colored polyhedral divisor $(\D,\F)$. Then there exist a Levi decomposition $P = P_{u}\ltimes G_{\star}$
and a closed $G_{\star}$-stable subvariety $X_{\lc}\subseteq X_{0}$ such that the map
$$\pi_{\lc}:P\times^{G_{\lc}}X_{\lc} = P_{u}\times X_{\lc}\rightarrow X_{0}, \,\,\,(u,x)\mapsto u\cdot x$$ is a $P$-isomorphism.
The variety $X_{\lc}$ is the variety defined in Lemma \ref{l-loc} and it corresponds to the colored 
polyhedral divisor $(\D, \F_{\lc})$. 
\end{theorem}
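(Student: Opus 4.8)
The plan is to deduce Theorem \ref{p-loc} from the classical local structure theorem for simple spherical varieties applied to the open $G_{\star}$-orbit situation, and then to combine it with the polyhedral-divisor bookkeeping furnished by Lemma \ref{l-loc} and Theorem \ref{t-ppcol}. First I would recall that $X = X(\D,\F) = G\cdot X_{0}(\D,\F)$ is a simple $G$-model of $\XX = S\times\Omega$, so by Theorem \ref{t-ppcol} its $B$-chart $X_{0} = X_{0}(\D,\F) = \spec\, R(C(\D)\cap\Vh,\F)$ meets every $G$-orbit. The set $\Delta_{\lc} = \Delta\setminus\bigcup_{D\notin\F}\zeta(D)$ is exactly the set of simple roots $\alpha$ whose minimal parabolic $P_{\alpha}$ does not move any color of $\gsc(\XX)$ outside $X_{0}$; in the spherical reminder of Section \ref{s-zeta} one sees that the parabolic $P$ associated with $\Delta_{\lc}$ is the largest parabolic whose unipotent radical $P_{u}$ acts by a free action on the relevant $B$-chart. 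Concretely, on the open $G$-orbit of $X$ (which is $G$-isomorphic to $\Omega$), the sub-$B$-chart $W_{0} = X_{0}\cap\Omega$ is precisely $\Omega\setminus\bigcup_{D\in\F_{\Omega}\setminus\F}\bar D$, and the classical local structure theorem (see \cite[\S 4.2]{Tim11}) gives a Levi decomposition $P = P_{u}\ltimes G_{\star}$ and a $G_{\star}$-stable slice so that $P_{u}\times(\text{slice})\xrightarrow{\sim} W_{0}$ with open $G_{\star}$-orbit $\cong\Omega_{\lc}$.

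Next I would upgrade this from the open orbit to the whole $B$-chart $X_{0}$. The key point is that the $P_{u}$-action on $X_{0}$ is still free with a geometric quotient, because $X_{0}$ is an affine $B$-chart of a $G$-model of $S\times\Omega$ and the obstruction to freeness of $P_{u}$ is detected already on divisors (colors), which by construction are all in $\F$ or are $G$-stable. More precisely, I would invoke the local structure theorem in the form stated in \cite[\S 4.2]{Tim11} directly for the affine simple $G$-variety $X$ with its chosen $B$-chart: there is a $P$-stable open subset equal to $X_{0}$ and a Levi decomposition $P = P_{u}\ltimes G_{\star}$ together with a closed $G_{\star}$-stable affine subvariety $X_{\lc}\subseteq X_{0}$ such that the multiplication map $\pi_{\lc}\colon P_{u}\times X_{\lc}\to X_{0}$, $(u,x)\mapsto u\cdot x$ is a $P$-equivariant isomorphism, where $P_{u}$ acts on the target by left translation through $P = P_{u}\ltimes G_{\star}$ and $X_{\lc} = X_{0}^{U_{\lc}}$ is the fixed-point set of a suitable one-parameter-unipotent complement. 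This is just the reductive local structure theorem; the only thing that needs checking is that the parabolic that comes out coincides with the $P$ attached to $\Delta_{\lc} = \Delta_{\lc,\F}$, and this follows from the description of $\F$ as the set of colors meeting $X_{0}$ (Lemma \ref{l:tecval2} controls which $G$-cycles lie in a given color), so that a minimal parabolic $P_{\alpha}$ preserves $X_{0}$ if and only if it does not move any color outside $\F$, i.e.\ iff $\alpha\in\Delta_{\lc}$.

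Finally I would identify the slice $X_{\lc}$ with the variety of Lemma \ref{l-loc}. Taking $U$-invariants (equivalently $U_{\lc}$-invariants for the maximal unipotent of $G_{\star}$ inside $U$) and using $k[X_{0}]^{U} = A(\Gamma,\D)$ from Theorem \ref{t-ppcol}, together with the decomposition $k[X_{0}] = k[P_{u}]\otimes k[X_{\lc}]$ coming from $\pi_{\lc}$, one gets that $k[X_{\lc}]$ is the $G_{\star}$-module whose highest-weight vectors span $A(\Gamma,\D)$; since $k[\Omega_{\lc}] = \bigoplus_{\lambda\in\Lambda}V_{\lambda}$ is multiplicity-free, this forces $k[X_{\lc}] = \bigoplus_{\lambda\in\tail(\D)^{\vee}\cap M}H^{0}(\loc(\D),\mathcal{O}(\D(\lambda)))\otimes V_{\lambda} = A_{\lc}(\Gamma,\D)$, which is exactly the algebra shown to be a well-defined $G_{\star}$-algebra in Lemma \ref{l-loc}. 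Then Lemma \ref{l-loc} also says $X_{\lc} = \spec\, A_{\lc}(\Gamma,\D)$ is the $G_{\star}$-model of $\Gamma\times\Omega_{\lc}$ corresponding to the colored polyhedral divisor $(\D,\F_{\lc})$, completing the proof. I expect the main obstacle to be the bookkeeping in the last step: verifying carefully that the $G_{\star}$-module structure on the slice together with the multiplicity-freeness of $k[\Omega_{\lc}]$ pins down $k[X_{\lc}]$ uniquely from its $U_{\lc}$-invariants (this uses that $X_{\lc}$ is a $G_{\star}$-model of $\Gamma\times\Omega_{\lc}$, hence its function algebra is determined by its highest-weight part, via \cite[Corollary 13.10]{Tim11}), and matching the weight monoid $\tail(\D)^{\vee}\cap M$ with $\Lambda$ on the nose.
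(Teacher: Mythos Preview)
Your approach is essentially the paper's: apply the local structure theorem (the paper cites \cite[\S 5, Lemma 2]{Tim00}) to the $B$-chart $X_{0}$, identify the stabilizing parabolic with $P$ via the description $X_{0} = X(\D,\F)\setminus\bigcup_{D\in\F_{\Omega}\setminus\F}\bar D$, and then pin down the slice by computing its $U_{\lc}$-invariants as $A(\Gamma,\D)$ and its $G_{\lc}$-birational type as $\Gamma\times\Omega_{\lc}$. One small correction: your parenthetical ``$X_{\lc} = X_{0}^{U_{\lc}}$'' is not right --- the slice is not a $U_{\lc}$-fixed locus (indeed $G_{\lc}$, and hence $U_{\lc}$, acts nontrivially on $X_{\lc}$); drop that description and simply invoke the local structure theorem as a black box, exactly as you do in the next clause.
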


\begin{proof}
Consider the parabolic subgroup $P_{1} = \{g\in G\,|\, g\cdot X_{0}\subseteq X_{0}\}$
that stabilizes the $B$-chart $X_{0}$.
By construction of the $B$-chart $X_{0}$ (see the comment after \cite[Corollary 13.9]{Tim11}), we have the equality 
$$X_{0} = X(\D,\F)\setminus \bigcup_{D\in\F_{\Omega}\setminus \F}\bar{D},$$
where we identify colors of $\Omega$ with colors of $X(\D,\F)$. Indeed, $X(\D,\F)$ has a $G$-stable dense open subset $G$-isomorphic to $\Gamma_{0}\times\Omega$, where 
$\Gamma_{0}\subseteq \Gamma$ is an open subset. To any color $D\in\F_{\Omega}$
the closure $\bar{D}$ of $\Gamma_{0}\times D$ in $X(\D,\F)$ defines a color and
each of them is obtained in this way. Hence $P_{1}$ coincides with the parabolic subgroup $P$ preserving $\F_{\Omega}\setminus \F$.

Consequently, by \cite[\S 5, Lemma 2]{Tim00}, there exists a closed $G_{\star}$-stable subvariety $Z\subseteq X_{0}$ such that the map
$$\pi_{\lc}: P\times^{G_{\lc}}Z = P_{u}\times Z\rightarrow X_{0},\,\, (u,x)\mapsto u\cdot x$$
is a $P$-isomorphism. Let us show that $Z$ is $G_{\star}$-isomorphic to $X_{\lc}$. 
Denoting by $U_{\lc}$ (resp. $B_{\lc}$) the maximal unipotent subgroup $G_{\star}\cap U$ (resp. the Borel subgroup $G_{\star}\cap B$), we obtain that
$$k[X_0]^{U} = A(\Gamma, \D) = k[Z]^{U_{\lc}}\text{ and } k(X_{0})^{B} = k(\Gamma)  = k(Z)^{B_{\lc}}.$$ Moreover, by identifying the complement of the union of the colors of $\F_{\Omega}\setminus \F$ in $\Omega$ with the product $P_{u}\times \Omega_{\lc}$
and using the map $\pi_{\lc}$, it follows that the $G_{\star}$-variety
$Z$ has a $G_{\star}$-stable dense open subset $G_{\star}$-isomorphic to $\Gamma_{0}\times \Omega_{\lc}$. We conclude that $Z$ is $G_{\star}$-isomorphic to $X_{\lc}$. This finishes the proof of the theorem.   
\end{proof}
\subsection{Some technical lemmata}\label{s-teclem}
In this 
section, we collect some technical results needed for our classification problem.
The following classical lemma is a consequence of \cite[Section 5, Theorem 3]{Sum74} and \cite[Theorems 12.11, 12.13]{Tim11}. It is an equivariant version of the valuative criterion of properness and separateness.
\begin{lemma}
\label{l:testval}
If $X$ is an integral scheme of finite type over $k$ with a $G$-action, then 
$X$ is separated over $k$ (resp. proper over $k$) if and only if any $G$-valuation $\nu$ of $k(X)$
 has at most one center (resp. exactly one center) in $X$.
\end{lemma}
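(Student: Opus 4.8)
The two ``only if'' directions are nothing but the usual valuative criteria. If $X$ is separated over $k$, the diagonal morphism $X\to X\times_{k}X$ is a closed immersion, so by the valuative criterion of separatedness every valuation of $k(X)$ --- a fortiori every $G$-valuation --- has at most one center in $X$; if in addition $X$ is proper, then $X\to\spec k$ is universally closed, and the valuative criterion of properness shows that every valuation of $k(X)$, hence every $G$-valuation, has exactly one center. So only the converses require an argument, and the plan is to reduce them to the Luna--Vust machinery.

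First I would pass to the $G$-equivariant normalization of $X$: this changes neither $k(X)$, nor the set of $G$-valuations of $k(X)$, nor --- since normalization is finite and valuation rings are integrally closed, so that above a given center there lies exactly one center on the normalization and conversely --- the number of centers of a given valuation; hence we may assume $X$ normal. Then Sumihiro's theorem \cite[Section 5, Theorem 3]{Sum74} lets us regard $X$ as a $G$-model of $k(X)$ in the sense of Section \ref{sec-valsec}, i.e.\ as a $G$-stable open subscheme of finite type of $\gsc(X)$. In this setting I would invoke the separateness and properness criteria for $G$-models, \cite[Theorems 12.11, 12.13]{Tim11}: such a $G$-model is separated over $k$ exactly when no $G$-valuation of its function field has two distinct centers, and is proper over $k$ exactly when every $G$-valuation has exactly one center. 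Combined with the observations above, this yields both equivalences at once --- ``exactly one'' for all $G$-valuations forces separateness, after which the properness criterion supplies the missing centers.

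The real work sits inside \cite[Theorems 12.11, 12.13]{Tim11}, and that is the step I expect to be the main obstacle. Its crux is the averaging of valuations: given a geometric valuation $v$ of $k(X)$ one forms the $G$-invariant valuation $\bar v$ with $\bar v(f)=\min_{g\in G}v(g\cdot f)$, which forces one to check that $g\mapsto v(g\cdot f)$ takes only finitely many values on $G$ (so that the minimum is attained and $\bar v$ is again geometric), and then to compare the centers of $v$ and of $\bar v$: that $\bar v$ acquires a center precisely when $v$ has one, and that two distinct centers of some valuation of $k(X)$ already produce two distinct centers of a $G$-valuation. This last comparison rests on the equivariant local structure theorem together with an analysis of the $G$-orbit closures through the centers; once it is granted, the two criteria follow formally from the classical valuative criteria, and no ingredient beyond \cite{Sum74} and \cite{Tim11} is needed.
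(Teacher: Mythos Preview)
Your proposal is correct and aligns with the paper's own treatment: the paper does not give a proof of this lemma but merely cites it as a classical consequence of \cite[Section 5, Theorem 3]{Sum74} and \cite[Theorems 12.11, 12.13]{Tim11}, exactly the references you invoke. Your reduction to the normal case via the $G$-equivariant normalization, together with the observation that normalization preserves the number of centers of a valuation, is a clean way to bridge the gap between the lemma's hypothesis (integral scheme) and the setting of \cite{Tim11} (normal $G$-models), and the remainder of your outline accurately identifies where the substance lies.
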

Let us recall some notation originally introduced in \cite[Section 7]{AHS08}. 
Let $(\D,\F)\in \CP(\Gamma, \ss)$ be a colored polyhedral divisor, where $\Gamma$ is a smooth semi-projective variety, and consider $s$ a discrete valuation on the function field $k(\Gamma)$ with center a schematic point $\xi\in \Gamma$. Since $\Gamma$ is smooth, any Weil divisor is locally described by a hypersurface and so for any prime $Y\subseteq \Gamma$ we denote by $f_{Y}$ the local equation of $Y$ near the schematic point $\xi$.
The symbol $s(\D)$ will stand for the polyhedron
$$s(\D) = \sum_{Y\subseteq \Gamma}s(f_{Y})\cdot \D_{Y}\subseteq N_{\QQ}.$$
If $s$ is trivial, then we make the convention that $s(\D)$ is equal to the tail of $\D$. 
Let $m\in\tail(\D)^{\vee}$ such that $\D(m)$ is an integral Cartier divisor and let $f$ be 
a local equation of $\D(m)$ near $\xi$. The polyhedron $s(\D)$ is constructed in a such way that $\min_{v\in s(\D)}\langle m, v \rangle = s(f)$.
 
\begin{lemma}
\label{l:valcenter}
Let $(\D,\F)\in \CP(\Gamma, \ss)$ be a colored polyhedral divisor on a smooth 
semi-projective variety $\Gamma$ and let $\nu = [s, p, \ell]\in \hs$
be a $G$-valuation on $k(\XX)$. Denote by $X= X(\D,\F)$ the simple $G$-model
of $\XX$ associated with $(\D,\F)$. Then $\nu$ has a center in $X$ if and only if the valuation $s$ has center a schematic point of $\Gamma$ and 
	\begin{align*}
		 \begin{cases}
						p/\ell\in s(\D) & \text{if $\ell\neq 0$,} \\
						p\in \tail(\D) & \text{if $\ell=0$.}
					\end{cases}
	\end{align*}
\end{lemma}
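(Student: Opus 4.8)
The plan is to reduce the statement to the affine local situation governed by Theorem~\ref{t-ppcol} and the local structure Theorem~\ref{p-loc}, and then translate the valuative condition into the combinatorics of the polyhedral divisor $\D$. First I would recall that $\nu = [s,p,\ell]\in\hs$ has a center in $X = X(\D,\F)$ if and only if $\nu$ has a center in the $B$-chart $X_0 = X_0(\D,\F)$ (since $X = G\cdot X_0$ and $\nu$ is a $G$-valuation, a center in some $G$-translate of $X_0$ gives one in $X_0$ itself), and this happens if and only if $\nu(f)\geq 0$ for every $f\in k[X_0]$, with the maximal ideal condition handled automatically for valuations by the usual argument (if $\nu\geq 0$ on $k[X_0]$ but the center is not proper, one passes to a larger model; but here finite generation and Lemma~\ref{l-chartt} let us just test nonnegativity on generators). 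Since $k[X_0] = R(C(\D)\cap\Vh,\F)$, this reduces to: $\nu(f)\geq 0$ for all $f\in k(\Gamma)\otimes_k k[\Omega_0]$ lying in $k[X_0]$, for all $\mathcal{O}_{v_D}$ with $D\in\F$, and for all $v\in C(\D)\cap\Vh$.

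\textbf{Key reduction.} The decisive simplification is that a center of $\nu$ exists if and only if one exists after projecting away the colors, i.e.\ it suffices to test $\nu$ on the $U$-invariants $k[X_0]^U = A(\Gamma,\D)$ together with the requirement that $s$ have a center on $\Gamma$. Indeed, by Lemma~\ref{l:tecval1}, for any $f\in k(\Gamma)\otimes_k k[\Omega_0]$ and the given $G$-valuation $\nu$ there is a $B$-eigenfunction $\tilde f\in A_M$ with $\nu(\tilde f) = \nu(f)$ and $w(\tilde f)\geq w(f)$ for all $w\in\Vh$ and $v_D(\tilde f)\geq v_D(f)$ for $D\in\F_\Omega$; hence if $\nu\geq 0$ on all the relevant eigenfunctions of $A(\Gamma,\D)$ then $\nu\geq 0$ on all of $k[X_0]$. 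So the condition ``$\nu$ has a center in $X$'' is equivalent to ``$s$ has a center $\xi\in\Gamma$, and $\nu(f\otimes\chi^m)\geq 0$ for every homogeneous $f\otimes\chi^m\in A(\Gamma,\D)$.''

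\textbf{Translation into polyhedra.} Now I would unwind $\nu(f\otimes\chi^m) = \ell\cdot s(f) + \langle m,p\rangle$. A homogeneous element $f\otimes\chi^m$ lies in $A(\Gamma,\D)$ precisely when $m\in\tail(\D)^\vee$ and $\div(f) + \D(m)\geq 0$ on $\loc(\D)$; localizing at $\xi$, this forces $s(f)\geq -\min_{v\in s(\D)}\langle m,v\rangle$, with equality attainable (using the construction of $s(\D)$ recalled just before the lemma, namely $\min_{v\in s(\D)}\langle m,v\rangle = s(f_0)$ for a local equation $f_0$ of $\D(m)$ near $\xi$, which exhibits the extremal section). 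Therefore the worst case of $\nu(f\otimes\chi^m)$ over admissible $f$ equals $\ell\cdot\min_{v\in s(\D)}\langle m,v\rangle + \langle m,p\rangle$, and requiring this to be $\geq 0$ for all $m\in\tail(\D)^\vee$ is exactly: $p + \ell\cdot s(\D)\subseteq$ (the set of vectors pairing nonnegatively with $\tail(\D)^\vee$) $= \tail(\D)$. When $\ell\neq 0$ this reads $p/\ell\in s(\D)$ (after noting $s(\D) + \tail(\D) = s(\D)$), and when $\ell = 0$ it reads $p\in\tail(\D)$, which is the claimed dichotomy. The main obstacle I expect is the careful handling of the ``center exists'' direction when $s$ is trivial or when $\loc(\D)$ is only semi-projective rather than affine: one must ensure that nonnegativity on global sections $A(\Gamma,\D)$ genuinely forces a center on the $B$-chart (not merely on some birational modification), which is where properness of $\D$ and Lemma~\ref{l:testval} (separatedness/properness via $G$-valuations) are invoked, together with Proposition~\ref{p:pullbackpp} to normalize $\Gamma$ so that $\xi$ is visible; and in the reverse direction one must check that the constructed center actually lies in $X$ and not just in the ambient scheme of localities $\sc_G(\XX)$, which again follows from Lemma~\ref{l-chartt}(i) once the polyhedral conditions hold.
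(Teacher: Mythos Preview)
Your approach is essentially the same as the paper's: reduce ``$\nu$ has a center in $X$'' to ``$k[X_0]\subseteq\mathcal{O}_\nu$'', then via Lemma~\ref{l:tecval1} to ``$A(\Gamma,\D)\subseteq\mathcal{O}_\nu$'', and finally translate this into the polyhedral condition on $(s,p,\ell)$. The paper compresses your explicit polyhedral computation into a citation of \cite[Lemma~7.7]{AHS08}, but the content is identical.

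One small correction to your discussion of the obstacle: the point where semi-projectivity of $\Gamma$ enters is the \emph{forward} direction, to conclude that $s$ has a center in $\Gamma$ once you know $k[\Gamma]\subseteq k[X_0]^U\subseteq\mathcal{O}_\nu$. The paper does this in one line by applying the valuative criterion of properness to the projective structure morphism $\Gamma\to\Gamma_0:=\spec\,k[\Gamma]$, not via Lemma~\ref{l:testval} or Proposition~\ref{p:pullbackpp} as you suggest. Likewise, the ``equality attainable'' step in your polyhedral translation (finding a global section realizing $s(f)=\min_{v\in s(\D)}\langle m,v\rangle$) uses semi-ampleness of $\D(m)$, not just the existence of a local equation; this is implicit in the properness of $\D$ but should be said explicitly if you are not citing \cite{AHS08}.
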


\begin{proof}
Let $X_{0}  =  X_{0}(\D,\F)$.
We start by using a similar argument as in the proof of \cite[Lemma 7.7]{AHS08}.
Assume that $\nu$ has a center $\zeta\in X$. Since $\zeta$ is the generic point 
of a $G$-cycle $X_{1}$, we have $X_{0}\cap X_{1}\neq \emptyset$ and this implies that $k[X_{0}]\subseteq \mathcal{O}_{\nu}$. Hence restricting the valuation $\nu$ to $k(\Gamma_{0})$, where $ \Gamma_{0}:= \spec\, k[\Gamma]$, and considering the projective morphism $\Gamma\rightarrow \Gamma_{0}$, we conclude by the valuative criterion of properness that $s = \nu_{|k(\Gamma)}$ has a center in $\Gamma$. Moreover, the inclusion $A(\Gamma, \D)= k[X_{0}]^{U}\subseteq \mathcal{O}_{\nu}$ and \cite[Lemma 7.7]{AHS08}
imply that $p/\ell\in s(\D)$ if $\ell\neq 0$ and $p\in \tail(\D)$ otherwise. 

Let us show the converse. By loc. cit. we directly have that $A(\Gamma,\D)\subseteq \mathcal{O}_{\nu}$. Now using \cite[Lemma 19.12]{Tim11},
for any $f\in k[X_{0}]$ there exists a $B$-eigenfunction $\alpha\in k[X_{0}]^{(B)}\subseteq A(\Gamma,\D)$ such that $\nu(f) = \nu(\alpha)\geq 0$.
Hence $\nu$ has a center in $X_{0}$ and therefore in $X$. This completes the proof of the lemma.
\end{proof}
The next result is a direct consequence of Lemma \ref{l:valcenter}.
\begin{lemma}\label{l-prop-lc}
Let $(\D,\F)\in \CP(\Gamma, \ss)$ be a colored polyhedral divisor. Consider
the sheaf of $\mathcal{O}_{\loc(\D)}$-algebras
$$\mathcal{A}_{\lc} = \bigoplus_{\lambda\in\sigma^{\vee}\cap M}\mathcal{O}_{\loc(\D)}(\D(\lambda))\otimes V_{\lambda}.$$
Here $\Omega_{\lc}$ is the homogeneous space obtained from $\ss$ by localization with respect to $\F$ (see Section \ref{s-zeta}) and $k[\Omega_{\lc}] = \bigoplus_{\lambda\in\Lambda}V_{\lambda}$
is the decomposition in irreducible representations. Then the natural morphism
$$q:\spec_{\loc(\D)}\,\mathcal{A}_{\lc}\rightarrow \spec\,\Gamma(\loc(\D), \mathcal{A}_{\lc})$$
is proper and birational.
\end{lemma}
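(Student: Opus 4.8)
The plan is to reduce the statement to known properness and birationality results already cited in the excerpt, by unwinding the definitions of $\spec_{\loc(\D)}\mathcal{A}_{\lc}$ and $\spec\,\Gamma(\loc(\D),\mathcal{A}_{\lc})$. First I would observe that, by Lemma~\ref{l-loc}, the algebra of global sections $\Gamma(\loc(\D),\mathcal{A}_{\lc})$ is precisely $A_{\lc}(\Gamma,\D)$, so $\spec\,\Gamma(\loc(\D),\mathcal{A}_{\lc}) = X_{\lc}$, the affine $G_{\star}$-model of $\XX_{\lc} = \Gamma\times\Omega_{\lc}$ attached to the colored polyhedral divisor $(\D,\F_{\lc})$. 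On the other hand, $\spec_{\loc(\D)}\mathcal{A}_{\lc}$ is the relative spectrum over the (semi-projective, hence projective-over-affine) base $\loc(\D)$; since $\mathcal{A}_{\lc}$ is a sheaf of finitely generated $\mathcal{O}_{\loc(\D)}$-algebras (each graded piece being a coherent sheaf $\mathcal{O}_{\loc(\D)}(\D(\lambda))\otimes V_{\lambda}$ and the whole algebra being generated in bounded degree because $\D$ is proper), the morphism $q$ is the composition of the affine structure morphism $\spec_{\loc(\D)}\mathcal{A}_{\lc}\to\loc(\D)$ with the projective morphism $\loc(\D)\to\spec\,k[\loc(\D)]$ coming from semi-projectivity, followed by the affinization map; this composition is visibly of finite type, and I would argue it is proper by the valuative criterion.

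The key step is therefore the properness, and here I would invoke Lemma~\ref{l:valcenter} exactly as in the analogous toric statement. Concretely, take a $G_{\star}$-valuation $\nu$ of $k(\XX_{\lc})$ that admits a center in $\spec\,\Gamma(\loc(\D),\mathcal{A}_{\lc}) = X_{\lc}$; writing $\nu = [s,p,\ell]$ in the hyperspace attached to $(S,N)$, having a center on $X_{\lc}$ forces (by the direction of Lemma~\ref{l:valcenter} applied to the $B_{\lc}$-chart of $X_{\lc}$, together with the valuative criterion of properness for the semi-projective $\loc(\D)\to\spec\,k[\loc(\D)]$) that $s$ has a schematic center $\xi\in\loc(\D)\subseteq\Gamma$ and that $p/\ell\in s(\D)$ (resp. $p\in\tail(\D)$ if $\ell=0$). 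But that center $\xi$ in $\loc(\D)$, together with the same polyhedral inequalities, is exactly the datum needed to produce a center of $\nu$ on $\spec_{\loc(\D)}\mathcal{A}_{\lc}$: one localizes $\mathcal{A}_{\lc}$ at $\xi$ and reads off that the homogeneous elements of $\mathcal{A}_{\lc}$ near $\xi$ lie in $\mathcal{O}_{\nu}$, by the same computation $\min_{v\in s(\D)}\langle\lambda,v\rangle = s(f)$ with $f$ a local equation of $\D(\lambda)$ recalled just before Lemma~\ref{l:valcenter}, plus the fact that $\nu$ is $G_{\star}$-invariant so it is determined by its values on the $B_{\lc}$-eigenfunctions. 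Conversely every $G_{\star}$-valuation with a center on $\spec_{\loc(\D)}\mathcal{A}_{\lc}$ maps to one with a center on $X_{\lc}$, since $q$ is dominant. By the equivariant valuative criterion (Lemma~\ref{l:testval}) this shows both that $q$ is separated — a center on the source is unique because $\xi$ is unique on $\loc(\D)$ by properness of $\loc(\D)$ over $\spec\,k[\loc(\D)]$ and the fiber direction is affine — and that $q$ is universally closed, hence proper.

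For birationality I would simply note that $\D$ being proper, the bigness condition in Definition~\ref{def-p-divisors} guarantees (by Theorem~\ref{pp-divisor} and \cite[Theorem 3.1]{AH06}, already invoked in the proof of Lemma~\ref{l-loc}) that the field of fractions of $A_{\lc}(\Gamma,\D)$ equals that of $A_M$, and the generic fiber of $q$ over the open locus of $\loc(\D)$ where $\D(\lambda)$ is free of base conditions is a single point; equivalently, $q$ is an isomorphism over the complement in $X_{\lc}$ of the support of the relevant divisors, exactly as in \cite[Example 8.4(i)]{AH06}. Thus $k(\spec_{\loc(\D)}\mathcal{A}_{\lc}) = k(\loc(\D))\otimes_k k(\Omega_{\lc}) = k(\XX_{\lc}) = k(X_{\lc})$ and $q$ is birational.

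The main obstacle I anticipate is the bookkeeping in the properness argument: one must be careful that the $G_{\star}$-valuation $\nu$, which a priori lives on the abstract hyperspace, genuinely restricts to a discrete valuation $s$ of $k(\Gamma)$ with a \emph{schematic} center on $\loc(\D)$ (not merely on $\Gamma$), and that the relative-$\spec$ construction $\spec_{\loc(\D)}\mathcal{A}_{\lc}$ really does pick up a center precisely when the polyhedral containment $p/\ell\in s(\D)$ holds — this is where one leans hardest on Lemma~\ref{l:valcenter} and on the identity $\min_{v\in s(\D)}\langle\lambda,v\rangle = s(f)$. Once that dictionary is set up, separateness and universal closedness follow formally from Lemma~\ref{l:testval}, and birationality is immediate from properness of $\D$.
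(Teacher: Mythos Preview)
Your proposal is correct and follows essentially the same approach as the paper: birationality from properness of $\D$, and properness via the equivariant valuative criterion combined with Lemma~\ref{l:valcenter}. The paper's proof is terser --- it records only the key observation that $\Gamma(\loc(\D),\mathcal{A}_{\lc})$ is the intersection of the $\mathcal{A}_{\lc}(V)$'s over open $V\subseteq\loc(\D)$, then cites \cite[Theorem~12.13]{Tim11} (the relative equivariant valuative criterion) together with Lemma~\ref{l:valcenter} --- whereas you spell out the center-chasing in detail; note only that Lemma~\ref{l:testval} as stated is the absolute criterion over $k$, so the precise reference for the relative properness of $q$ is \cite[Theorem~12.13]{Tim11}, which is what the paper invokes.
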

\begin{proof}
The fact that the map $q$ is birational comes from the fact that the polyhedral divisor $\D$ is proper. We note that 
$\Gamma(\loc(\D), \mathcal{A}_{\lc})$ is equal to the intersection of the $\mathcal{A}_{\star}(V)$'s, where $V$ runs over the open dense subsets of $\loc(\D)$.
So we conclude that $q$ is proper by applying \cite[Theorem 12.13]{Tim11}
and Lemma \ref{l:valcenter}.  
\end{proof}

We now describe the invariant cycles 
of simple $G$-models of $\XX$ via the local structure theorem (see Theorem \ref{p-loc}).
\begin{lemme}\label{l-L-orbits}
Let $(\D,\F)\in\CP(\Gamma, \ss)$ be a colored polyhedral divisor. Denote by
$X_{0} = X_{0}(\D,\F)$ the associated $B$-chart and let $X = G\cdot X_{0}(\D, \F)$.  Let $Z$  
be a $G$-cycle in $X$ and let $\nu$ be the $G$-valuation centered in its generic point. Then
the following assertions hold.
\begin{itemize}
\item[$(i)$] The intersection $Z\cap X_{0}$ is identified via the map $\pi_{\lc}$ of Theorem 
\ref{p-loc} with the product $P_{u}\times Z_{\lc}$, where $Z_{\lc}$ is the 
$G_{\star}$-cycle in $X_{\lc}$ with corresponding vanishing ideal 
$$I(Z_{\lc}) = \{f\in k[X_{\lc}]\setminus\{0\}\,|\,\nu(f)>0\}\cup\{0\}.$$

\item[$(ii)$] The correspondence $Z\mapsto Z_{\lc}$ defines an injective map from
the set of $G$-cycles in $X$ to the set of $G_{\star}$-cycles in $X_{\lc}$ and preserves the inclusion order.  
\end{itemize} 
\end{lemme}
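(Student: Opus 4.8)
The plan is to reduce the statement to a computation inside the affine slice $X_{\lc}$ furnished by the local structure theorem (Theorem~\ref{p-loc}), where $G$-cycles of $X$ passing through $X_{0}$ correspond to $G_{\star}$-cycles of $X_{\lc}$. First I would recall that by Theorem~\ref{p-loc} the $B$-chart decomposes $G_{\star}$-equivariantly as $X_{0}\cong P_{u}\times X_{\lc}$, with $P_{u}$ a unipotent radical acted on trivially by $G_{\star}$, so that the $G_{\star}$-invariant (indeed $P$-invariant) prime ideals of $k[X_{0}]$ are exactly the extensions of $G_{\star}$-invariant prime ideals of $k[X_{\lc}]$. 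Since a $G$-cycle $Z$ meeting $X_{0}$ is $B$-stable, its trace $Z\cap X_{0}$ is defined by a $B$-stable (hence $P$-stable, as $P$ is the stabilizer of $X_{0}$) prime ideal, which therefore factors through $X_{\lc}$; this gives the product description $Z\cap X_{0}=P_{u}\times Z_{\lc}$ for a $G_{\star}$-cycle $Z_{\lc}\subseteq X_{\lc}$.

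For part $(i)$, the key point is to identify the vanishing ideal of $Z_{\lc}$ with $\{f\in k[X_{\lc}]\setminus\{0\}\mid \nu(f)>0\}\cup\{0\}$, where $\nu$ is the $G$-valuation centered at the generic point of $Z$. I would argue that the generic point $\zeta$ of $Z$ lies in $X_{0}$ (by simplicity and the fact that $X_{0}$ meets every $G$-orbit, combined with $Z\cap X_{0}\neq\emptyset$), so $\mathcal{O}_{\zeta,X}$ dominates $k[X_{0}]$, and the corresponding prime ideal is $\{f\in k[X_{0}]\mid \nu(f)>0\}$. Restricting along the inclusion $k[X_{\lc}]\hookrightarrow k[X_{0}]$ induced by $\pi_{\lc}$, and using that $\nu$ is central on the $P_{u}$-factor (its restriction to $k(\Gamma)\subseteq k(X_{\lc})^{B_{\lc}}$ controls everything, as in Lemma~\ref{l:valcenter}), this prime ideal contracts to $I(Z_{\lc})$ as claimed. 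I would invoke Lemma~\ref{l:tecval1} (equivalently \cite[Lemma 19.12]{Tim11}) to replace arbitrary functions by $B$-eigenfunctions when checking that the value $\nu(f)>0$ condition detects membership in $I(Z_{\lc})$.

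For part $(ii)$, injectivity of $Z\mapsto Z_{\lc}$ follows because $Z$ is the $G$-saturation of $Z\cap X_{0}$ (every $G$-cycle meeting a $B$-chart of a simple $G$-variety is determined by its trace there, by \cite[Proposition 13.7]{Tim11}), and $Z\cap X_{0}$ is recovered from $Z_{\lc}$ via the $P$-isomorphism $\pi_{\lc}$; hence $Z_{\lc}$ determines $Z$. Order-preservation is immediate: if $Z\subseteq Z'$ then $Z\cap X_{0}\subseteq Z'\cap X_{0}$, and passing through the product decomposition $P_{u}\times Z_{\lc}\subseteq P_{u}\times Z'_{\lc}$ forces $Z_{\lc}\subseteq Z'_{\lc}$; conversely an inclusion $Z_{\lc}\subseteq Z'_{\lc}$ yields $Z\cap X_{0}\subseteq Z'\cap X_{0}$ and then $Z = G\cdot(Z\cap X_{0})\subseteq G\cdot(Z'\cap X_{0}) = Z'$. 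The main obstacle I anticipate is the careful bookkeeping in part $(i)$: verifying that the contraction of the center prime ideal to $k[X_{\lc}]$ really is $\{f\mid\nu(f)>0\}$ and not something larger, which is where centrality of $\nu$ on the $P_{u}$-direction and the eigenfunction reduction of Lemma~\ref{l:tecval1} must be used precisely; everything else is a formal consequence of the local structure theorem and the simplicity of $X$.
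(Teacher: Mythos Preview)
Your argument is correct and is precisely the unpacking of what the paper records as ``Self evident.'' The paper gives no proof beyond that phrase, so your approach---reducing everything to the product decomposition $X_{0}\cong P_{u}\times X_{\lc}$ from Theorem~\ref{p-loc}, identifying the $P$-stable prime ideal of $Z\cap X_{0}$ with an ideal extended from $k[X_{\lc}]$, and reading off injectivity and order-preservation from the fact that $Z$ is the $G$-saturation of its trace on $X_{0}$---is exactly the intended route. One minor comment: invoking Lemma~\ref{l:tecval1} in part~(i) is harmless but not really needed; once you know the generic point of $Z$ lies in $X_{0}$, the prime ideal $\{f\in k[X_{0}]\mid \nu(f)>0\}$ is simply the contraction of $\mathfrak{m}_{\nu}$ along $k[X_{0}]\subseteq\mathcal{O}_{\nu}$, and its intersection with $k[X_{\lc}]$ is $I(Z_{\lc})$ by the product structure alone.
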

\begin{proof} Self evident.
\end{proof}
\begin{remarque}
The map $Z\mapsto Z_{\lc}$ is not a bijection in general, see the comment in \cite[Remark 15.19]{Tim11}. 
\end{remarque}

\subsection{Localization of colored polyhedral divisors}\label{sec-localiz}
In this subsection, we study and classify certain open immersions between $B$-charts of $\gsc(\XX)$. As in the case of
normal varieties with a torus action (see \cite[Sections 3, 4]{AHS08}), we characterize these immersions in the language of (colored) polyhedral divisors. 
In the sequel, we will denote by  $(\D,\F)\in \CP(\Gamma, \ss)$  a colored polyhedral divisor with tail the polyhedral cone $\sigma\subseteq N_{\QQ}$. 
\begin{definition}\label{d-loca}
Let $m\in\sigma^{\vee}\cap M$ and let 
$f\in H^{0}(\loc(\D), \mathcal{O}_{\loc(\D)}(\D(m)))$ be a nonzero global section. The \emph{localization} of $(\D,\F)$ with  respect to the homogeneous element $f\otimes\chi^{m}$ is the colored polyhedral divisor $(\D,\F)_{f}$ (or $(\D,\F)^{m}_{f}$ if we want to specify the degree of $f$) defined as follows. First of all, for any $\sigma$-polyhedron $C\subseteq N_\QQ$ we denote
$$\face(C, m) = \{v\in C\,|\, \langle m, v\rangle \leq \langle m, v'\rangle\text{ for all }v'\in C\}\subseteq N_{\QQ};$$
it is a $\sigma\cap m^{\perp}$-polyhedron. One can also denote by
$\Gamma_{f}$ the subset
$$\Gamma\setminus \supp (\div(f) + \D(m)).$$
Then considering
$$\D_{f} = \sum_{Y\subseteq \Gamma_{f}}\face(\D_{Y}, m)\cdot Y\text{ and } \F^{m} = \{D\in \F\,|\, \varrho(D)\in m^{\perp}\},$$
the colored polyhedral divisor  $(\D,\F)_{f}$ is the pair $(\D_{f}, \F^{m})$.
Note that the properness of $\D_{f}$ is a consequence of \cite[Proposition 3.3]{AHS08}.
Moreover, the symbol $\D_{y}:= \sum_{Y\subseteq \Gamma,\,y\in Y}\D_{Y}\subseteq N_{\QQ}$ will stand for the \emph{fiber polyhedron} over the point $y\in \Gamma$.
\end{definition}
As expected, the localization of a colored polyhedral divisor geometrically translates into the usual localization of the corresponding $B$-eigenfunction.
\begin{lemma}
\label{l:localiz}
Let $X_{0} = X_{0}(\D,\F)$ be a $B$-chart associated to the colored polyhedral divisor $(\D,\F)$. Let $\xi = f\otimes \chi^{m}$ be a homogeneous element of $A(\loc(\D),\D)$. Then $(\D,\F)_{f}$ is a colored polyhedral divisor describing 
the $B$-chart $(X_{0})_{\xi}:= X_{0}\setminus \supp(\div(\xi))$. 
\end{lemma}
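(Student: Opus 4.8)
The plan is to show that both $B$-charts are cut out of $\gsc(\XX)$ by the same data in the sense of Lemma \ref{l-chartt}, so that Corollary \ref{c-equal} (together with Theorem \ref{t-ppcol}) identifies them. First I would observe that by Theorem \ref{t-ppcol} the $B$-chart $X_{0}(\D,\F)$ is described by the pair $(C(\D)\cap\Vh,\F)$, and that $k[X_{0}(\D,\F)]^{U} = A(\Gamma,\D)$. Localizing the Krull ring $k[X_{0}]$ at the single $B$-eigenfunction $\xi = f\otimes\chi^{m}$ amounts to deleting from the description exactly those valuations $v$ (among the $G$-valuations in $C(\D)\cap\Vh$ and the colors in $\F$) for which $v(\xi)>0$, since $(X_{0})_{\xi}$ is by definition $X_{0}\setminus\supp(\div(\xi))$. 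So the proof reduces to checking two things at the combinatorial level: (a) the remaining $G$-valuations, i.e. $\{v\in C(\D)\cap\Vh\,|\,v(\xi)=0\}$, coincide with $C(\D_{f})\cap\Vh$; and (b) the remaining colors, $\{D\in\F\,|\,v_{D}(\xi)=0\}$, coincide with $\F^{m} = \{D\in\F\,|\,\varrho(D)\in m^{\perp}\}$.

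For step (b), a color $D\in\F$ is cut out by $\xi=f\otimes\chi^{m}$ precisely when $v_{D}(\xi)>0$; but $D$ is not $G$-stable, so $\supp(\div(f))$ (a union of $G$-stable or non-$B$-stable divisors pulled back from $\Gamma$) does not contain $D$, hence $v_{D}(f)=0$ and $v_{D}(\xi) = \langle m,\varrho(D)\rangle$. Thus $v_{D}(\xi)=0 \Leftrightarrow \varrho(D)\in m^{\perp}$, which is exactly the definition of $\F^{m}$. For step (a), I would use the explicit description of $C(\D)$ via Cayley cones: a point $[v_{Y},a,b]\in\hs$ lies in $C(\D)$ iff $(a,b)\in C_{Y}(\D)$, i.e. iff $a\in b\cdot\D_{Y}+\sigma$ for $b>0$ (resp. $a\in\sigma$ for $b=0$); and such a valuation $\nu$ satisfies $\nu(\xi) = b\cdot v_{Y}(f) + \langle m,a\rangle$. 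The condition $\nu(\xi)=0$ forces the pairing against $m$ to take its minimal value over the relevant polyhedron, so $a$ lies in $\face(\D_{Y},m)$ scaled appropriately, and $Y$ must survive in $\Gamma_{f} = \Gamma\setminus\supp(\div(f)+\D(m))$. This is precisely the statement that $\nu\in C(\D_{f})\cap\Vh$, and it matches the analysis of localization of polyhedral divisors used in \cite[Proposition 3.3]{AHS08}; combined with Proposition \ref{prop-vc} describing $\Vh$, the equality of sets follows.

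Having matched the two pieces of combinatorial data, I would invoke Theorem \ref{t-ppcol} to conclude that $X_{0}(\D_{f},\F^{m})$ is a $B$-chart with $k[X_{0}(\D_{f},\F^{m})]^{U} = A(\Gamma_{f},\D_{f})$ and set of colors $\F^{m}$, and that $k[(X_{0})_{\xi}]^{U} = (k[X_{0}]^{U})_{\xi} = A(\Gamma,\D)_{\xi}$, which equals $A(\Gamma_{f},\D_{f})$ by the description of localized algebras of polyhedral divisors in \cite[Section 3]{AHS08}; the set of colors of $(X_{0})_{\xi}$ is $\{D\in\F\,|\,v_{D}(\xi)=0\} = \F^{m}$ by step (b). Then Corollary \ref{c-equal} gives $(X_{0})_{\xi} = X_{0}(\D_{f},\F^{m})$ inside $\gsc(\XX)$, which is the assertion. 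The main obstacle I expect is the bookkeeping in step (a): correctly reconciling the hypercone/Cayley-cone description of $C(\D)$ with the $\face(-,m)$ operation on the fiber polyhedra and making sure the locus shrinks to exactly $\Gamma_{f}$ — essentially importing the toric-geometry computation of \cite[Proposition 3.3]{AHS08} into the hyperspace $\hs$ and keeping track of the central versus non-central parts of the valuation set.
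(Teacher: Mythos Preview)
Your proposal is correct and follows essentially the same approach as the paper: both arguments identify the $U$-invariants of $(X_{0})_{\xi}$ with $A(\Gamma_{f},\D_{f})$ via \cite[Proposition~3.3]{AHS08}, identify the surviving colors as $\F^{m}$ through the computation $v_{D}(\xi)=\langle m,\varrho(D)\rangle$, and then conclude by Corollary~\ref{c-equal}. The only organizational difference is that the paper obtains the inclusion $k[X_{0}]_{\xi}\subseteq k[X_{0}(\D_{f},\F^{m})]$ directly via Lemma~\ref{l:tecval1} before pinning down the color set, whereas you instead carry out the parallel hypercone computation (your step~(a)) showing $\{v\in C(\D)\cap\Vh\mid v(\xi)=0\}=C(\D_{f})\cap\Vh$; this step is correct and is precisely the hyperspace analogue of the AHS08 localization, so it is somewhat redundant once you also invoke \cite[Proposition~3.3]{AHS08}, but it does no harm.
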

\begin{proof}
By \cite[Proposition 3.3]{AHS08} we know that $k[X_{0}]_{\xi}^{U} = k[X_{0}((\D,\F)_{f})]^{U}.$ Hence letting $f_{1}/\xi^{r}$ be in $k[X_{0}]_{\xi}$ with $f_{1}\in k[X_{0}]\setminus \{0\}$, we have $v(f_{1}/\xi^{r})\geq 0$
for any $G$-valuation $v$ in $C(\D_{f})$ (by using Lemma \ref{l:tecval1}). Since $\F^{m}\subseteq \F$, we also have $$v_{D}(f_{1}/\xi^{r}) = v_{D}(f_{1}) - r\langle m, \varrho(D)\rangle  = v_{D}(f_{1}) \geq 0$$
for any $D\in \F^{m}$. This implies that $k[X_{0}]_{\xi}\subseteq k[X_{0}((\D,\F)_{f})]$.

For the opposite inclusion, we remark that the $B$-chart $(X_{0})_{\xi}$ can be described by a colored polyhedral divisor $(\D_{f},\F')$ over $\Gamma_{f}$ and
it remains to establish the equality $\F'=\F^{m}$. By the preceding step, we have $\F'\subseteq \F$ and therefore
$$D\in \F'\iff \forall r\in \ZZ,\, v_{D}(\xi^r) \geq 0\iff \langle m, \varrho(D)\rangle = 0\iff D\in \F^{m},$$
finishing the proof of the lemma.
\end{proof} 
The following lemma shows that open immersions of $B$-charts can be expressed in terms 
of localizations of colored polyhedral divisors. 
\begin{lemme}
\label{l:reduce} 
Let $X_{0}', X_{0}$ be two $B$-charts of $\gsc(\XX)$. Denote by $(\D', \F'), (\D, \F)$ the colored polyhedral divisors
describing $X_{0}'$ and $X_{0}$ respectively and assume that $\F'\subseteq \F$. 
\begin{itemize}
\item[$\rm (i)$]  Then we have an inclusion $X_{0}'\subseteq X_{0}$ if and only if there exists a finite family of elements $\alpha_{1} =f_{1}\otimes \chi^{m_{1}}, \dots, \alpha_{r} = f_{r}\otimes \chi^{m_{r}}$ of $k[X_{0}]^{U}$ such that 
$$G\cdot X_{0}' = G\cdot ((X_{0})_{\alpha_{1}}\cup\ldots \cup (X_{0})_{\alpha_{r}}) \text{ and }$$
$$(X_{0})_{\alpha_{i}} = (X_{0}')_{\alpha_{i}}\text{ for all }i\in\{1, \ldots, r\}.$$ 

\item[$\rm (ii)$] Let $\Gamma$ and $\sigma\subseteq N_{\QQ}$ be respectively
the (smooth) locus and the tail of $\D$ and assume 
that $X_{0}'\subseteq X_{0}$. Denote by 
$X_{1}$ the relative spectrum over $\Gamma$ of the sheaf
$$\mathcal{A} = \bigoplus_{m\in\sigma^{\vee}\cap M}\mathcal{O}_{\Gamma}(\D(m)).$$
Consider the contraction map $r: X_{1}\rightarrow X_{0}/\!\!/U$ and 
the quotient morphism $\pi: X_{1}\rightarrow \Gamma$. Then the subvariety $$\Gamma_{1}':= \pi(r^{-1}(X_{0}'/\!\!/U))\subseteq \Gamma$$
is open and semi-projective. Moreover, let
$$\D^{1} =\sum_{Y\subseteq \Gamma_{1}'}\D_{Y}^{1}\cdot Y, \text{ where }
\D^{1}_{Y} = \bigcup _{Y\cap (\Gamma_1')_{f_{i}} \neq \emptyset}\face(\D_{Y}, m_{i}),$$
and write $\F^{1}$ for the set $\bigcup_{i= 1}^{r}\F^{m_{i}}$. Then the pair $(\D^{1},\F^{1})$ is a colored polyhedral divisor
describing $X_{0}'$.
\end{itemize}
\end{lemme}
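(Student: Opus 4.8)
The plan is to pit the geometric inclusion $X_0'\subseteq X_0$ against the combinatorial data by means of three facts already established in the excerpt: Theorem \ref{t-ppcol} identifies $k[X_0]^U$ with the associated algebra $A(\Gamma,\D)$ of the polyhedral divisor; Lemma \ref{l:localiz} says that localizing $(\D,\F)$ at a homogeneous $\xi=f\otimes\chi^m$ describes the $B$-chart $(X_0)_\xi$; and Corollary \ref{c-equal} says that a $B$-chart of $\gsc(\XX)$ is determined by its $U$-invariants together with its colors. With these in hand, part $(i)$, direction $\Rightarrow$, is the familiar "cover by $B$-stable principal opens'' argument: since $X_0'\subseteq X_0$, the complement $X_0\setminus X_0'$ is $B$-stable and closed, so the ideal $I\subseteq k[X_0]$ it defines is $B$-stable, hence a sum of $B$-eigenspaces, and as every $B$-eigenfunction is $U$-invariant, $I$ is spanned by homogeneous elements $f\otimes\chi^m$ of $k[X_0]^U=A(\Gamma,\D)$. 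For every point of $X_0'$ one picks such an element not vanishing there; the corresponding $B$-stable principal open lies in $X_0'$, and by quasi-compactness finitely many $\alpha_1=f_1\otimes\chi^{m_1},\dots,\alpha_r=f_r\otimes\chi^{m_r}$ already cover $X_0'$. Since $(X_0)_{\alpha_i}\subseteq X_0'\subseteq X_0$ one gets $(X_0)_{\alpha_i}=(X_0')_{\alpha_i}$, and $X_0'=\bigcup_i(X_0)_{\alpha_i}$ forces $G\cdot X_0'=G\cdot\bigcup_i(X_0)_{\alpha_i}$.

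For the converse implication in $(i)$, each $(X_0)_{\alpha_i}$ is a principal open of $X_0$, so $X':=G\cdot X_0'=G\cdot\bigcup_i(X_0)_{\alpha_i}\subseteq G\cdot X_0=:X$ as open $G$-subschemes of $\gsc(\XX)$. Now every $B$-chart of $\gsc(\XX)$ arises from a colored polyhedral divisor (Theorem \ref{t-ppcol}), and by the construction recalled in the proof of Theorem \ref{p-loc} (the comment after \cite[Corollary 13.9]{Tim11}) such a $B$-chart is obtained from its simple $G$-variety by deleting the closures of the colors it misses; thus $X_0=X\setminus\bigcup_{D\in\F_\Omega\setminus\F}\overline D$ with closures in $X$, and $X_0'=X'\setminus\bigcup_{D\in\F_\Omega\setminus\F'}\overline D$ with closures in $X'$. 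Here the hypothesis $\F'\subseteq\F$ enters: every color of $\F_\Omega\setminus\F$ also lies in $\F_\Omega\setminus\F'$, so, $X'$ being open in $X$,
$$X_0'\ =\ X'\setminus\bigcup_{D\in\F_\Omega\setminus\F'}\overline D\ \subseteq\ X'\setminus\bigcup_{D\in\F_\Omega\setminus\F}\overline D\ =\ X'\cap X_0\ \subseteq\ X_0 ,$$
which is the asserted inclusion.

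For part $(ii)$, assume $X_0'\subseteq X_0$ and take the family $\alpha_i=f_i\otimes\chi^{m_i}\in A(\Gamma,\D)$ from $(i)$, so $X_0'=\bigcup_i(X_0)_{\alpha_i}$ and, by Lemma \ref{l:localiz}, $(X_0)_{\alpha_i}=X_0(\D_{f_i},\F^{m_i})$ with $\loc(\D_{f_i})=\Gamma_{f_i}=\Gamma\setminus\supp(\div(f_i)+\D(m_i))$. Since $\Gamma$ is smooth, $\mathcal A$ is locally free, hence $\pi:X_1\to\Gamma$ is open, and $r:X_1\to\spec A(\Gamma,\D)=X_0/\!\!/U$ is proper and birational by properness of $\D$ (argue as in Lemma \ref{l-prop-lc}). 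The $U$-invariant functions $\alpha_i$ descend to $X_0/\!\!/U$, the preimage $r^{-1}\big(\bigcup_i(X_0/\!\!/U)_{\alpha_i}\big)=\bigcup_i(X_1)_{r^{\star}\alpha_i}$ is $\TT$-stable open in $X_1$, and pushing it down by $\pi$ yields $\Gamma_1'=\bigcup_i\Gamma_{f_i}$, which is open. That the pair $(\D^1,\F^1)$ with the stated coefficients is a colored polyhedral divisor on $\Gamma_1'$ comes down to the localization statement \cite[Proposition 3.3]{AHS08} applied to each $\D_{f_i}$ — so $\D^1$ is proper and in particular $\Gamma_1'$, being its locus, is semi-projective — together with the observation that $(\tail(\D^1),\F^1)$ is a colored cone because it is generated by the colored cones $(\tail(\D_{f_i}),\F^{m_i})$. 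Finally $(\D^1,\F^1)$ describes $X_0'$ by Corollary \ref{c-equal} (after passing, via Proposition \ref{p:pullbackpp}, to a common model): its color set $\F^1=\bigcup_i\F^{m_i}$ is exactly the set of colors meeting $\bigcup_i(X_0)_{\alpha_i}=X_0'$, and, testing on a $B$-eigenfunction $f\otimes\chi^m$ as in the proof of Theorem \ref{t-ppcol}, the union-of-faces formula gives $\min_{v\in\D^1_Y}\langle m,v\rangle=\min_i\min_{v\in\face(\D_Y,m_i)}\langle m,v\rangle$, whence $f\otimes\chi^m\in A(\Gamma_1',\D^1)$ if and only if $f\otimes\chi^m\in A(\Gamma_{f_i},\D_{f_i})=A(\Gamma,\D)_{\alpha_i}$ for all $i$; therefore $A(\Gamma_1',\D^1)=\bigcap_i A(\Gamma,\D)_{\alpha_i}=k[X_0']^U$.

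\textbf{Main obstacle.} Part $(i)$ is essentially formal once the structural description $X_0=X\setminus\bigcup_{D\notin\F}\overline D$ of a $B$-chart inside its simple $G$-variety is invoked. The genuine work is in $(ii)$: identifying the locus $\Gamma_1'=\pi(r^{-1}(X_0'/\!\!/U))$ with $\bigcup_i\Gamma_{f_i}$ and establishing its semi-projectivity through the proper birational contraction $r$, and verifying that the union-of-faces polyhedral divisor $\D^1$ is proper with associated algebra exactly $k[X_0']^U$ — the latter being the fiddly but routine $B$-eigenfunction bookkeeping modeled on the proof of Theorem \ref{t-ppcol}, and the former resting on \cite[Proposition 3.3]{AHS08} and the properness of $r$.
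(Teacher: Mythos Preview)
Your argument for the converse in (i) and for (ii) tracks the paper closely and is fine. The gap is in the forward direction of (i): the sentence ``the ideal $I\subseteq k[X_0]$ it defines is $B$-stable, hence a sum of $B$-eigenspaces'' is not justified. The Borel $B$ is not reductive, so a $B$-stable subspace of a rational $B$-module need not be spanned by $B$-eigenvectors; there is no complete reducibility to invoke. Consequently you cannot conclude that the $B$-eigenfunctions in $I$ already cut out $X_0\setminus X_0'$, and hence you cannot conclude that the principal opens $(X_0)_{\alpha_i}$ actually \emph{cover} $X_0'$. Without that, both displayed conclusions of (i) collapse.

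The paper gets around this precisely by \emph{not} trying to cover $X_0'$ pointwise. It first observes that $X_0\setminus X_0'$ is pure of codimension one (since both are affine), so its components are $B$-divisors. Then for each generic point $\zeta$ of a $G$-cycle of $X(\D',\F')$ or of a color meeting $X_0'$, it invokes Lemma~\ref{l:tecval1} (Timash\"ev's approximation of $G$-valuations by $B$-eigenfunctions) to produce a homogeneous $a_\zeta\in k[X_0]^U$ vanishing on $X_0\setminus X_0'$ but not at $\zeta$. This yields only $G\cdot X_0'=G\cdot\bigl(\bigcup_\zeta (X_0)_{a_\zeta}\bigr)$, which is weaker than your $X_0'=\bigcup_i(X_0)_{\alpha_i}$ but is exactly what the statement asks for; and in (ii) one then recovers $k[X_0']=k[\Omega_1]$ from the valuation-ring description of a $B$-chart, since $\Omega_1$ meets every color and $G$-cycle of $X_0'$. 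So the missing ingredient in your approach is exactly Lemma~\ref{l:tecval1}; once you use it at the level of generic points rather than arbitrary points, the rest of your write-up goes through.
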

\begin{proof} 
(i) Let us show the direct implication. Since $X_{0}'$ is an affine dense open subset of $X_{0}$, the complement $D= X_{0}\setminus X_{0}'$ is pure of codimension $1$. So the closure of the irreducible components of $D$ in $X := G\cdot X_{0}$ are described by $G$-valuations and colors. Let $\zeta$ be the generic point of a $G$-cycle of $X(\D', \F')$ or a color intersecting $X_{0}'$. Then using Lemma \ref{l:tecval1}, there exists a homogeneous function $a_{\zeta}\in k[X_{0}]^{U}$ such that $a_{\zeta}$ vanishes on $D$ and is nonzero at $\zeta$.
Let $X_{2}:= \bigcup_{\zeta} (X_{0})_{a_{\zeta}}$, where $(X_{0})_{a_{\zeta}}$ stands for the localization of $X_{0}$ with respect to $a_{\zeta}$. Therefore $X(\D',\F') = G\cdot X_{2}$ and since we only consider finitely many $(X_{0})_{a_{\zeta}}$ for defining $X_{2}$, we conclude. For the converse, our assumption gives $X(\D',\F')\subseteq X(\D, \F)$. As $\F'\subseteq \F$ it follows that 
$$X_{0}' = X(\D', \F')\setminus \bigcup_{D\in\F_{\Omega}\setminus \F'}\bar{D}\subseteq X(\D,\F)\setminus \bigcup_{D\in \F_{\Omega}\setminus \F}\bar{D} = X_{0},$$
yielding Assertion (i).

(ii) The fact that $\Gamma_{1}'$ is a semi-projective open subset of $\Gamma$
and that $(\D^{1}, \F^{1})$ is a colored polyhedral divisor is a consequence
of the proof of \cite[Proposition 4.3]{AHS08}. By the proof of Assertion (i), $X_{0}'$ has an open subset of the form 
$$\Omega_{1}:= (X_{0})_{\alpha_{1}}\cup\ldots \cup (X_{0})_{\alpha_{r}}$$
intersecting any of its color and $G$-cycle. From the description
of $k[X_{0}']$ in terms of valuation rings, we observe that $k[\Omega_{1}] = k[X_{0}']$. 
Moreover for an arbitrary $B$-eigenfunction $\xi$ of $k(\XX)$, we have 
$$\xi \in k[\Omega_{1}]^{U}\iff \xi \in \bigcap_{i=1}^{r}k[(X_{0})_{\alpha_{i}}]^{U}
\iff \xi \in \bigcap_{i=1}^{r} A(\Gamma_{f_{i}}', \D_{f_{i}})\iff \xi \in A(\Gamma_{1}', \D^{1}), $$
and $\F' =  \F_{1}$, whence the result.
\end{proof} 
With the same notation as in Lemma \ref{l:reduce}, we will write $\bigcup_{i\in I}(\D, \F)_{\alpha_{i}}$ for the colored polyhedral divisor $(\D^{1}, \F^{1})$. 
\begin{corollaire}
\label{c:tec-loc}
Let $X_{0}$ be a $B$-chart of $\gsc(\XX)$ built from a colored poyhedral divisor $(\D,\F)\in \CP(\Gamma, \ss)$ and consider two colored polyhedral divisors
$$\left(\D^{1}= \sum_{Y\subseteq \Gamma}\D_{Y}^{1}\cdot Y, \F^{1} \right)= \bigcup_{i\in I}(\D,\F)_{\alpha_{i}}$$ 
$$\text{ and }\left(\D^{2}= \sum_{Y\subseteq \Gamma}\D_{Y}^{2}\cdot Y, \F^{2} \right)= \bigcup_{j\in J}(\D,\F)_{\beta_{j}}$$
that describe $B$-equivariant open subsets 
$$X_{0}^{1} := \bigcup_{i\in I}(X_{0})_{\alpha_{i}}\text{ and } X_{0}^{2} :=\bigcup_{j\in J}(X_{0})_{\beta_{j}}$$ of $X_{0}$ as in Lemma \ref{l:reduce}.
Then the intersection can be computed as
$$\left(\sum_{Y\subseteq \Gamma}\D_{Y}^{1}\cap \D_{Y}^{2}\cdot Y, \F^{1}\cap \F^{2}\right) = \left(\bigcup_{(i,j)\in I\times J}\D_{f_{i}g_{j}},\bigcup_{(i,j)\in I\times J}\F^{w_{i}+y_{j}}\right)$$
and corresponds to the $B$-chart $X_{0}^{1}\cap X_{0}^{2}$, where 
$\alpha_{i} = f_{i}\otimes\chi^{w_{i}}$ and $\beta_{j} = g_{j}\otimes\chi^{y_{j}}$. 
\end{corollaire}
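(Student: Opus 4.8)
The plan is to reduce the statement to the behaviour of a single localization (Lemma~\ref{l:localiz}) and of the union construction behind the symbol $\bigcup_{i\in I}(\D,\F)_{\alpha_i}$ (Lemma~\ref{l:reduce}), and then to carry out the remaining matching combinatorially. First I would observe that, since $X_0^1=\bigcup_{i\in I}(X_0)_{\alpha_i}$ and $X_0^2=\bigcup_{j\in J}(X_0)_{\beta_j}$ are $B$-stable open subsets of the $B$-chart $X_0$,
$$X_0^1\cap X_0^2=\bigcup_{(i,j)\in I\times J}\bigl((X_0)_{\alpha_i}\cap(X_0)_{\beta_j}\bigr)=\bigcup_{(i,j)\in I\times J}(X_0)_{\alpha_i\beta_j},$$
the last equality because $(X_0)_{\alpha_i}$ (resp. $(X_0)_{\beta_j}$) is the principal open subset of the affine variety $X_0$ on which the regular function $\alpha_i$ (resp. $\beta_j$) is invertible, and the intersection of two such is the principal open subset on which $\alpha_i\beta_j$ is invertible. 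Since $X_0^1$ and $X_0^2$ are affine open subsets of the separated scheme $X_0$, their intersection is again affine; being $B$-stable and dense, it is a $B$-chart of $\gsc(\XX)$, and its set of colors $\F^1\cap\F^2$ is contained in $\F$. This already gives the last assertion of the corollary.

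Next I would identify the pieces. Each product $\alpha_i\beta_j=(f_ig_j)\otimes\chi^{w_i+y_j}$ is a homogeneous element of $A(\loc(\D),\D)=k[X_0]^U$: indeed $f_ig_j$ is a global section of $\mathcal{O}_{\loc(\D)}(\D(w_i)+\D(y_j))$, and $\D(w_i)+\D(y_j)\leq\D(w_i+y_j)$ by superadditivity of $m\mapsto\D(m)$, so $f_ig_j$ is a section of $\mathcal{O}_{\loc(\D)}(\D(w_i+y_j))$. Hence Lemma~\ref{l:localiz} applies with $\xi=\alpha_i\beta_j$ and shows that $(X_0)_{\alpha_i\beta_j}$ is the $B$-chart described by the colored polyhedral divisor $(\D,\F)^{\,w_i+y_j}_{f_ig_j}=(\D_{f_ig_j},\F^{w_i+y_j})$. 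Feeding the finite family $\{\alpha_i\beta_j\}_{(i,j)\in I\times J}\subseteq k[X_0]^U$ into Lemma~\ref{l:reduce}(ii) and the convention introduced immediately after it, I conclude that the $B$-chart $X_0^1\cap X_0^2=\bigcup_{(i,j)}(X_0)_{\alpha_i\beta_j}$ is described by
$$\bigcup_{(i,j)\in I\times J}(\D,\F)_{\alpha_i\beta_j}=\Bigl(\textstyle\bigcup_{(i,j)}\D_{f_ig_j},\ \bigcup_{(i,j)}\F^{w_i+y_j}\Bigr),$$
which is the right-hand side of the asserted equality.

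It then remains to match this with $\bigl(\sum_{Y}\D^1_Y\cap\D^2_Y\cdot Y,\ \F^1\cap\F^2\bigr)$, color by color and coefficient by coefficient. For the colors, I would use that $\varrho(D)$ lies in $\sigma=\tail(\D)$ for every $D\in\F$ (the remark after Definition~\ref{d-poly}) while $w_i,y_j\in\sigma^\vee$; thus $\langle w_i,\varrho(D)\rangle$ and $\langle y_j,\varrho(D)\rangle$ are $\geq 0$, so $\langle w_i+y_j,\varrho(D)\rangle=0$ if and only if both vanish, i.e. $\F^{w_i+y_j}=\F^{w_i}\cap\F^{y_j}$. Since $\F^1=\bigcup_i\F^{w_i}$ and $\F^2=\bigcup_j\F^{y_j}$, the elementary identity $\bigcup_{(i,j)}(A_i\cap B_j)=(\bigcup_iA_i)\cap(\bigcup_jB_j)$ gives $\bigcup_{(i,j)}\F^{w_i+y_j}=\F^1\cap\F^2$. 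For the coefficients I would argue through centers of $G$-valuations rather than through faces: by Lemma~\ref{l:valcenter}, for a prime $Y\subseteq\Gamma$ and a $G$-valuation $\nu=[v_Y,p,1]\in\hs$, $\nu$ has a center in a simple $G$-model $X(\D',\F')$ exactly when $p\in\D'_Y$; and since $X_0^1,X_0^2\subseteq X_0$ with $X_0$ separated, $\nu$ has a center in $X_0^1\cap X_0^2$ if and only if it has one in $X_0^1$ and one in $X_0^2$. Applying this simultaneously to $\bigl(\bigcup_{(i,j)}\D_{f_ig_j},\bigcup_{(i,j)}\F^{w_i+y_j}\bigr)$, to $(\D^1,\F^1)$ and to $(\D^2,\F^2)$ forces $\bigl(\bigcup_{(i,j)}\D_{f_ig_j}\bigr)_Y=\D^1_Y\cap\D^2_Y$ for every $Y$ (when this intersection is empty, $Y$ simply drops out of the common locus, which the union construction already produces on its own). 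Combined with the color identity this is the claimed equality, and by construction it describes the $B$-chart $X_0^1\cap X_0^2$.

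The step I expect to be the genuine obstacle is this coefficient comparison: the naive identity $\face(\D_Y,w_i+y_j)=\face(\D_Y,w_i)\cap\face(\D_Y,y_j)$ is \emph{false} in general (two faces of a polyhedron may be disjoint while the face cut out by the sum is large), so one really has to keep track of which indices $(i,j)$ and which divisors $Y$ contribute — equivalently of the loci $\loc(\D^1),\loc(\D^2)$ and of the supports $\supp(\div(f_i)+\D(w_i))$. Routing the argument through Lemma~\ref{l:valcenter} and the separatedness of $X_0$, as above, is precisely what dissolves this bookkeeping; the residual verifications (that $\bigcup_{(i,j)}\D_{f_ig_j}$ carries the expected semi-projective locus, and that superadditivity of $\D$ turns into an equality exactly on the cones that matter) are the technical adaptations of \cite[Proposition 3.3]{AHS08} and \cite[Proposition 4.3]{AHS08} already invoked in Lemmata~\ref{l:localiz} and~\ref{l:reduce}.
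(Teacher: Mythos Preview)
Your argument follows essentially the same route as the paper's: the paper's proof is nothing more than the chain of equivalences
\[
\xi\in k[X_0^1\cap X_0^2]^U \iff \xi\in\bigcap_{(i,j)}k[(X_0)_{\alpha_i\beta_j}]^U \iff \xi\in\bigcap_{(i,j)}A(\loc(\D_{f_ig_j}),\D_{f_ig_j}) \iff \xi\in A\Bigl(\textstyle\bigcup_{(i,j)}\D_{f_ig_j}\Bigr)
\]
together with the bare assertion $\F^1\cap\F^2=\bigcup_{(i,j)}\F^{w_i+y_j}$; your steps 1--3 spell out exactly this content, and you additionally supply a proof of the color identity via $\F^{w_i+y_j}=\F^{w_i}\cap\F^{y_j}$ which the paper leaves to the reader.

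One remark on your step 4. Your valuation argument only gives $(\bigcup_{(i,j)}\D_{f_ig_j})_Y\cap\Vc=\D^1_Y\cap\D^2_Y\cap\Vc$, because $[v_Y,p,1]$ is a $G$-valuation only when $p\in\Vc$ (Proposition~\ref{prop-vc}). The paper does not verify the literal coefficient equality either. If you want it, the ``naive'' route you warned against actually goes through once loci are tracked: whenever $\face(\D_Y,w_i)\cap\face(\D_Y,y_j)=\emptyset$ one has $\min_{\D_Y}\langle w_i+y_j,\cdot\rangle>\min_{\D_Y}\langle w_i,\cdot\rangle+\min_{\D_Y}\langle y_j,\cdot\rangle$, hence the coefficient of $\div(f_ig_j)+\D(w_i+y_j)$ at $Y$ is strictly positive and $Y\not\subseteq\Gamma_{f_ig_j}$, so such $(i,j)$ simply do not contribute to the union; on the surviving indices the intersection of faces is nonempty and then $\face(\D_Y,w_i+y_j)=\face(\D_Y,w_i)\cap\face(\D_Y,y_j)$ holds exactly. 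This yields the full coefficient identity and removes the bookkeeping obstacle you anticipated.
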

\begin{proof}
This immediately follows from the equivalences 
$$\xi\in k[X_{0}^{1}\cap X_{0}^{2}]^{U}\iff \xi \in \bigcap_{(i,j)\in I\times J}k[(X_{0})_{\alpha_{i}}\cap (X_{0})_{\beta_{j}}]^{U}\iff \xi \in \bigcap_{(i,j)\in I\times J}k[(X_{0})_{\alpha_{i}\beta_{j}}]^{U}$$
$$\iff \xi \in \bigcap_{(i,j)\in I\times J}A(\loc(\D_{f_{i}g_{j}}), \D_{f_{i}g_{j}})\iff \xi \in A\left(\loc\left(\bigcup_{(i,j)\in I\times J}\D_{f_{i}g_{j}}\right),\bigcup_{(i,j)\in I\times J}\D_{f_{i}g_{j}}   \right)$$
and the equality $\F^{1}\cap \F^{2}= \bigcup_{(i,j)\in I\times J}\F^{w_{i}+y_{j}}$
by remarking that the polyhedral divisor $\D_{f_{i} g_{j}}$ is obtained by the component-wise intersection between $\D_{f_{i}}$ and $\D_{g_{j}}$.
\end{proof}

The next two lemmata are preparations for our next main result, namely Theorem \ref{p:glue-pp}.
\begin{lemma}
\label{l:torusorbit12}
Let $(\D, \F)\in\CP(\Gamma, \ss)$ be a colored polyhedral divisor with smooth
locus $\Gamma$.
For any $G$-orbit $O$ of a simple model $X= X(\D,\F)$ of $\XX$, there exist a closed point $y\in\Gamma$, a $G$-valuation $v = [\mu, p, \ell]$,  where $\ell\neq 0$, $p/\ell\in \D_{y}$ and $y$ is the center of $\mu$, such that $v$ is centered in the generic point 
of $\bar{O}$. 
\end{lemma}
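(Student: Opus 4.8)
The plan is to reduce the statement to Lemma \ref{l:valcenter}, which characterizes precisely which $G$-valuations $[s,p,\ell]$ have a center in $X=X(\D,\F)$. First I would recall that a $G$-orbit $O$ of $X$ is contained in a unique minimal $G$-cycle, namely $\bar O$ itself, and by Lemma \ref{l:testval} (separateness) the $G$-valuation centered in the generic point of $\bar O$, if it exists, is essentially unique up to proportionality once we prescribe the ``shape'' of the center. More to the point: since $X$ is covered by $G$-translates of the $B$-chart $X_0=X_0(\D,\F)$, any $G$-orbit $O$ meets $X_0$, so $\bar O\cap X_0$ is a nonempty $B$-stable closed subvariety, and by the local structure theorem (Theorem \ref{p-loc}) it corresponds via $\pi_\lc$ to $P_u\times Z_\lc$ for a $G_\star$-cycle $Z_\lc\subseteq X_\lc$ (Lemma \ref{l-L-orbits}). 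Thus the generic point $\zeta$ of $\bar O$ lies in $X_0$, and any discrete $G$-valuation $v$ on $k(\XX)$ centered at $\zeta$ may be written $v=[s,p,\ell]$ with $s$ a valuation on $k(\Gamma)$ having a schematic center $y\in\Gamma$, by the first part of Lemma \ref{l:valcenter}.

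The key step is to produce such a $v$ with $\ell\neq 0$ and $p/\ell\in\D_y$. I would argue as follows. Pick a prime divisor $D_0$ on some normal $G$-model $X'$ of $\XX$ whose generic point has center equal to $\zeta$ (such exists by blowing up $\zeta$ in a $G$-equivariant resolution, or more precisely by the existence of $G$-valuations centered at any $G$-germ, cf. the construction used in Proposition \ref{prop-vc}); this gives a $G$-valuation $v$ centered at $\zeta$. A priori $v$ could be central (i.e. $\ell=0$, $v|_{k(\XX)^B}$ trivial) or could have $p/\ell$ merely in $s(\D)$ rather than in a fiber polyhedron $\D_y$. To fix both issues I would replace $v$ by a suitable modification: perturb $v$ by adding a small positive multiple of a $G$-valuation of the form $[s_0,0,1]$ where $s_0$ has center $y$; since the set $\Vh$ of $G$-valuations is the convex set $\{[\va,a,b]\mid \va\in\geo(\YY),\ (a,b)\in\Vc\times\QQ_{\ge0}\}$ by Proposition \ref{prop-vc}, and since ``having a center in $X$'' is characterized by the convex conditions of Lemma \ref{l:valcenter}, a generic small perturbation keeps the center at $\zeta$ (it can only specialize the center, and by minimality of $\bar O$ among $G$-cycles through $\zeta$ it stays at $\zeta$) while forcing $\ell\neq 0$. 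Finally, among the valuations centered at $\zeta$ with $\ell\neq 0$, the condition from Lemma \ref{l:valcenter} reads $p/\ell\in s(\D)=\sum_Y s(f_Y)\D_Y$; choosing $s$ to be (proportional to) the trivial valuation plus a generic divisorial valuation with center $y$ — equivalently, choosing $s$ so that $s(f_Y)$ vanishes except for the $Y$ through $y$ — collapses $s(\D)$ to $\D_y$ (up to scaling), so that $p/\ell\in\D_y$; and by an appropriate normalization the center of the resulting $G$-valuation is still the generic point of $\bar O$.

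Concretely, I would phrase the argument through the correspondence $Z\mapsto Z_\lc$ of Lemma \ref{l-L-orbits}: the $G_\star$-cycle $Z_\lc\subseteq X_\lc=\spec A_\lc(\Gamma,\D)$ lies over a point $y\in\Gamma$ (its image under $X_\lc\to\Gamma$), and the $G_\star$-valuation $\nu$ centered at its generic point restricts on $k(\Gamma)$ to a valuation with center $y$; combining with Lemma \ref{l:valcenter} applied to the colored polyhedral divisor $(\D,\F_\lc)$ (which describes $X_\lc$), one gets exactly $p/\ell\in s(\D)$ with $s$ centered at $y$, and the remaining freedom in choosing $s$ (replacing it by a valuation whose only nonzero values are on local equations of divisors through $y$) yields $p/\ell\in\D_y$ after rescaling. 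Then the associated $G$-valuation $v=[s,p,\ell]$ on $k(\XX)$ is centered in the generic point of $\bar O$ by Lemma \ref{l:valcenter} (the conditions are satisfied) and Lemma \ref{l:testval} (uniqueness forces the center to be $\zeta$), with $\ell\neq 0$ by construction.

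The main obstacle I anticipate is the normalization step: ensuring that after replacing $s$ by a valuation ``concentrated'' at the point $y$ so that $s(\D)$ reduces to the fiber polyhedron $\D_y$, the resulting $G$-valuation is still centered at the generic point of $\bar O$ and not at some proper $G$-subcycle or at a different orbit closure. This requires carefully tracking how the center of $[s,p,\ell]$ in $X$ depends on the pair $(s,p/\ell)$: one must check that keeping $p/\ell$ at the ``same position'' in the fiber polyhedron $\D_y$ while simplifying $s$ does not change which face of the relevant polyhedral subdivision the pair sits in, hence does not change the center. This is a routine but slightly delicate convexity bookkeeping, essentially the same kind of argument as in the proof of \cite[Lemma 7.7]{AHS08} combined with Lemma \ref{l-L-orbits}(ii); I would handle it by working directly with the vanishing-ideal description $I(Z_\lc)=\{f\in k[X_\lc]\setminus\{0\}\mid \nu(f)>0\}\cup\{0\}$ and verifying it is preserved.
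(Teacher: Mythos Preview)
There is a genuine structural gap in your ``concrete'' argument. You claim that the $G_\star$-cycle $Z_\lc\subseteq X_\lc$ ``lies over a point $y\in\Gamma$ (its image under $X_\lc\to\Gamma$)'', but no such morphism exists in general: $X_\lc=\spec A_\lc(\Gamma,\D)$ only admits a morphism to the affinization $\Gamma_0=\spec k[\loc(\D)]$, and when $\loc(\D)$ is, say, projective, $\Gamma_0$ is a single point. So there is no way to read off a point $y\in\Gamma$ from $Z_\lc$ directly, and the subsequent step of ``choosing $s$ concentrated at $y$'' so that $s(\D)$ collapses to $\D_y$ has no anchor. Your earlier perturbation argument has the same flaw in a different guise: you never justify why modifying $s$ (or adding $[s_0,0,1]$) keeps the center at the generic point of $\bar O$ rather than moving it to a different $G$-cycle; the vague appeal to ``minimality'' does not control this, since changing $s$ can move the center to a \emph{larger} cycle.

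The paper fixes precisely this by inserting the relative spectrum $\hat X_\lc=\spec_{\loc(\D)}\mathcal{A}_\lc$, which \emph{does} come with a genuine morphism $\pi:\hat X_\lc\to\loc(\D)$; since $G_\star$ acts trivially on $\Gamma$, every $G_\star$-orbit of $\hat X_\lc$ lies in a single fiber $\pi^{-1}(y)$. The proper birational morphism $q:\hat X_0\to X_0$ from Lemma~\ref{l-prop-lc} then lets one lift the orbit $O$ to an orbit $\hat O\subseteq\pi^{-1}(y)$, reducing the problem to the case where $\Gamma$ is affine. A second reduction (replacing $(\D,\F)$ by $(\D^{\circ},\emptyset)$ via the projective map $X(\D^{\circ},\emptyset)\to X(\D,\F)$) strips the colors and brings the situation down to an affine torus-variety $\spec A(\Gamma,\D)$, where the claim is the known description of torus orbits from \cite[Section~7]{AH06}. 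Your sketch is missing both reductions; without them the bookkeeping you flag as ``routine but slightly delicate'' is not routine at all.
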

\begin{proof}
Denote by $X_{0}$ the $B$-chart associated with $(\D,\F)$ and let $P$ be the parabolic subgroup stabilizing $X_{0}$. 
Using Theorem \ref{p-loc}, there exists a $P$-isomorphism 
$X_{0}\simeq P_{u}\times X_{\lc}$. We refer to Lemma \ref{l-loc} for the definition of $X_{\lc}$.
Let $\hat{X}_{\lc} = \spec\,\mathcal{A}_{\lc}$ be the relative spectrum over $\Gamma$ of the sheaf  
$$\mathcal{A}_{\lc} = \bigoplus_{\lambda\in\sigma^{\vee}\cap M} \mathcal{O}_{\Gamma}(\D(\lambda))\otimes_{k}V_{\lambda},$$
where $\sigma$ is the tail of $\D$. 
Then as in Lemma \ref{l-prop-lc},
we have the natural proper birational $P$-equivariant morphism $q:\hat{X_{0}}\rightarrow  X_{0},$
where $\hat{X_{0}} = P_{u}\times \hat{X_{\lc}}$. By Lemma \ref{l-L-orbits}, there exists a $P$-orbit $\hat{O}$ inside a fiber $(\pi)^{-1}(y)$ such that $q(\hat{O})$ is dense in the closure of $X_{0}\cap O$, where $\pi: \hat{X_{0}}\rightarrow \Gamma$ is the quotient map. So without loss of generality, we may assume that $\Gamma$ is a smooth affine variety.

Now since $\Gamma$ is smooth and affine, the polyhedral divisor $\D^{\circ}$ over $\Gamma$ defined by the equality $C(\D^{\circ}) = C(\D)\cap \Vh$ is proper. Moreover,
by combining Lemma \ref{l:valcenter} and \cite[Theorem 12.13]{Tim11}, we get a natural $G$-equivariant projective morphism
$X(\D^{\circ}, \emptyset)\rightarrow X(\D, \F)$. Hence we may assume that $\D = \D^{\circ}$ and $\F = \emptyset$. In this new situation, $X_{\star}$ becomes $\spec\, A(\Gamma,\D)$ and we conclude by using the description of torus orbits in \cite[Section 7]{AH06}.
\end{proof}
\begin{lemma}
\label{p:glue-pp1}
Let $(\D,\F), (\D',\F')\in\CP(\Gamma, \ss)$ be two colored polyhedral divisors with tails $\sigma, \sigma'$ living in $N_{\QQ}$ and smooth loci $\Gamma_{1}, \Gamma_{1}'$, respectively. Assume that $\Gamma_{1}'\subseteq \Gamma_{1}$, $C(\D')\subseteq C(\D)$, and $\F'\subseteq\F$. Then
the induced $B$-equivariant dominant morphism $X_{0}(\D',\F')\rightarrow X_{0}(\D,\F)$ is an open immersion if and only if for any geometric valuation $\mu$ centered in a schematic point $\zeta$ of $\Gamma_{1}'$ there exist $m\in \sigma^{\vee}\cap M$ and a nonzero section $f\in H^{0}( \Gamma_{1},\mathcal{O}_{\Gamma_{1}}(\D(m)))$ such that $(\F')^m = \F^m$, 
$$\zeta\in (\Gamma_{1})_{f}\subseteq \Gamma_{1}',\, \mu(\D')\cap \Vc = \face(\mu(\D), m)\cap\Vc,\text{ and }
\face(\nu(\D), m)\cap \Vc = \face(\nu(\D'), m)\cap\Vc$$
for any  other valuation $\nu$ centered in $(\Gamma_{1})_{f}\,\, (\star).$ Here $\Vc$ denotes the valuation cone of $\Omega$.
\end{lemma}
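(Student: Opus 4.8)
The plan is to reduce the statement to the torus case treated in \cite[Proposition 4.3]{AHS08} by peeling off the ``spherical direction'' via the local structure theorem. Concretely, I would fix a Levi decomposition $P = P_u \ltimes G_\star$ of the parabolic subgroup $P$ stabilizing $X_0(\D,\F)$ (here I use that $\F' \subseteq \F$ forces the stabilizing parabolic of $X_0(\D',\F')$ to be contained in that of $X_0(\D,\F)$ up to the colors, so one works with the larger $P$), and invoke Theorem \ref{p-loc} to obtain $P$-isomorphisms $X_0(\D,\F) \simeq P_u \times X_\star$ and $X_0(\D',\F') \simeq P_u \times X_\star'$, where $X_\star = \spec\, A_\star(\Gamma, \D)$ and $X_\star' = \spec\, A_\star(\Gamma_1',\D')$ are the intermediate $G_\star$-varieties of Lemma \ref{l-loc}. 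Since the $P_u$-factor is the same on both sides, the $B$-equivariant morphism $X_0(\D',\F') \to X_0(\D,\F)$ is an open immersion if and only if the induced $G_\star$-equivariant morphism $X_\star' \to X_\star$ is an open immersion.

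\textbf{Key steps.} First I would record that $X_\star$ and $X_\star'$ carry a natural effective action of the diagonalizable quotient $T_\star := B_\star/B_\star \cap H_\star$ with weight lattice $M$, making $\hat X_\star := \spec_{\Gamma}\mathcal{A}_\star$ and its primed analogue $T_\star$-varieties over $\Gamma$ in the sense of \cite{AHS08}, via the identification of the weight-graded pieces with $H^0(\loc(\D),\mathcal{O}(\D(\lambda)))$. Next, using Lemma \ref{l-prop-lc} together with Lemma \ref{l-L-orbits}, I would note that the $G_\star$-cycles of $X_\star$ correspond (order-preservingly, and in a way compatible with the map $\hat X_\star \to X_\star$) to the $T_\star$-cycles of $\hat X_\star$, so that the open immersion question for $X_\star' \to X_\star$ is equivalent to the open immersion question for the $T_\star$-varieties $\hat X_\star' \to \hat X_\star$ described by the (uncolored) polyhedral divisors $\D^\circ, (\D')^\circ$ with $C(\D^\circ) = C(\D)\cap\Vh$. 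At this point one is exactly in the setting of \cite[Proposition 4.3]{AHS08}, which characterizes such open immersions of $T_\star$-varieties: for every geometric valuation $\mu$ centered at $\zeta \in \Gamma_1'$ there must exist $m$ and a section $f$ with $\zeta \in (\Gamma_1)_f \subseteq \Gamma_1'$ and $\mu(\D') = \face(\mu(\D),m)$, $\face(\nu(\D),m) = \face(\nu(\D'),m)$ for all $\nu$ centered in $(\Gamma_1)_f$. The remaining task is to intersect all of this with the valuation cone $\Vc$ and to account for the colors: by the remark after Definition \ref{d-poly} the vertices of every coefficient polyhedron lie in $\Vc$, so intersecting the face conditions with $\Vc$ loses no information on the polyhedral part, while the color condition $(\F')^m = \F^m$ is precisely the translation, via Lemma \ref{l:localiz} (i.e. Definition \ref{d-loca} applied to $\xi = f\otimes\chi^m$), of the requirement that the localizations $X_0(\D,\F)_\xi$ and $X_0(\D',\F')_\xi$ agree — which by Lemma \ref{l:reduce}(i) is exactly what it means for the collection of such localizations to cover $X_0(\D',\F')$ inside $X_0(\D,\F)$.

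\textbf{Main obstacle.} I expect the main difficulty to be the bookkeeping around colors and the valuation cone, not the polyhedral geometry: one must check carefully that the passage $X_0(\D,\F) \leftrightarrow X_\star$ is genuinely an equivalence of ``open immersion'' statements despite the $G$-variety $X(\D,\F)$ having strictly more $G$-orbits than $X_\star$ has $G_\star$-orbits in general (cf. the warning in Remark following Lemma \ref{l-L-orbits} and \cite[Remark 15.19]{Tim11}). The point to be handled with care is that an open immersion of $B$-charts is detected on $U$-invariants and colors simultaneously (Corollary \ref{c-equal} and Lemma \ref{l:tecval2}): the $U$-invariant part is governed by the polyhedral/$\Vc$ conditions coming from \cite[Proposition 4.3]{AHS08}, and the colored part is governed by the conditions $(\F')^m = \F^m$, and one needs Lemma \ref{l:tecval1} to glue these two pieces of data back into a single statement about $X_0(\D',\F') \subseteq X_0(\D,\F)$. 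Once the equivalence with the $T_\star$-setting is set up cleanly, the ``if'' and ``only if'' directions follow by quoting \cite[Proposition 4.3]{AHS08} in each direction and translating via Lemmas \ref{l:localiz} and \ref{l:reduce}.
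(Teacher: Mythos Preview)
Your approach is genuinely different from the paper's, and it runs into real trouble at the reduction step. The paper does \emph{not} pass through the local structure theorem or reduce to \cite[Proposition~4.3]{AHS08}. Instead it argues directly with $G$-valuations: for the forward direction, given $\mu$ centered at $\zeta\in\Gamma_1'$, one picks a $G$-valuation $v=[\mu,p,\ell]$ with $p/\ell$ in the relative interior of $\mu(\D')\cap\Vc$, uses Lemma~\ref{l:valcenter} to locate its center as the generic point of a $G$-cycle $Z\subseteq X(\D',\F')$, and then invokes Lemma~\ref{l:reduce}(i) to produce $\alpha=f\otimes\chi^m$ with $X_0(\D,\F)_\alpha=X_0(\D',\F')_\alpha$ meeting $Z$; the face and color conditions are read off from this equality. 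The backward direction is simply the combinatorial translation of Lemma~\ref{l:reduce}(i). No Levi slice, no torus reduction.

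Your detour has a concrete gap. When $\F'\subsetneq\F$, the parabolics stabilizing the two $B$-charts differ: $P'\subsetneq P$, hence $P_u\subsetneq P_u'$, so the local structure decompositions $X_0(\D,\F)\simeq P_u\times X_\star$ and $X_0(\D',\F')\simeq P_u'\times X_\star''$ do not share the same unipotent factor, and your claim that the open immersion question reduces to $X_\star'\to X_\star$ is not justified as stated. More seriously, even granting a common slice, $X_\star$ is still a $G_\star$-variety with spherical orbits (described by $(\D,\F_\star)$ per Lemma~\ref{l-loc}), not a $T_\star$-variety; the sheaf $\mathcal{A}_\star$ carries the full $V_\lambda$'s, not just the weight-$m$ lines. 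Your assertion that the open immersion question for $X_\star'\to X_\star$ is \emph{equivalent} to that for the associated $T_\star$-varieties is essentially a restatement of the lemma you are trying to prove, and the cycle correspondence of Lemma~\ref{l-L-orbits} is explicitly noted (in the remark following it) to be non-bijective, so it cannot carry this equivalence on its own. The paper avoids all of this by never leaving the $G$-valuation picture: Lemma~\ref{l:valcenter} already encodes the relevant $\Vc$-intersections, and Lemma~\ref{l:reduce}(i) already packages the color condition $(\F')^m=\F^m$.
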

\begin{proof} 
$\Rightarrow:$ Assume that we have a $B$-equivariant immersion $X_{0}(\D',\F')\rightarrow X_{0}(\D,\F).$ Let $\mu$ be a geometric valuation centered in a generic
point of $\Gamma_{1}'$. Consider a $G$-valuation $v =[\mu, p, \ell]$ such that $\ell\neq 0$ and $p/\ell$ belongs to the relative interior of $\mu(\D')\cap \Vc$. Then by Lemma \ref{l:valcenter}, $v$ is centered in the generic point of a $G$-cycle $Z\subseteq X(\D',\F')$, and any other choice as before of $(p,l)$ gives the same $G$-cycle. Using Lemma \ref{l:reduce} (i), there exists $m\in \sigma^{\vee}\cap M$ and a nonzero $f\in H^{0}( \Gamma_{1},\mathcal{O}_{\Gamma_{1}}(\D(m)))$ such that the localization $X_{0}(\D, \F)_{\alpha}$ intersects $Z$ and $X_{0}(\D, \F)_{\alpha} = X_{0}(\D', \F')_{\alpha}$,
where $\alpha = f\otimes \chi^{m}$. From Lemma \ref{l:valcenter} we get that
$\mu$ is centered in $(\Gamma_{1})_{f}$ and the inclusion
$\mu(\D')\cap \Vc\subseteq \face(\mu(\D), m)\cap\Vc$, the reverse inclusion is performed in the same way. Condition $(\star)$ follows from the equality $X_{0}(\D, \F)_{\alpha} = X_{0}(\D', \F')_{\alpha}$.

 $\Leftarrow:$ This is the combinatorial translation of the characterization of Lemma \ref{l:reduce} (i).
\end{proof}
The next theorem gives a combinatorial description of $B$-equivariant open immersions of $B$-charts. 
\begin{theorem}
\label{p:glue-pp}
Let $(\D,\F), (\D',\F')\in\CP(\Gamma, \ss)$ be two colored polyhedral divisors with tails $\sigma, \sigma'$ living in $N_{\QQ}$ and smooth loci $\Gamma_{1}, \Gamma_{1}'$, respectively. Assume that $\Gamma_{1}'\subseteq \Gamma_{1}$, $C(\D')\subseteq C(\D)$, and $\F'\subseteq\F$. Then
the induced $B$-equivariant dominant morphism $X_{0}(\D',\F')\rightarrow X_{0}(\D,\F)$ is an open immersion if and only if for any $y\in \Gamma_{1}'$ there exist $m\in \sigma^{\vee}\cap M$ and a nonzero section $f\in H^{0}( \Gamma_{1},\mathcal{O}_{\Gamma_{1}}(\D(m)))$ such that $(\F')^m = \F^m$, 
$$y\in (\Gamma_{1})_{f}\subseteq \Gamma_{1}',\, \D_{y}'\cap \Vc = \face(\D_{y}, m)\cap\Vc, \text{ and }
\face(\D_{z}, m)\cap \Vc = \face(\D_{z}', m)\cap\Vc$$
for any $z\in(\Gamma_{1})_{f}$.
\end{theorem}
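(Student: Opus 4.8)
The plan is to reduce Theorem~\ref{p:glue-pp} to the already established Lemma~\ref{p:glue-pp1}, whose criterion is phrased in terms of geometric valuations $\mu$ centered at schematic points of $\Gamma_1'$, and then translate that criterion into the point-wise statement over $y\in\Gamma_1'$. The key observation is that a schematic point $\zeta\in\Gamma_1'$ either is the generic point of $\Gamma_1'$ (in which case the corresponding valuation is trivial on $k(\Gamma)$ and the associated polyhedron $\mu(\D)$ is, by convention, the tail $\tail(\D)=\sigma$), or is the generic point of a prime divisor $Y\subseteq\Gamma_1'$ (in which case $\mu=v_Y$ and $\mu(\D)=\D_Y$), or has codimension $\geq 2$. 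Since $\Gamma_1$ is smooth, the fiber polyhedron $\D_y=\sum_{y\in Y}\D_Y$ over a point $y$ records exactly the information of all the $v_Y$ with $y\in Y$, together with the tail. So the statements ``for all geometric valuations $\mu$ centered in $\Gamma_1'$'' and ``for all $y\in\Gamma_1'$'' carry the same combinatorial content, once we pass between $\mu(\D)$ and $\D_y$.

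Concretely, first I would fix $y\in\Gamma_1'$ and apply Lemma~\ref{p:glue-pp1} to the finitely many divisorial valuations $\mu=v_Y$ with $y\in Y$ and to the trivial valuation, producing for each such $\mu$ a degree $m_\mu\in\sigma^\vee\cap M$ and a section $f_\mu\in H^0(\Gamma_1,\mathcal{O}_{\Gamma_1}(\D(m_\mu)))$. The subtlety is that Theorem~\ref{p:glue-pp} asks for a \emph{single} pair $(m,f)$ that works simultaneously for the whole fiber polyhedron at $y$, not one pair per divisor through $y$. To obtain this, I would take a suitable common refinement: passing to $m=\sum_\mu m_\mu$ (or a large multiple) and $f=\prod_\mu f_\mu$, one checks that $\face(\D_y,m)=\sum_{y\in Y}\face(\D_Y,m)$ because a linear functional that is minimized on a Minkowski sum is minimized on each summand, so $\face(-,m)$ commutes with the finite Minkowski sums defining $\D_y$; and $(\Gamma_1)_f=\bigcap_\mu(\Gamma_1)_{f_\mu}$ is again an open semi-projective neighborhood of $y$ contained in $\Gamma_1'$. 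The conditions $(\F')^m=\F^m$ and the face equalities over $(\Gamma_1)_f$ are preserved under this refinement, using that $\face(C,m+m')=\face(\face(C,m),m')$ and that the colors discarded only grow. This yields the forward direction.

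For the converse, I would argue that the point-wise hypothesis of Theorem~\ref{p:glue-pp} implies the hypothesis of Lemma~\ref{p:glue-pp1}: given a geometric valuation $\mu$ centered at a schematic point $\zeta$, pick a closed point $y$ in the closure of $\{\zeta\}$ inside $\Gamma_1'$ and apply the hypothesis at $y$; the resulting $f$ is nonvanishing on a neighborhood of $y$, hence the open set $(\Gamma_1)_f$ contains $\zeta$, and the face equality $\face(\D_z,m)\cap\Vc=\face(\D_z',m)\cap\Vc$ for all $z\in(\Gamma_1)_f$ specializes to the divisorial data needed, via $\mu(\D)=\D_Y$ when $\zeta$ is the generic point of $Y$ and via compatibility of the fiber polyhedron at $y$ with the coefficient at each $Y$ through $y$. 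Applying Lemma~\ref{p:glue-pp1} then gives that $X_0(\D',\F')\to X_0(\D,\F)$ is an open immersion.

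The main obstacle I expect is the bookkeeping in the forward direction: making sure that a single degree $m$ and section $f$ can be chosen to handle every prime divisor through a fixed $y$ at once, and that the chosen $f$ still cuts out an open set contained in $\Gamma_1'$ rather than merely in $\Gamma_1$. This is exactly the step where smoothness of $\Gamma_1$ is used (to have local equations $f_Y$ and a clean description of the fiber polyhedron), and where one must invoke the analogous argument in the proof of \cite[Proposition 4.3]{AHS08} that the union-of-localizations operation $\bigcup_i(\D,\F)_{\alpha_i}$ produces a bona fide colored polyhedral divisor; everything else is a routine translation between divisorial valuations and fiber polyhedra.
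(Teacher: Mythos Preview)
Your approach differs substantially from the paper's, and it has a genuine gap.

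The paper's proof is a single sentence: by Lemma~\ref{l:torusorbit12}, every $G$-orbit of $X(\D,\F)$ admits a $G$-valuation $[\mu,p,\ell]$ with $\ell\neq 0$ and $p/\ell\in\D_{y}$ for some closed point $y\in\Gamma$. Since the proof of Lemma~\ref{p:glue-pp1} (in both directions) ultimately works by detecting $G$-cycles via Lemma~\ref{l:reduce}(i) and Lemma~\ref{l:valcenter}, one may simply rerun that proof using fiber-polyhedron valuations in place of arbitrary geometric valuations. There is no separate combinatorial translation step; the restriction to fiber polyhedra is justified geometrically, not by a direct equivalence of the two criteria.

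Your proposal, by contrast, never invokes Lemma~\ref{l:torusorbit12} and instead attempts a direct combinatorial equivalence between the valuation criterion of Lemma~\ref{p:glue-pp1} and the point-wise criterion of Theorem~\ref{p:glue-pp}. The forward direction has a concrete gap: when you apply Lemma~\ref{p:glue-pp1} to each divisorial valuation $v_{Y}$ with $y\in Y$, the center of $v_{Y}$ is the \emph{generic point} of $Y$, so the resulting open set $(\Gamma_{1})_{f_{Y}}$ is only guaranteed to contain that generic point, not the closed point $y$. Hence the product $f=\prod_{Y}f_{Y}$ need not satisfy $y\in(\Gamma_{1})_{f}$, and the whole construction of a single pair $(m,f)$ working at $y$ breaks down. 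Your ``key observation'' is also incomplete: geometric valuations centered at a schematic point $\zeta$ of codimension $\geq 2$ are not reducible to a single $v_{Y}$, and for such $\mu$ the polyhedron $\mu(\D)=\sum_{Y}\mu(f_{Y})\cdot\D_{Y}$ is a genuinely weighted Minkowski sum, not any one coefficient $\D_{Y}$. Finally, even granting the individual face equalities, the passage $\face(\D_{Y},m)\cap\Vc=\face(\D'_{Y},m)\cap\Vc$ for each $Y\ni y$ does not yield $\face(\D_{y},m)\cap\Vc=\D'_{y}\cap\Vc$, because intersection with $\Vc$ does not commute with Minkowski sums.

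The remedy is exactly what the paper does: abandon the attempt to match the two criteria abstractly, and instead observe that both directions of the open-immersion characterization pass through $G$-cycles, for which Lemma~\ref{l:torusorbit12} supplies fiber-polyhedron witnesses.
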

\begin{proof}
According to Lemma \ref{l:torusorbit12}, we may restrict the characterization of Lemma \ref{p:glue-pp1}
to the case of fiber polyhedra.  
\end{proof}
\subsection{Colored divisorial fans}\label{sec-divfan}
We now describe the $G$-models of $\XX$, that is, all normal $G$-varieties that are $G$-birational to $\XX = \YY\times \Omega$ in terms of geometric and combinatorial objects which we will call colored divisorial fans. In the special case of torus actions on normal varieties, this class of objects restricts to
those of divisorial fans introduced in \cite[Definition 5.2]{AHS08} and encompasses the defining fans of toric varieties. 

Without loss of generality, we will assume that all polyhedral divisors are defined on a smooth semi-projective variety $\Gamma$ (where $\Gamma$ is a model of $S$) by taking a pull back by a projective resolution of singularities (compare with Proposition \ref{p:pullbackpp})
if necessary. We recall that $\ss$ denotes the homogeneous spherical datum of the spherical homogeneous $G$-space $\Omega$.
\\

Here we give the definition of a colored divisorial fan.
\begin{definition}
\label{d:divfan}
A \emph{colored divisorial fan} associated with the pair $(\Gamma, \ss)$ is a finite set 
$$\EE = \{(\D^i, \F^i)\in \CP(\Gamma, \ss)\,|\,i\in I\}$$
of colored polyhedral divisors satisfying the following properties.
\begin{itemize}
\item[(i)] The intersections $(\D^i\cap \D^j, \F^i\cap \F^j)$, for all $i,j\in I$,
belong to $\EE$,
where 
$$\D^i\cap \D^j := \sum_{Y\subseteq\Gamma}\D^i_Y\cap \D^j_Y\cdot Y.$$
\item[(ii)] For all $i,j\in I$,
the natural maps 
$$X_{0}(\D^{i},\F^{i})\leftarrow X_{0}(\D^{i}\cap \D^{j},\F^{i}\cap \F^{j})\rightarrow X_{0}(\D^{j},\F^{j})$$
are open immersions (see Theorem \ref{p:glue-pp} for a geometric and combinatorial description).
\item[(iii)] For any geometric valuation $\mu$ on the function field $k(\Gamma) = k(S)$ we have 
$$\mu(\D^{i})\cap \mu(\D^{j})\cap\Vc = \mu(\D^{i}\cap \D^{j})\cap\Vc$$ for all $i,j\in I$, where $\Vc$ is the valuation cone of the spherical homogeneous space $\Omega$ (see Section \ref{s-teclem} for the definition of $\mu(\D^{i})$).
\end{itemize}
\end{definition}
We define the \emph{locus} of $\EE$ as
$$\loc(\EE) := \bigcup_{(\D,\F)\in \EE}\loc(\D)\subseteq \Gamma.$$
 
\begin{example} \label{ex-sphere}
A spherical $G$-variety can be seen as a $G$-model (or an embedding) of its open $G$-orbit $\Omega$. Classically, such $G$-varieties are described by colored fans (see \cite[Section 3]{Kno91}). Let $(\sigma_{1}, \F_{1})$, $(\sigma_{2}, \F_{2})$ be two colored cones
of $\Omega$. We say that  $(\sigma_{1}, \F_{1})$ is \emph{essential} if its 
relative interior meets the valuation cone $\Vc$. Moreover, $(\sigma_{1}, \F_{1})$ is a \emph{face} of $(\sigma_{2}, \F_{2})$ if $\sigma_{1}$ is a face of $\sigma_{2}$ whose its relative interior intersects $\Vc$ and $\F_{1} = \varrho^{-1}(\sigma_{1})\cap \F_{2}$.
A \emph{colored fan} $\mathbb{F}$ is a finite set of essential colored cones of $\Omega$, stable under the face relation, and such that for any $v\in \Vc$ there exists at most one colored cone $(\sigma, \F)\in \mathbb{F}$ with $v$ in the relative interior of $\sigma$.  Each essential colored cone exactly corresponds to a simple $G$-model of $\Omega$ and the open immersions of $B$-charts are translated into their face relations. One may recover it from Theorem \ref{p:glue-pp}. Colored divisorial fans are vast generalizations of colored fans, where $S$ is no longer assumed to be $0$-dimensional.  
\end{example}
\begin{example}
\begin{figure}[t]
\includegraphics[width=8cm, height=1.5cm]{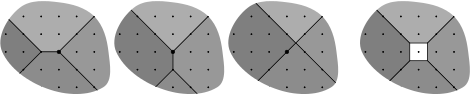}
\centering
\caption{}
\end{figure}
\emph{S\"uss pictures and torus actions.} Figure 1 illustrates a usual divisorial fan over
the projective line. The three first polyhedral subdivisions represent
the non-trivial polyhedral coefficients.
The fourth polyhedral subdivision consists of their Minkowski sums. Moreover, 
there are exactly four distinct polyhedral divisors with complete locus 
and maximal tail cone. According to the classification of S\"uss in \cite{Sus14} of Fano threefolds
admitting a faithful $2$-torus action, this divisorial fan defines
a smooth quadric threefold of $\P^{4}$ with Picard number $1$.
\end{example}
We now enunciate the main result of this section which classifies $G$-models of $\XX$. 
\begin{theorem}
\label{t:class1}
Let $\Gamma$ be a smooth projective model of the variety $S$ and
let $\ss$ be the homogeneous spherical datum of the spherical homogeneous $G$-space $\Omega$.
Denote by $\EE$ a colored divisorial fan on $(\Gamma, \ss)$. Then the open subscheme
$$X(\EE) := \bigcup_{(\D,\F)\in \EE} X(\D,\F)\subseteq \gsc(\XX)$$
is a $G$-model of $\XX =  S\times\Omega$ in which the open subsets $X(\D\cap \D',\F\cap \F')$
are identified with the intersections $$X(\D,\F)\cap X(\D',\F')\text{ for all }(\D,\F), (\D',\F')\in \EE.$$ Conversely, any $G$-model of $\XX$ arises in this way. 
\end{theorem}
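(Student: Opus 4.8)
The plan is to prove Theorem \ref{t:class1} in two halves, exactly mirroring the structure of the toric case in \cite[Section 5]{AHS08}, but replacing every use of "$\TT$-stable" by "$G$-stable" and every use of the divisorial fan machinery by its colored counterpart developed in Sections \ref{sec-localiz}--\ref{sec-divfan}.

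\medskip

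\emph{First half: a colored divisorial fan $\EE$ gives a $G$-model.} We start from the set-theoretic union $X(\EE) = \bigcup_{(\D,\F)\in\EE} X(\D,\F)$ inside $\gsc(\XX)$. Each $X(\D,\F) = G\cdot X_{0}(\D,\F)$ is a simple $G$-model of $\XX$ by Theorem \ref{t-ppcol}, hence a $G$-stable open subscheme of finite type over $k$ of $\gsc(\XX)$. The first point is that these pieces are glued along open subschemes: by Condition (ii) of Definition \ref{d:divfan} together with Theorem \ref{p:glue-pp}, the natural $B$-equivariant maps
$$X_{0}(\D^{i},\F^{i})\leftarrow X_{0}(\D^{i}\cap\D^{j},\F^{i}\cap\F^{j})\rightarrow X_{0}(\D^{j},\F^{j})$$
are open immersions, and applying $G\cdot(-)$ we get that $X(\D^{i}\cap\D^{j},\F^{i}\cap\F^{j})$ is a $G$-stable open subvariety of both $X(\D^{i},\F^{i})$ and $X(\D^{j},\F^{j})$. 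Since $\EE$ is stable under intersection (Condition (i)), the intersection $X(\D^{i},\F^{i})\cap X(\D^{j},\F^{j})$ computed inside $\gsc(\XX)$ equals $X(\D^{i}\cap\D^{j},\F^{i}\cap\F^{j})$; here I would invoke Corollary \ref{c:tec-loc} on the level of $B$-charts (the intersection of the two $B$-charts is the $B$-chart associated with the intersection of the colored polyhedral divisors) and then saturate by $G$, using that each $G$-stable open subset of $\gsc(\XX)$ is determined by its $B$-chart via Corollary \ref{c-equal}. Therefore $X(\EE)$ is a $G$-stable open subscheme of $\gsc(\XX)$ of finite type over $k$, and by construction it is a $G$-model of $\XX$ (the birational identification with $\XX$ comes from any single $X(\D,\F)$). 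What remains is \emph{separateness}: by Lemma \ref{l:testval}, $X(\EE)$ is separated over $k$ if and only if no $G$-valuation of $k(\XX)$ has two distinct centers in $X(\EE)$. Suppose a $G$-valuation $\nu = [s,p,\ell]$ has centers in both $X(\D^{i},\F^{i})$ and $X(\D^{j},\F^{j})$. By Lemma \ref{l:valcenter}, $s$ has a center $\zeta\in\Gamma$ and ($p/\ell$ or $p$) lies in both $s(\D^{i})$ (resp. $\tail(\D^{i})$) and $s(\D^{j})$ (resp. $\tail(\D^{j})$); intersecting with $\Vc$ and using Condition (iii), it lies in $s(\D^{i}\cap\D^{j})$, so by Lemma \ref{l:valcenter} again $\nu$ has a center in $X(\D^{i}\cap\D^{j},\F^{i}\cap\F^{j}) = X(\D^{i},\F^{i})\cap X(\D^{j},\F^{j})$, which forces the two original centers to coincide (a $G$-valuation has at most one center in a \emph{separated} $G$-variety, and each $X(\D,\F)$, being a simple $G$-model, is separated). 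Finally every $G$-orbit of $X(\EE)$ is spherical because it meets some $X(\D,\F)$, whose general orbit is $\Omega$ — here I would cite Proposition \cite[Section 2, Proposition 1]{Arz97c} quoted just after Definition \ref{def-embbbb}.

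\medskip

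\emph{Second half: every $G$-model of $\XX$ arises this way.} Let $X$ be a $G$-model of $\XX$. By Sumihiro's theorem (quoted in Section \ref{sec-valsec}), $X$ admits a finite covering $X = \bigcup_{i} X_{i}$ by simple $G$-varieties, and each $X_{i} = G\cdot X_{0,i}$ for a $B$-chart $X_{0,i}$ of $\gsc(\XX)$; by Theorem \ref{t-ppcol} (the converse part) each $X_{0,i}$ is described by a colored polyhedral divisor $(\D^{i},\F^{i})\in\CP(\Gamma_{i},\ss)$, and after pulling back along a common projective resolution of singularities of a model dominating all $\Gamma_{i}$ we may, by Proposition \ref{p:pullbackpp}, assume all $(\D^{i},\F^{i})$ live on one smooth projective $\Gamma$. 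Set $\EE_{0} = \{(\D^{i},\F^{i})\}$ and let $\EE$ be the closure of $\EE_{0}$ under the intersection operation $(\D^{i}\cap\D^{j},\F^{i}\cap\F^{j})$; this is still a finite set (intersecting polyhedral coefficients and intersecting subsets of the finite set $\F_{\Omega}$ can only produce finitely many pairs). I must check $\EE$ is a colored divisorial fan. Condition (i) holds by construction. For Condition (ii): $X_{i}\cap X_{j}$ is a $G$-stable open subset of $X$ and in particular separated; by Corollary \ref{c:tec-loc} its $B$-chart is that of $(\D^{i}\cap\D^{j},\F^{i}\cap\F^{j})$, so the two maps $X_{0}(\D^{i}\cap\D^{j},\F^{i}\cap\F^{j})\to X_{0}(\D^{i},\F^{i})$ (and to $X_{0}(\D^{j},\F^{j})$) are open immersions, and Theorem \ref{p:glue-pp} gives the combinatorial reformulation. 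Condition (iii) is the separateness input: because $X$ is separated, by Lemma \ref{l:testval} no $G$-valuation $\nu = [s,p,\ell]$ has two distinct centers in $X$; applying Lemma \ref{l:valcenter} to the pair $X_{i},X_{j}$ and to $X_{i}\cap X_{j} = X(\D^{i}\cap\D^{j},\F^{i}\cap\F^{j})$ forces $s(\D^{i})\cap s(\D^{j})\cap\Vc \subseteq s(\D^{i}\cap\D^{j})\cap\Vc$ (the reverse inclusion is automatic), which is exactly Condition (iii) for fiber/geometric-valuation polyhedra — and by Lemma \ref{l:torusorbit12} it suffices to test on those. Then $X(\EE) = \bigcup_{(\D,\F)\in\EE} X(\D,\F)$ contains all the $X_{i}$, hence contains $X$; conversely each $X(\D^{i}\cap\D^{j},\F^{i}\cap\F^{j})$ equals $X_{i}\cap X_{j}\subseteq X$, so by induction on the number of intersection operations every $X(\D,\F)$ with $(\D,\F)\in\EE$ is an open subset of $X$, giving $X(\EE)\subseteq X$. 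Therefore $X = X(\EE)$.

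\medskip

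The main obstacle I anticipate is the careful bookkeeping in Condition (iii) — translating "separated as a scheme" into the purely combinatorial statement $\mu(\D^{i})\cap\mu(\D^{j})\cap\Vc = \mu(\D^{i}\cap\D^{j})\cap\Vc$. The delicate point is that $\mu(\D^{i}\cap\D^{j})$ is defined using the \emph{intersection} of the polyhedral coefficients $\D^{i}_{Y}\cap\D^{j}_{Y}$, and the operation $\mu(-)$ is a weighted Minkowski sum, so $\mu$ does not commute with intersection in general (this is already the source of the extra hypothesis in \cite[Definition 5.1]{AHS08}); what makes it work here is precisely that we only need equality \emph{after intersecting with the valuation cone $\Vc$}, together with the fact (Remark after Definition \ref{d-poly}, via Proposition \ref{prop-vc}) that all vertices of all $\D_{Y}$ already lie in $\Vc$, so inside $\Vc$ the relevant polyhedra are honest polyhedra of the spherical datum and the toric arguments of \cite[Section 5, 7]{AHS08} apply verbatim. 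The colored part $\F^{i}\cap\F^{j}$ contributes no separateness obstruction because colors are never $G$-stable and their behavior is already pinned down by Lemma \ref{l:tecval2} and Corollary \ref{c-equal}. Everything else is a routine transcription of the toric gluing lemmas with $\TT$ replaced by $G$ and the quotient $\Gamma$ replaced by the model of the rational quotient.
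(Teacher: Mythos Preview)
Your overall architecture matches the paper's proof closely, and the first half (colored divisorial fan $\Rightarrow$ $G$-model) is essentially identical: gluing via Condition~(ii), identifying intersections via Corollary~\ref{c:tec-loc}, and separateness via Lemma~\ref{l:testval} combined with Lemma~\ref{l:valcenter} and Condition~(iii). That part is fine.

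The second half has a real gap. You take the Sumihiro pieces $(\D^{i},\F^{i})$, pull them back to a common smooth projective $\Gamma$, and then \emph{define} $\EE$ as the closure of $\{(\D^{i},\F^{i})\}$ under the formal operation $(\D,\F),(\D',\F')\mapsto (\D\cap\D',\F\cap\F')$. You then invoke Corollary~\ref{c:tec-loc} to assert that the $B$-chart of $X_{i}\cap X_{j}$ is $X_{0}(\D^{i}\cap\D^{j},\F^{i}\cap\F^{j})$. But Corollary~\ref{c:tec-loc} has a hypothesis you have not verified: it applies only when both $(\D^{1},\F^{1})$ and $(\D^{2},\F^{2})$ arise as unions of localizations of a \emph{common} ambient colored polyhedral divisor $(\D,\F)$. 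For two arbitrary charts coming from a Sumihiro covering there is no such common ambient chart, so the corollary does not apply. Without this, you do not know that $\D^{i}\cap\D^{j}$ is even a \emph{proper} polyhedral divisor, nor that $A(\Gamma,\D^{i}\cap\D^{j})$ computes $k[X_{0}^{i}\cap X_{0}^{j}]^{U}$; consequently your check of Condition~(ii), and your inductive argument that every $X(\D,\F)$ for $(\D,\F)\in\EE$ sits inside $X$, both rest on an unproved identification.

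The paper closes this gap differently: rather than taking a formal closure under intersection, it uses Lemma~\ref{l:reduce} (together with Corollary~\ref{c:tec-loc} and Proposition~\ref{p:pullbackpp}) to \emph{modify} the initial $(\D^{i},\F^{i})$'s --- expressing each inclusion $X_{i}\cap X_{j}\hookrightarrow X_{i}$ as a union of honest localizations $(\D^{i},\F^{i})_{\alpha}$ --- so that after this adjustment the resulting family already satisfies Conditions~(i) and~(ii) of Definition~\ref{d:divfan}. Only then does Condition~(iii) follow from separateness by the contrapositive argument you wrote. So the fix is not to cite Corollary~\ref{c:tec-loc} directly on the raw Sumihiro data, but to first pass through Lemma~\ref{l:reduce} to put the $(\D^{i},\F^{i})$'s into a form where the formal and geometric intersections agree.
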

\begin{proof}
Let $\EE = \{(\D^{i},\F^{i})\}_{i\in I}$ be a colored divisorial on $(\Gamma, \ss)$. We start by proving the first claim, namely that the 
open subscheme $X(\EE)$ in $\gsc(\XX)$ is obtained by gluing charts $X(\D,\F)$ for $(\D,\F)\in \EE$, where their intersections are given by the intersections of colored 
polyhedral divisors.
We first observe that we have the commutative diagram 
$$
\xymatrix{
X_{0}^{j}  \ar[r]
&X_{0}(\D^{i}, \F^{i}) 
&X_{0}^{\ell}\ar[l]\\
&X_{0}(\D^{i}\cap \D^{j}, \F^{i}\cap \F^{j})\cap X_{0}(\D^{i}\cap \D^{\ell}, \F^{i}\cap \F^{\ell}) \ar[u]\ar[lu]\ar[ru]}
$$
Here $X_{0}^{j}$ and $X_{0}^{\ell}$ are defined by the equalities
$$X_{0}^{e} = X_{0}(\D^{i}\cap \D^{e}, \F^{i}\cap \F^{e})$$
for $e = j,\ell$.
Note that the horizontal maps are the open 
immersions given by the definition of a divisorial fan.
Moreover, the other maps are clearly
open immersions.

The natural morphisms
$$X_{0}(\D^{i},\F^{i})\leftarrow X_{0}(\D^{i}\cap \D^{j},\F^{i}\cap \F^{j})\rightarrow X_{0}(\D^{j},\F^{j})$$
induce the open immersions
$$X(\D^{i},\F^{i})\leftarrow X(\D^{i}\cap \D^{j},\F^{i}\cap \F^{j})\rightarrow X(\D^{j},\F^{j})$$
which we denote respectively by $\eta_{ij}$ and $\eta_{ji}$. Put $X_{ij} = \eta_{ij}(X(\D^{i}\cap \D^{j},\F^{i}\cap \F^{j}))$. To see these maps define a gluing, we need to check that the cocycle conditions are satisfied, namely: 
$$\varphi_{ij}(X_{ij}\cap X_{i\ell}) = X_{ji}\cap X_{j\ell} \text{ and } \varphi_{i\ell} = \varphi_{j\ell}\circ \varphi_{ij},$$
where $\varphi_{ij}$ is the composition $\eta_{ji}\circ\eta_{ij}^{-1}$. 
Since the maps $\varphi_{ij}$ are inclusions of open subsets in the scheme  $\gsc(\XX)$,
it suffices to show that 
$$G\cdot (X_{0}(\D^{i}\cap \D^{j}, \F^{i}\cap \F^{j})\cap X_{0}(\D^{i}\cap \D^{\ell},\F^{i}\cap \F^{\ell}))$$ 
$$= X(\D^{i}\cap \D^{j}\cap \D^{\ell},\F^{i}\cap \F^{j}\cap \F^{\ell}),$$
which follows from Corollary \ref{c:tec-loc}.
Hence the open subset $X(\EE)$ is an integral scheme of finite type over $k$ in which the open subsets $X(\D\cap \D',\F\cap \F')$
are identified with the intersections $$X(\D,\F)\cap X(\D',\F')\text{ for all }(\D,\F), (\D',\F')\in \EE.$$ 

We now show that $X(\EE)$ is separated over $k$ by using Condition (iii) of Definition \ref{d:divfan} and Lemma \ref{l:testval}. We follow the argument of the proof of \cite[Proposition 7.5]{AHS08}.
Let $\nu = [s, p, \ell]$ be a $G$-valuation on $k(\XX)$ having centers the schematic points $\xi$ and $\xi'$ in $X(\EE)$. We may assume that $\ell\neq 0$.
Then $\xi, \xi'$ belong respectively to some dense open $G$-stable subsets $X(\D,\F)$ and $X(\D',\F')$, where $(\D,\F), (\D',\F')\in\EE$.
By Lemma \ref{l:valcenter}, the restriction $s= \nu_{|k(\Gamma)}$
has a unique center in $\Gamma$ and $$p\in s(\D)\cap s(\D')\cap\Vc = s(\D\cap \D')\cap\Vc.$$ This implies by Lemma \ref{l:valcenter} that $\nu$
has a center in $X(\D\cap \D',\F\cap\F')$. As $X(\D,\F)$ and $X(\D',\F')$ are separated over $k$, we obtain that $\xi = \xi'$. By Lemma \ref{l:testval}, 
we conclude that the subscheme $X(\EE)$ is a $G$-model of $\XX$. 

Conversely, let us consider $X$ a $G$-model of $\XX$. By the Sumihiro theorem 
(see \cite[Theorem 1 and Lemma 8]{Sum74}, \cite[Theorem 1.3]{Kno91}) there exists a $G$-stable open covering $(X_{i})_{i\in I}$ of $X$ by simple $G$-varieties, where $I$ is a finite set and $X_{i}\subseteq \gsc(\XX)$
for any $i\in I$. By Theorem \ref{t-ppcol}, each $X_{i}$ is described by a colored polyhedral divisor $(\D^{i}, \F^{i})\in \CP(\Gamma_{i}, \ss)$ on a normal semi-projective variety $\Gamma_{i}\subseteq \sc(\YY)$.  
We follow the argument of the proof of \cite[Theorem 5.6]{AHS08}. Let $\bar{\Gamma}_{i}$ be a projective compactification of $\Gamma_{i}$ such that 
the complement $\bar{\Gamma}_{i}\setminus \Gamma_{i}$ is the support of a semi-ample divisor. In this way, any colored polyhedral divisor $(\D^{i},\F^{i})$
is defined on $\bar{\Gamma}_{i}$ by adding empty coefficients if necessary. Moreover the inclusions $X_{i}\cap X_{j}\subseteq X_{i}$ induce birational maps 
between $\bar{\Gamma}_{i}$ and $\bar{\Gamma}_{j}$. By resolving the indeterminacies and using the Hironaka theorem, we obtain a smooth projective variety $\Gamma\subseteq \sc(\YY)$ which dominates all the $\bar{\Gamma}_{i}$'s and is compatible with the initial rational maps.
Then using Lemma \ref{l:reduce}, Corollary \ref{c:tec-loc} and Proposition \ref{p:pullbackpp}, we may choose the colored polyhedral divisors $(\D^{i},\F^{i})$'s such that their pull-back to $\Gamma$ forms a set $\EE$ satisfying Conditions (i), (ii) of Definition \ref{d:divfan}. 

It remains to show that $\EE$ verifies the Condition \ref{d:divfan} (iii).
Let us assume that this condition does not hold for $\EE$. Then there exist
$(\D,\F), (\D',\F')\in \EE$ and a geometric valuation $\mu$ on $k(S)$ such that
$$\mu(\D\cap \D')\cap \Vc\subsetneq\mu(\D)\cap\mu(\D')\cap\Vc.$$ Let $p\in \mu(\D\cap \D')\cap \Vc$ and assume that $p$ does not belong to $\mu(\D)\cap\mu(\D')\cap\Vc$.
Then by Lemma \ref{l:valcenter}, the $G$-valuation $\nu=[\nu, p, 1]$ has no center in $X(\D\cap\D',\F\cap\F')$ but has center $\xi, \xi'$ in $X(\D,\F)$ and $X(\D',\F')$, respectively. Since by the preceding steps we have 
$$X(\D,\F)\cap X(\D',\F') = X(\D\cap\D',\F\cap\F'),$$
we conclude that $\xi\neq \xi'$, which contradicts the separateness of $X$ and 
completes the proof of the theorem.
\end{proof}
Our next task is to characterize the completeness property among the $G$-models of $\XX$.
For this purpose, we introduce the appropriate notion for colored divisorial fans.
\begin{definition}
Let $\EE$ be a colored divisorial fan on $(\Gamma, \ss)$. Recall that $\Vc$ is the valuation cone of the spherical homogeneous $G$-space and the variety $\Gamma$ is always assume to be smooth and projective. We say $\EE$ is \emph{complete} if for any geometric valuation $s$ on the function field $k(\Gamma)$ the condition $\bigcup_{(\D,\F)\in\EE}s(\D)\cap\Vc = \Vc$ holds. In particular, this implies that 
$\loc(\EE) = \Gamma$.
\end{definition}
\begin{proposition}\label{p:completef}
Let $\EE$ be colored divisorial fan defining a $G$-model $X$ of $\XX$. Then the scheme $X$ is proper over $k$ if and only if $\EE$ is complete.
\end{proposition}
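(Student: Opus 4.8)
The plan is to deduce the equivalence directly from the equivariant valuative criterion, Lemma \ref{l:testval}, combined with the center–detection criterion of Lemma \ref{l:valcenter}. Since $X = X(\EE)$ is a $G$-model of $\XX$ (Theorem \ref{t:class1}), it is in particular separated over $k$, so every $G$-valuation of $k(\XX)$ has at most one center in $X$; hence by Lemma \ref{l:testval} the variety $X$ is proper over $k$ if and only if every $G$-valuation $\nu \in \Vh$ of $k(\XX)$ admits a center in $X$. The whole proof thus consists in translating this last condition into the combinatorial completeness of $\EE$, repeating the argument of \cite[Section 7]{AHS08} with the hyperspace $\hs$ in place of $N_{\QQ}$.

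First I would fix a $G$-valuation $\nu = [s,p,\ell] \in \Vh$ and record, using Proposition \ref{prop-vc}, that $p \in \Vc$ and $\ell \in \QQ_{\geq 0}$; when $\ell = 0$ the element $[\cdot,p,0]$ does not depend on $s$, so I take $s$ to be the trivial valuation of $k(\Gamma)$, whose center is the generic point of $\Gamma$. Because $\Gamma$ is projective over $k$, the valuative criterion of properness provides a (unique) center of $s$ at a schematic point of $\Gamma$, so the first hypothesis of Lemma \ref{l:valcenter} is automatically satisfied. That lemma then tells us that, for $(\D,\F) \in \EE$, the valuation $\nu$ has a center in $X(\D,\F)$ if and only if $p/\ell \in s(\D)$ (when $\ell \neq 0$), respectively $p \in \tail(\D) = s(\D)$ (when $\ell = 0$, by the convention for trivial $s$). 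Since $X(\EE)$ is the union of the $X(\D,\F)$ over $(\D,\F) \in \EE$, we conclude that $\nu$ has a center in $X$ if and only if $p/\ell$ (resp. $p$) lies in $\bigcup_{(\D,\F)\in\EE} s(\D)$; and because $p \in \Vc$ with $\Vc$ a cone, this is equivalent to $p/\ell$ (resp. $p$) lying in $\bigcup_{(\D,\F)\in\EE} s(\D) \cap \Vc$.

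To finish, I would quantify over $\nu$. Requiring every central $G$-valuation $[\cdot,p,0]$ with $p \in \Vc$ to have a center in $X$ amounts, by the translation above, to $\bigcup_{(\D,\F)\in\EE}\tail(\D)\cap\Vc = \Vc$, i.e. to the completeness condition for the trivial valuation. For a fixed geometric valuation $s$ on $k(\Gamma)$, requiring every non-central $G$-valuation $[s,p,\ell]$ with $\ell > 0$ to have a center amounts to $q \in \bigcup_{(\D,\F)\in\EE} s(\D)$ for every $q \in \Vc$ — since $q = p/\ell$ runs over all of $\Vc$ as $(p,\ell)$ runs over $\Vc \times \QQ_{> 0}$ — that is, to $\bigcup_{(\D,\F)\in\EE} s(\D) \cap \Vc = \Vc$. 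Reading these equivalences in both directions shows that $X$ is proper over $k$ precisely when $\bigcup_{(\D,\F)\in\EE} s(\D)\cap\Vc = \Vc$ holds for every geometric valuation $s$ on $k(\Gamma)$ (the trivial one included), which is exactly the definition of $\EE$ being complete; in the ``if'' direction the center produced for each $\nu$ is moreover unique by separateness, so Lemma \ref{l:testval} yields properness.

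I do not expect a genuine obstacle: Lemmata \ref{l:testval} and \ref{l:valcenter} do all the geometric work, and what remains is bookkeeping. The one point needing care is the central case $\ell = 0$: one must check that the hypothesis ``$s$ has a center in $\Gamma$'' of Lemma \ref{l:valcenter} is legitimately supplied by the trivial valuation of $k(\Gamma)$, that the convention $s(\D) = \tail(\D)$ is in force for it, and that $\Vh$ indeed contains every $[\cdot,p,0]$ with $p \in \Vc$ — all guaranteed by Proposition \ref{prop-vc} and the definitions of Sections \ref{s-chi} and \ref{s-teclem}. This case is not optional: when $S$ is a point it is the entire content of the statement, recovering the classical fact that a toric variety is complete if and only if its fan is complete.
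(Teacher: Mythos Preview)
Your proposal is correct and follows essentially the same approach as the paper: both reduce the statement to the equivariant valuative criterion (Lemma \ref{l:testval}) and then translate the existence of centers via Lemma \ref{l:valcenter}. The paper's proof is considerably terser---it dispatches the direction ``$\EE$ complete $\Rightarrow$ $X$ proper'' as ``straightforward'' and handles only the non-central case $\ell\neq 0$ explicitly for the converse---whereas you spell out both the central and non-central cases; your extra care with $\ell=0$ is not strictly necessary (the tail condition is forced by the condition for non-trivial $s$, since a finite union of $\tail(\D^i)$-polyhedra covering the cone $\Vc$ forces the tails to cover $\Vc$), but it does no harm.
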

\begin{proof}
We will use the criterion of Lemma \ref{l:testval}. From this result, the fact that the completeness of $\EE$ implies that the $k$-scheme $X$ is proper is straightforward.
Assume that $X$ is complete. If
$$E:= \bigcup_{(\D,\F)\in\EE}s(\D)\cap\Vc \subsetneq \Vc,$$
then choosing a vector $p\in \Vc\setminus E$, the $G$-valuation $[s, p, 1]\in \Vh$ on $k(\XX)$
has a center in $X$. This gives a contradiction and completes the proof of the proposition.
\end{proof}
\subsection{Explicit construction}\label{sec-expli}
This subsection aims to
construct explicitly the simple $G$-variety associated with a colored polyhedral divisor $(\D, \F)$ via an embedding into a projective space. We will follow the idea of the proof of \cite[Theorem 3.1]{Kno91}.
\\

We start by taking homogeneous generators
$h_{i}= f_{i}\otimes \chi^{m_{i}}$  $(1\leq i\leq r)$
of the $M$-graded algebra $A(\Gamma,\D)$, where every $f_{i}$ is in $k(\Gamma)^{\star}$. The functions $\chi^{m_{i}}$ considered as $B$-eigenfunctions on $k(\Omega)$ have their poles contained in the subset
$$Z_{0} = \bigcup_{D\in \F_{\Omega}\setminus \F} D\subseteq \Omega,$$
where $\F_{\Omega}$ is the set of colors of $\Omega$. 
Hence using that $G$ is factorial (since simply-connected),
we may choose a function $\xi_{0}\in k[G]^{(B\times H)}$ with zero locus 
equal to $\pi^{-1}_{\Omega}(Z_{0})$ and such that $$\xi_{i}:= \xi_{0}\cdot h_{i}\in k(\Gamma)\otimes_{k}k[G]\text{ for }1\leq i\leq r,$$ where $\pi_{\Omega}:G\rightarrow \Omega$ is the natural projection. 

Let $V$ be the $G$-module  generated by $\xi_{0}, \xi_{1}, \ldots , \xi_{r}$
in $k(\Gamma)\otimes_{k}k[G]$. We finally obtain a natural $G$-equivariant rational map
$$\iota: \Gamma\times \Omega \dashrightarrow \P(V^{\vee}).$$ 
In the next theorem, we may assume that the colored polyhedral divisor $(\D,\F)$ gives rise to a $B$-chart $X_{0}$ in which every color $D\in \F$ contains a $G$-orbit of $G\cdot X_{0}$. From Lemmata \ref{l:tecval2} and \ref{l:torusorbit12}, this is equivalent to ask that there is one fiber polyhedron $\D_{y}$ (see Definition \ref{d-loca}) whose relative interior meets $\Vc$. 
\begin{theorem}
\label{t-explicit}
Let $X_{0} :=\bar{X}\cap\{\xi_{0}\neq 0\}$ where $\bar{X}$ is the closure of the image of the map $\iota$ and let $X= G\cdot X_{0}\subseteq \P(V^{\vee})$. The $G$-variety $X$ is $G$-isomorphic to $X(\D,\F)$ and the $B$-chart $X_{0}(\D,\F)$ is identified with $X_{0}$.
\end{theorem}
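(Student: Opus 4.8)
The strategy is to follow Knop's argument for simple spherical varieties \cite[Theorem 3.1]{Kno91}, adapting it to the relative setting where a base variety $\Gamma$ is present. The key point is that we already know two things: by Theorem \ref{t-ppcol} the abstract $B$-chart $X_{0}(\D,\F)$ is characterized up to $B$-isomorphism by its algebra of $U$-invariants $A(\Gamma,\D)$ together with the set of colors $\F$ (Corollary \ref{c-equal}), and by Theorem \ref{p-loc} it has the local structure $X_{0}(\D,\F)\simeq P_{u}\times X_{\lc}$, where $X_{\lc}=\spec A_{\lc}(\Gamma,\D)$ is the affine $G_{\lc}$-model built from the localized datum. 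So it suffices to produce a locally closed $G$-stable $X\subseteq \P(V^{\vee})$ together with an explicit $B$-chart $X_{0}\subseteq X$ meeting every $G$-orbit, to compute $k[X_{0}]^{U}$ and its set of colors, and to match these with $A(\Gamma,\D)$ and $\F$.

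\textbf{Key steps.} First I would verify that the rational map $\iota:\Gamma\times\Omega\dashrightarrow \P(V^{\vee})$ is $G$-equivariant and birational onto its image $\bar X$; birationality follows because the $h_{i}=f_{i}\otimes\chi^{m_{i}}$ generate $A(\Gamma,\D)$, whose field of fractions is that of $A_{M}$ (properness of $\D$, cf.\ \cite[Theorem 3.1]{AH06}), hence the $\xi_{i}/\xi_{0}$ generate a subfield of $k(\Gamma\times\Omega)$ containing all $B$-eigenfunctions of $A_{M}$, and adjoining the unipotent directions from $k[G]$ one recovers $k(\XX)$. Second, set $X_{0}=\bar X\cap\{\xi_{0}\neq 0\}$ and $X=G\cdot X_{0}$. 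By construction the homogeneous coordinate $\xi_{0}$ is a $(B\times H)$-eigenfunction whose vanishing is exactly $\pi_{\Omega}^{-1}(Z_{0})$, so $\{\xi_{0}\neq0\}$ is $B$-stable and $X_{0}$ is an affine $B$-chart; the functions $\xi_{i}/\xi_{0}=h_{i}$ are the $U$-invariant (indeed $B$-semiinvariant) generators of $k[X_{0}]$ graded by $M$, whence $k[X_{0}]^{U}=A(\Gamma,\D)$. Third, I would identify the colors: a color $D\in\F_{\Omega}$ survives in $X_{0}$ (i.e.\ $\bar D\cap X_{0}\neq\emptyset$) precisely when $D\not\subseteq Z_{0}$, that is $D\in\F$ by the very definition $Z_{0}=\bigcup_{D\in\F_{\Omega}\setminus\F}D$; so the set of colors of $X$ meeting $X_{0}$ is $\F$. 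Fourth, I would check that $X_{0}$ meets every $G$-orbit of $X$ — which is immediate since $X=G\cdot X_{0}$ by definition — and that $X$ is normal: normality of $X_{0}$ is inherited from $A(\Gamma,\D)$ being a normal algebra (Theorem \ref{pp-divisor}), and one extends to $X$ using $G$-homogeneity together with the standard fact that the normalization of a simple $G$-variety is again simple with the same combinatorial data, or equivalently by replacing $X$ by its normalization and invoking Corollary \ref{c-equal}. Finally, by Corollary \ref{c-equal} (equality of $U$-invariants and of the color set), $X_{0}\cong X_{0}(\D,\F)$ as $B$-varieties, hence $X=G\cdot X_{0}\cong G\cdot X_{0}(\D,\F)=X(\D,\F)$ as $G$-varieties, where the extra hypothesis that some fiber polyhedron $\D_{y}$ has relative interior meeting $\Vc$ guarantees (via Lemmata \ref{l:tecval2} and \ref{l:torusorbit12}) that every $D\in\F$ genuinely contains a $G$-orbit, so that the colored datum recovered from $X$ is exactly $(\D,\F)$ and not a proper sub-datum.

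\textbf{Main obstacle.} The delicate point is showing that the $B$-chart $X_{0}=\bar X\cap\{\xi_{0}\neq0\}$ cut out inside the projective variety $\bar X$ really is the \emph{affine} scheme $\spec A(\Gamma,\D)$ — a priori it is only quasi-affine, and one must rule out that taking the closure in $\P(V^{\vee})$ and re-intersecting with $\{\xi_{0}\neq0\}$ has added or removed points. This is handled exactly as in Knop's proof: one uses that $\D$ is proper so that $X_{\lc}=\spec A_{\lc}(\Gamma,\D)$ is an honest affine $G_{\lc}$-variety with $A_{\lc}$ finitely generated, that the $\xi_{i}$ were chosen to extend to sections on $\Gamma\times G$ with controlled poles along $\pi_{\Omega}^{-1}(Z_{0})$, and that the contraction map from the relative spectrum $\spec_{\loc(\D)}\mathcal{A}_{\lc}$ to $\spec\Gamma(\loc(\D),\mathcal{A}_{\lc})$ is proper and birational (Lemma \ref{l-prop-lc}); combining these one sees $\{\xi_{0}\neq0\}$ pulls back to the total space of $\mathcal{A}_{\lc}$ away from the exceptional locus, whose image downstairs is all of $X_{\lc}$, giving $X_{0}/\!\!/U=\spec A(\Gamma,\D)$ and then, by the $P$-isomorphism $X_{0}\simeq P_{u}\times X_{\lc}$ of Theorem \ref{p-loc}, the claimed identification $X_{0}=X_{0}(\D,\F)$.
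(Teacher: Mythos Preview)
Your proposal is correct and follows essentially the same route as the paper: identify $k[X_{0}]^{U}$ with $A(\Gamma,\D)$, check normality, identify the colors as $\F$, and conclude via Corollary~\ref{c-equal}. Two small remarks. First, in your second key step the inclusion $A(\Gamma,\D)\subseteq k[X_{0}]^{U}$ is immediate from $h_{i}=\xi_{i}/\xi_{0}\in k[X_{0}]^{U}$, but the reverse inclusion needs the graded-cone argument: write $k[X_{0}]=\{f/\xi_{0}^{d}\mid f\in R_{d}\}$ with $R_{d}$ the degree-$d$ piece of the homogeneous coordinate ring of $\bar X$, and then use that the $B$-highest-weight vectors in $V$ are exactly $\xi_{0},\dots,\xi_{r}$ (this is what ``adapt \cite[Theorem 3.1]{Kno91}'' means in the paper). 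Second, your ``main obstacle'' is not an obstacle: $X_{0}=\bar X\cap\{\xi_{0}\neq 0\}$ is automatically affine as the complement of a hyperplane section in a projective variety, so the detour through Lemma~\ref{l-prop-lc} and the contraction map is unnecessary. For normality you should cite \cite[\S 3, Lemma 1]{Tim00} rather than Theorem~\ref{pp-divisor}, since the latter only gives normality of $k[X_{0}]^{U}$, not of $k[X_{0}]$; your alternative via normalization plus Corollary~\ref{c-equal} is also fine. Your color identification in step~3 (colors meeting $X_{0}$ are exactly those not in $Z_{0}$, hence exactly $\F$) is in fact more direct than the paper's, which takes a longer route through a $G$-valuation centered in the relative interior of a fiber polyhedron to establish $\F=\F'$.
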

\begin{proof}
Let $Z$ be the affine cone arising from $\bar{X}\hookrightarrow \P(V^{\vee})$. The grading on
$k[Z]$ is given by $\bigoplus_{d\in \NN}R_{d},$
where $R_{d}$ is the subvector space
generated by monomials in elements of $V$ of degree $d$. Moreover,
we have 
$$k[X_{0}] = \{f/\xi_{0}^{d}\,|\, f\in R_{d}, \, d\in\NN\}.$$
From this we deduce that $k[X_{0}]^{U} = A(\Gamma, \D)$ (adapt the argument of \cite[Theorem 3.1]{Kno91}) and by \cite[\S 3, Lemma 1]{Tim00} that $X_{0}$ (and so $X$) is normal. Moreover, the rational map $\iota$ is induced by the inclusion $$k[X_{0}]\subseteq k(\Gamma)\otimes_{k}k(\Omega).$$ Since $\D$ is proper, the map $\iota$ is birational. Hence, $X$ is described by a colored polyhedral divisor $(\D,\F')$ and it remains to show that $\F=\F'$. Note that, by construction, the irreducible components of the hyperplane 
$\xi_{0}= 0$ are exactly the elements of $\F_{\Omega}\setminus \F$. Therefore by \cite[Proposition 13.7 (1)]{Tim11} the inclusion
$$k[X_{0}]\subseteq \bigcap_{D\in\F}\mathcal{O}_{v_{D}}\text{ implies that }\F\subseteq \F'.$$
By assumption, one can find a $G$-valuation $\nu = [s, p,\ell]$ with $\ell\neq 0$ and $p/\ell$ in the relative interior of some fiber $\D_{y}$. Then by Lemma \ref{l:valcenter}, $\nu$ has a center $\xi\in X$.
Assume that $\xi\not\in D$ for some $D\in\F'$. By \cite[Lemma 19.12]{Tim11} and
using that $X_{0}\cap D\neq \emptyset$, one can find $f\in k[X_{0}]^{(B)}$ such that $v(f) = 0$ and $v_{D}(f)>0$. But 
the fact that $p/\ell$ is in the interior relative of $\D_{y}$ implies that $v_{D'}(f)= 0$ for all $D'\in \F'$, which gives 
a contradiction and establishes $\F = \F'$. 
\end{proof}

\begin{example}
In this example, $G= \SL_{2}\times \SL_{2}$. We consider the spherical homogeneous $G$-space $\Omega = \SL_{2}/U\times \SL_{2}/U$, where $U$ is a maximal unipotent subgroup of $\SL_{2}$. Here $M = N = \ZZ^{2}$. We exactly have two colors $D_{1}, D_{2}$ in $\Omega$ which are respectively  sent on the first and second vectors of the canonical basis.  We denote by $(\D, \F)$ the colored polyhedral divisor defined by the equalities
$\D = \D_{0}\cdot [0] + \D_{1}\cdot [1] + \D_{\infty}\cdot [\infty]$ and $\F = \{D_{1}\}$. The locus of $\D$ is $\P^{1}$ and 
$$\D_{0} =  [(1,0), (1,1)]+\sigma,\,\,\D_{1} = \left(-\frac{1}{3},0\right)+\sigma,\,\,\D_{\infty} =\left(-\frac{5}{8},  0\right)+\sigma,\,\,$$ 
with $\sigma = \QQ_{\geq 0}(1,0) + \QQ_{\geq 0}(1,24)$ (see Figure 2).
Using the natural $G$-action on $\AA^{2}\times \AA^{2}$, the irreducible  representations inside $k[\Omega]$ are of the form $V(\lambda, \nu) = E(\lambda)\otimes_{k} F(\nu)$ ($\lambda, \nu\in\ZZ_{\geq 0}$) with
$$E(\lambda) = \bigoplus_{i + j = \lambda, i,j\geq 0}kx_{0}^{i}x_{1}^{j}\text{ and }F(\nu) = \bigoplus_{i + j = \nu, i,j\geq 0}ky_{0}^{i}y_{1}^{j}.$$
Then by \cite[Example 2.6]{Lan13} we have
$A(\P^{1}, \D) \simeq k[u,v,w,t]/(uv - w^{8} + t^{3}), $
$$\text{ where }u\mapsto y_{0},\, v\mapsto \frac{(1-z)^{8}}{z^{23}}{x_{0}}^{24}{y_{0}}^{-1},\, w\mapsto  \frac{1-z}{z^{3}}{x_{0}}^{3} \text{ and } t\mapsto \frac{(1-z)^{3}}{z^{8}}{x_{0}}^{8}.$$
Here $z$ is a local coordinate of $\P^{1}$, i.e., $k(\P^{1}) = k(z)$.  The subvariety $y_{0} = 0$ in $\Omega$ corresponds to the color $D_{2}$. 
Denote by $V$ the $G$-submodule generated by $y_{0}^{2}, y_{0}u,y_{0}v, y_{0}w, y_{0}t$ in $k(z)\otimes_{k}k(\Omega)$. Then $\bar{X}$ is the Zariski closure of the image of the morphism
$$\AA^{1}\setminus\{0\}\times \Omega\rightarrow \P(V^{\vee}),\,\, (z, [M_{1}],[M_{2}]) \mapsto [\phi_{z, [M_{1}], [M_{2}]}],$$
where the linear form $\phi_{z, [M_{1}], [M_{2}]}$ is the usual evaluation function on the triple $(z, [M_{1}], [M_{2}])$. Finally, by considering $y_{0}$
as element of the bidual $V \simeq V^{\vee\vee}$, the complement of $\{y_{0} = 0\}$ in $\bar{X}$ corresponds to the chart $X_{0}(\D,\F)$. 
\end{example}
\begin{figure}
		\centering
		\begin{tikzpicture}[scale=1/2]
			\shade[lower right=white, upper left=white, lower left=black, upper right=white,fill opacity=0.6] (1,0) -- (7,0) -- (7,6) -- (11/6,6) -- (1,1) -- (1,0) ;
			\shade[lower right=white, upper left=white, lower left=black, upper right=white,fill opacity=0.6] (9.5,0) -- (17,0) -- (17,6) -- (10.5,6) -- (9.5,0);
			\shade[lower right=white, upper left=white, lower left=black, upper right=white,fill opacity=0.6] (59/3,0) -- (27,0) -- (27,6) -- (62/3,6) -- (59/3,0);
			\draw[very thick,->]	(-1,0) -- (7.3,0);
			\draw[very thick,->]	(0,-1) -- (0,6);
			\draw[thick]	(1,0) -- (7,0)
						(1,0) -- (1,1)
						(1,1) -- (11/6,6);
			\draw[very thick,->]	(9,0) -- (17.3,0);
			\draw[very thick,->]	(10,-1) -- (10,6);
			\draw[thick]	(9.5,0) -- (17,0)
						(9.5,0) -- (10.5,6);
			\draw[very thick,->]	(19,0) -- (27.3,0);
			\draw[very thick,->]	(20,-1) -- (20,6);
			\draw[thick]	(59/3,0) -- (27,0)
						(59/3,0) -- (62/3,6);
			\draw (4,3) node {$\D_0$};
			\draw (14,3) node {$\D_1$};
			\draw (24,3) node {$\D_\infty$};
			\node[draw,fill,circle,inner sep=1.5pt] at (1,0) {};
			\node[draw,fill,circle,inner sep=1.5pt] at (0,1) {};
			\node[draw,fill,circle,inner sep=1.5pt] at (9.5,0) {};
			\node[draw,fill,circle,inner sep=1.5pt] at (59/3,0) {};
			\draw[thick,dashed] (0,1) -- (1,1);
			\draw (1,-0.7) node {$1$};
			\draw (-0.6,1) node {$1$};
			\draw (9.5,-0.9) node {$\frac{1}{3}$};
			\draw (59/3,-0.9) node {$\frac{5}{8}$};
		\end{tikzpicture}
		\caption{}
	\end{figure}
\section{Classification}\label{sec-3class}
In this section we present results to classify normal $G$-varieties with spherical orbits. As explained in the introduction, this classification breaks into two pieces:
(1) a birational part and (2) a biregular part. In Section \ref{s-birtype}, we explain how we can describe the equivariant birational type of a $G$-variety with spherical orbits and how we can go back to the trivial equivariant birational type case treated in Section \ref{s-combi}. Finally, our main result 
is formulated in Section \ref{s-classif} where we give a construction of any normal $G$-variety with spherical orbits in terms of colored divisorial fans. 
\subsection{Equivariant birational type}\label{s-birtype}
We start by collecting results on the birational type of a $G$-variety with spherical orbits. 
Our starting point is the following theorem due to Alexeev and Brion.
\begin{theorem}\cite[Theorem 3.1]{AB05} 
\label{t-AB}
Let $X$ be a $G$-variety with spherical orbits. Then there exist a closed spherical subgroup $H\subseteq G$  and a $G$-stable dense open subset $X_{1}\subseteq X$ such that any isotropy group of
a point of $X_{1}$ is conjugate to $H$.
\end{theorem}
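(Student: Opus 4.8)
\emph{Proof proposal.} My plan is to reduce the statement to the finiteness assertion quoted in the introduction — namely that the isotropy subgroups $G_x$ of $X$ with spherical quotient fall into only finitely many conjugacy classes, which is the substantive content of \cite[Theorem 3.1]{AB05} — and then to extract $X_1$ by a stratification argument. Note first that $X$ is irreducible, being a variety in the sense of this paper. For a subgroup $L\subseteq G$ put $X_{(L)}=\{x\in X : G_x\text{ is conjugate to }L\}$; this set is $G$-stable, and in characteristic $0$ the orbit-type strata $X_{(L)}$ are constructible (the standard orbit-type stratification; compare \cite{Ric72}). Since every $G$-orbit of $X$ is spherical, every $G_x$ gives rise to a spherical orbit, hence is conjugate to one of the finitely many subgroups $H_1,\dots,H_n$ furnished by \cite[Theorem 3.1]{AB05}, so $X$ is the finite union of the $X_{(H_i)}$.

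As $X$ is irreducible and this union is finite, at least one stratum — say $X_{(H)}$ with $H$ among the $H_i$, in particular $H$ a spherical subgroup — is dense in $X$. It then remains to see that a dense, $G$-stable, constructible subset contains a $G$-stable dense \emph{open} subset: the complement $X\setminus X_{(H)}$ is $G$-stable and constructible, and since $X_{(H)}$ already contains a dense open subset of $X$ this complement lies in a proper closed set, so its closure $F$ is a proper, $G$-stable, closed subset. Then $X_1:=X\setminus F$ is a $G$-stable dense open subset of $X$, it is contained in $X_{(H)}$, and by construction every isotropy group of a point of $X_1$ is conjugate to the fixed spherical subgroup $H$. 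This proves the theorem.

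The genuine content is therefore \cite[Theorem 3.1]{AB05}, and a self-contained proof would have to re-establish it; the route I would take is as follows. Using Sumihiro's theorem \cite{Sum74} and the local structure theorem \cite{BLV86} one localises to an affine action of a reductive group with spherical orbits; one then passes to a $G$-stable dense open subset on which a geometric quotient $q\colon X'\to Q$ by $G$ exists, with $Q$ irreducible, and over which the stabiliser group scheme is flat (generic flatness). The map $q$ now displays a flat family $\{q^{-1}(t)=G/G_{x_t}\}_{t\in Q}$ of spherical homogeneous $G$-spaces, and one invokes the rigidity of spherical homogeneous spaces: the homogeneous spherical datum $\ss_{G/G_{x_t}}$ is discrete and, over the connected base $Q$, must be independent of $t$, whence by the injectivity of $\Omega\mapsto\ss_\Omega$ (\cite{Los09}, recalled above) all the $G/G_{x_t}$ are mutually $G$-isomorphic. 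I expect the main obstacle in that route to be exactly the proof that the homogeneous spherical datum — in particular the rank of $M$, the spherical roots $\Sigma$, and the colours — does not jump at special points of $Q$; this is controlled by a spreading-out and semicontinuity argument, but the bookkeeping is delicate, which is why I would prefer to cite \cite[Theorem 3.1]{AB05} outright and keep the argument above.
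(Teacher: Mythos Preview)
The paper does not give its own proof of this statement: Theorem~\ref{t-AB} is stated with a bare citation to \cite[Theorem 3.1]{AB05} and no proof environment follows. So there is nothing in the paper to compare against beyond the attribution.

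Your reduction argument is correct and is the natural way to extract the ``general stabilizer'' formulation from the finiteness statement: the orbit-type strata $X_{(L)}$ are $G$-stable and constructible in characteristic $0$, the finiteness from \cite{AB05} makes the stratification finite, irreducibility forces one stratum to contain a dense open, and your passage from a dense $G$-stable constructible set to a $G$-stable dense open via $X_1=X\setminus\overline{X\setminus X_{(H)}}$ is clean. One small remark on your self-contained sketch: invoking \cite{Los09} for the injectivity of $\Omega\mapsto\ss_\Omega$ is anachronistic relative to \cite{AB05}, so the original argument of Alexeev--Brion necessarily proceeds differently (they work via deformation theory of spherical varieties and multiplicity-free modules rather than Losev's uniqueness); if you ever need to unwind the proof, that is where to look. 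But for the purposes of this paper your decision to cite \cite{AB05} outright is exactly what the paper itself does.
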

The homogeneous space $\Omega =G/H$ in the above statement will be called 
\emph{the general orbit} of $X$ and $H$ the \emph{stabilizer in general position}. The reader is referred to \cite{Ric72} for various results on the existence of a unique general orbit for reductive group actions.
We will also use the following lemma latter.
\begin{lemme}
\label{l-finite group}
Let $F$ be a finite group acting on an integral scheme $\tilde{X}$ of finite type over $k$. Then the following assertions hold.
\begin{itemize}
\item[(i)] The $F$-action is faithfull if and only if it is generically free, i.e.,
for a general point $x\in \tilde{X}$, the stabilizer $F_{x}$ is trivial.
\end{itemize}
For the next points, assume that any $F$-orbit of $\tilde{X}$
is contained in an affine open subset. For instance, this applies for the case where $\tilde{X}$ is covered by $F$-stable quasi-projective open subsets.
\begin{itemize}
\item[(ii)] The $F$-scheme $\tilde{X}$ admits a good categorical quotient $\gamma:\tilde{X}\rightarrow X$, where $X= \tilde{X}/F$ is an integral scheme of finite type over $k$.
\item[(iii)] If the $F$-action is free, then the quotient map $\gamma: \tilde{X}\rightarrow X$
is an \'etale morphism.
\item[(iv)] The field extension $k(\tilde{X})/k(X)$ is Galois with Galois group $F$ if and only if
the $F$-action on $\tilde{X}$ is generically free.
\end{itemize}
\end{lemme}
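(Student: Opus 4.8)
The plan is to treat each of the four assertions in turn, reducing everything to the affine case, which is standard invariant theory for finite groups. First I would observe that (i) is elementary: a generically free action is faithful since the kernel of $F \to \Aut(\tilde{X})$ acts trivially on a dense open set, hence on all of $\tilde{X}$; conversely, if $F$ acts faithfully, then for each non-trivial $g\in F$ the fixed locus $\tilde{X}^{g}$ is a proper closed subset (it is closed, and cannot be all of $\tilde{X}$ by faithfulness together with the fact that $\tilde{X}$ is reduced and $g$ acts non-trivially on the generic point since the action is by scheme automorphisms over $k$ and $\tilde{X}$ is integral), so the union $\bigcup_{g\neq e}\tilde{X}^{g}$ is a proper closed subset whose complement consists of points with trivial stabilizer. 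Here the integrality of $\tilde{X}$ is what makes ``non-trivial on a dense open set'' equivalent to ``non-trivial on the generic point'', so the faithful/generically free dichotomy collapses.

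For (ii) and (iii), under the hypothesis that every $F$-orbit lies in an affine open subset, I would choose for each orbit an $F$-stable affine open neighborhood (obtained by intersecting the $F$-translates of an affine open containing the orbit), giving an $F$-stable affine open cover $\{U_{i}\}$. On each $U_{i} = \spec A_{i}$ the categorical quotient is $\spec A_{i}^{F}$, and $A_{i}$ is a finite $A_{i}^{F}$-module (integral dependence: each $a\in A_{i}$ is a root of $\prod_{g\in F}(T - g\cdot a)$, whose coefficients are $F$-invariant), so $\spec A_{i}^{F}$ is of finite type; the local pieces glue along the $F$-stable intersections $U_{i}\cap U_{j}$ to a scheme $X$ of finite type, and $\gamma$ is the good categorical quotient. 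Integrality of $X$ follows since $A_{i}^{F}\subseteq A_{i}$ is a subring of a domain; one also checks the quotient is a variety. For (iii), if the $F$-action is free, then on each $U_{i}$ the map $\spec A_{i}\to \spec A_{i}^{F}$ is finite, flat, and unramified — this is the classical statement that a free action of a finite group yields an étale (indeed a principal $F$-bundle, hence Galois) covering; flatness and unramifiedness can be verified fibrewise using that every geometric fibre is a single free $F$-orbit of reduced points, and this glues to give that $\gamma$ is étale.

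Finally, for (iv), the ``if'' direction: if the action is generically free, pick an $F$-stable affine open $U = \spec A$ meeting the free locus, so $k(\tilde{X}) = \mathrm{Frac}(A)$ and $k(X) = \mathrm{Frac}(A^{F})$; then $F$ acts on $k(\tilde{X})$ by field automorphisms with fixed field containing $k(X)$, and by Artin's theorem $[k(\tilde{X}):k(\tilde{X})^{F}] = |F/K|$ where $K$ is the kernel of $F\to\Aut(k(\tilde{X}))$, which is trivial by generic freeness (part (i) applied at the level of function fields), so $k(\tilde{X})/k(\tilde{X})^{F}$ is Galois with group $F$; it remains to identify $k(\tilde{X})^{F}$ with $k(X)$, which follows from $A^{F} = A\cap k(\tilde{X})^{F}$ having the same fraction field as $k(\tilde{X})^{F}$ (a standard fact: invariants of the fraction field are the fraction field of the invariants, for finite groups). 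Conversely, if $k(\tilde{X})/k(X)$ is Galois with group $F$, then the $F$-action on $k(\tilde{X})$ is faithful, so $F$ acts faithfully on $\tilde{X}$ (a scheme automorphism inducing the identity on the function field of an integral scheme is the identity on a dense open, hence everywhere, but faithfulness on $k(\tilde X)$ already rules out a nontrivial kernel), hence generically freely by (i). The main obstacle I anticipate is purely bookkeeping: verifying carefully that the local quotients $\spec A_{i}^{F}$ glue to a \emph{separated} scheme of finite type and that $\gamma$ is genuinely a good categorical quotient in the scheme-theoretic sense (surjectivity, the universal property, $\mathcal{O}_{X} = (\gamma_{*}\mathcal{O}_{\tilde X})^{F}$); once the affine building blocks are in place this is routine but must be done compatibly across the cover. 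All of this is classical, so I expect the write-up to be short, with references to standard sources on quotients by finite groups.
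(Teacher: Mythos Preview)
Your proposal is correct and follows essentially the same approach as the paper: for (i) both argue via the union of fixed loci being a proper closed subset, for (ii) and (iii) both invoke the classical construction of quotients by finite groups (the paper simply cites Mumford's \emph{Abelian Varieties}, while you sketch the gluing of $\spec A_i^F$), and for (iv) both compare $|F|$ with $[k(\tilde X):k(X)]$. The only cosmetic difference is in (iv): you invoke Artin's theorem directly to conclude $k(\tilde X)/k(\tilde X)^F$ is Galois with group $F$ once the action on the function field is faithful, whereas the paper counts points in a general fiber (separability in characteristic zero gives $[k(\tilde X):k(X)]$ points, and generic freeness gives $|F|$ points) to reach the same conclusion; these are two phrasings of the same elementary fact.
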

\begin{proof}
Assertion (i) follows from a classical argument: the complement of the subset in which
the $F$-action is free is the finite union of closed subsets $\bigcup_{F_{1}}\tilde{X}^{F_{1}}$,
where $F_{1}$ runs over all subgroups of $F$ of cardinality $>1$.
Assertion (ii) is a consequence of \cite[Page 111, III, Theorem 1 (A)]{Mum70}.
For Assertion (iii), the fact that $\gamma: \tilde{X}\rightarrow X$ is a finite flat morphism
is explained in  \cite[Page 112, III, Theorem 1 (B)]{Mum70}. Finally, the morphism $\gamma$ is unramified since we work over a base field of characteristic zero. Let us show Assertion (iv). Since the extension $k(\tilde{X})/k(X)$
is separable, the number of points of a general fiber of $$\tilde{X}\rightarrow X\text{ is }[k(\tilde{X}):k(X)].$$ Hence the $F$-action is generically free if and only if the cardinality of $F$ is $[k(\tilde{X}):k(X)]$.
This finishes the proof of the lemma. 
\end{proof}

The next lemma is a straightforward observation but useful for the sequel. It determines the set of $H$-fixed points of the homogeneous space $G/H$. The proof is left to the reader. 
\begin{lemma}\label{l:pointfixed}
Let $\Omega$ be a homogeneous space $G/H$. Then we have the equality
$$\Omega^{H} = \{gH\,|\, g\in N_{G}(H)\}= N_{G}(H)/H$$
and for any $x\in \Omega^{H}$ the isotropy group $G_{x}$ is equal to $H$.
\end{lemma}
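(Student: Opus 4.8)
The plan is to reduce the computation of the fixed-point set $\Omega^{H}$ to an elementary statement about conjugates of $H$ inside $G$. Recall that $G$ acts on $\Omega = G/H$ by left translations, so $H$ acts by the restriction of this action. First I would observe that a coset $gH \in G/H$ lies in $\Omega^{H}$ if and only if $hgH = gH$ for every $h \in H$, which is equivalent to $g^{-1}hg \in H$ for all $h \in H$, i.e.\ $g^{-1}Hg \subseteq H$.

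The next step is to promote this inclusion to an equality $g^{-1}Hg = H$, so that $g \in N_{G}(H)$. Since conjugation by $g$ is an automorphism of the algebraic group $G$, the subgroup $g^{-1}Hg$ is closed and isomorphic to $H$; in particular it has the same dimension and the same (finite) number of connected components as $H$. Restricting to identity components, $g^{-1}H^{\circ}g$ is a connected closed subgroup of $H^{\circ}$ of full dimension, hence equals $H^{\circ}$; consequently $g^{-1}Hg/H^{\circ}$ is a subgroup of $H/H^{\circ}$ of the same cardinality, which forces $g^{-1}Hg = H$. This yields $\Omega^{H} = \{gH \mid g \in N_{G}(H)\}$, and since $H \subseteq N_{G}(H)$ this set is canonically identified with $N_{G}(H)/H$.

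For the statement on isotropy groups, I would simply note that the stabilizer in $G$ of the point $gH \in G/H$ is the conjugate $gHg^{-1}$; when $g \in N_{G}(H)$ we obtain $gHg^{-1} = H$, so $G_{x} = H$ for every $x \in \Omega^{H}$. I do not anticipate any genuine difficulty here: the only point that is not purely formal is the passage from $g^{-1}Hg \subseteq H$ to $g^{-1}Hg = H$, which uses that $H$ is an algebraic group (finitely many connected components, well-defined dimension) and not an arbitrary abstract subgroup; over a field of characteristic $0$ this is routine. I would just take care to keep all formulas consistent with the left $G$-action on $G/H$ used throughout the paper.
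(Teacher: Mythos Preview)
Your argument is correct and complete. The paper itself leaves this lemma to the reader, so there is no proof to compare against; your treatment, including the dimension/component-count argument to upgrade $g^{-1}Hg \subseteq H$ to equality, is exactly the kind of routine verification one would expect here.
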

By a \emph{Galois covering} with Galois group $F$ we mean a dominant finite morphism 
$\gamma: \tilde{X}\rightarrow X$ such that the field extension $k(\tilde{X})/k(X)$ is Galois with Galois group $F$. 
The following classical theorem describes the birational type of any $G$-variety having a general stabilizer in terms of its rational quotient 
and its general orbit (see \cite[Theorem 2.13]{CKPR11}). It was inspired by the ideas of Popov and Vinberg developed in \cite[Section 2]{PV89}. Note that the theorem was also proved by Alexeev and Brion in \cite[Section 3.1]{AB06} for certain families of affine spherical varieties.
For reading convenience we give a detailed proof.

\begin{theorem}\label{t-CKPR}\cite[Theorem 2.13]{CKPR11}
Let $\XX$ be a $G$-variety having a general $G$-orbit $\Omega = G/H$ (possibly not spherical). Then there exist a variety $S$ and a $G$-equivariant rational map
$$\gamma:\tilde{\XX} := S\times \Omega\dashrightarrow \XX$$
which is  a finite Galois covering on a $G$-stable dense open subset. After shrinking $S$, the map $\gamma$ is constructed in a such way that it induces 
a Galois covering $S\rightarrow S'$
giving rise to a $G$-equivariant isomorphim between $k(\tilde{\XX})$ and the fraction field of $k(S)\otimes_{k(S')}k(\XX)$, where $k(\XX)^{G} = k(S')$.
\end{theorem}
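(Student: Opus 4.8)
The plan is to follow the standard descent/rational-quotient construction of Popov--Vinberg, made equivariant for the $G$-action. First I would invoke Rosenlicht's theorem to obtain a rational quotient for the $G$-action on $\XX$: after shrinking $\XX$ to a $G$-stable dense open subset, there is a $G$-invariant dominant morphism $\pi:\XX\to S'$ with $k(\XX)^{G}=k(S')$ whose general fibers are single $G$-orbits, hence $G$-isomorphic to $\Omega=G/H$. The geometric generic fiber is then the $\overline{k(S')}$-homogeneous space, and the obstruction to $\XX$ being birationally $S'\times\Omega$ over $S'$ is exactly a class in a nonabelian Galois cohomology set $H^{1}(\mathrm{Gal}(\overline{k(S')}/k(S')),\Aut^{G}(\Omega))$; since $\Omega=G/H$ the relevant automorphism group is $N_{G}(H)/H$, which is an algebraic group over $k$ (finite-by-diagonalizable in the spherical case, but this is not needed here).

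Next I would trivialize this cohomology class by a finite base change. Choose a finite Galois extension $L/k(S')$ splitting the cocycle (possible because the cocycle takes values in a group of finite type and one can always pass to a finite level); realize $L$ as $k(S)$ for some variety $S$ equipped with a dominant finite morphism $S\to S'$ which is Galois with group $F=\mathrm{Gal}(L/k(S'))$ after shrinking $S$ and $S'$ so that $S\to S'$ is an honest finite \'etale Galois covering. Over $S$ the pulled-back family becomes a trivial $\Omega$-bundle in the sense that the twisted form becomes isomorphic to the constant one: concretely, the generic fiber of $\XX\times_{S'}S\to S$ is $S$-isomorphic to $S\times\Omega$. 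This gives a $G$-equivariant birational map $\gamma:\tilde{\XX}:=S\times\Omega\dashrightarrow\XX$ defined over a $G$-stable dense open subset, and by construction $k(\tilde{\XX})$ is $G$-equivariantly identified with $\mathrm{Frac}\bigl(k(S)\otimes_{k(S')}k(\XX)\bigr)$, which is the compositum of $k(S)$ and $k(\XX)$ inside $k(\tilde{\XX})$. Since $k(S)/k(S')=k(S)/k(\XX)^{G}$ is Galois with group $F$, the extension $k(\tilde{\XX})/k(\XX)$ is Galois with group $F$ as well, so $\gamma$ is a finite Galois covering on a suitable $G$-stable dense open subset, as required; shrinking $S$ further does not affect this.

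The main obstacle is the descent step: one must check that the cocycle governing the twist really does take values in $\Aut^{G}(\Omega)=N_{G}(H)/H$ (equivalently, that every $G$-equivariant birational self-map of $\Omega$ over $\overline{k(S')}$ comes from right translation by $N_{G}(H)$), and that it can be split by a \emph{finite} extension that can moreover be spread out to an honest finite \'etale Galois covering of varieties after shrinking. The first point is the classical computation of $G$-equivariant automorphisms of a homogeneous space; the second is a routine limit/spreading-out argument (a cocycle with values in an affine group scheme of finite type over a field is split over a finite extension, and finite extensions of function fields of varieties come from generically finite dominant morphisms, which one makes \'etale and Galois by excising a proper closed subset). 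I would also note that Lemma~\ref{l-finite group}(iv) guarantees the converse bookkeeping: the generically free $F$-action on $\tilde{\XX}$ that we have produced has $k(\tilde{\XX})/k(\tilde{\XX})^{F}=k(\tilde{\XX})/k(\XX)$ Galois with group $F$, so $\XX$ is recovered birationally as the quotient $\tilde{\XX}/F$. This is exactly the content of \cite[Theorem 2.13]{CKPR11}, and the proof above is a self-contained rendering of that argument.
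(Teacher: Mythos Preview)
Your argument is correct, but it takes a genuinely different route from the paper's.  The paper proceeds \emph{geometrically} rather than cohomologically: after passing to a $G$-stable open $V\subseteq\XX$ on which all orbits are $G$-isomorphic to $\Omega$ and the quotient $\pi_{0}:V\to S'$ exists, it looks at the fixed-point subscheme $V^{H}$.  By Lemma~\ref{l:pointfixed}, every point of $V^{H}$ has isotropy exactly $H$, so any irreducible component $S_{1}\subseteq V^{H}$ dominating $S'$ is a \emph{quasi-section} of the family.  Taking $S\to S_{1}$ so that the composite $S\to S'$ is Galois, the map $(s,gH)\mapsto (s,\,g\cdot\pi_{2}(s))$ gives an explicit $G$-equivariant isomorphism $S\times\Omega\xrightarrow{\sim} S\times_{S'}V$ (checked directly via Zariski's Main Theorem), and the second projection is the sought Galois covering.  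No descent theory is invoked.

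Your approach instead packages the problem as a twisted form of $\Omega$ over $k(S')$, classified by $H^{1}(\mathrm{Gal}(\overline{k(S')}/k(S')),N_{G}(H)/H)$, and splits it over a finite Galois extension.  This is exactly the viewpoint the paper adopts \emph{after} the theorem, in the discussion leading to Corollary~\ref{c:cohoclass}, so your proof effectively reverses the logical order: you take the cohomological interpretation as primary and deduce the geometric statement.  The trade-off is that the paper's argument is more explicit (the section $S_{1}\hookrightarrow V$ is concretely constructed, and the field identification $k(\tilde\XX)\simeq\mathrm{Frac}(k(S)\otimes_{k(S')}k(\XX))$ is visibly a compositum inside $k(V)$), whereas yours is more conceptual but leaves the linear disjointness of $k(S)$ and $k(\XX)$ over $k(S')$---needed for the tensor product to have the expected fraction field---as an implicit step.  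This is harmless (it follows from $k(\XX)^{G}=k(S')$ once everything is embedded in a common overfield), but the paper's quasi-section construction makes it automatic.
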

\begin{proof}
By the Rosenlicht theorem \cite{Ros63}, \cite[Satz 2.2]{Spr89}, there exist a $G$-stable dense
open subset $V\subseteq \XX$ such that every $G$-orbit is $G$-isomorphic to $\Omega$ and a global geometric quotient $\pi_{0}:V\rightarrow S'$ where $S'$ is a $d$-dimensional variety. Note that the fibers of $\pi_{0}$ are exactly the $G$-orbits of $V$ and they intersect the fixed point subscheme $V^{H}$. Using the equality 
$V^{H} = \bigcup_{x\in V}(G\cdot x)^{H}$ and Lemma \ref{l:pointfixed}, we observe that all the isotropy groups of points of  $V^{H}$ for the $G$-action on $V$ are equal to $H$. 
Let us take a $d$-dimensional irreducible reduced closed subset $S_{1}$ of $V^{H}$ sending dominantly on $S'$ by the morphism $\pi_{0}$; 
this latter being equivalent to find a closed schematic point in the fiber
of the generic point $\eta\in S'$ of $\pi_{0}|_{V'}$. 
By shrinking $S_{1}$ and $S'$ if necessary, one can find a 
variety $S$ (via a Galois extension of $k(S_{1})$) and a commutative diagram of finite morphisms
$$
\xymatrix{
S \ar[r]^{\pi_{2}} \ar[rd]_{\pi_{1}} &
S_{1} \ar[d]^{\pi_{0}} \\
& S'}
$$
such that $\pi_{1}$ is a Galois covering. Therefore, we get a new commutative diagram
$$
\xymatrix{
S\times\Omega \ar[r]^{\gamma_{1}} \ar[rd]_{\gamma} &
S\times_{S'}V \ar[d]^{{\rm proj_{2}}} \\
& V}
$$
Here $\gamma_{1}$ is defined by the formula $\gamma_{1}(s, gH) = (s, g\cdot \pi_{2}(s))$
for all $s\in S$ and $g\in G$. By changing $V$ by a $G$-stable open subset, we may assume that $\gamma$ (and therefore $\gamma_{1}$) is surjective. 
Indeed, $S_{1}$ is chosen in a such way that it intersects a $G$-orbit of $V$ taken in general position.

The fact that 
$\gamma_{1}$ is injective is clear. Therefore applying the Zariski Main Theorem,
the map $\gamma_{1}$ is a $G$-equivariant birational morphism. Finally, the morphism
${\rm proj_{2}}$ is a finite morphism since it is obtained by base change of a finite one. It is hence clear that $\gamma$ is a Galois covering. This concludes the proof of the theorem.
\end{proof}
\begin{corollary}\label{t-Galois} Let $\XX$ be a $G$-variety with a general orbit
$\Omega = G/H$ (possibly not spherical) and consider the rational map
$$\gamma: \tilde{\XX}= S\times \Omega\dashrightarrow \XX$$
obtained from Theorem \ref{t-CKPR}.
Let $F$ be the corresponding Galois group acting by $G$-equivariant birational transformations on
$\tilde{\XX}$.
Then for any $G$-model $X$
of $\XX$, there exists an $F$-stable $G$-model $\tilde{X}$ of 
$\tilde{\XX}$ with a regular $F$-action such that $X = \tilde{X}/F$. 
\end{corollary}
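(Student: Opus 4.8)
The plan is to take $\tilde X$ to be the normalization of $X$ in the function field $k(\tilde{\XX})$ and to transport the actions of $G$ and of $F$ to it by functoriality of normalization. Recall first that, $\gamma$ being a Galois covering with group $F$, the finite extension $k(\tilde{\XX})/k(\XX)$ is Galois with group $F$; and since $F$ acts on $\tilde{\XX}=S\times\Omega$ by $G$-equivariant birational transformations, the induced $F$-action on $k(\tilde{\XX})$ fixes $k(\XX)$ and commutes with the $G$-action, while the inclusion $\gamma^{\star}\colon k(\XX)\hookrightarrow k(\tilde{\XX})$ is $G$-equivariant.

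First I would fix a $G$-model $X$ of $\XX$, so that $k(X)=k(\XX)$ as $G$-algebras, and let $\gamma_X\colon\tilde X\to X$ be the normalization of $X$ in $k(\tilde{\XX})$. As $X$ is a variety over $k$ and $k(\tilde{\XX})/k(\XX)$ is finite, $\gamma_X$ is a finite morphism, $\tilde X$ is normal, and $k(\tilde X)=k(\tilde{\XX})$. To lift the $G$-action I would use that $G\times\tilde X$ is integral and normal (it is smooth over the normal scheme $\tilde X$, with $G$ connected), that the composition $G\times\tilde X\xrightarrow{\mathrm{id}\times\gamma_X}G\times X\xrightarrow{a}X$ is dominant, and that its comorphism agrees, on $k(X)\subseteq k(\tilde X)$, with the $k$-algebra map $k(\tilde X)=k(\tilde{\XX})\to k(G\times\tilde X)$ coming from the regular $G$-action on $\tilde{\XX}$ --- this compatibility is precisely the $G$-equivariance of $\gamma$. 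The universal property of normalization in a field extension then yields a morphism $\tilde a\colon G\times\tilde X\to\tilde X$ over $X$; since $\tilde a$ restricts to the regular $G$-action of $\tilde{\XX}$ on a $G$-stable dense open subset of $\tilde X$ and $\tilde X$ is separated, the group action axioms extend to all of $\tilde X$. Hence $\tilde X$ is a $G$-model of $\tilde{\XX}$ and $\gamma_X$ is finite and $G$-equivariant.

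To equip $\tilde X$ with a regular $F$-action I would work over an affine open $U=\spec A\subseteq X$: then $\gamma_X^{-1}(U)=\spec\bar A$, with $\bar A$ the integral closure of $A$ in $k(\tilde{\XX})$, and each $\phi\in F$ fixes $A$ pointwise and permutes the elements of $k(\tilde{\XX})$ integral over $A$, so $\phi(\bar A)=\bar A$. Thus $F$ acts regularly on $\tilde X$ by $X$-automorphisms, and this action commutes with the $G$-action since both are regular and commute on $k(\tilde X)=k(\tilde{\XX})$; in particular $\tilde X$ is an $F$-stable $G$-model of $\tilde{\XX}$. Finally, because $\gamma_X$ is finite every $F$-orbit of $\tilde X$ lies in the affine $F$-stable open $\gamma_X^{-1}(U)$ for a suitable affine $U\subseteq X$, so Lemma \ref{l-finite group}(ii) provides the good quotient $q\colon\tilde X\to\tilde X/F$ with $\tilde X/F$ normal; the map $\gamma_X$ factors as $\bar\gamma_X\circ q$ with $\bar\gamma_X\colon\tilde X/F\to X$ birational (since $k(\tilde X/F)=k(\tilde X)^F=k(\XX)=k(X)$) and finite (being descended from $\gamma_X$: on affine opens $A\subseteq\bar A^F\subseteq\bar A$). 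A finite birational morphism of normal varieties is an isomorphism, so $\tilde X/F\cong X$ $G$-equivariantly, which is the assertion.

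The step I expect to be the main obstacle is the lifting of the $G$-action: one must check carefully that the morphism supplied by the universal property of normalization is a genuine regular action of $G$ (not merely a birational one), that it is compatible with the given $G$-action on $\tilde{\XX}=S\times\Omega$, and that it commutes with the $F$-action, so that $\tilde X$ is honestly an $F$-stable $G$-model. The remaining verifications --- finiteness, normality of the quotient, and the identification $X=\tilde X/F$ --- are routine given the properties of normalization and Lemma \ref{l-finite group}.
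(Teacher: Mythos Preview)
Your proof is correct and starts from the same construction as the paper: $\tilde X$ is the normalization of $X$ in $k(\tilde{\XX})$, the $F$-action is obtained because $F$ stabilizes the integral closure over each affine open, and the identification $X=\tilde X/F$ follows from the fact that a finite birational morphism between normal varieties is an isomorphism.

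The genuine difference lies in how the $G$-action is shown to be regular on $\tilde X$. You argue via the universal property of normalization in a field extension: since $G\times\tilde X$ is normal (being smooth over a normal scheme) and maps dominantly to $X$ with function field containing $L=k(\tilde{\XX})$, one obtains $\tilde a\colon G\times\tilde X\to\tilde X$; the group-action identities then follow by uniqueness in the same universal property (both sides of the associativity and identity diagrams are morphisms from a normal integral scheme to $\tilde X$ over $X$ inducing the same map $L\to k(G\times G\times\tilde X)$, resp.\ $L\to L$). The paper instead verifies regularity of the $G$-action by hand, through the Luna--Vust machinery: it writes a $B$-chart $X_0\subseteq X$ as an intersection of valuation rings, computes the integral closure of $k[X_0]$ in $L$ explicitly as another such intersection (using that $G$-valuations on $K$ lift to $G$-valuations on $L$ and that colors pull back to colors), and concludes that $\tilde X_0=\spec\,\overline{k[X_0]}$ is a $B$-chart of $\gsc(\tilde{\XX})$.

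Your route is cleaner and more self-contained; it avoids the valuation-theoretic bookkeeping. The paper's route, on the other hand, yields more: it identifies the $B$-charts of $\tilde X$ explicitly in terms of those of $X$, which is exactly what is needed later (Theorem~\ref{t-claf}) to show that the pull-back of a colored polyhedral divisor on $X$ gives a colored polyhedral divisor on $\tilde X$ satisfying the $F$-stability conditions of Definition~\ref{d: F-divfan}.
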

\begin{proof}
Let $K = k(\XX)$ and $L = k(\tilde{\XX})$. We consider a $G$-model $X$ of $\XX$.
We define the variety $\tilde{X}$ as the normalization of $X$ with respect to the finite field extension $L/K$. One can construct $\tilde{X}$ as follows: denote by
$\mathcal{A}^{L}$ the sheaf of $\mathcal{O}_{X}$-algebras associated with
the presheaf
$$U\mapsto \overline{\mathcal{O}}_{X}(U)\subseteq L,$$
where $\overline{\mathcal{O}}_{X}(U)$ is the integral closure of $\mathcal{O}_{X}(U)$
in the field extension $L$ for any dense open subset $U\subseteq X$. 
Then $\tilde{X}$ is the relative spectrum $\spec_{X}\mathcal{A}^{L}$. In particular, this normal scheme is finite over $X$, by \cite[Section 1, Proposition 1.2.4]{EGAII}, it is separated over $k$ and is therefore a model of $\tilde{\XX}$.
Moreover, $F$ acts regularly on $\tilde{X}$, has an open covering by $F$-stable affine subsets and $X$ is identified with the quotient $\tilde{X}/F$. 

Let us show that the open subscheme $\tilde{X}\subseteq \sc(\tilde{\XX})$ is a $G$-model of $\tilde{\XX}$.
We may suppose that $X$ is simple. Let $X_{0}$ be a $B$-chart of $X$ intersecting any $G$-orbit. 
Its coordinate ring can be regarded as
$$k[X_{0}] = \bigcap_{v\in \hs'}\mathcal{O}_{v}\cap
\bigcap_{D\in \F'\cup D(\XX)}\mathcal{O}_{v_{D}},$$
where $(\hs', \F')$ is an admissible pair as in \cite[Theorem 3]{Tim00}
and $D(\XX)$ is the set of all prime divisors on any $G$-model of $\XX$ 
that are not $B$-stable. Note that $\hs'$ is a set of $G$-valuations of $K$ and
$\F'$ is a set of colors.
Using the map $\gamma$, we deduce that the integral closure of 
$$\bigcap_{D\in \F'\cup D(\XX)} \mathcal{O}_{v_{D}}\text{ in } L\text{ is } \bigcap_{D\in \F \cup D(\tilde{\XX})}\mathcal{O}_{v_{D}},$$
where $\F$ consists of the irreducible components of $\gamma^{-1}(D)$ for $D$ running through $\F'$. Moreover, a discrete valuation $v$ on $K$ is $G$-invariant if and
only if any of its extensions on $L$ is $G$-invariant. This follows from \cite[Corollary 19.6]{Tim11} and the fact that the Galois group $F$ acts transitively on the set of valuations rings of $L$ dominating $\mathcal{O}_{v}$ (compare \cite[Chapter 4, Exercise 12.1]{Mat89}). 
Hence by considering the set $\hs''$ of valuations extending those of
$\hs'$, the integral closure of $k[X_{0}]$ in $L$ is
$$A= \bigcap_{v\in \hs''}\mathcal{O}_{v}\cap
\bigcap_{D\in \F\cup D(\tilde{\XX})}\mathcal{O}_{v_{D}}.$$

Clearly, the pair $(\hs'', \F)$ satisfies Conditions $(i)$ and $(ii)$ of Lemma \ref{l-chartt}. 
We conclude that $\tilde{X}_{0} := \spec\, A$ is a $B$-chart of $\gsc(\tilde{\XX})$.
As $\gamma$ is a $G$-equivariant map, the subset $g\cdot \tilde{X}_{0}$ coincides with the preimage of $g\cdot X_{0}$ under the natural map $\tilde{X}\rightarrow X$ for any $g\in G$. Thus $\tilde{X} = G\cdot \tilde{X}_{0}$ is a $G$-model of $\tilde{\XX}$. This completes the proof 
of the theorem.
\end{proof}

The next step is to give an interpretation of the equivariant 
birational classes of $G$-varieties with spherical orbits in terms
of Galois cohomology.
Let $S'$ be a variety with function field $E= k(S')$. Let us fix an algebraic closure $\overline{E}$ of the field $E$.
The $G$-isomorphism classes of forms of the $G$-algebra $E\otimes_{k}k(\Omega)$ over $E$ are parameterized by the first pointed 
set of Galois cohomology 
$$\mathfrak{H} :=H^{1}(\overline{E}/E, \Aut_{G}(\overline{E}\otimes_{k}k(\Omega)))$$
with coefficients in the $G$-equivariant automorphism group of the $\overline{E}$-algebra
$\overline{E}\otimes_{k} k(\Omega)$ (see \cite[Chapter III, \S1]{Ser97}). Writing $K$ for $N_{G}(H)/H$, we have 
$$ \Aut_{G}(\overline{E}\otimes_{k}k(\Omega)) = \Aut_{G(\overline{E})}(G(\overline{E})/H(\overline{E})) = N_{G}(H)(\overline{E})/H(\overline{E}) = K(\overline{E}).$$
We note that according to \cite[p 283]{BP87} the algebraic group $K$ is a split diagonalizable group.
Hence decomposing $K = K_{\rm tor}\times K_{0}$ into a direct product with its torsion group $K_{\rm tor}$ and its neutral component $K_{0}$, we get the following identifications
$$ \mathfrak{H}\simeq H^{1}(\overline{E}/E, K_{0}(\overline{E}))\oplus H^{1}(\overline{E}/E, K_{\rm tor}(\overline{E}))\simeq H^{1}(\overline{E}/E, K_{\rm tor}(\overline{E})),$$
where the last isomorphism comes from the Hilbert Theorem 90.

Finally, from Theorem \ref{t-CKPR} we observe that $\mathfrak{H}$ classifies the birational type of any $G$-variety $X$ with spherical orbits such that $E = k(X)^{G}$ and  $\overline{E}\otimes_{E}k(X)\simeq \overline{E}\otimes_{k} k(\Omega)$. With this in hand, we obtain the following corollary.
\begin{corollaire}\label{c:cohoclass}
The $G$-equivariant birational class of a $G$-variety with general spherical orbit $\Omega =G/H$ and rational quotient $S'$ determine an element of the set $\mathfrak{H}$, and vice-versa. Moreover,
if $N_{G}(H)$ is connected, then $\mathfrak{H}$ is a singleton. 
\end{corollaire}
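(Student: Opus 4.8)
The plan is to read off both assertions of Corollary~\ref{c:cohoclass} from the material already assembled, principally Theorem~\ref{t-CKPR}, the identifications of the cohomology set $\mathfrak{H}$ made just above, and the Rosenlicht quotient theorem.

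First I would establish the correspondence between $G$-equivariant birational classes and elements of $\mathfrak{H}$. Fix the spherical homogeneous space $\Omega = G/H$ and a variety $S'$ representing the rational quotient, so $E = k(S')$. Given a normal $G$-variety $X$ with general spherical orbit $\Omega$ and $k(X)^{G} = E$, the $G$-algebra $k(X)$ is, by Theorem~\ref{t-CKPR} (and the Rosenlicht theorem applied to $X$), a form of $E\otimes_{k}k(\Omega)$ over $E$ that becomes trivial after passing to a finite Galois extension of $E$; hence it becomes trivial over $\overline{E}$, so it defines a class in $\mathfrak{H} = H^{1}(\overline{E}/E, \Aut_{G}(\overline{E}\otimes_{k}k(\Omega)))$ by the usual descent dictionary (see \cite[Chapter~III, \S1]{Ser97}). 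Conversely, a class in $\mathfrak{H}$ produces a form $L$ of $E\otimes_{k}k(\Omega)$, which is the function field of a $G$-variety whose rational quotient is $S'$ and whose general orbit is $\Omega$ --- concretely, one may take any $G$-model of the universal $G$-scheme $\gsc$ attached to that function field. Two $G$-varieties in the hypothesis of the corollary are $G$-equivariantly birational precisely when their function fields are $G$-isomorphic as $E$-algebras, i.e.\ when the associated cohomology classes coincide; this is exactly the injectivity built into the descent correspondence. Thus the map (birational class)~$\mapsto$~(class in $\mathfrak{H}$) is well defined and bijective onto $\mathfrak{H}$, and conversely every element of $\mathfrak{H}$ is realized, which is the first claim.

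For the second claim I would invoke the identification already displayed in the excerpt. Writing $K = N_{G}(H)/H$, we have $\Aut_{G}(\overline{E}\otimes_{k}k(\Omega)) = K(\overline{E})$, and $K$ is a split diagonalizable group by \cite[p.~283]{BP87}, so it decomposes as $K = K_{\mathrm{tor}}\times K_{0}$ with $K_{0}$ a split torus and $K_{\mathrm{tor}}$ finite; Hilbert's Theorem~90 kills the torus factor, giving $\mathfrak{H}\simeq H^{1}(\overline{E}/E, K_{\mathrm{tor}}(\overline{E}))$. If $N_{G}(H)$ is connected, then $K = N_{G}(H)/H$ is connected as well, hence $K_{\mathrm{tor}}$ is trivial, so $\mathfrak{H}\simeq H^{1}(\overline{E}/E, \{1\})$ is a singleton (its distinguished point, corresponding to the split form $E\otimes_{k}k(\Omega)$, i.e.\ to trivial equivariant birational type). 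This completes the proof.

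The genuine content of the corollary is entirely packaged in the preceding paragraphs of the excerpt, so the ``main obstacle'' is mostly bookkeeping: one must be careful that the descent argument is applied to the correct notion of isomorphism (as $E$-algebras with $G$-action, not just as abstract field extensions), and that ``$G$-equivariantly birational with the same rational quotient'' translates exactly to equality of cohomology classes rather than merely to some coarser equivalence. The only place where something could go subtly wrong is the implicit claim that every form appearing here is split by a \emph{finite} Galois extension of $E$ --- but this is guaranteed by Theorem~\ref{t-CKPR}, which produces the splitting field as the function field of the Galois covering $S\to S'$. Once that is in place, the statement is immediate.
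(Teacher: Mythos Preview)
Your proof is correct and follows essentially the same approach as the paper: the first claim is read off from the Galois covering result (the paper cites Corollary~\ref{t-Galois}, you cite the underlying Theorem~\ref{t-CKPR}) together with the descent discussion preceding the corollary, and the second claim is the observation that connectedness of $N_{G}(H)$ forces $K_{\rm tor}=\{1\}$. Your version simply makes explicit the intermediate step that $K=N_{G}(H)/H$ is connected, which the paper leaves implicit.
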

\begin{proof}
The first claim is a direct consequence of Corollary \ref{t-Galois} and the above discussion.
For the last claim, the connectedness of $N_{G}(H)$ implies that $H^{1}(\overline{E}/E, K_{\rm tor}(\overline{E})) =\{1\}$ and so $\mathfrak{H}$ is a singleton.
\end{proof}
We can reformulate the last corollary for the case of $G$-varieties with horospherical orbits. In particular, we recover \cite[Satz 2]{Kno90}.
\begin{corollaire}
Let $\XX$ be a $G$-variety with general horospherical orbit $\Omega = G/H$ and
geometric quotient $S$ on a $G$-stable dense open subset. Then $\XX$ is $G$-equivariantly birational to $S\times \Omega$, where $G$ acts on $S\times \Omega$ with 
the trivial action on $S$ and the natural one on $\Omega$. 
\end{corollaire}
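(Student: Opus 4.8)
The plan is to deduce this directly from Corollary \ref{c:cohoclass} together with the structure theory of horospherical subgroups. First I would record that, by hypothesis, the general stabilizer $H$ is horospherical, hence contains a maximal unipotent subgroup of $G$; as recalled in the example on horospherical homogeneous spaces, the normalizer $P := N_{G}(H)$ is then a parabolic subgroup of $G$ and $P/H$ is an algebraic torus. In particular $N_{G}(H)$ is connected. Since $\Omega = G/H$ is horospherical it is in particular spherical, so Corollary \ref{c:cohoclass} applies: because $N_{G}(H)$ is connected, the classifying set $\mathfrak{H}$ is a singleton. (One may see this even more concretely: $\mathfrak{H}$ is isomorphic to $H^{1}(\overline{E}/E, K_{\rm tor}(\overline{E}))$ with $K = N_{G}(H)/H$ a torus, so $K_{\rm tor} = \{1\}$ and $\mathfrak{H}$ is trivial.) Here $E$ denotes the function field of the rational quotient.

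Next I would check that $S \times \Omega$ --- with $G$ acting trivially on $S$ and by left translations on $\Omega$ --- is a $G$-variety carrying exactly the same birational invariants as $\XX$. Every $G$-orbit of $S \times \Omega$ is of the form $\{s\} \times \Omega$, which is $G$-isomorphic to $G/H$; hence its general orbit is $\Omega$ with general stabilizer $H$. Moreover, since $k(\Omega)^{B} = k$ (complexity zero), one has $k(S \times \Omega)^{G} \subseteq k(S \times \Omega)^{B} = k(S)$, the reverse inclusion being obvious, so the rational quotient of $S \times \Omega$ is $S$. The hypothesis that $S$ is a geometric quotient on a $G$-stable dense open subset of $\XX$ likewise gives $k(S) = k(\XX)^{G}$, so $\XX$ has rational quotient $S$ as well. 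Thus $\XX$ and $S \times \Omega$ are two $G$-varieties with the same general spherical orbit $\Omega = G/H$ and the same rational quotient $S$; by the bijectivity statement in Corollary \ref{c:cohoclass} they define the same --- and unique --- element of $\mathfrak{H}$, and are therefore $G$-equivariantly birational. Equivalently, in Theorem \ref{t-CKPR} the induced Galois covering $S \to S'$ has trivial Galois group, so the map $\gamma : S \times \Omega \dashrightarrow \XX$ is birational.

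The argument is short, and I do not expect a genuine obstacle: the content is entirely packaged into the earlier results, so the only points that need a little care are verifying that the rational quotient of $S \times \Omega$ is $S$ (which uses only that $\Omega$ has complexity zero) and matching the variety $S$ of the present statement with the rational quotient $S'$ used in Theorem \ref{t-CKPR} and Corollary \ref{c:cohoclass}. As a special case this recovers \cite[Satz 2]{Kno90}, as noted just before the corollary.
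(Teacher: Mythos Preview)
Your proof is correct and follows the same approach as the paper: both hinge on the fact that $N_G(H)$ is a parabolic subgroup, hence connected, so Corollary \ref{c:cohoclass} gives that $\mathfrak{H}$ is a singleton. The paper's proof consists of that single sentence, while you have (helpfully) spelled out the remaining step of matching $\XX$ and $S\times\Omega$ via their common rational quotient and general orbit.
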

\begin{proof}
Since $N_{G}(H)$ is a parabolic subgroup (hence connected), we conclude that $\mathfrak{H}$ is a singleton by Corollary \ref{c:cohoclass}.
\end{proof}
\begin{remark}\label{rem-GaloisAction}
Denote by $G_{\overline{E}/ E}$ the absolute Galois group of $E$.
A Galois cohomology class in $\mathfrak{H}$ corresponds to an equivalence class of semi-linear actions 
of the group $G_{\overline{E}/E}$. Indeed, if 
$$\alpha: G_{\overline{E}/E}\rightarrow \Aut_{G}(\overline{E}\otimes_{k}k(\Omega)), \,\, g\mapsto \alpha_{g}$$ is a continuous $1$-cocycle, then the corresponding semi-linear action is given by the formula
$$g\cdot (\lambda\otimes f) := \alpha_{g}\circ g^{\star} (\lambda\otimes f),\text{ for all }
g\in G_{\overline{E}/E}, \lambda\in \overline{E}, \text{ and } f\in k(\Omega).$$
Here the transformation $g^{\star}:\overline{E}\otimes_{k}k(\Omega)\rightarrow \overline{E}\otimes_{k}k(\Omega)$ is induced by the usual Galois action on the field 
extension $\overline{E}$. 
\end{remark}
The next proposition gives an explicit description of the $F$-action on the product
$S\times\Omega$ considered in Theorem \ref{t-CKPR}.
\begin{proposition}\label{prop-GaloisAction}
The $G_{\overline{E}/E}$-action on $\overline{E}\otimes_{k}k(\Omega)$ is induced
by the usual Galois action on $\overline{E}$ and by a  continuous $1$-cocycle with values in the equivariant automorphism group of $\Omega$ (up to change with an equivalent semi-linear action). Regarding the notation of Theorem \ref{t-CKPR}, 
this geometrically means that the $F$-action on the product $S\times \Omega$, assumed for simplicity regular, is induced by a generically free $F$-action on $S$ and by a $G$-equivariant one 
on $\Omega$.
\end{proposition}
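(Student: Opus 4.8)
The plan is to unwind the Galois-cohomological description of $\mathfrak{H}$ established just above and to upgrade the statement about automorphisms of the $\overline{E}$-algebra $\overline{E}\otimes_k k(\Omega)$ into a statement about actions on the geometric object $S\times\Omega$. First I would recall that, as explained in Remark \ref{rem-GaloisAction}, a class in $\mathfrak{H}$ is represented by a continuous $1$-cocycle $\alpha:G_{\overline{E}/E}\to \Aut_G(\overline{E}\otimes_k k(\Omega)) = K(\overline{E})$, where $K = N_G(H)/H$ is a split diagonalizable group, and the associated semilinear action is $g\cdot(\lambda\otimes f)=\alpha_g\circ g^{\star}(\lambda\otimes f)$. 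The key point is that $K(\overline{E})$ is not just an abstract group but the group of $G$-equivariant automorphisms of $\Omega=G/H$, realized by right translations by representatives of $N_G(H)/H$; so each $\alpha_g$ is literally a $G$-equivariant automorphism of the homogeneous space $\Omega$, extended $\overline{E}$-linearly. Thus the semilinear $G_{\overline{E}/E}$-action splits, by construction, as the product of the tautological Galois action on the scalar field $\overline{E}$ and a (non-linear over $k$, but $G$-equivariant) cocycle action on the $\Omega$-factor.

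Next I would pass from the birational/field-theoretic picture to the geometric one via descent. Using the decomposition $K = K_{\mathrm{tor}}\times K_0$ and Hilbert 90 as above, one may assume the cocycle takes values in the finite torsion subgroup $K_{\mathrm{tor}}(\overline{E})$; choosing a finite Galois subextension $E'/E$ through which $\alpha$ factors, the cocycle becomes a homomorphism $F = \mathrm{Gal}(E'/E)\to K_{\mathrm{tor}}$ — exactly the finite group $F$ appearing in Theorem \ref{t-CKPR}, with $S\to S'$ the corresponding Galois covering with $k(S)=E'$. The action of $F$ on $S\times\Omega$ furnished by Theorem \ref{t-CKPR} is, on function fields, the semilinear action just described; geometrically this means: on the first factor $F$ acts through the deck transformations of $S\to S'$ (a faithful, hence by Lemma \ref{l-finite group}(i) generically free, action), and on the second factor $F$ acts through the homomorphism $F\to K_{\mathrm{tor}}\hookrightarrow \Aut_G(\Omega)$ by $G$-equivariant automorphisms. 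Concretely, $f\in F$ sends $(s,x)\mapsto (f\cdot s,\ \alpha_f(x))$ where $\alpha_f$ is right translation by a representative $n_f\in N_G(H)$; one checks the cocycle relation makes this a genuine $F$-action and that it is $G$-equivariant because left translations commute with right translations. Replacing the cocycle by a cohomologous one corresponds precisely to replacing this $F$-action by an equivalent one, which is the content of the parenthetical "up to change with an equivalent semi-linear action."

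The main obstacle I anticipate is bookkeeping the identification between the abstract $F$-action produced in the proof of Theorem \ref{t-CKPR} (built from a diagram of finite morphisms $S\to S_1\to S'$ and the map $\gamma_1(s,gH)=(s,g\cdot\pi_2(s))$) and the cocycle $\alpha$: one must verify that the Galois action on $k(S\times\Omega)\cong \mathrm{Frac}(k(S)\otimes_{k(S')}k(\XX))$ transported back through $\gamma$ agrees, as a semilinear action, with $g\mapsto \alpha_g\circ g^{\star}$. This amounts to checking that the $H$-fixed section $S_1\subseteq V^H$ used in Theorem \ref{t-CKPR} has the property that moving the base point $\pi_2(s)\in \Omega^H = N_G(H)/H$ by a Galois element is exactly right translation by the corresponding element of $K_{\mathrm{tor}}$ — this uses Lemma \ref{l:pointfixed}, which identifies $\Omega^H$ with $N_G(H)/H$ and guarantees every point of $S_1$ still has isotropy exactly $H$. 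Once that compatibility is in hand, the asserted factorization of the $F$-action into a generically free action on $S$ and a $G$-equivariant action on $\Omega$ is immediate, and the faithfulness/generic-freeness on the $S$-factor follows from Lemma \ref{l-finite group}(i),(iv) together with the fact that $k(S)/k(S')$ is Galois with group $F$.
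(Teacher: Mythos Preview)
Your overall architecture matches the paper's, but the first paragraph contains a genuine error that undercuts the argument. You write that each $\alpha_g\in K(\overline{E})$ ``is literally a $G$-equivariant automorphism of the homogeneous space $\Omega$, extended $\overline{E}$-linearly.'' This is exactly the conclusion to be proved, not an input. An element of $K(\overline{E})=N_G(H)(\overline{E})/H(\overline{E})$ is right translation by an $\overline{E}$-point of $N_G(H)$, which acts on $\Omega_{\overline{E}}$ over $\overline{E}$; it need not preserve the subfield $k(\Omega)\subseteq \overline{E}\otimes_k k(\Omega)$, so it is \emph{not} in general the $\overline{E}$-linear extension of an automorphism of $\Omega$ over $k$. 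Hence the ``splitting by construction'' you announce there does not hold before any cohomological reduction.

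Your second paragraph heads in the right direction with Hilbert 90, but you skip the decisive step. After replacing the cocycle by a cohomologous one with values in $K_{\rm tor}(\overline{E})$, you immediately declare that ``the cocycle becomes a homomorphism $F\to K_{\rm tor}$.'' Two things are being used implicitly here and neither is stated: first, that $K_{\rm tor}(\overline{E})=K_{\rm tor}(k)$, which holds because $K_{\rm tor}$ is a finite constant group scheme and $k$ is algebraically closed (so all the relevant roots of unity already live in $k$); second, that once the values lie in $K_{\rm tor}(k)$ the Galois action on them is trivial, which is what turns the cocycle relation into a genuine homomorphism. The paper's proof makes exactly this point---it computes the cocycle on $B$-eigenfunctions as $\alpha_g(\chi^m)=\omega_g(m)\chi^m$ with $\omega_g(m)\in\G_m(\overline{E})$, reduces via Hilbert 90 to $K_{\rm tor}(\overline{E})$, and then invokes $K_{\rm tor}(\overline{E})=K_{\rm tor}(k)$ to conclude that $\alpha_g$ descends to an automorphism of $k(\Omega)$. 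Once you insert this step, your argument is essentially the paper's; the eigenfunction bookkeeping in the paper is just an explicit coordinatization of your abstract claim. Your third paragraph on compatibility with the construction in Theorem~\ref{t-CKPR} is a reasonable sanity check but is not needed for the proof as the paper presents it.
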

\begin{proof}
We will keep the same notation as in Remark \ref{rem-GaloisAction}.
Set $\Omega_{\overline{E}} := \Omega\times_{\spec\, k}\spec\, \overline{E}.$
For a $1$-cocycle $\alpha$  representing a cohomology class in $\mathfrak{H}$, the action of $\alpha_{g}$ on $\overline{E}(\Omega_{\overline{E}})$ is determined by the action on the $B$-eigenfunctions $\chi^{m}\in k(\Omega)^{(B)}$ (see \cite[Theorem 21.5]{Tim11}). Moreover, 
$\alpha_{g}(\chi^{m}) = \omega_{g}(m)\chi^{m}$ for all $m\in M$
and for some group homomorphism $\omega_{g}:M\rightarrow \G_{m}(\overline{E})$.
 From the relations
$$\alpha_{g}\circ g^{\star}\circ \alpha_{g'}\circ (g')^{\star}(\chi^{m}) = \alpha_{gg'} \circ (gg')^{\star}(\chi^{m}) = \alpha_{gg'}(\chi^{m}) \text{ for all } g,g'\in  G_{\overline{E}/E},$$
we remark that
$$\omega_{gg'}(m) = \omega_{g}(m)g^{\star}(\omega_{g'}(m))$$
and so $g\mapsto \omega_{g}$ is a $1$-cocycle with values in $K(\overline{E})$.
We get therefore an automorphism $\mathfrak{H}\rightarrow \mathfrak{H}$ sending the class of $\alpha$ to the class of $\omega$. In the sequel we will identify these two objects. 

Now using the isomorphism $$\phi_{\star}:\mathfrak{H}
\rightarrow H^{1}(\overline{E}/E, K_{\rm tor}(\overline{E})),\,\, [g\mapsto \omega_{g}]\mapsto [g\mapsto \phi(\omega_{g})],$$
where $\phi: K\rightarrow K_{\rm tor}$ is the natural projection, we may change the $1$-cocycle $\omega$ into a cohomologous one where its values are in $K_{\rm tor}(\overline{E})$. 
Doing this change and using that $k$ is algebraically closed, we obtain that 
$$\omega_{g} \in K_{\rm tor}(\overline{E}) = K_{\rm tor}(k)\text{ for all }g\in G_{\overline{E}/E},$$
and thus the $\overline{E}$-automorphism $\alpha_{g}$ is the scalar extension of an automorphism of $k(\Omega)$.
\end{proof}

Let us end this section with some examples.

\begin{example}{\em Diagonalizable matrices.} In general, a $G$-variety with a unique general orbit has non trivial equivariant birational type. A basic example (see \cite[Page 23]{Bri96}) is to look at the action by conjugacy of $G= {\rm GL}_{n}$ on the space of $n\times n$-matrices $X$. The subset of diagonalizable matrices with distinct eigenvalues forms an open subset $X_{0}\subseteq X$ where all the stabilizers are conjugate
to a maximal torus $T$. In addition, the $T$-fixed point set of $X_{0}$ is irreducible  while the $T$-fixed point set of $S\times G/T$ for any variety $S$ is isomorphic to the product $S\times W$, where $W$ is the Weyl group of $G$ (see Lemma \ref{l:pointfixed}). This latter is the symmetric group with $n$ letters, where $n = \dim\, T$. Hence the equivariant birational type of $X$ cannot be trivial for $n>1$.
\end{example}
\begin{example}[\cite{Arz97c}] \emph{${\rm SL}_{2}$-threefolds.}
Let $G = {\rm SL}_{2}$ and let $H\subseteq G$ be the normalizer of a maximal torus.
We will consider normal affine threefolds that have a unique ${\rm SL }_{2}$-fixed point, an affine line as algebraic quotient, and general stabilizer $H$. 
It follows from
Corollary \ref{c:cohoclass} that such $\SL_{2}$-varieties have trivial equivariant birational type.  Arzhantsev gave in \cite{Arz97c} a concrete classification of these threefolds in terms of  a pair $(a,b)$ of coprime non-negative integers. Actually, the way of passing from our description to the one in  \cite[Section 4]{Arz97c} is to associate to the marked pair $(a, b)$ the polyhedral divisor $$(\D, \F) = \left((-b/a + \QQ_{\geq 0})\cdot [0], \F\right)\in\CP(\AA^{1}, \ss),$$ where $\ss$ is the homogeneous spherical datum of $\Omega = \SL_{2}/H$ and $\F$ consists to the unique color. 

Let us start with an explicit example to illustrate this correspondence.
We first consider the case of a single marked point $(\ell, 1)$ on the affine line $\AA^{1}$.  Let us denote by $x_{1},x_{2},  x_{3}$ the coordinates of the affine space $S^{2}V\simeq \AA^{3}$, where $V$ is a two-dimensional vector space. The matrix $A =\left(\begin{smallmatrix} a_{11}   & a_{12} \\ a_{21}   &   a_{22} \end{smallmatrix}\right)\in \SL_{2}$ acts on $S^{2}V$ via the formulae
$$ A\cdot x_{1} = a^{2}_{11}x_{1}  + 2a_{11}a_{12}x_{2}  + a_{12}^{2}x_{3};$$ $$A\cdot x_{2} = a_{11}a_{21}x_{1} +(a_{11}a_{22}+a_{12}a_{21})x_{2} +a_{12}a_{22}x_{3};$$ $$A\cdot x_{3} = a_{21}^{2}x_{1} + 2a_{21}a_{22}x_{2} + a_{22}^{2}x_{3}.$$  
Note that the stabilizer at any point $(0,\lambda, 0)\in S^{2}V$ for $\lambda\in k^{\star}$ is conjugate to $H$. Also, if $U$ is the upper unipotent matrix subgroup of $\SL_{2}$, then
$$k[S^{2}V]^{U} =k[x:= x_{3}, z:= x_{2}^{2} - x_{1}x_{3}].$$
The degrees of $z$ and $x$ are respectively $0$ and $1$. So $S^{2}V$ is given by $(\D_{1},\F)$, where $\D_{1}$ is the $\QQ_{\geq 0}$-polyhedral divisor over $\AA^{1}$
satisfying $\D_{1}(1) = - [0]$. 
This can be seen geometrically by regarding the fibers of the quotient map
$$S^{2} V\rightarrow \AA^{1}, \,\, (x_{1}, x_{2}, x_{3})\mapsto x_{2}^{2} - x_{1}x_{3},$$
where the fiber in $0$ has a unique fixed point $(0,0,0)$ and the other fibers are all ${\rm SL}_{2}$-isomorphic to ${\rm SL}_{2}/H$. The unique non-closed orbit of the fiber in $0$ is ${\rm SL}_{2}$-isomorphic to ${\rm SL}_{2}/U_{2}$, where $U_{2} = \left \{\left(\begin{smallmatrix} \pm 1   & \lambda  \\ 0   &   \pm 1 \end{smallmatrix}\right) ; \lambda \in k\right\}$. The valuation hypercone 
$$\Vh = \{[v_{z}, a, b]\,|\, a\in \QQ_{\leq 0}, b\in \QQ_{\geq 0}, z\in \P^{1}\}$$
of $k(S^{2}V)$ intersects in its relative the Cayley cone $$C_{0}(\D_{1}) = \{[v_{0}, a, b ]\, |\, (a, b)\in \QQ_{\geq 0} (1, 0) + \QQ_{\geq 0}(-1, 1)\}$$ at $0$ 
but this does not hold for all the other Cayley cones.

The group $\mu_{2\ell}(k)$ acts on $S^{2}V$ via the usual multiplication on the coordinates. In this way, it is easy to see that the general stabilizer of  $S^{2}V/\!\!/\mu_{2\ell}(k)$ is $H$. Letting $\chi^{1} = x^{2}/z$ and $t = z^{\ell}$, we further observe that 
$$k[ S^{2}V/\!\!/\mu_{2\ell}(k)]^{U} = k[S^{2}V/\!\!/U]^{\mu_{2\ell}(k)} =k[x^{2\ell}, x^{2\ell-2}z, \dots, z^{\ell}] = k[t,t\chi^{1}, \ldots, t\chi^{\ell}]\simeq A(\AA_{1}, \D_{\ell}),$$
where $\D_{\ell}$ is the $\QQ_{\geq 0}$-polyhedral divisor over $\AA^{1}$ determined by the $\QQ$-divisor $\D_{\ell}(1) = -\frac{1}{\ell}\cdot [0]$.

Let $s\in\ZZ_{>0}$ be an integer coprime to $\ell$. Now for a single 
marked pair $(\ell, s)$ on $\AA^{1}$, the last step is to modify our variety via the cartesian commutative diagram
$$\xymatrix{
\AA^{1}\times_{\AA^{1}}S^{2}V/\!\!/\mu_{2\ell}(k) \ar[d] \ar[r] & S^{2}V/\!\!/\mu_{2\ell}(k)\ar[d]\\
\AA^{1} \ar[r]^{t\mapsto t^{s}}  &\AA^{1}.}$$
Denote by $\mathcal{X}_{\ell, s}$ the normalization of $\AA^{1}\times_{\AA^{1}}S^{2}V/\!\!/\mu_{2\ell}(k)$.
Then the algebra $k[\mathcal{X}_{\ell,s}]^{U}$ is identified with the integral closure
of $A(\AA^{1}, \D_{\ell})[t^{1/s}]$ in the function field $k(\chi^{1}, t^{1/s})$ (compare with \cite[Section D, Lemma D.6]{Tim11}). It is therefore isomorphic 
to $A(\AA^{1}, \D_{\ell}^{s}),$ where $\D_{\ell}^{s}$ is the $\QQ_{\geq 0}$-polyhedral divisor over $\AA^{1}$ defined by  $\D_{\ell}^{s}(1) = -\frac{s}{\ell}\cdot [0]$ (apply \cite[Theorem 2.4]{Lan13}). We conclude that $\mathcal{X}_{\ell,s}$ is described by the colored polyhedral divisor $(\D_{\ell}^{s},\F)$. 
\end{example}
\subsection{Classification of $G$-varieties with spherical orbits}
\label{s-classif}
We now pass to the biregular classification
of normal $G$-varieties with spherical orbits. Before stating our main result (see Theorem \ref{t-claf}),
we start by introducing the notion of splitting associated to a $G$-variety with spherical orbits $\XX$. 
\\

\begin{definition}\label{def-split}
Let $\Omega$ be the general orbit of $\XX$. By a \emph{splitting} of $\XX$ we mean the datum $\gamma$ of
a $G$-equivariant generically free action of a finite abelian group $F$ on a product $\tilde{\XX} := S \times \Omega$
such that 
\begin{itemize}
\item $S$ is a quasi-projective variety;
\item $\gamma$ corresponds to a non-trivial cohomology class (see \ref{c:cohoclass}) if $\gamma$ is not the trivial action;  
\item  $F$ acts on $\tilde{\XX}$ via a generically free $F$-action on $S$ and an equivariant one on $\Omega$;
\item The quotient $\tilde{\XX}/F$ is $G$-birational to $\XX$. 
\end{itemize}
We denote by the same letter the invariant rational map 
$\gamma:\tilde{\XX}\dashrightarrow \XX.$ 
\end{definition}
\begin{remark}
Note that $\XX$ admits always a splitting. Indeed by the results of Section \ref{s-birtype}, there exists a finite group $F_{0}$ acting on a product $S_{0}\times \Omega$ such that
$\tilde{\XX}/F_{0}$ is $G$-birational to $\XX$. The $F_{0}$-action satisfies the first and third conditions of Definition \ref{def-split}. Since the equivariant automorphism group of $\Omega$ is abelian, the $F_{0}$-action on $\Omega$ factorises into an $F_{0}/[F_{0},F_{0}]$-action,
where $[F_{0},F_{0}]$ is the derived subgroup of $F_{0}$. Therefore, we may change $F_{0}$ by $F_{0}/[F_{0},F_{0}]$ and $S_{0}$ by $S_{0}/[F_{0},F_{0}]$ and this yields the existence of a splitting for $\XX$. 
\end{remark}
\begin{remark}
Conversely, if  $S$ is a quasi-projective variety with a generically free $F$-action and $F$ acts by $G$-equivariant automorphisms on $\Omega$, then the quotient
$\XX_{0}:= (S \times \Omega)/F$ on the product exists (since  $S \times \Omega$ is quasi-projective) and $\XX_{0}$ is a $G$-variety having $\Omega$ as general orbit. 
\end{remark}
Until now $F$ is the Galois group of the splitting $\gamma$ of $\XX$.
In order to study the $G$-models of $\XX$ we need to consider the models of $S$ admitting a natural $F$-action. This leads
us to state the following lemma where the proof, left to the reader, is similar as in \cite[Proposition 12.2]{Tim11}. 
\begin{lemme}
The subset $$\fsc(S) :=\left\{\xi\in\sc(S)\,\,|\,\,\alpha^{\star}(\mathcal{O}_{\xi, \sc(S)})\subseteq \mathcal{O}_{(1,\xi), F\times \sc(S)}\right\}$$
is a dense open subscheme of $\sc(S)$, where $\alpha$ is the comorphism of the rational $F$-action.   
\end{lemme}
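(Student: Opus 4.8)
The plan is to show that $\fsc(S)$ is both open and dense in $\sc(S)$, mimicking the proof of the analogous statement for $\gsc(\XX)$ recorded in \cite[Proposition 12.2]{Tim11}. Recall that $F$ acts on $S$ by birational transformations, so the comorphism $\alpha$ is a rational map of $k$-schemes $\alpha\colon F\times\sc(S)\dashrightarrow \sc(S)$, and the condition defining $\fsc(S)$ is exactly the requirement that, at the point $\xi$, each of the finitely many birational transformations $g\in F$ extends to a local isomorphism near $\xi$ in the sense of localities. The key point is that $\sc(S)$ is covered by the affine opens $\spec\,A$ for finitely generated normal subalgebras $A\subseteq k(S)$ with fraction field $k(S)$, and that for each fixed $g\in F$ the set of $\xi$ at which $g$ is defined (i.e.\ $g^{\star}\mathcal{O}_{\xi}\subseteq\mathcal{O}_{\xi'}$ for some $\xi'$) is open: this is the standard fact that the domain of definition of a rational map between normal varieties is open, applied locality by locality. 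Taking the intersection over the finitely many elements $g\in F$ then exhibits $\fsc(S)$ as a finite intersection of open subschemes, hence open.

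First I would fix a finitely generated normal subalgebra $A\subseteq k(S)$ with $\mathrm{Frac}(A)=k(S)$, so that $U_{A}:=\spec\,A$ is a typical affine open of $\sc(S)$. For each $g\in F$ the algebra $g^{\star}(A)$ is again such a subalgebra; let $A'\subseteq k(S)$ be a common finitely generated normal overalgebra containing both $A$ and $g^{\star}(A)$ (for instance the integral closure of the subalgebra they generate). Then on $U_{A'}=\spec\,A'$ the inclusion $g^{\star}(A)\subseteq A'$ is an honest morphism, which realizes $g$ as a regular self-map of $\sc(S)$ over the open dense $U_{A'}$. Hence each locality $\mathcal{O}_{\xi,\sc(S)}$ with $\xi\in U_{A'}$ satisfies $\alpha^{\star}(\mathcal{O}_{\xi})\subseteq\mathcal{O}_{(1,\xi),F\times\sc(S)}$ for that particular $g$; running $g$ over the finite set $F$ and intersecting the (dense open) loci so obtained shows that $\fsc(S)$ contains a dense open subscheme, and in particular is dense. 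Openness follows because the defining condition is local on $\sc(S)$ and, by the argument just given, holds on a neighbourhood of any of its points.

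The one subtlety worth spelling out — and the step I expect to be the main (if minor) obstacle — is checking that $\sc(S)$ is not empty of points where all of $F$ acts regularly simultaneously, i.e.\ that the finite intersection over $g\in F$ of the individual domains of regularity is still dense. This is immediate here because each such domain is a dense open subset of the irreducible (though non-separated) scheme $\sc(S)$, so a finite intersection of them is again dense open; the irreducibility of $\sc(S)$ is exactly the statement that all its localities share the common fraction field $k(S)$. Assembling these observations gives that $\fsc(S)$ is a dense open subscheme of $\sc(S)$, as claimed; as indicated, the details are entirely parallel to \cite[Proposition 12.2]{Tim11} and are left to the reader.
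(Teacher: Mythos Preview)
Your proposal is correct and follows exactly the approach the paper indicates: the paper gives no proof of this lemma, stating only that it ``is similar as in \cite[Proposition 12.2]{Tim11}'' and is left to the reader, which is precisely the template you invoke and flesh out. Your use of the finiteness of $F$ to pass from the regularity locus of each individual $g\in F$ to their intersection is the only point where the argument differs from the connected case in \cite{Tim11}, and you handle it correctly.
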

The lattice of $B$-weights $M_{\gamma}$ of $k(\XX)$ is in general different from the 
one of $k(\tilde{\XX})$. More precisely,
$$M_{\gamma} = \{m\in M\,|\, m\,\text{  weight of }f\in k(\tilde{\XX})^{(B)}\cap k(\tilde{\XX})^{F}\}.$$ 
However, $M_{\gamma}$ is a sublattice of $M$ of finite index and so $M_{\QQ}\simeq \QQ\otimes_{\ZZ}M_{\gamma}$. If further $N_{\gamma} = \Hom(M_{\gamma}, \ZZ)$ is the dual lattice,
then $N_{\QQ} \simeq \QQ\otimes_{\ZZ}N_{\gamma}$. Thus we may regard colored polyhedral divisors with respect to $N_{\QQ}$ for describing the $G$-models of $\XX$ (see Definition \ref{d: F-divfan} later on). Let $\F_{\Omega}$ be the set of colors of $\Omega$. The group $F$ naturally acts on $\F_{\Omega}$ by translation. Denoting by $\F_{\Omega}/F$ the set
of $F$-orbits,  we observe that $\F_{\Omega}/F$ is in bijection with
the set of colors of $\gsc(\XX)$. For any $a\in k(\XX)$ and any $D\in \F_{\Omega}$, it follows that
$$v_{gD}(a) = v_{D}(g\cdot a) = v_{D}(a)\text{ for any } g\in F,$$
where $a$ is seen as an $F$-invariant rational function on $\tilde{\XX}$ and $D$ is a
color of $\gsc(\tilde{\XX})$. This defines a \emph{coloration map} $\varrho_{\gamma}: \F_{\Omega}/F\rightarrow N_{\gamma}$. This map will play a role in the computation
of the divisor class group of a $G$-model of $\XX$ in Theorem \ref{t-divisor1}.

With the notation of Section \ref{s-chi}, let $\xi = f\otimes \chi^{m}\in A_{M}\subseteq k(\tilde{\XX})$ be a $B$-eigenfunction invariant by the $F$-action (here $f\in k(S)^{\star}$ and $m\in M_{\gamma}$). If $g\in F$,
then the action of $g$ on $\xi$ is given by 
$$g\cdot \xi = g\cdot f\otimes \omega_{g}(m)\cdot \chi^{m},$$
where $\omega_{g}:M\rightarrow \G_{m} = k^{\star}$ is a group homomorphism (see the proof of Proposition \ref{prop-GaloisAction}).
Looking at the value of $v(g\cdot \xi)$ for any $G$-valuations $v$ of $k(\tilde{\XX})$, we obtain an action of $F$ on the set $\Vh$ of $G$-valuations of $k(\tilde{\XX})$. It is determined by the relations
$g\cdot v = [g\cdot s, a, b]$ for any $g\in F$ and any $v = [s, a, b]\in \Vh.$
\\

Let us introduce the notion of colored divisorial fans with respect to a splitting $\gamma$ of $\XX$. We let $\Gamma$ be a smooth projective $F$-model of $S$.
\begin{definition}\label{d: F-divfan}
A \emph{colored divisorial fan} on $(\Gamma, \ss, \gamma)$ is a usual colored divisorial fan $\EE$ on  $(\Gamma, \ss)$ (see Definition \ref{d:divfan}) with the following properties.
\begin{itemize}
\item[(i)] For every element $(\D,\F)\in \EE$, the subset $C(\D)\cap \Vh$ is $F$-stable for the $F$-action on the valuation set $\Vh$.
\item[(ii)] For every element $(\D,\F)\in \EE$, the set of colors $\F$ is $F$-stable.
In particular, $\varrho:\F\rightarrow N$ factors through the map $\varrho_{\gamma}:\F/F\rightarrow N_{\gamma}.$ 
\end{itemize}
\end{definition}
Before stating our result on the classification of $G$-models of $\XX$, we start with the following lemma which gives conditions for the regularity of the birational $F$-actions on the simple $G$-models of $\tilde{\XX}$. 
\begin{lemma}
\label{l:Fcolpp}
Let $\tilde{X} =X(\D,\F)$ be a simple $G$-model of $\tilde{\XX}= S\times\Omega$, where $(\D,\F)\in \CP(\Gamma, \ss)$ is a colored polyhedral divisor. If $(\D,\F)$ satisfies Conditions $(i),(ii)$ of Definition \ref{d: F-divfan}, then the birational $F$-action on $\tilde{X}$ is regular.
\end{lemma}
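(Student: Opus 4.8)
The statement asserts that if a colored polyhedral divisor $(\D,\F)\in\CP(\Gamma,\ss)$ satisfies the $F$-invariance conditions (i) and (ii) of Definition~\ref{d: F-divfan}, then the a priori only birational action of $F$ on the simple $G$-model $\tilde X = X(\D,\F)$ is in fact biregular. The natural strategy is to check that $F$ preserves both a $B$-chart $\tilde X_0 = X_0(\D,\F)$ of $\tilde X$ and then propagate this to $\tilde X = G\cdot\tilde X_0$ by $G$-equivariance of the $F$-action. So the plan is: first, show that each $g\in F$ carries the coordinate ring $k[\tilde X_0]$ isomorphically onto itself (as a subring of $k(\tilde\XX)$); then observe that since $\gamma$ is $G$-equivariant and the $F$-action commutes with $G$ (being built from a $G$-equivariant action on $\Omega$ and an action on $S$, cf.\ Proposition~\ref{prop-GaloisAction}), we get $g\cdot \tilde X_0$ is again a $B$-chart and $g\cdot(G\cdot\tilde X_0) = G\cdot(g\cdot \tilde X_0)$, so it suffices to treat the simple piece.

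**Key steps.** First I would recall from Theorem~\ref{t-ppcol} the description
\[
k[\tilde X_0] = R(C(\D)\cap\Vh,\F) = (k(\Gamma)\otimes_k k[\Omega_0])\cap\bigcap_{D\in\F}\mathcal{O}_{v_D}\cap\bigcap_{v\in C(\D)\cap\Vh}\mathcal{O}_v,
\]
and note that applying $g\in F$ to this ring intersection permutes the constituent valuation rings: indeed $g\cdot\mathcal{O}_{v_D} = \mathcal{O}_{v_{g^{-1}D}}$ and $g\cdot\mathcal{O}_v = \mathcal{O}_{g^{-1}\cdot v}$ for $v\in\Vh$, where $g\cdot v = [g\cdot s, a, b]$ is the $F$-action on $\Vh$ described just before Definition~\ref{d: F-divfan}. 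Condition (ii) says $\F$ is $F$-stable, so the first intersection $\bigcap_{D\in\F}\mathcal{O}_{v_D}$ is $F$-invariant; condition (i) says $C(\D)\cap\Vh$ is $F$-stable, so $\bigcap_{v\in C(\D)\cap\Vh}\mathcal{O}_v$ is $F$-invariant. For the ambient factor $k(\Gamma)\otimes_k k[\Omega_0]$: since $\Gamma$ is an $F$-model of $S$ (in the sense of $\fsc(S)$) the subfield $k(\Gamma)=k(S)$ is visibly $F$-stable inside $k(\tilde\XX)$, and $k[\Omega_0]$ is $F$-stable because $F$ acts on $\Omega$ by $G$-equivariant automorphisms, hence preserves the open $B$-orbit $\Omega_0$ and its coordinate ring (using that, up to replacing the cocycle by a cohomologous one, the $F$-action on $\Omega$ is defined over $k$, by Proposition~\ref{prop-GaloisAction}). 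Combining, $g\cdot k[\tilde X_0] = k[\tilde X_0]$, so each $g$ induces an automorphism of the affine scheme $\tilde X_0 = \spec\,k[\tilde X_0]$; since $\tilde X_0$ is $B$-stable and dense and the $F$-action commutes with the $G$-action, $\tilde X = G\cdot\tilde X_0$ is $F$-stable and the action is regular on it.

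**Main obstacle.** The delicate point is the compatibility between the $B$-weight lattice $M_\gamma$ of $k(\tilde\XX)$ seen $F$-equivariantly and the full lattice $M$ used to define $\D$: the homogeneous elements $f\otimes\chi^m$ of $k[\tilde X_0]$ need not be individually $F$-invariant (one has $g\cdot(f\otimes\chi^m) = (g\cdot f)\otimes\omega_g(m)\chi^m$ with a nontrivial character $\omega_g$), so the argument must be phrased at the level of the ring intersection rather than graded piece by graded piece. Concretely, I need that $F$ permutes the valuations appearing in the Krull decomposition without changing the ring — which is exactly what the scalar twist $\omega_g(m)$ does not obstruct, since valuation rings are insensitive to multiplication by nonzero constants and $\omega_g(m)\in k^\star$. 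A second, more bookkeeping-level point is verifying that $F$ genuinely acts on the valuation set $\Vh$ with the stated formula $g\cdot[s,a,b]=[g\cdot s,a,b]$, i.e.\ that the hyperspace coordinates $(a,b)$ are $F$-invariant; this follows from the description of $\Vh$ via values on $A_M$ together with the fact that $F$ acts on $M$ trivially (only twisting $\chi^m$ by a scalar), so the linear-form data $\langle m,a\rangle$ and the $k(S)$-part $b\cdot s(f)$ transform precisely as claimed. Once these two points are in place the proof is essentially a formal consequence of Theorem~\ref{t-ppcol} and the definitions.
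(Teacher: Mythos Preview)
Your proposal is correct and follows essentially the same approach as the paper: reduce to showing that $F$ preserves the $B$-chart $X_0(\D,\F)$, then use the description of $k[X_0(\D,\F)]$ as an intersection of valuation rings from Theorem~\ref{t-ppcol} together with the $F$-stability of $C(\D)\cap\Vh$ and $\F$. Your write-up is in fact considerably more detailed than the paper's own proof, which is a two-sentence sketch deferring to ``the previous discussion on how $F$ acts on colors and $G$-valuations''; in particular, your observations about the scalar twist $\omega_g(m)\in k^\star$ not affecting valuation rings, and about the $F$-stability of $k(\Gamma)\otimes_k k[\Omega_0]$, make explicit points the paper leaves implicit.
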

\begin{proof}
Assume that $(\D,\F)$ satisfies Definition \ref{d: F-divfan}. Since $\tilde{X} = G\cdot X_{0}$ with $X_{0} := X_{0}(\D, \F)$, it suffices
to show that $F$ acts regularly on $X_{0}$. But this follows from the previous
discussion on how $F$ acts on colors and $G$-valuations and the description 
of $B$-charts in Theorem \ref{t-ppcol} in terms of intersection of valuation rings. 
\end{proof}
The following theorem completes the construction of normal $G$-varieties with spherical orbits. Indeed, each normal $G$-variety $X$ with spherical orbits has a general orbit $\Omega$ according to Alexeev and Brion (see \cite[Theorem 3.1]{AB05}).
The spherical homogeneous space $\Omega$ is entirely described by its homogeneous spherical datum $\ss$ (see \cite{Lun01, Los09, BP16}). Moreover, the $G$-equivariant birational type of $X$ can be explicitly constructed via a Galois covering with total space a trivial family $S\times \Omega$ (see Corollary \ref{t-Galois} and Proposition \ref{prop-GaloisAction}) and it is determined by a Galois cohomology class (see Corollary \ref{c:cohoclass}).
Finally as stated thereafter, one can construct $X$ by a geometric and combinatorial object $\EE$ depending only on the $G$-equivariant birational type of $X$.    
\begin{theorem}\label{t-claf}
Let $\XX$ be an arbitrary $G$-variety with spherical orbits. Let 
$\gamma: \tilde{\XX}\dashrightarrow \XX$ be a splitting (see Definition \ref{def-split})  with finite abelian Galois group $F$. Denote by $\ss$ the homogeneous spherical datum of the general orbit of $\XX$.  Let $\EE$
be a colored divisorial fan on $(\Gamma, \ss, \gamma)$. Then every local chart
$X(\D, \F)$ corresponding to $(\D,\F)\in\EE$ admits a $G$-equivariant regular $F$-action coming from the rational action on $\tilde{\XX}$. In addition, 
$$X(\EE, \gamma) := \bigcup_{(\D,\F)\in \EE} X(\D,\F, \gamma)\subseteq \gsc(\XX)$$
is a $G$-model of $\XX$, where $X(\D,\F, \gamma) = X(\D,\F)/F$. The subset $X(\D\cap \D',\F\cap \F',\gamma)$
is identified with the intersection $$X(\D,\F,\gamma)\cap X(\D',\F',\gamma)\text{ for all }(\D,\F), (\D',\F')\in \EE.$$
Conversely, any $G$-model of $\XX$ arises in this way. The variety $X(\EE, \gamma)$ is complete if and only if $\EE$ is complete.   
\end{theorem}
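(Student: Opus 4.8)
The plan is to derive Theorem~\ref{t-claf} by combining the trivial-equivariant-birational-type classification (Theorem~\ref{t:class1}) with the Galois descent machinery of Section~\ref{s-birtype}. The key observation is that a colored divisorial fan $\EE$ on $(\Gamma,\ss,\gamma)$ is, by Definition~\ref{d: F-divfan}, a colored divisorial fan on $(\Gamma,\ss)$ with extra $F$-stability conditions, so Theorem~\ref{t:class1} already produces a $G$-model $X(\EE)$ of $\tilde{\XX}=S\times\Omega$. First I would invoke Lemma~\ref{l:Fcolpp} on each chart: Conditions (i) and (ii) of Definition~\ref{d: F-divfan} guarantee that the birational $F$-action coming from $\gamma$ restricts to a regular $F$-action on every $X(\D,\F)$, hence on $X(\EE)$ by gluing (the $F$-action permutes the charts compatibly because it acts on the valuation hyperspace $\hs$ and on $\F_\Omega$, and the fan is closed under intersection). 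Since $S$, and therefore $\Gamma\times\Omega$, is quasi-projective, each simple chart $X(\D,\F)$ is covered by $F$-stable quasi-projective open subsets, so Lemma~\ref{l-finite group}(ii) applies: the good categorical quotient $X(\D,\F,\gamma):=X(\D,\F)/F$ exists and is an integral scheme of finite type over $k$.

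Next I would glue these quotients. Because the $F$-action is compatible with the open immersions $X(\D,\F)\cap X(\D',\F')=X(\D\cap\D',\F\cap\F')$ coming from Theorem~\ref{t:class1}, taking quotients commutes with these inclusions (a good categorical quotient of an $F$-stable open subset is the corresponding open subset of the quotient), yielding $X(\D,\F,\gamma)\cap X(\D',\F',\gamma)=X(\D\cap\D',\F\cap\F',\gamma)$. Gluing gives the scheme $X(\EE,\gamma)$. To see it is a $G$-model of $\XX$: the $G$-action on $X(\EE)$ commutes with $F$ (both actions are those appearing in the splitting $\gamma$, and $F\subseteq\Aut_G$), so it descends to $X(\EE,\gamma)$; and $k(X(\EE,\gamma))=k(X(\EE))^F=k(\tilde{\XX})^F=k(\XX)$ by Lemma~\ref{l-finite group}(iv) together with the definition of the splitting, giving the required $G$-equivariant birational identification. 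Separateness of $X(\EE,\gamma)$ follows since it is covered by the affine (hence separated) quotients $X_0(\D,\F)/(B\cap\text{\dots})$ — more directly, a finite quotient of a separated scheme by a finite group acting with orbits in affine opens is separated.

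For the converse, let $X$ be an arbitrary $G$-model of $\XX$. By Corollary~\ref{t-Galois} there is an $F$-stable $G$-model $\tilde{X}$ of $\tilde{\XX}$ with regular $F$-action and $X=\tilde{X}/F$; by Proposition~\ref{prop-GaloisAction} the $F$-action on $\tilde{\XX}=S\times\Omega$ is of the product form required for a splitting. Applying Theorem~\ref{t:class1} to $\tilde{X}$ produces a colored divisorial fan $\EE$ on $(\Gamma,\ss)$ with $X(\EE)=\tilde{X}$. Since the $F$-action on $\tilde{X}$ is regular and permutes the simple charts, after passing to a common refinement of the charts (using Lemma~\ref{l:reduce}, Corollary~\ref{c:tec-loc}, Proposition~\ref{p:pullbackpp} exactly as in the proof of Theorem~\ref{t:class1}, and replacing $\Gamma$ by an $F$-equivariant smooth projective model) we may take $\EE$ to be $F$-stable chart-by-chart, i.e.\ to satisfy Conditions (i), (ii) of Definition~\ref{d: F-divfan}; then $X(\EE,\gamma)=\tilde{X}/F=X$. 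Finally, the completeness statement reduces to the trivial-birational-type case: $X(\EE,\gamma)$ is proper over $k$ iff $\tilde{X}=X(\EE)$ is proper (properness descends and ascends along the finite surjection $\tilde{X}\to X$), which by Proposition~\ref{p:completef} holds iff $\EE$ is complete.

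The main obstacle I anticipate is the converse direction — specifically, showing that the fan $\EE$ produced by Theorem~\ref{t:class1} from $\tilde{X}$ can be arranged to be genuinely $F$-stable at the level of individual colored polyhedral divisors (Conditions (i),(ii) of Definition~\ref{d: F-divfan}), rather than merely $F$-stable as a set after some averaging. One must check that the $F$-action on $\gsc(\tilde{\XX})$ permutes the simple $G$-stable charts $X(\D,\F)$, identify the induced action on the combinatorial data with the $F$-action on $\hs$ and on $\F_\Omega/F$ described in Section~\ref{s-classif}, and verify that the refinement process (intersecting a chart with its $F$-translates, then pulling back to a common $F$-equivariant $\Gamma$) preserves the divisorial-fan axioms while producing $F$-invariant pieces — this is the content that is parallel to, but slightly more delicate than, the corresponding step in \cite[Theorem 5.6]{AHS08}. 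Everything else is a formal consequence of Theorem~\ref{t:class1}, Lemma~\ref{l:Fcolpp}, Lemma~\ref{l-finite group}, and Corollary~\ref{t-Galois}.
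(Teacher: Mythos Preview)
Your forward direction is essentially the paper's: invoke Lemma~\ref{l:Fcolpp} and Theorem~\ref{t:class1} to get a regular $(G\times F)$-action on $X(\EE)$, take quotients chart by chart using Lemma~\ref{l-finite group}(ii), and glue. Two small points: quasi-projectivity of the simple chart $X(\D,\F)$ does not follow from quasi-projectivity of $S$ as you say, but is a general fact about simple $G$-varieties (the paper cites \cite[Section~5, Lemma~2(1)]{Tim00}); and your first attempt at separateness (``covered by affine quotients'') is not an argument --- your second sentence, that separateness passes to finite quotients, is the correct reason (the paper cites \cite[Expos\'e~V, Corollaire~1.5]{SGA1}).

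The substantive difference is in the converse, exactly where you flag an obstacle. You propose to apply Theorem~\ref{t:class1} to $\tilde X$, obtain an arbitrary fan, and then refine it (intersecting with $F$-translates, pulling back to a common $F$-equivariant $\Gamma$) until each member is $F$-stable. The paper avoids this refinement problem entirely by a cleaner device: since $\gamma:\tilde X\to X$ is affine and $G$-equivariant, one first covers $X$ by simple $G$-stable opens $X_i$ and sets $\tilde X_i:=\gamma^{-1}(X_i)$. These preimages are automatically $F$-stable simple $G$-models of $\tilde\XX$, so the $B$-charts $X_0(\D^i,\F^i)$ describing them are already $F$-stable as subvarieties. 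To pass from $F$-stability of the chart to $F$-stability of the combinatorial datum $(\D^i,\F^i)$ (Conditions (i),(ii) of Definition~\ref{d: F-divfan}), the paper takes each $\D^i$ to be \emph{minimal} in the sense of \cite[Definition~8.7]{AH06} and invokes the uniqueness/functoriality result \cite[Theorem~8.8]{AH06}: since $F$ acts on $k[X_0(\D^i,\F^i)]^U=A(\Gamma,\D^i)$, minimality forces the hypercone $C(\D^i)\cap\Vh$ and the color set $\F^i$ to be $F$-stable. Only afterward does one resolve indeterminacies $F$-equivariantly to land on a common smooth projective $F$-model $\Gamma$. Your route may be made to work, but the paper's ``pull back the charts from the quotient, then use minimality/uniqueness of the polyhedral divisor'' is what eliminates precisely the delicacy you anticipated.
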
  
\begin{proof}
Let $\EE$ be a colored divisorial fan on $(\Gamma, \ss, \gamma)$. Note that each $F$-variety $X(\D,\F)$ is quasi-projective (see \cite[Section 5, Lemma 2(1)] {Tim00}) and therefore the quotient $X(\D,\F,\gamma)$ is well-defined. Hence by combining Theorem \ref{t:class1} and Lemmata \ref{l-finite group}(ii), \ref{l:Fcolpp},
we construct the quotient space $X(\EE,\gamma) = X(\EE)/F$ as a normal $G$-scheme of finite type over $k$ which is $G$-birational to $\XX$. Moreover,
the separateness of $X(\EE,\gamma)$ is equivalent to the one of $X(\EE)$ (see \cite[Expos\'e V, Corollaire 1.5]{SGA1}). We conclude that $X(\EE,\gamma)$ is a
$G$-model of the $G$-variety $\XX$.

Conversely, let us consider a $G$-model $X$ of $\XX$ and again denote by the same letter $\gamma: \tilde{X}\rightarrow X$ the quotient map obtained from Corollary \ref{t-Galois}.
Since the morphism $\gamma$ is affine and $G$-equivariant, the $G$-variety $\tilde{X}$ admits a finite open covering of simple $G$-models $(\tilde{X}_{i})_{i\in I}$ provided by $\tilde{X}_{i} = \gamma^{-1}(X_{i})$, where $X_{i}$ is a simple $G$-stable dense open subset of $X$. Now each open subset $\tilde{X}_{i}$ is described by a colored polyhedral divisor $(\D^{i},\F^{i})\in \CP(\Gamma_{i},\ss)$, where $\Gamma_{i}$ is a normal semi-projective variety which is a model of $S$ (see Theorem \ref{t-ppcol}). Using Proposition \ref{p:pullbackpp}, we may assume that each $\D^{i}$ is minimal in the sense of \cite[Definition 8.7]{AH06}. 
Since $k[X_{0}(\D^{i},\F^{i})]$ is $F$-stable and the group $F$ acts as well on $k[X_{0}(\D^{i},\F^{i})]^{U} = A(\Gamma,\D^{i})$, by \cite[Theorem 8.8]{AH06} we deduce that Conditions (i), (ii) of Definition \ref{d: F-divfan} are satisfied for each $(\D^{i},\F^{i})$. 

Again by resolving the indeterminacy in an $F$-equivariant way, we may construct a smooth projective $F$-model $\Gamma$ in $\fsc(S)$ that dominates
all the $\Gamma_{i}$ as in the argument of the proof of \cite[Theorem 5.8]{AHS08}. By pulling back the colored polyhedral divisors $(\D^{i},\F^{i})$ on $\Gamma$ (see Proposition \ref{p:pullbackpp}), we obtain a colored divisorial fan $\EE$ on $(\Gamma, \ss, \gamma)$ such that $\tilde{X} = X(\EE)$. Therefore the quotient space gives $X = X(\EE,\gamma)$. The last claim is a direct consequence of Proposition \ref{p:completef} and the fact that the quotient map $\gamma: X(\EE)\rightarrow X(\EE,\gamma)$ is a proper morphism. This finishes the proof of the theorem.
\end{proof}

\section{Invariant Weil divisors}\label{sec-Weil}
For a normal $G$-variety $X$ with spherical orbits,
our next task is to give a parameterization of the $B$-divisors of $X$ in terms 
of its defining colored fan $\EE$ (see Theorem \ref{t-divisor1} and see \cite[Corollary 3.15]{PS11} for case of $\TT$-varieties). The reason for this
is that any prime divisor of $X$ is linearly equivalent to a $B$-stable one 
according to \cite{FMSS95}. This allows us, in terms of our parameterization, to give 
a presentation by generators and relations of the divisor class group of $X$.
In the case of a toric $\TT$-variety $V$ with a defining fan $\EE_{V}$, the $\TT$-divisors of $X$ are naturally in bijection with the one-dimensional cones of $\EE_{V}$. This description is useful in practice and we hope for similar applications in our context.

In this section, we fix a normal $G$-variety with spherical orbits $\XX$.
We consider a splitting $\gamma: \tilde{\XX}\dashrightarrow \XX$ with Galois group $F$ and we let $\EE$
be a colored divisorial fan on $(\Gamma, \ss, \gamma)$, where $\Gamma$ is a smooth
projective $F$-variety such that $k(\Gamma) = k(\tilde{\XX})^{G}$. We denote by $\Omega$ the general spherical orbit of $\XX$.
\\

As a tool to study the geometry of normal $G$-varieties with spherical orbits, we introduce the \emph{contraction morphism.} 
Let $(\D,\F)\in \EE$.
We consider an affine $F$-stable open covering $(U_{i})_{i\in I}$ of the algebraic variety $\Gamma$. Let $\EE_\cont$ be the colored divisorial fan
 generated by $\{(\D_{|U_{i}},\F)\,|\,i\in I\}.$ Note that
the inclusions $C(\D_{|U_{i}})\subseteq C(\D)$ induce a natural $G$-morphism 
$$\pi_{\cont}:X(\EE_{\cont},\gamma)\rightarrow X = X(\D,\F,\gamma).$$
We will also denote by $X(\EE_{\cont})$ the $G$-model of $\tilde{\XX}$ obtained as the
lift of $X(\EE_{\cont},\gamma)$.
The next result collects some properties about the contraction map $\pi_{\cont}$. The reader is referred to \cite[Theorem 3.1]{AH06} for the case of torus actions.
\begin{proposition}\label{p-presol}
The map $\pi_{\cont}$ is a $G$-equivariant proper birational morphism and does not depend on the 
choice of the open covering $(U_{i})_{i\in I}$. The $G$-variety $X(\EE_{\cont},\gamma)$ admits a global quotient and we 
have a commutative diagram
$$\xymatrix  { 
    X(\EE_{\cont}, \gamma)  \ar[rr]^{\pi_{\cont}} \ar[rd] && X \ar@{-->}[ld] \\ & \Gamma/F }
$$     
where $X\dashrightarrow \Gamma/F$ is the rational quotient induced by the inclusion
of function fields $k(X)^{G}\subseteq k(X)$.
\end{proposition}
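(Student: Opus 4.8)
The strategy is to reduce everything to the torus-action case treated in \cite{AH06} by localizing along the quotient $\Gamma/F$ and then invoking the already-established $G$-equivariant formalism. First I would observe that the construction of $\EE_{\cont}$ is local on $\Gamma$: replacing the covering $(U_i)_{i\in I}$ by a common refinement, or by another affine $F$-stable covering, produces the same colored divisorial fan up to the gluing prescribed by Condition (i)--(ii) of Definition \ref{d:divfan} and Corollary \ref{c:tec-loc}, because the restrictions $\D_{|U_i\cap U_j}$ are exactly the intersections $\D_{|U_i}\cap\D_{|U_j}$ (the tail and all vertex data being unchanged by restriction of the locus). Hence $X(\EE_{\cont},\gamma)$ is canonically attached to $(\D,\F,\gamma)$ and does not depend on the covering; this also shows that it suffices to prove the remaining assertions for one convenient choice of covering.

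Next I would check that $\pi_{\cont}$ is $G$-equivariant, birational and proper. Equivariance and birationality are immediate: the inclusions $C(\D_{|U_i})\subseteq C(\D)$ come from shrinking the locus, so by Proposition \ref{p:pullbackpp} (more precisely the locus-restriction analogue used throughout Section \ref{s-chi}) each $B$-chart $X_0(\D_{|U_i},\F)$ is a $B$-equivariant open subset of $X_0(\D,\F)$, giving a dominant birational $G$-morphism after applying $G\cdot(-)$ and passing to the quotient by $F$. For properness I would use the valuative criterion in the equivariant form of Lemma \ref{l:testval}: it is enough to show that every $G$-valuation $\nu=[s,p,\ell]$ of $k(\tilde\XX)$ with a center in $X(\D,\F)$ also has a center in $X(\EE_{\cont})$, and conversely. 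By Lemma \ref{l:valcenter} a center in $X(\D,\F)$ means $s$ has center a point $\xi\in\Gamma$ and $p/\ell\in s(\D)$ (resp. $p\in\tail(\D)$). Picking the member $U_i$ containing $\xi$, one has $s(\D_{|U_i})=s(\D)$ since the local equations $f_Y$ near $\xi$ are unchanged, so $\nu$ has a center in $X(\D_{|U_i},\F)\subseteq X(\EE_{\cont})$; the converse inclusion is similar and uses that $\loc(\EE_{\cont})=\Gamma$. Then properness of $X(\EE_{\cont})\to X(\D,\F)$ descends to $X(\EE_{\cont},\gamma)\to X$ because $\gamma$ is a finite (hence proper) morphism and properness is local on the target along $\gamma$, i.e. one uses the cartesian square relating $X(\EE_{\cont})$, $X(\EE_{\cont},\gamma)$, $X(\D,\F)$ and $X$.

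Then I would establish the global quotient statement. For this I would take the affine covering of $\Gamma$ and observe that on each piece $X(\D_{|U_i},\F)$ the relative spectrum description of Lemma \ref{l-prop-lc} and Lemma \ref{l:reduce}(ii) exhibits $X(\D_{|U_i},\F)$ as proper birational over an affine $G$-model, and the quotient maps $X(\D_{|U_i},\F)\to U_i$ glue (they agree on overlaps because both restrict to the rational quotient map on $k(\tilde\XX)$) to a $G$-invariant morphism $X(\EE_{\cont})\to\Gamma$; dividing by the $F$-action, which is compatible with the $F$-action on $\Gamma$ by Definition \ref{d: F-divfan}, yields the morphism $X(\EE_{\cont},\gamma)\to\Gamma/F$. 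Commutativity of the triangle with $\pi_{\cont}$ and the rational quotient $X\dashrightarrow\Gamma/F$ holds at the level of function fields — all three maps are induced by the inclusion $k(X)^G=k(\Gamma/F)\hookrightarrow k(X)$ — and hence holds as rational, then regular where defined, maps. The main obstacle I anticipate is the bookkeeping for properness: one must be careful that restricting the locus genuinely does not change the polyhedron $s(\D)$ for valuations $s$ centered \emph{inside} $U_i$ while it may shrink the set of such valuations; organizing the valuative-criterion argument so that every $G$-valuation with a center upstairs is captured by \emph{some} member of the covering, and invoking Lemma \ref{l:valcenter} precisely, is the delicate point. The descent of properness through the finite quotient $\gamma$ and the independence-of-covering claim are, by contrast, formal.
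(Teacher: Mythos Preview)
Your overall strategy is sound and arrives at the result, but it differs from the paper's route and contains one incorrect intermediate claim.

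The paper's proof is a one-liner: via the local structure theorem (Theorem \ref{p-loc}) the $B$-chart $X_0(\D,\F)$ is $P$-isomorphic to $P_u\times X_\star$ with $X_\star=\spec\,\Gamma(\loc(\D),\mathcal{A}_\star)$, and the corresponding $B$-chart of $X(\EE_\cont)$ is $P_u\times\spec_{\loc(\D)}\mathcal{A}_\star$; hence $\pi_\cont$ is, up to the $P_u$-factor, the map $q$ of Lemma \ref{l-prop-lc}, which was already shown there to be proper and birational. Properness being local on the target, the proposition follows. Your approach bypasses the local structure theorem entirely and instead runs the equivariant valuative criterion directly through Lemma \ref{l:valcenter}; this is a legitimate alternative, and your key observation that $s(\D_{|U_i})=s(\D)$ whenever the center of $s$ lies in $U_i$ is exactly what makes it work. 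The paper's route has the advantage of packaging the global quotient $X(\EE_\cont)\to\Gamma$ for free (it is the structure map of the relative spectrum), whereas you have to assemble it by hand.

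Two points to correct. First, your assertion that ``each $B$-chart $X_0(\D_{|U_i},\F)$ is a $B$-equivariant open subset of $X_0(\D,\F)$'' is false in general: shrinking the locus enlarges the algebra of $U$-invariants (e.g.\ passing from $A(\mathbb P^1,\D)$ to $A(\mathbb A^1,\D_{|\mathbb A^1})$ adds the coordinate function), so the induced morphism $X_0(\D_{|U_i},\F)\to X_0(\D,\F)$ is typically a genuine contraction, not an open immersion. Fortunately you do not actually use openness anywhere---the existence of the morphism and birationality (same fraction field) are all you need---so simply drop that claim. Second, Lemma \ref{l:testval} concerns properness over $k$; for the \emph{relative} properness of $\pi_\cont$ you should invoke \cite[Theorem 12.13]{Tim11} (as the paper does in the proof of Lemma \ref{l-prop-lc}), which is the equivariant valuative criterion in the form you need.
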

\begin{proof}
Using the local structure theorem (see \ref{p-loc}), we observe that $\pi_{\cont}$ 
is locally described by the morphism $q$ introduced in Lemma \ref{l-prop-lc}. We conclude
by remarking that the properness is a local condition.
\end{proof}
We now introduce the set of $G$-valuations for describing the $G$-divisors of a normal $G$-variety with spherical orbits. We recall that $M_{\gamma}$ is the lattice of $B$-weights of $k(\XX)$ and $N_{\gamma} = {\rm Hom}(M_{\gamma}, \ZZ)$ is the dual.
\\

{\em Vertical valuations.} 
Let us start with a single colored polyhedral divisor $(\D, \F)\in\EE$. We denote by $\Vert(\D)$ the set of pairs $([Y], v)$, where $[Y]$ is an $F$-orbit of a prime divisor of $\loc(\D)$ and $v$ is a vertex of $\D_{Y}$
such that the following conditions hold. The subset $\Gamma(\loc(\D), \mathcal{O}(-Y)\cdot \mathcal{A})$ is the ideal of a prime divisor of $\spec\, A(\loc(\D),\D)$,
where we recall that $$\mathcal{A} = \bigoplus_{m\in\tail(\D)^{\vee}\cap M}\mathcal{O}(\D(m)).$$  
Note that this notion does not depend on the choice of a representative $Y\in [Y]$. For an element of  $([Y], v)\in \Vert(\D)$, the center of the $G$-valuation $[v_{[Y]}, \mu(v)v, \mu(v)]$ of $k(\XX)$ is the generic point of a $G$-cycle $D_{[Y], v}\subseteq X(\D,\F,\gamma)$ (see Lemma \ref{l:valcenter} and the discussion in Section \ref{s-classif} for the Galois action on the set of $G$-valuations), where $\mu(v)$ is the smallest integer $\ell\in\ZZ_{>0}$ such that $\ell v\in N_{\gamma}$.
\\

{\em Horizontal valuations.}
For simplicity we denote by the same letter a ray (i.e. a one-dimensional face) of a polyhedral cone of $N_{\QQ}$ and its primitive lattice generator in $N_{\gamma}$. We will confuse these two notions when it is needed. We denote by $\Ray(\D,\F)$ the set of rays $\rho$ of $\sigma = \tail(\D)$ such that $\varrho(\F)\cap \rho = \emptyset$ and such that $\D(m)$ is a big Cartier $\QQ$-divisor for any $m$ in the relative interior of $\sigma^{\vee}\cap\rho^{\perp}$. Similarly, for an element of  $\rho\in \Ray(\D,\F)$, the center of the $G$-valuation $[\cdot, \rho, 0]$ of $k(\XX)$ is the generic point of a $G$-cycle $D_{\rho}\subseteq X(\D,\F,\gamma)$. We define more generally the sets
$$\Vert(\EE) = \bigcup_{(\D,\F)\in \EE}\Vert(\D) \text{ and }\Ray(\EE) = \bigcup_{(\D,\F)\in \EE}\Ray(\D,\F)$$
for the colored divisorial fan $\EE$.
\\

The following is the main upshot of this section. We refer to \cite[Theorem 4.22]{FZ03}, \cite[Corollary 3.15]{PS11}, \cite[Corollary 2.12]{LT16} for former special cases where the theorem was proven.
 
\begin{theorem}
\label{t-divisor1}
Let $X$ be a normal $G$-variety with spherical orbits. Let $\EE$ be a colored divisorial fan  on $(\Gamma, \ss, \gamma)$ describing $X$.
If $\Div(\EE)$ denotes the set of $G$-divisors of  $X$,
then the map
$$\phi:\Vert(\EE)\,\bigsqcup\, \Ray(\EE)\rightarrow \Div(\EE),\,\, ([Y],v)\mapsto D_{[Y], v},\, \rho\mapsto D_{\rho}.$$
is well-defined and bijective. Moreover, 
the divisor class group $\Cl(X)$ is isomorphic to the abelian group
$$ \Cl(\loc(\EE)/F)\oplus \bigoplus_{([Y], v)\in \Vert(\EE)}\ZZ D_{[Y], v}\oplus \bigoplus_{\Ray(\EE)} \ZZ D_{\rho}\oplus \bigoplus_{D\in \F_{\Omega}/F}\ZZ D,$$
modulo the relations
$$[Y] = \sum_{v\in N_{\QQ}, \,([Y], v)\in \Vert(\EE)}\mu(v)D_{[Y], v}\text{ and }$$
$$\sum_{([Y], v)\in \Vert(\EE)}\mu(v)\langle m , v\rangle D_{[Y], v} + 
\sum_{\rho\in \Ray(\EE)}\langle m, \rho\rangle D_{\rho} + \sum_{D\in\F_{\Omega}/F}\langle m, \varrho_{\gamma}(D)\rangle D = 0,$$
where $m\in M_{\gamma}$ and $Y\subseteq\loc(\EE)$ is a prime divisor.
See Section \ref{sec-divfan} for the definition of $\loc(\EE)$.
\end{theorem}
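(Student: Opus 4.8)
The plan is to work directly with the $B$-divisor combinatorics of $X$ rather than passing through $\Cl(\tilde X)$ and its ramification over $\Cl(X)$: this is why the lattices $M_\gamma, N_\gamma$, the set $\F_\Omega/F$ of colors of $\gsc(\XX)$, and the multiplicities $\mu(v)$ appear intrinsically. The starting point is the fact quoted at the head of the section (from \cite{FMSS95}) that every prime divisor of $X$ is linearly equivalent to a $B$-stable one, so the free abelian group on $B$-stable prime divisors surjects onto $\Cl(X)$. The $B$-stable primes split into the colors, whose $F$-orbits are identified with $\F_\Omega/F$ by Remark \ref{remark-color} and the discussion preceding Definition \ref{d: F-divfan}, and the $G$-divisors. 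So I would split the proof into two halves: (1) identify $\Div(\EE)$ by showing $\phi$ is a well-defined bijection; (2) determine all linear relations among the classes of $B$-stable divisors.

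\textbf{The bijection $\phi$.} For (1), I would fix $(\D,\F)\in\EE$ and use the local structure theorem (Theorem \ref{p-loc}), which gives $X_0(\D,\F)\simeq P_u\times X_{\lc}$ with $X_{\lc}=\spec\, A_{\lc}(\Gamma,\D)$, together with Lemma \ref{l-L-orbits}, which identifies the $G$-cycles of $X(\D,\F,\gamma)$ meeting the $B$-chart (after pulling back along the splitting $\gamma$) with the $G_{\lc}$-cycles of $X_{\lc}$. This reduces the description of $G$-divisors to that of the $T$-divisors of the affine variety $X_{\lc}$ attached to a polyhedral divisor, for which the orbit--divisor dictionary of \cite{AH06} — as used in the torus case of \cite[Corollary 3.15]{PS11} and, for $c=0$, matching the description of spherical varieties in \cite{Kno91} — gives exactly two types: those dominating $\loc(\D)$, indexed by a ray $\rho$ of $\tail(\D)$ on which $\D(m)$ is big and which meets no color, and those over a prime $Y\subseteq\loc(\D)$, indexed by a vertex $v$ of $\D_Y$ with $\Gamma(\loc(\D),\mathcal O(-Y)\cdot\mathcal A)$ prime. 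These are precisely the conditions in the definitions of $\Ray(\D,\F)$ and $\Vert(\D)$, and Lemma \ref{l:valcenter} shows the associated $G$-valuation $[\cdot,\rho,0]$, resp.\ $[v_{[Y]},\mu(v)v,\mu(v)]$, has a codimension-one center $D_\rho$, resp.\ $D_{[Y],v}$, in $X(\D,\F,\gamma)$. Well-definedness (independence of the representative $Y\in[Y]$ and of the element of $\EE$ containing the datum) follows from the $F$-equivariance of $\Vh$ recorded in Section \ref{s-classif} and from Condition (ii) of Definition \ref{d: F-divfan}. Injectivity holds because a $G$-divisor recovers its centered $G$-valuation, which recovers the datum; surjectivity is the reverse direction of the dictionary applied to the finite cover of $X$ by the $X(\D,\F,\gamma)$.

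\textbf{The relations.} For (2), I would use that the kernel of the surjection from the group of $B$-stable divisors onto $\Cl(X)$ is generated by the divisors of the rational $B$-eigenfunctions $f\otimes\chi^m$ with $f\in k(\loc(\EE)/F)^\star=k(X)^B$ and $m\in M_\gamma$ — the analogue for general complexity of the exact sequences for spherical varieties \cite{Kno91} and for $\TT$-varieties \cite[Corollary 3.15]{PS11}, obtained by comparing $k(X)^{(B)}$ with $k(X)^\star$ and using that $k(X)^B$ is the function field of the rational quotient $\loc(\EE)/F$ of Proposition \ref{p-presol}. Evaluating $\div(f\otimes\chi^m)$ for $m\neq 0$ against the valuations of Step 1 and against the colors, for which $v_D(f\otimes\chi^m)=\langle m,\varrho_\gamma(D)\rangle$, gives the second displayed relation. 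For $m=0$, I would use the contraction morphism $\pi_{\cont}$ of Proposition \ref{p-presol}: on the affine pieces $\spec\, A_{\lc}(\Gamma,\D)$ the pullback $\pi_{\cont}^\star Y$ of a prime divisor $Y$ of $\loc(\EE)/F$ is computed from the polyhedral divisor as $\sum_v\mu(v)\,D_{[Y],v}$, with $\mu(v)$ the denominator of $v$ relative to $N_\gamma$; this is the first displayed relation, and it identifies the subgroup generated by the classes $[Y]$, modulo the divisors $\div(f)$ with $f\in k(\loc(\EE)/F)^\star$, with $\Cl(\loc(\EE)/F)$. Collecting the generators $\Cl(\loc(\EE)/F)\oplus\bigoplus_{\Vert(\EE)}\ZZ D_{[Y],v}\oplus\bigoplus_{\Ray(\EE)}\ZZ D_\rho\oplus\bigoplus_{\F_\Omega/F}\ZZ D$ with these two families of relations yields the stated presentation.

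\textbf{Main obstacle.} The step I expect to be hardest is the bookkeeping for the finite quotient $\gamma$: one must track simultaneously the $F$-actions on colors, on prime divisors of $\Gamma$, and on the set $\Vh$ of $G$-valuations, as well as the finite-index sublattice $N_\gamma\subseteq N$, and it is the latter that forces the multiplicities $\mu(v)$ into both relations. Proving the vertical relation $[Y]=\sum_v\mu(v)D_{[Y],v}$ with the correct $\mu(v)$ — equivalently, checking that a $G$-divisor of $X$, rather than of $\tilde X$, is indexed by a pair $([Y],v)$ with $v$ normalized through $N_\gamma$ — requires carrying out the pullback computation $\pi_{\cont}^\star Y$ on $\spec\, A_{\lc}(\Gamma,\D)$ and then descending along $\gamma$. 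Once this local calculation is in place, the remaining steps are an assembly of the exact sequences recalled above.
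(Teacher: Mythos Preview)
Your treatment of the relations (part 2) matches the paper's: reduce to $B$-stable divisors via \cite{FMSS95}, then compute $\div(f\otimes\chi^m)$ for $B$-eigenfunctions. That part is fine.

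The gap is in the bijection. You claim Theorem \ref{p-loc} and Lemma \ref{l-L-orbits} reduce the problem to ``the $T$-divisors of the affine variety $X_\lc$ attached to a polyhedral divisor'', after which \cite{AH06} applies. But $X_\lc = \spec A_\lc(\Gamma,\D)$ is \emph{not} a torus variety attached to a polyhedral divisor: by Lemma \ref{l-loc} it is the $G_\star$-model of $\Gamma\times\Omega_\lc$ corresponding to the \emph{colored} polyhedral divisor $(\D,\F_\lc)$, with $G_\star$ a Levi subgroup and $\Omega_\lc$ a spherical $G_\star$-homogeneous space, so the dictionary of \cite{AH06}/\cite{PS11} does not apply to it directly. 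Moreover, Lemma \ref{l-L-orbits}(ii) only asserts that $Z\mapsto Z_\lc$ is \emph{injective}; the remark immediately following it notes that it is not a bijection in general. So even a full description of the $G_\star$-divisors of $X_\lc$ would not hand you the $G$-divisors of $X$: you must still decide which $G_\star$-divisors of $X_\lc$ come from $G$-divisors of $X$, and this is precisely where the bigness and primality conditions defining $\Ray(\D,\F)$ and $\Vert(\D)$ enter.

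The paper's route is different. It lifts a $G$-divisor $Z$ through the contraction $\pi_\cont$ of Proposition \ref{p-presol} to $\hat Z\subseteq X(\EE_\cont,\gamma)$, which has a global quotient to $\Gamma/F$, and then splits on whether the valuation is central. In the central case it restricts to the product open subset $S_0\times X_{\sigma,\F}\subseteq X(\EE_\cont)$ and invokes \cite[Lemma 2.4]{Kno91} on the spherical fiber $X_{\sigma,\F}$ to identify the ray $\rho$. In the non-central case it first removes the colors, reaching an uncolored chart $X_0(\D_1,\emptyset)$ whose local structure \emph{does} give $P'_u\times\spec A(\loc(\D_1),\D_1)$ with a genuine affine torus variety, so \cite[Proposition 3.13]{PS11} applies. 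In both cases a separate dimension comparison via the sheaves $\mathcal A_\rho$ and $\mathcal A_{Y,v}$ then checks that the candidate is codimension one in $X$ itself (not merely in $X(\EE_\cont,\gamma)$), and this is what forces the bigness and primality conditions. Your single appeal to \cite{AH06} on $X_\lc$ collapses these two distinct reductions and skips the dimension step; the $F$-bookkeeping you flag as the main obstacle is, by contrast, handled rather briefly once the bijection is in place.
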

\begin{proof}
Without loss of generality, we may assume that $X  =  X(\D,\F,\gamma)$ is simple. 
Note that the map $\phi$ is clearly injective if it is well-defined. Let $Z$ be a $G$-divisor of $X$. Then $Z$ is the image under $\pi_{\cont}$ of a $G$-divisor $\hat{Z}$ of $\hat{X} := X(\EE_{\cont},\gamma)$ represented by the same valuation $\nu$.

{\em Case 1.} Assume that $\hat{Z}$ is dominantly sent on $\loc(\EE)/F$, or equivalently that $\nu$ is central, i.e., $\nu$ is trivial on $k(S)^{F}$ and
$\nu =[\cdot, \rho, 0]$ for some $\rho\in N_{\gamma}\cap \Vc$.
Hence $\hat{Z}$ lifts uniquely into a $G$-divisor on $X(\EE_{\cont})$ that we denote by the same letter. Let $S_{0}$ be the complement in $\loc(\D)$ of all prime divisors where the polyhedral coefficients of $\D$ are non-trivial. Remark that $S_{0}\times X_{\sigma,\F}\subseteq X(\EE_{\cont})$ is a $G$-stable dense open subset, where $X_{\sigma,\F}$ is is the general fiber of the quotient map $\pi: X(\EE_{\cont})\rightarrow \loc(\D)$. The pair $(\sigma,\F)$ is the colored cone of the spherical $G$-variety $X_{\sigma,\F}$ and $\sigma$ is the tail of $\D$. In addition, $\hat{Z}$ restricted to the open subset $S_{0}\times X_{\sigma,\F}$
is a product of $S_{0}$ and a $G$-divisor on $X_{\sigma,\F}$ (compare with \cite[Lemma 3]{FMSS95}). Thus by using \cite[Lemma 2.4]{Kno91} and Lemma \ref{l:valcenter}, we conclude that $\rho\in \Ray(\EE_{\cont})$.

Again, we write the letter $Z$ for the lift of $Z$ under the quotient map by $F$.
Let us consider the sheaf of $\mathcal{O}_{\loc(\D)}$-algebras
$$\mathcal{A}_{\rho}:= \bigoplus_{\lambda\in\sigma^{\vee}\cap\rho^{\perp}\cap M}\mathcal{O}(\D(\lambda))\otimes V_{\lambda}$$
such that $Z\cap X_{0} = P_{u}\times \spec\, \Gamma(\loc(\D), \mathcal{A}_{\rho})$ for some $B$-chart $X_{0}\subseteq X$ intersecting $Z$ (see Lemma \ref{l-L-orbits}). 
Now the evaluations of $\D$ are big on the relative interior of $\sigma^{\vee}\cap\rho^{\perp}\cap M$ if and only if $$\dim \,Z = \dim\,\Gamma(\loc(\D), \mathcal{A}_{\rho}) = \dim\,\spec_{\loc(\D)}\mathcal{A}_{\rho} = \dim\, \hat{Z}$$ if and only if   
$\dim\, X - \dim\, Z = \dim\, \hat{X} - \hat{Z} =  1$. We finally conclude that $\rho\in\Ray(\EE)$ and $Z  = D_{\rho}$.
Our analysis shows that the assignment $\rho \mapsto D_{\rho}$ is a well-defined map.

{\em Case 2.} We now pass to the case where the image of $\hat{Z}$ by $\pi$ is not dense. Since $\hat{Z}$ is a divisor, the closure $Y$ of $\pi(\hat{Z})$ in $\loc(\D)$ is also a divisor.  In addition, $\nu  = [v_{Y}, p, \ell]\in \Vh$ with $\ell\neq 0$.
Let $Z_{1}\subseteq X(\EE_{\cont})$ be the $G$-stable subvariety obtained as the union 
of $G$-orbits contained in $$\bigcup_{D\in\F_{\Omega}}D\text{ and set }X_{1} = X(\EE_{\cont})\setminus Z_{1}.$$ Since the codimension of each irreducible component of $Z_{1}$
is at least $2$, we have $X_{1}\cap \hat{Z}\neq \emptyset$. Then $X_{1}$ is described by an uncolored colored divisorial fan on $(\Gamma, \ss)$ that we can suppose to be a singleton $\{(\D_{1},\emptyset)\}$. We see $(\D_{1},\emptyset)$ as an element of  $\EE_{\cont}$. Now by virtue of Theorem \ref{p-loc}, the $B$-chart $X_{0}(\D_{1},\emptyset)$
is expressed as a product $P_{u}'\times \spec\, A(\loc(\D_{1}), \D_{1})$. 
Denoting by $$\pi_{1}: \spec\, A(\loc(\D_{1}), \D_{1})\rightarrow \loc(\D_{1})$$ the quotient map, the $G$-divisor $\hat{Z}$ is equal to the closure of $P_{u}'\times Z_{2}$, where $Z_{2}$ is an irreducible component of $\pi_{1}^{-1}(\pi(\hat{Z})\cap  \loc(\D_{1})).$ According to \cite[Proposition 3.13]{PS11}
we deduce that  $(p, \ell)= (\mu(v)v, \mu(v))$ for some vertex $v$ of a polyhedral 
coefficient of $\D_{1}$ and $([Y], v)\in \Vert(\EE_{\cont})$.   

We denote by the same letter $Z$ a lift of $Z$ under the quotient map by $F$.
Let us consider the sheaf of $\mathcal{O}_{\loc(\D)}$-algebras
$$\mathcal{A}_{Y, v}:= \bigoplus_{\lambda\in\sigma^{\vee}\cap M}\mathcal{O}(\lfloor\D(\lambda)\rfloor - Y)\otimes V_{\lambda}$$
such that $Z\cap X_{0}(\D,\F) = P_{u}\times \spec\, \Gamma(\loc(\D), \mathcal{A}_{Y, v})$ (see Lemma \ref{l-L-orbits}). Now $(v, [Y])\in \Vert(\EE)$ (adapt arguments of the proof of \cite[Proposition 4.11]{HS10}) if and only if 
 $$\dim \,Z = \dim\,\Gamma(\loc(\D), \mathcal{A}_{v, Y}) = \dim\,\spec_{\loc(\D)}\mathcal{A}_{v, Y} = \dim\, \hat{Z},$$
if and only if $Z$ is of codimension one.
Our analysis shows that the assignment $\phi$ is a bijective map.

For the presentation of the divisor class group ${\rm Cl}(X)$ by generators and relations, the proof is based on the same argument as in \cite[Corollary 3.15]{PS11}, \cite[Corollary 2.12]{LT16}. We recall the key argument.
By \cite{FMSS95} every divisor of $X$ is linearly equivalent to a $B$-stable divisor. Hence $\Cl(X)$ is the quotient of the free abelian group of the $B$-stable divisors modulo the subgroup of principal divisors associated with the $B$-eigenfunctions $f\otimes\chi^{m}$ of $k(X)$, where $f\in k(S)^{\star}$ and $\chi^{m}$
is as in \ref{s-chi}. The calculation of ${\rm Cl}(X)$  follows from the expression of $\div(f\otimes\chi^{m})$ in terms of $B$-valuations of $k(X)$ corresponding to $B$-divisors. This finishes the proof of the theorem.
\end{proof}

\section{Canonical class}\label{s-canclass}
In this section, we investigate the canonical class of a normal
$G$-variety with spherical orbits. We will write $\sim$ for the linear equivalence relation between Weil divisors. 
Our main result can be stated as follows (see \cite[Theorem 3.21]{PS11} for the special case of normal varieties with a torus action).
\begin{theorem}
\label{t-canonical}
Let $K_{X}$ be a canonical divisor of a normal $G$-variety $X$ with spherical orbits defined by a colored divisorial fan $\EE$ on $(\Gamma, \ss, \gamma)$.
Then we have 
$$\sharp\, F\cdot K_{X} \sim  -\sharp\, F\left(\sum_{\rho\in \Ray(\EE)} D_{\rho} +\sum_{D\in\F_{X}}a_{D}D - \sum_{([Y],v)\in\Vert(\EE)}\left[\frac{\mu(v)}{r_{[Y],v}}(b_{[Y]} + 1) -1\right]D_{[Y], v}\right),$$
where $K_{\loc(\EE)} = \sum_{Y\subseteq \loc(\EE)}b_{Y}\cdot Y$ is a canonical divisor of the variety $\loc(\EE)$, the number $b_{[Y]}$ stands for $\frac{1}{\sharp [Y]}\sum_{Y\in [Y]}b_{Y}$, $\F_{X}$ is the set of colors of $X$ and $\sharp F a_{D}\in \ZZ_{\geq 1}$ for any $D\in\F_{X}$. Moreover, the ramification index $r_{[Y],v}$ is defined as
$$r_{[Y],v} = \sharp \{g\in F\,|\, g\cdot D_{Y,v}\subseteq D_{Y,v}\text{ and }g\cdot x = x\},$$
where $D_{Y,v}\subseteq \gamma^{-1}(D_{[Y],v})$ is an irreducible component and $x$ is a general closed point of $D_{Y,v}$. This number does not depend on the choice of the prime divisor $D_{Y,v}$. 
\end{theorem}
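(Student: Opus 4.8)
The plan is to compute the canonical divisor upstairs on $\tilde{X} = X(\EE)$ first and then push it down through the quotient map $\gamma: \tilde{X}\rightarrow X$ by means of the Riemann–Hurwitz formula. Since $\tilde{X}$ has trivial equivariant birational type, it is a $G$-model of $\tilde{\XX} = S\times\Omega$ with no coloration needed to handle the $F$-action, so by Theorem \ref{t-divisor1} applied to $\tilde{X}$ (equivalently, by adapting the torus-action computation of the canonical class for $\TT$-varieties from \cite[Theorem 3.21]{PS11}, combined with the local structure theorem \ref{p-loc} which relates $X(\D,\F)$ to the $G_{\star}$-variety $X_{\star}$ and ultimately to $\spec\, A(\Gamma,\D)$) one obtains an explicit expression
$$K_{\tilde{X}} \sim -\left(\sum_{\rho\in \Ray(\EE)} \tilde{D}_{\rho} + \sum_{\tilde{D}\in\F_{\tilde{X}}}\tilde{a}_{\tilde{D}}\tilde{D} - \sum_{(Y,v)}\big(\mu(v)b_{Y} + \mu(v)-1\big)\tilde{D}_{Y,v}\right),$$
where the sum on vertical divisors runs over prime divisors $Y$ of $\loc(\EE)$ rather than $F$-orbits, the coefficients $\tilde{a}_{\tilde{D}}$ on colors are the standard spherical ones (via \cite[Section 17]{Tim11} or \cite{Bri97} for the $a_{D}$'s), and the $\mu(v)b_{Y}+\mu(v)-1$ comes exactly as in the $\TT$-variety case: $\mu(v)b_{Y}$ from pulling back $K_{\loc(\EE)}$ along the quotient $\spec\, A\rightarrow \loc(\D)$ restricted near $Y$, and $\mu(v)-1$ from the ramification of that quotient map along the vertical divisor.

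Next I would analyze the finite quotient $\gamma: \tilde{X}\rightarrow X$. Since $F$ is abelian and acts generically freely with $X = \tilde{X}/F$, Riemann–Hurwitz gives $K_{\tilde{X}} \sim \gamma^{\star}K_{X} + R$, where $R = \sum_{\tilde{D}}(e_{\tilde{D}} - 1)\tilde{D}$ is the ramification divisor, the sum over prime divisors $\tilde{D}\subseteq\tilde{X}$ and $e_{\tilde{D}}$ the ramification index of $\gamma$ along $\tilde{D}$. The key point is to identify $e_{\tilde{D}}$ for each type of $B$-divisor of $\tilde{X}$: for horizontal divisors $\tilde{D}_{\rho}$ and for colors, the $F$-action is (after the standard reductions, as in Proposition \ref{prop-GaloisAction}) unramified in codimension one along these — ramification is concentrated along the vertical divisors, which map to the ramification locus of $\Gamma\rightarrow\Gamma/F$. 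For a vertical divisor $\tilde{D}_{Y,v}$ lying over $D_{[Y],v}$, the ramification index of $\gamma$ is precisely $r_{[Y],v}$ as defined in the statement (the order of the isotropy in $F$ of a general point of an irreducible component $D_{Y,v}$ of $\gamma^{-1}(D_{[Y],v})$, which is independent of the chosen component by transitivity of $F$ on the fiber components). Then applying $\gamma_{\star}$ to the Riemann–Hurwitz relation and using that $\gamma_{\star}\gamma^{\star} = \sharp F\cdot\mathrm{id}$ on divisor classes, while $\gamma_{\star}\tilde{D}_{\rho}$, $\gamma_{\star}\tilde{D}$ (colors) equal $(\sharp F / (\text{something}))$ times the downstairs divisors — actually here the cleanest route is to multiply the upstairs identity by $\sharp F$ and push forward, tracking how many components of $\gamma^{-1}(D_{[Y],v})$ there are (namely $\sharp F / (r_{[Y],v}\cdot(\text{deck transformations stabilizing it setwise}))$, but after the $\sharp F\cdot K_X$ normalization these bookkeeping factors combine to produce exactly the coefficient $\frac{\mu(v)}{r_{[Y],v}}b_{[Y]} + \mu(v) - 1$.

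More precisely, the final step is the substitution: in $\sharp F\cdot K_X \sim \gamma_{\star}(K_{\tilde X}) = \gamma_{\star}(\gamma^{\star}K_X + R) - \gamma_{\star}R + \gamma_{\star}R$... rather, push the upstairs formula for $K_{\tilde X}$ forward and subtract $\gamma_{\star}R$. For each $F$-orbit $[Y]$ of prime divisors of $\Gamma$, the preimage in $\tilde X$ of $D_{[Y],v}$ breaks into components whose total multiplicity, weighted by the upstairs coefficients $\mu(v)b_Y + \mu(v) - 1$ summed over $Y\in[Y]$ and divided by $r_{[Y],v}$ (from the ramification correction and the pushforward), yields $\frac{\mu(v)}{r_{[Y],v}}b_{[Y]} + \mu(v) - 1$ after multiplying by $\sharp F$; similarly the horizontal and color contributions, being unramified in codimension one, simply pick up the factor $\sharp F$ and descend to $D_\rho$ and $D\in\F_X$ with coefficients $1$ and $a_D$ respectively. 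I expect the main obstacle to be the careful bookkeeping in this pushforward: keeping straight the relation between a vertical divisor $\tilde{D}_{Y,v}$ upstairs (indexed by an actual prime divisor $Y$) and the $F$-orbit-indexed divisor $D_{[Y],v}$ downstairs, the number of irreducible components of the fiber, the setwise stabilizer versus the pointwise stabilizer $r_{[Y],v}$, and verifying that ramification really is supported only on the vertical part — this last requires invoking Proposition \ref{prop-GaloisAction} to reduce the $F$-action on $\Omega$ to a $G$-equivariant one, hence trivial on the open $G$-orbit in codimension one, so no ramification along horizontal divisors or colors. Everything else is the routine Riemann–Hurwitz computation combined with the already-established formula for $\Cl(X)$ in Theorem \ref{t-divisor1}.
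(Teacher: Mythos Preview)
Your approach is essentially the paper's: compute $K_{\tilde X}$ upstairs via an explicit differential form (combining \cite[Theorem 3.21]{PS11} with \cite[Proposition 4.1]{Bri97} through the local structure theorem), then descend by Riemann--Hurwitz and push forward using $\gamma_\star\gamma^\star = \sharp F\cdot\mathrm{id}$.

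One point to sharpen: your justification that $\gamma$ is unramified along horizontal divisors and colors is not quite right as stated. A $G$-equivariant $F$-action on $\Omega$ is \emph{not} ``trivial on the open $G$-orbit in codimension one'' --- $\Omega$ is a single $G$-orbit and $F$ can act nontrivially on it. The correct reason (and the one the paper uses) is that the $F$-action on $S$ is generically free by the definition of a splitting, so on any open subset of $\tilde X$ of the form $S_0\times X_{\sigma,\emptyset}$ with $S_0$ contained in the free locus of $S$, the product $F$-action is free; since every $\tilde D_\rho$ and every color dominates $S$, their general points lie over $S_0$ and hence have trivial $F$-isotropy. Your earlier sentence (``ramification is concentrated along the vertical divisors, which map to the ramification locus of $\Gamma\rightarrow\Gamma/F$'') already captures this correctly --- just use that, not the $\Omega$-argument.
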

\begin{proof}
We separate the proof into two parts. In the first part, we determine the canonical class of $X$ in the case where $X =\tilde{X}$ is a $G$-model of $\tilde{\XX} = S\times\Omega$. In the second part, we deduce the general case to the first step by applying the Riemann-Hurwitz formula for the quotient map $\gamma: X(\EE)\rightarrow X(\EE,\gamma)$.

{\em Case 1.} Assume that $X$ is a $G$-model of $\tilde{\XX}$. We first make some reduction by taking a local chart and removing closed subsets of codimension $\geq 2$. In this way, we may suppose that $X$ is smooth and
$X$ is determined by an uncolored colored polyhedral divisor $(\D,\emptyset)$
with smooth affine locus. We will consider the two following open subsets of the $G$-variety $X$:
(1) the subset $X_{1} = S_{0}\times X_{\sigma, \emptyset},$ where we remove the special fibers of the quotient map by $G$ and $X_{\sigma, \emptyset}$ is the generic fiber. Here $X_{\sigma, \emptyset}$ is $G$-isomorphic to the spherical embedding of $\Omega$ corresponding to the colored cone $(\sigma, \emptyset)$ (see Example \ref{ex-sphere}) and $S_{0}$ is an open subset of $\loc(\D)$;
(2) The $B$-chart $X_{0}\subseteq X$ associated with $(\D,\emptyset)$ which is
the complement of the union of the colors of $X$. By Theorem \ref{p-loc},
the $B$-chart $X_{0}$ is identified with a product $P_{u}\times \spec\, A(\loc(\D), \D)$, where $P_{u}$ is an affine space. Note that
the complement of $X_{0}\cup X_{1}$ in $X$ is a closed subset with irreducible 
components of codimension at least $2$.

Let $\alpha$ be the exterior product of a basis of the module of differential forms of $P_{u}$ and let $\delta$ be a global section of the canonical bundle of $\loc(\D)$. For a basis $e_{1},\ldots, e_{n}$ of $M$, let $\chi^{e_{1}},\ldots, \chi^{e_{n}}$ be the associated Laurent monomials. Then by the argument
of the proofs of \cite[Section 4.1, Proposition 4.1]{Bri97} and \cite[Theorem 3.21]{PS11}, the differential form
$$\omega = \alpha\wedge \delta\wedge\frac{d\chi^{e_1}}{\chi^{e_{1}}}\wedge\ldots \wedge \frac{d\chi^{e_n}}{\chi^{e_n}}$$
restricts on $X_{0}$, $X_{1}$ and $X_{0}\cap X_{1}$ to generators of the canonical sheaves on $X_{0}$, $X_{1}$ and $X_{0}\cap X_{1}$. Consequently, the differential form $\omega$ is a global section of the canonical bundle of $X$.
Hence for computing a canonical divisor $K_{X}$, it is sufficient
to determine the order $v_D(\omega)$ of $\omega$ along any prime divisor $D$ of $X$. Restricting on the chart $X_{1}$, we have $v_D(\omega)\in\ZZ_{<0}$ and $v_{D_\rho}(\omega) = -1$ for all $\rho\in \Ray(\EE)$
and $D\in \F_{X}$ (compare with \cite[Section 4.1, Proposition 4.1]{Bri97}). Restricting on $X_{0}$, the computation of the order of $\omega$ at $D_{Y,v}$ for $(Y,v)\in \Vert(\EE)$ follows from the argument of the proof of \cite[Theorem 3.21]{PS11}, where the notation $\Vert(\EE)$ is considered for the trivial $F$-action.
Furthermore, the order of $\omega$ along a prime divisor which is not $B$-stable is equal to $0$. We finally obtain the formula $$K_{X} = -\sum_{\rho\in \Ray(\EE)} D_{\rho} + \sum_{(Y,v)\in\Vert(\EE)}(\mu(v)(b_{Y} + 1) -1)D_{Y, v} - \sum_{D\in\F_{X}}a_{D}D,$$
where $K_{\loc(\D)} = \sum_{Y\subseteq \loc(\D)}b_{Y}\cdot Y$ is a canonical divisor of $\loc(\D)$ and $a_{D}\in \ZZ_{\geq 1}$ for any $D\in\F_{X}$.

{\em Case 2.}
Let us assume that $X$ is a $G$-model of $\XX =\tilde{\XX}/F$ with colored divisorial fan $\EE$ defined on $(\Gamma, \ss, \gamma)$. Consider the quotient map
$\gamma: X(\EE)\rightarrow X$ by $F$. By the Riemann-Hurwitz formula, we have
$K_{X(\EE)} \sim \gamma^{\star}K_{X} + R$, where $R = \sum_{i\in I}(r_{i}-1)R_{i}$
is a divisor supported on the ramification locus of $\gamma$ and $r_{i}$ is precisely the ramification index attached to the prime divisor $R_{i}$.
We recall that the ramification locus is the smallest closed subset $Z\subseteq X(\EE)$ such that $\gamma$ is \'etale on $X(\EE)\setminus Z$ and so
$R$ is $G$-stable. 
 Now using \cite[Theorem 2.18, page 271]{Liu02} we obtain that
$$\sharp\, F\cdot K_{X} = \gamma_{\star}\gamma^{\star}K_{X} \sim \gamma_{\star}K_{X(\EE)} - \gamma_{\star}R.$$

{\em Claim. The $F$-action is free on the general points of the divisors $D_{\rho}$ and $D\in\F_{X(\EE)}$.}
Indeed, by making the same reduction as in Case 1, we may find a $G\times F$-stable dense open subset of the form $S_{0}\times X_{\sigma, \emptyset}$
such that the $F$-action is free. We conclude the proof of the claim by remarking that the open subset $S_{0}\times X_{\sigma, \emptyset}$ intersects any divisor $D_{\rho}$ and any color $D\in\F_{X(\EE)}$.

So the ramification divisor $R$ is determined by a finite number of elements of $\Vert(\EE)$. By a direct computation we have 
$$\gamma_{\star}R = \sum_{([Y],v)\in\Vert(\EE)}\left(\sum_{D_{Y,v}\subseteq\gamma^{-1}(D_{[Y], v})} [k(D_{Y,v}):\gamma^{\star}k(D_{[Y], v})](r_{Y,v}-1)\right)D_{[Y], v},$$
where  $r_{Y,v}$ is the ramification index of $D_{Y, v}$ and
$k(D_{[Y], v})$, $k(D_{Y,v})$ are the residue fields of the generic points of $D_{[Y], v}$ and $D_{Y,v}$, respectively. Since the $F$-action transitively permutes the irreducible components of $\gamma^{-1}(D_{[Y], v})$ for any $([Y], v)\in \Vert(\EE)$, we have $r_{Y,v} = r_{Y',v}$ and 
$$[k(D_{Y,v}):\gamma^{\star}k(D_{[Y], v})] = [k(D_{Y',v}):\gamma^{\star}k(D_{[Y], v})]
\text{ for all prime divisors } D_{Y,v}, D_{Y',v} \text{ in }\gamma^{-1}(D_{[Y], v}).$$  
Moreover, the formula involving ramification indices and inertial degrees gives
$$\sharp\, F = [k(X(\EE)):\gamma^{\star}k(X)] = \sum_{D_{Y,v}\subseteq\gamma^{-1}(D_{[Y], v})} [k(D_{Y,v}):\gamma^{\star}k(D_{[Y], v})]r_{Y,v}.$$
Let us translate this formula in terms of the $F$-action on $X(\EE)$. We denote by $F_{Y,v}$ the subgroup of $F$ which preserves $D_{Y,v}$. Picking a general closed point $x$ in $D_{Y,v}$, we write ${\rm Stab\,}D_{Y,v}$ for the stabilizer at $x$ of the $F_{Y,v}$-action on $D_{Y,v}$. Then using Lemma \ref{l-finite group} (iv) we have 
$$[k(D_{Y,v}):\gamma^{\star}k(D_{[Y], v})] = [k(D_{Y,v}):k(D_{Y,v})^{F_{Y,v}/{\rm Stab\,}D_{Y,v}}] = \frac{\sharp F_{Y,v}}{\sharp{\rm Stab\,}D_{Y,v}}\text{ and } \sharp [Y] = \frac{\sharp F}{\sharp F_{Y,v}}.$$
Hence the equality $\sharp F = \sharp [Y] \cdot [k(D_{Y,v}):\gamma^{\star}k(D_{[Y], v})] \cdot r_{Y,v}$ by the argument above yields $r_{Y,v} = \sharp{\rm Stab\,}D_{Y,v}$, as required. Letting $r_{[Y],v} = r_{Y,v}$, we finally arrive at 
$$\gamma_{\star}R = \sum_{([Y],v)\in\Vert(\EE)} (\sharp\, F - \sharp\,F/r_{[Y],v} )D_{[Y], v}. $$
By Case 1, it follows that $\gamma_{\star}K_{X(\EE)}$ is equal to 
$$-\sharp\, F\left(\sum_{\rho\in \Ray(\EE)} D_{\rho} +\sum_{D\in\F_{X}}a_{D}D\right)  + \sum_{([Y],v)\in\Vert(\EE)}\left(\mu(v)\frac{\sharp\, F}{r_{[Y],v}}\left(b_{[Y]} + 1\right) -\frac{\sharp\, F}{r_{[Y],v}}\right)D_{[Y], v},$$
where for any $D\in \F_{X}$ we let $a_{D} = \frac{1}{\sharp \gamma^{-1}(D)} \sum_{D'\subseteq \gamma^{-1}(D)}a_{D'}$. 
The difference $\gamma_{\star}K_{X(\EE)} - \gamma_{\star}R$ and the preceding computations give the desired formula.
\end{proof}
\begin{remark}
The coefficient $a_{D}$ in Theorem \ref{t-canonical} can be explicitly determined in terms
of the homogeneous spherical datum $\ss$ of $\Omega$. We refer to \cite[Theorem 4.2]{Bri97}
for more details.
\end{remark}
The next proposition gives an example where there are no ramification divisors.
\begin{proposition}
With the same notation as in Theorem \ref{t-canonical}, if the group $F$ acts freely on $\Gamma$, then $r_{[Y], v} = 1$ for all $([Y], v)\in \Vert(\EE)$.
\end{proposition}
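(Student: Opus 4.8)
The plan is to unwind the definition of the ramification index $r_{[Y],v}$ given in Theorem \ref{t-canonical} and show it is forced to be $1$ once $F$ acts freely on $\Gamma$. Recall that
$$r_{[Y],v} = \sharp \{g\in F\,|\, g\cdot D_{Y,v}\subseteq D_{Y,v}\text{ and }g\cdot x = x\},$$
where $D_{Y,v}$ is an irreducible component of $\gamma^{-1}(D_{[Y],v})$ and $x$ is a general closed point of $D_{Y,v}$; in the course of the proof of Theorem \ref{t-canonical} this number was identified with $\sharp\,{\rm Stab}\,D_{Y,v}$, the stabilizer at $x$ of the $F_{Y,v}$-action on $D_{Y,v}$, where $F_{Y,v}\subseteq F$ is the subgroup preserving $D_{Y,v}$. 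So it suffices to prove that, under the hypothesis, an element $g\in F$ fixing a general closed point $x$ of $D_{Y,v}$ must be trivial.

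The key point is that the divisors $D_{Y,v}$ arising from vertical valuations $([Y],v)\in\Vert(\EE)$ lie over prime divisors $Y\subseteq\loc(\EE)$ via the rational quotient map; concretely, by the local structure theorem (Theorem \ref{p-loc}) and Lemma \ref{l-L-orbits}, in a suitable $B$-chart $X_{0}(\D,\F)$ the cycle $D_{Y,v}$ is identified with $P_{u}\times Z_{Y,v}$ where $Z_{Y,v}$ is the $G_{\star}$-cycle cut out inside $X_{\lc}=\spec A_{\lc}(\Gamma,\D)$, and the quotient map $\pi:X_{\lc}\to\loc(\D)$ sends $Z_{Y,v}$ dominantly onto $Y$ (this is exactly Case 2 of the proof of Theorem \ref{t-divisor1}: a vertical valuation $[v_{Y},\mu(v)v,\mu(v)]$ restricts to $v_{Y}$ on $k(\Gamma)=k(S)$). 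The $F$-action on $X(\EE)$ is compatible with its action on $\Gamma$: by Proposition \ref{p-presol} there is the commutative diagram with $X(\EE)\dashrightarrow\Gamma/F$, and more precisely the contraction $X(\EE_{\cont})\to\Gamma/F$ is $F$-equivariant over $\Gamma\to\Gamma/F$. Hence the composite $D_{Y,v}\dashrightarrow Y\hookrightarrow\Gamma$ is $F_{Y,v}$-equivariant, and a general closed point $x$ of $D_{Y,v}$ maps to a general closed point $y$ of $Y\subseteq\Gamma$.

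Now I would conclude as follows. If $g\in F$ fixes the general point $x\in D_{Y,v}$, then by equivariance of the rational map $D_{Y,v}\dashrightarrow\Gamma$ (which is defined at $x$ since $x$ is general) the image $g\cdot y = y$, so $g$ fixes a closed point $y$ of $\Gamma$. By hypothesis $F$ acts freely on $\Gamma$, so $g = 1$. Therefore $r_{[Y],v}=\sharp\,{\rm Stab}\,D_{Y,v}=1$ for every $([Y],v)\in\Vert(\EE)$, which is the assertion. I expect the main obstacle to be purely bookkeeping: making precise that the rational map from $D_{Y,v}$ to $\Gamma$ is genuinely defined (as a morphism) at a general closed point and is $F_{Y,v}$-equivariant there, i.e.\ that one can choose the $B$-chart, the representative component $D_{Y,v}$, and the open locus of $\Gamma$ compatibly with the finite group action — this is routine given Lemma \ref{l-L-orbits}, Theorem \ref{p-loc} and the $F$-equivariance built into Definition \ref{d: F-divfan} and Lemma \ref{l:Fcolpp}, but it is the only place where care is needed.
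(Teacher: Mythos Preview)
Your argument is correct and rests on the same idea as the paper's: an element $g\in F$ fixing a point of $X(\EE)$ (or of a vertical divisor $D_{Y,v}$) must fix its image in $\Gamma$, hence is trivial by freeness. The paper packages this slightly more cleanly by passing to the contraction space $X(\EE_{\cont})$ of Proposition~\ref{p-presol}, which carries a genuine $(G\times F)$-equivariant \emph{morphism} $\pi\colon X(\EE_{\cont})\to\Gamma$; one line then gives that the $F$-action on $X(\EE_{\cont})$ is free, and since $X(\EE)$ and $X(\EE_{\cont})$ agree on a $(G\times F)$-stable open subset with complement of codimension $\geq 2$, the quotient map $\gamma$ has no ramification divisor. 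This sidesteps exactly the bookkeeping you flag at the end about the rational map $D_{Y,v}\dashrightarrow\Gamma$ being defined and equivariant at a general point.
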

\begin{proof}
Note that under our assumption the $F$-action on the variety $X(\EE_{\cont})$
is free. Indeed, if $g\in F$ belongs to the stabilizer of a point $x\in X(\EE_{\cont})$, then we have $$g\cdot \pi(x) = \pi(g\cdot x) = \pi(x),$$ where $\pi: X(\EE_{\cont})\rightarrow \Gamma$ is the quotient map by $G$.  Since the $F$-action on $\Gamma$ is free, we deduce that $g = 1$. Now there exists a $(G\times F)$-stable dense open subset of $X(\EE)$ with complement of codimension $\geq 2$ and $(G\times F)$-isomorphic to the one of $X(\EE_{\cont})$. 
This implies that the quotient map $\gamma: X(\EE)\rightarrow X$ has no ramification divisor.
\end{proof}

\begin{footnotesize}

\end{footnotesize}

\end{document}